\documentclass[11pt]{article}
\usepackage{amsmath,amsthm,amssymb}
\usepackage[usenames,dvipsnames]{xcolor}
\usepackage{tabularx}
\usepackage{enumerate}
\usepackage{graphicx}
\usepackage[noadjust]{cite}
\usepackage{comment}
\usepackage{oands}
\usepackage{tikz}
\usepackage{changepage}
\usepackage{bbm}
\usepackage{mathtools}
\usepackage[margin=1in]{geometry}
\usepackage[pagewise,mathlines]{lineno}
\usepackage{appendix}
\usepackage{stmaryrd} 
\usepackage{multicol}
\usepackage{microtype}
\usepackage[colorinlistoftodos]{todonotes}
\usepackage[normalem]{ulem}

\usepackage[pdftitle={},
  pdfauthor={Ewain Gwynne, Nina Holden, Xin Sun},
colorlinks=true,linkcolor=NavyBlue,urlcolor=RoyalBlue,citecolor=PineGreen,bookmarks=true,bookmarksopen=true,bookmarksopenlevel=2,unicode=true,linktocpage]{hyperref}

\setcounter{tocdepth}{2}






\theoremstyle{plain}
\newtheorem{thm}{Theorem}[section]

\newtheorem{definition}[thm]{Definition}
\newtheorem{lem}[thm]{Lemma}
\newtheorem{prop}[thm]{Proposition}
\newtheorem{conj}[thm]{Conjecture}

\def\@rst #1 #2other{#1}
\newcommand\MR[1]{\relax\ifhmode\unskip\spacefactor3000 \space\fi
  \MRhref{\expandafter\@rst #1 other}{#1}}
\newcommand{\MRhref}[2]{\href{http://www.ams.org/mathscinet-getitem?mr=#1}{MR#2}}


\theoremstyle{definition} 
\newtheorem{defn}[thm]{Definition}
\newtheorem{example}[thm]{Example}
\newtheorem{remark}[thm]{Remark}

\newtheorem{prob}[thm]{Problem}

\numberwithin{equation}{section}

\newcommand{\dsb}{\begin{adjustwidth}{2.5em}{0pt}
\begin{footnotesize}}
\newcommand{\dse}{\end{footnotesize}
\end{adjustwidth}}

\newcommand{\ssb}{\begin{adjustwidth}{2.5em}{0pt}}
\newcommand{\sse}{\end{adjustwidth}}

\newcommand{\aryb}{\begin{eqnarray*}}
\newcommand{\arye}{\end{eqnarray*}}
\def\alb#1\ale{\begin{align*}#1\end{align*}}
\def\allb#1\alle{\begin{align}#1\end{align}}
\newcommand{\eqb}{\begin{equation}}
\newcommand{\eqe}{\end{equation}}
\newcommand{\eqbn}{\begin{equation*}}
\newcommand{\eqen}{\end{equation*}}

\newcommand{\BB}{\mathbbm}
\newcommand{\ol}{\overline}
\newcommand{\ul}{\underline}
\newcommand{\op}{\operatorname}

\newcommand{\re}{\operatorname{Re}}
\newcommand{\frk}{\mathfrak}
\newcommand{\eqD}{\overset{d}{=}}
\newcommand{\ep}{\epsilon}
\newcommand{\rta}{\rightarrow}

\newcommand{\wt}{\widetilde}
\newcommand{\wh}{\widehat} 
\newcommand{\mcl}{\mathcal}

\newcommand{\bdy}{\partial}


\newcommand{\D}{\mathbb{D}}
\newcommand{\CC}{\mathbb{C}}

\newcommand{\N}{\mathbb{N}}
\newcommand{\Q}{\mathbb{Q}}
\newcommand{\Z}{\mathbb{Z}}
\newcommand{\R}{\mathbb{R}}

\def\cZ{\mathcal{Z}}

\def\cW{\mathcal{W}}
\def\cV{\mathcal{V}}

\def\cT{\mathcal{T}}
\def\cS{\mathcal{S}}
\def\cR{\mathcal{R}}

\def\cM{\mathcal{M}}
\def\cL{\mathcal{L}}
\def\cK{\mathcal{K}}

\def\cH{\mathcal{H}}

\def\cE{\mathcal{E}}
\def\cD{\mathcal{D}}

\def\cB{\mathcal{B}}

\newcommand{\MT}{\cM\cT}
\newcommand{\LW}{\cL\cW}
\newcommand{\Dh}{\cD\cH}
\newcommand{\defeq}{:=}
\newcommand{\notion}[1]{{\bf\textit{#1}}}
\newcommand{\DP}{\mathfrak{P}}
  
\DeclareMathOperator{\SLE}{SLE}  
\DeclareMathOperator{\CLE}{CLE}  
\newcommand{\p}{\bdy}
\newcommand{\fh}{\mathsf h}

\let\originalleft\left
\let\originalright\right
\renewcommand{\left}{\mathopen{}\mathclose\bgroup\originalleft}
\renewcommand{\right}{\aftergroup\egroup\originalright}

\title{Mating of trees for random planar maps and Liouville quantum gravity: a survey}

\date{  }
\author{
\begin{tabular}{c} Ewain Gwynne\\[-5pt]\small Cambridge \end{tabular} 
\begin{tabular}{c} Nina Holden\\[-5pt]\small ETH Z\"urich \end{tabular}
\begin{tabular}{c} Xin Sun \\[-5pt]\small Columbia \end{tabular}
}

\begin{document}

\maketitle
  
\begin{abstract}
We survey the theory and applications of mating-of-trees bijections for random planar maps and their continuum analog: the mating-of-trees theorem of Duplantier, Miller, and Sheffield (2014). 
The latter theorem gives an encoding of a Liouville quantum gravity (LQG) surface decorated by a Schramm-Loewner evolution (SLE) curve in terms of a pair of correlated linear Brownian motions.
We assume minimal familiarity with the theory of SLE and LQG.

Mating-of-trees theory enables one to reduce problems about SLE and LQG to problems about Brownian motion and leads to deep rigorous connections between random planar maps and LQG.  
Applications discussed in this article include scaling limit results for various functionals of decorated random planar maps, estimates for graph distances and random walk on (not necessarily uniform) random planar maps, computations of the Hausdorff dimensions of sets associated with SLE, scaling limit results for random planar maps conformally embedded in the plane, and special symmetries for $\sqrt{8/3}$-LQG which allow one to prove its equivalence with the Brownian map. 
\end{abstract}
  
\tableofcontents

\section{Introduction}
\label{sec-intro}

\subsection{Overview}
\label{sec-overview}

A planar map is a graph embedded in the plane, viewed modulo orientation preserving homeomorphisms. 
Planar maps have been an important object of study in combinatorics since the pioneering work of Tutte~\cite{tutte}. 
More recently, random planar maps have been a major focus in probability theory and mathematical physics since they are the natural discrete analogs of so-called \emph{Liouville quantum gravity (LQG)} surfaces. LQG surfaces are canonical models of random fractal surfaces which are important in statistical mechanics, string theory, and conformal field theory.

In this article, we will be interested both in \emph{uniform random planar maps}, where each possible planar map satisfying certain constraints (e.g., on the total number of edges and/or the degrees of the faces) is assigned equal probability; and in planar maps weighted by --- i.e., sampled with probability proportional to --- the partition function of a statistical mechanics model on the map. This type of weighting is especially natural since it arises when we sample a planar map decorated by a statistical mechanics model. 
For example, suppose we sample a uniform random pair $(M,T)$ consisting of a planar map $M$ with $n$ edges and a spanning tree on $M$. Then the marginal law of $M$ is the uniform measure on planar maps with $n$ edges weighted by the number of possible spanning trees $T$ which they admit.
 
One of the most fruitful approaches to the study of random planar maps is to encode them in terms of simpler objects --- such as random trees and random walks --- via combinatorial bijections. An important class of such bijections are the so-called \emph{mating-of-trees bijections} which represent a planar map decorated by a statistical mechanics model as the gluing of a pair of discrete trees. Since discrete trees can be represented by their contour functions (see, e.g.,~\cite{legall-tree-survey}), this is equivalent to encoding the map by a two-dimensional walk. The simplest mating-of-trees bijection is the \emph{Mullin bijection}~\cite{mullin-maps,bernardi-maps,shef-burger} which encodes a planar map decorated by a spanning tree via a nearest-neighbor walk in $\BB Z^2$. Here the two trees being mated are the spanning tree and the corresponding dual spanning tree, see Section~\ref{sec-bijections}.  
There are also mating-of-trees bijections for many other decorated planar map models, such as site percolation on a triangulation~\cite{bernardi-dfs-bijection,bhs-site-perc} and planar maps decorated by an instance of the critical Fortuin Kasteleyn (FK) cluster model~\cite{shef-burger}. See Sections~\ref{sec-bijections} and~\ref{subsec:rmp} for more on these bijections.\footnote{Mating-of-trees bijections are fundamentally different from bijections of ``Schaeffer type"~\cite{schaeffer-bijection,bdg-bijection} which encode an undecorated planar map by means of a labeled tree, where the labels describe graph distances to a marked root vertex. This latter type of bijection has been used to great effect to study distances in uniform random planar maps (see, e.g.,~\cite{legall-uniqueness,miermont-brownian-map}), but this work is not the focus of the present survey.}

In this article,  we will survey the theory of mating-of-trees bijections and their continuum analog: the mating-of-trees theorem for LQG of Duplantier, Miller, and Sheffield~\cite{wedges}, 
and their applications. Let us now give a brief overview of the results we will present.   
A $\gamma$-LQG surface is, heuristically speaking, the random two-dimensional Riemannian manifold parameterized by a domain $D\subset\BB C$ whose Riemannian metric tensor is $e^{\gamma h } \, (dx^2 + dy^2)$, where $h$ is some variant of the Gaussian free field (GFF) on $D$ and $dx^2 + dy^2$ is the Euclidean metric tensor. 
The surfaces we now call LQG surfaces were, albeit in a different form, first described by Polyakov in the 1980's~\cite{polyakov-qg1} in the context of bosonic string theory. 

The above definition of LQG does not make literal sense since $h$ is a distribution, not a function. However, one can rigorously define LQG surfaces via various regularization procedures. For example, one can define the area measure (volume form) $\mu_h$ associated with an LQG surface as a limit of regularized versions of $e^{\gamma h} \,d^2z$, where $d^2z$ denotes Lebesgue measure.\footnote{
Recent work has shown that there is also a metric $\frk d_h$ on $D$ which is the limit of regularized versions of the Riemannian distance function associated with $e^{\gamma h } \, (dx^2 + dy^2)$~\cite{dddf-lfpp,gm-uniqueness}. This metric is expected to describe the scaling limit of graph distances on random planar maps, but this is only known for $\gamma=\sqrt{8/3}$~\cite{lqg-tbm1,lqg-tbm2,lqg-tbm3,legall-uniqueness,miermont-brownian-map}.} 
This measure is a special case of the theory of \emph{Gaussian multiplicative chaos (GMC)}, which was initiated in~\cite{kahane,hk-gmc}.
See~\cite{rhodes-vargas-review,berestycki-gmt-elementary,aru-gmc-survey} for an introduction to GMC theory, along with \cite{dkrv-lqg-sphere} for a physically motivated construction.
See also~\cite{shef-kpz} for a number of results which are specific to the LQG measure $\mu_h$.  
	
In the physics literature, random planar maps are used as discrete models for 2D quantum gravity. 
The heuristic connection between LQG surfaces and random planar maps comes by way of the so-called \emph{DDK ansatz}~\cite{david-conformal-gauge,dk-qg}.
This ansatz implies that LQG surfaces as defined above should be the same as samples from the ``Lebesgue measure on surfaces" weighted by the partition function of a certain matter field, and hence these surfaces should be related to ``weighted discrete random surfaces", i.e., random planar maps. 
See Section~\ref{subsec:scaling} for more detail. 
Further evidence for the connection between random planar maps and LQG can be obtained by matching formulas in the discrete and continuum setting. 
Indeed, results for random planar maps (e.g., the computations of various exponents) obtained using random matrix techniques~\cite{bipz-planarmaps} can be shown, at a physics level of rigor, to agree with corresponding results in continuum Liouville theory. 
In particular, Knizhnik, Polyakov, Zamolodchikov \cite{kpz-scaling} established a relation between scaling  exponents for statistical physics models in Euclidean and quantum environments using the continuum theory. This can be used to derive quantum exponents  based on their Euclidean counterparts.
In some cases it has been verified that results based on these  derivations agree with  the ones from random planar map calculations, 
see e.g.\ \cite{fgz-95} and references therein.
	
Because of the above correspondence in the physics literature, it is believed that planar maps should converge (in various topologies) to LQG surfaces; see Section~\ref{subsec:scaling} for more details. 
The parameter $\gamma$ depends on the type of random planar map under consideration. 
For example, uniform random planar maps are expected (and in some senses known) to converge to $\sqrt{8/3}$-LQG. The same is true if we place local constraints on the map, e.g., if we consider a uniform triangulation (a map in which all faces have 3 edges) or quadrangulation (a map in which all faces have 4 edges). 
For $\gamma\not=\sqrt{8/3}$, $\gamma$-LQG surfaces arise as the scaling limits of random planar maps sampled with probability proportional to the partition function of an appropriate (critical) statistical mechanics model on the map. For example, if we sample with probability proportional to the number of spanning trees, we get $\sqrt 2$-LQG. 
If we instead weight by the partition function of the critical FK cluster model with parameter $q\in (0,4)$, we get $\gamma$-LQG where $\gamma\in (\sqrt 2 , 2)$ satisfies $q = 2+2\cos(\pi\gamma^2/2)$.

It is natural to look at the scaling limits of statistical mechanics models on random planar maps in addition to just the underlying map. For many such models, at the critical point it is known or expected that the scaling limit of the statistical mechanics model should be described by one or more Schramm-Loewner evolution curves (SLE$_\kappa$)~\cite{schramm0} sampled \emph{independently} from the GFF-type distribution corresponding to the $\gamma$-LQG surface which is the limit of the random planar map. The case when $\kappa \in \{\gamma^2,16/\gamma^2\}$ describes the scaling limit of statistical mechanics models which are compatible with the weighting of the underlying random planar map. More precisely, if we look at a planar map weighted by the partition function of a statistical model which converges to SLE$_\kappa$ for one of these values of $\kappa$, then the joint law of the planar map and the statistical mechanics model on it should converge to the law of a $\gamma$-LQG surface decorated by an independent SLE$_\kappa$.

The mating-of-trees theorem of~\cite{wedges}  (see Theorem~\ref{thm-mating}) is a deep result in the theory of SLE and LQG which gives a way of encoding a $\gamma$-LQG surface decorated by an SLE$_\kappa$ curve for $\kappa=16/\gamma^2$ in terms of a correlated two-dimensional Brownian motion, with the correlation of the two coordinates given by $-\cos(\pi\gamma^2/4)$. 
As we will explain, this encoding is an exact continuum analog of the aforementioned mating-of-trees bijections for random planar maps.

The mating-of-trees theorem has a huge number of applications to random planar maps, LQG, and SLE, which we will review in Section~\ref{sec-applications}. Some highlights of these applications include the following. 
\begin{itemize}
	\item The first rigorous versions of the long-expected convergence of random planar maps toward LQG, due to the precise correspondence between mating of trees in the discrete and continuum settings. 
	\item The first scaling limit results for random planar maps conformally embedded in the plane. In fact, all current rigorously proven convergence results for random planar maps toward LQG use the mating-of-trees theorem.\footnote{Scaling limit results for random planar maps in the Gromov-Hausdorff topology have been obtained without using the mating-of-trees theorem (see e.g.\ \cite{legall-uniqueness,miermont-brownian-map}) but, as explained in Section \ref{sec-metric-pure-lqg}, identifying the limiting object with an LQG surface requires mating-of-trees theory.} 
	\item Computations and bounds for exponents related to graph distance, random walk and statistical mechanics models on random planar maps.  
	\item A general framework for computing Hausdorff dimensions and for constructing natural measures on random fractals associated with SLE. 
\end{itemize}

We remark that there are other approaches to the theory of LQG besides the one considered in this article.
In particular, David, Kupiainen, Rhodes, Vargas, and others have studied LQG from the path integral perspective, which is much more closely aligned with the original physics literature than the ideas surveyed in this article. See~\cite{dkrv-lqg-sphere,grv-higher-genus,krv-dozz} for results in this direction, \cite{vargas-dozz-notes} for a survey article on such results, and Section~\ref{subsec:rv} for further discussion.
A recent paper by Dub\'edat and Shen~\cite{ds-ricci-flow} defines a notion of a \emph{stochastic Ricci flow} under which the $\gamma$-LQG measure is invariant. 
This can be seen as an alternative approach to LQG theory based on stochastic quantization.

There is also a substantial physics literature on LQG and related topics, which is mostly outside the scope of this paper. We refer to~\cite{shef-kpz} for an extensive list of references to relevant physics literature.

\subsection{Guidance for reading}
\label{sec-outline}

The article has two main purposes.  First, it explains the motivation and main  ideas in the theory of mating of trees.  This is the focus of Sections~\ref{sec-bijections}, \ref{sec-sle-lqg}, and \ref{sec-mating}.
Second, it reviews various applications of mating of trees and further research directions.  For readers who are mainly interested in this aspect, we recommend reading Sections~\ref{subsec:UST} and~\ref{subsec:scaling}, skimming through the basic definitions and theorem statements in the rest of Sections~\ref{sec-sle-lqg} and~\ref{sec-mating}, and then reading whatever parts of Sections~\ref{sec-applications} and~\ref{sec-open-problems} that are of interest. We note that most of the different applications and open problems can be read independently of each other. 

We will try to assume as little background as possible, but the reader may have an easier time if he/she has some basic familiarity with the Gaussian free field (see~\cite{shef-gff,pw-gff-notes} and the introductory sections of~\cite{ss-contour,shef-zipper,ig1,ig4}) and the Schramm-Loewner evolution (see~\cite{lawler-book,werner-notes,bn-sle-notes}). 
See also~\cite{berestycki-lqg-notes} for introductory notes on the GFF and LQG and~\cite{gwynne-ams-survey} for a short introductory article on LQG aimed at a reader with no prior knowledge of the subject.

We intend for this article to be a useful reference for experts.
To this end, we have tried to make the reference list as comprehensive as possible and we have included precise statements of several useful lemmas about SLE and LQG which may be hard to find in the existing literature. 

This article is structured as follows. In Section~\ref{sec-bijections}, we present two of the simplest and most important mating-of-trees bijections: the Mullin bijection for spanning-tree weighted maps and the bijection of Bernardi, Holden, and Sun for uniform triangulations decorated by site percolation. 
This section is not needed to understand the continuum theory, but may provide useful intuition and motivation. 

In Section~\ref{sec-sle-lqg},  we first discuss the conjectured link between random planar maps and LQG and its motivations in Section~\ref{subsec:scaling}.
We then review the definition of the Gaussian free field, LQG, several special types of LQG surfaces (quantum cones, wedges, disks, etc.), and the definitions of ordinary and space-filling SLE curves. 
Along the way, we make note of some important properties of these objects which are established in various places in the literature. 
We give precise definitions of most objects involved for the sake of completeness, but it is not essential to  know these precise definitions in order to understand the rest of the paper.

In Section~\ref{sec-mating}, 
we first review the ``conformal welding" results for LQG surfaces cut by SLE curves, which were first established in~\cite{shef-zipper} and later generalized in~\cite{wedges} and which constitute the first rigorous connections between SLE and LQG.
We then state the continuum mating-of-trees theorem of~\cite{wedges} and outline its proof. We also state and discuss two variants of this theorem: a variant for LQG on the disk and a variant for an LQG surface cut by an ordinary (not space-filling) SLE$_\kappa$ curve for $\kappa \in (4,8)$. 

In Section~\ref{sec-applications} we review the applications of mating-of-trees theory, including the ones mentioned at the end of Section~\ref{sec-overview}. 
In Section~\ref{sec-open-problems}, we state and discuss several open problems in mating-of-trees theory, which should be of interest to readers with a variety of different backgrounds including mathematical physics, combinatorics, and complex analysis.

\subsubsection*{Acknowledgments} 
We thank Scott Sheffield for useful discussions. We thank Juhan Aru, Nathan\"ael Berestycki, Linxiao Chen, Gr\'egory Miermont, 
Yiting Li, Ellen Powell, Wei Qian, R\'emi Rhodes, Steffen Rohde, Lukas Schoug, Scott Sheffield, and an anonymous referee for helpful comments on an earlier version of this article.
E.G.\ was supported by a Clay Research Fellowship and a Junior Research Fellowship from Trinity College, Cambridge.
N.H.\ was supported by Dr.\ Max R\"ossler, the Walter Haefner Foundation, and the ETH Z\"urich Foundation. 
X.S.\ was supported by the Simons Foundation as a Junior Fellow at the Simons Society of Fellows, by NSF grant DMS-1811092, and by Minerva fund at the Department of Mathematics at Columbia University. 
N.H.\ was invited to contribute with a survey article in \emph{Panorama and Syntheses} in relation with her talk at the conference \emph{Etats de la recherche SMF: Statistical mechanics} of the French Mathematical Society in December 2018. She thanks the organizers for the invitation.  

\section{Discrete mating of trees}
\label{sec-bijections}
There are numerous combinatorial bijections between planar maps (possibly decorated by additional structures) and simpler objects such as trees and walks.
In \cite{shef-burger}, Sheffield observed that a bijection due to Mullin~\cite{mullin-maps} and its generalization due to Bernardi~\cite{bernardi-maps,bernardi-sandpile} can be interpreted as follows.
A random planar map decorated by a uniform spanning tree or an instance of the critical Fortuin-Kasteleyn (FK) cluster model can be bijectively encoded by a certain random walk on $\BB Z^2$ in such a way that the decorated map is obtained by ``gluing together'' the two trees whose contour functions are (minor variants of) the two coordinates of the walk.
In~\cite{shef-burger}, it is proved that these random walks converge in the scaling limit to 2D correlated Brownian motions with the correlation depending explicitly on the parameter $q \in (0,4)$ of the FK cluster model. 
Together with \cite{wedges}, this sets the foundation for mating-of-trees theory. Since then, several additional encodings of decorated planar maps by random walks along the same lines have been found. 
We call such encodings \emph{mating-of-trees bijections}.

In Sections~\ref{subsec:UST} and~\ref{subsec:site}, we describe  two mating-of-trees  bijections: the aforementioned Mullin bijection for spanning-tree-decorated random planar maps and the one for site-percolated loopless triangulations due to Bernardi, Holden, and Sun~\cite{bernardi-dfs-bijection,bhs-site-perc}. We choose to focus on these two bijections for three reasons. First, they are especially simple in comparison to other mating-of-trees bijections. Second, they provide an instrumental intuition for understanding the continuum picture of the mating-of-trees theory. Third, they are both fundamental in many later applications (see Section~\ref{sec-applications} for more details).
We will briefly review other known mating-of-trees bijections in Section~\ref{subsec:rmp} to give the reader an idea of the range of models this theory applies to. The reader may want to skip Section~\ref{subsec:site} in the first reading because much of the discrete intuition can be gained from Section~\ref{subsec:UST}.
For understanding some additional features of mating-of-trees theory for the case $\gamma\in(\sqrt{2},2)$, as well as its applications, Section~\ref{subsec:site} will be instrumental.

In Section~\ref{subsec:scaling}, 
after introducing the necessary background for LQG and SLE, we will explain why the bijections in this section lead to the concept of mating-of-trees theory in the continuum.

\subsection{Spanning-tree-decorated planar maps}\label{subsec:UST}

A planar map is called \emph{rooted} if there is a  marked directed edge, which we call  the \emph{root edge}. The \emph{root vertex} is the terminal endpoint of the root edge.
A (rooted) planar map with a single face is called a (rooted) planar tree. 
For a rooted planar tree $T$ with $n$ edges, its unique face has degree $2n$ (each of the $n$ edges has multiplicity 2). 
Suppose we trace along the boundary of this face started from the root edge.
We define $C_0 :=0$ and for each $i\in \{1,\dots,2n\}$, we let $C_i$ be the graph distance in $T$ from the $i$th vertex we encounter to the root vertex.
Then $i\mapsto C_i$ is a walk with $2n$ steps in $\{+1,-1\}$ which takes values in $\Z_{\ge 0}$ and starts and ends at the origin. 
This walk is called the \emph{contour function} of $T$. 
It is not hard to see that each walk with these properties is the contour function of a unique planar tree $T$ with $n$ edges, so we have a bijection between trees and walks. 
See~\cite{legall-tree-survey} for more on this bijection.

Given a finite connected graph $G$, 
a \emph{spanning tree} of $G$ is a connected subgraph of $G$ without cycles whose vertex set agrees with that of $G$. 
Let $\MT^n$ be the set of triples of the form $(M,e_0 ,T)$ where $M$ is  planar map with $n$ edges  rooted at the edge $e_0$ and $T$ is a spanning tree of $M$. 
Let $\LW^n$ be the set of  walks on $\Z^2_{\ge 0}$ of $2n$ steps, with steps in $\{(1,0),(-1,0),(0,1),(0-1) \}$, starting and ending at the origin.
The Mullin bijection is a bijection between $\MT^n$ and $\LW^n$, which is a two-dimensional analog of the contour function of a rooted planar tree in the previous paragraph.

\begin{figure}[ht!]
	\begin{center}
		\includegraphics[scale=.8]{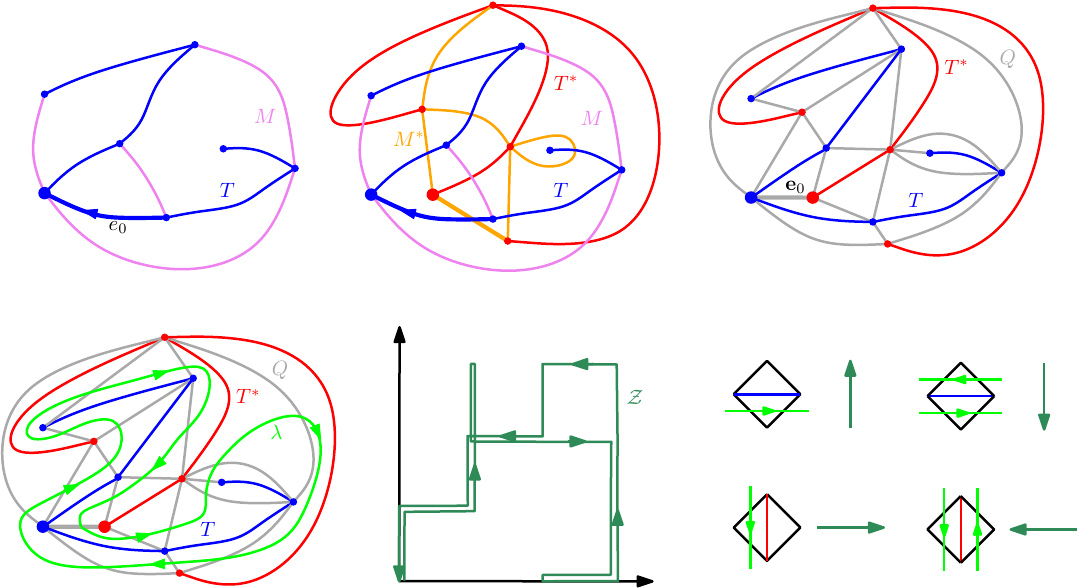} 
		\caption[Mullin bijection]{
			\textbf{Top left:} a planar map $M$ with an oriented root edge $e_0$ and a spanning tree $T$. 
			\textbf{Top middle:} the dual map $M^*$ and the dual spanning tree $T^*$. 
			\textbf{Top right:} the quadrangulation $Q$ (whose vertices correspond to the vertices and faces of $M$) with its root edge $\BB e_0$. Note that the rightmost edge of $T$ corresponds to two prime ends of the face to its left, which gives rise to a double edge of $Q$.
			\textbf{Bottom left:} the space-filling path $\lambda$ on $\mcl T$.  
			\textbf{Bottom middle:} the lattice walk $\cZ$ corresponding to $(M,e_0,T)$. The different segments of the path are slightly shifted to make it easier to read off the walk.
			\textbf{Bottom right:} correspondence between steps of $\cZ$ and triangles of $Q\cup T\cup T^*$.
		}\label{fig-burger-bijection}  
	\end{center}
\end{figure}

We now explain the Mullin bijection; see Figure~\ref{fig-burger-bijection} for an illustration.
Given $(M , e_0 , T)\in \MT^n$, let $M^*$ be the dual map of $M$ (whose vertices correspond to faces of $M$) and let $T^*$ be the dual spanning tree of $M^*$, which consists of edges of $M^*$ which do not cross edges of $T$. Let $Q$ be the graph whose vertex set is the union of the vertex sets of $M$ and $M^*$, with two such vertices joined by an edge if and only if they correspond to a face of $M$ and a vertex incident to that face. If $v$ is a vertex of $M$ which corresponds to $k\in\BB N$ prime ends on the boundary of the face $f\in M^*$, then there are $k$ edges joining $v$ and $f$, one for each prime end. We define the root edge $\BB e_0$ of $Q$ to be the edge which is adjacent to the terminal endpoint of $e_0$ and which is the first edge in the counterclockwise direction among all such edges. Then $Q$ is a quadrangulation, and each face of $Q$ is bisected by either an edge of $T$ or an edge of $T^*$, which divides it into two triangles. Let $\mcl T$ be the set of such triangles, and view $\mcl T$ as the vertex set of a graph whereby two triangles are joined by an edge if they share a common edge of $Q\cup T\cup T^*$ (i.e., $\mcl T$ is the dual map of $Q\cup T \cup T^*$). 

There is a unique path $\lambda : \{1,\dots,2n\} \rta \mcl T$ which snakes between the trees $T$ and $T^*$, begins and ends at the midpoint of $\BB e_0$, traverses each triangle of $\mcl T$ exactly once, and always keeps $T$ to its right and $T^*$ to its left (see Figure~\ref{fig-burger-bijection}).  
We call $\lambda$ the \emph{peano curve} of $(M,e_0,T)$. 

We now use $\lambda$ to construct a walk $\cZ \in \LW^n$ associated with $(M,e_0,T)$. 
The peano curve $\lambda$ crosses each face of $Q$ twice (once for each triangle). 
Set $\cZ(0)=(0,0)$.
For each integer $i\in [1,2n]$, let $\lambda(i)$ be the $i$-th triangle traversed by $\lambda$ and $q(i)$ be the quadrilateral of $Q$ containing $\lambda(i)$.
If $q(i)$ is  bisected by an edge of $T$ (resp. $T^*$) and $\lambda(i)$ the first triangle traversed by $\lambda$ among the two triangles in $q(i)$, we set $\cZ(i) - \cZ(i-1) = (0,1)$ (resp. $\cZ(i) - \cZ(i-1) = (1,0)$). 
If $q(i)$ is  bisected by an edge  of $T$ (resp. $T^*$)  and $\lambda(i)$ the second triangle traversed by $\lambda$ among the two triangles in $q(i)$,  we set $\cZ(i) - \cZ(i-1) = (0,-1)$ (resp. $\cZ(i) - \cZ(i-1) = (-1,0)$). 

With the above definition, the first (resp.\ second) coordinate of $\cZ$ is the same as the contour function of $T$ (resp.\ $T^*$), except with some extra constant steps. 
As a consequence, we have $\cZ \in \LW^n$. 
We call $\cZ$ the \emph{contour function} of $(M,e_0,T)$.
The following is easily verified.

\begin{thm}[Mullin bijection] \label{thm:Mullin}
	The mapping $(M,e_0,T)\mapsto \cZ$ is  a bijection between $\MT^n$ and $\LW^n$.
\end{thm}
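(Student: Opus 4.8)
## Proof proposal

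The plan is to exhibit an explicit inverse map $\cZ \mapsto (M,e_0,T)$ and check that the two constructions are mutually inverse. The forward direction (that $(M,e_0,T) \mapsto \cZ$ indeed lands in $\LW^n$) has essentially already been observed in the text: each coordinate of $\cZ$ agrees with the contour function of a planar tree ($T$ or $T^*$) up to inserting constant steps, so $\cZ$ is a $2n$-step nearest-neighbor path in $\Z_{\ge 0}^2$ starting and ending at the origin. What remains is bijectivity, and for this I would build the inverse by reversing the geometric construction step by step.

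First I would recover the combinatorial skeleton from a walk $\cZ \in \LW^n$. Reading the steps of $\cZ$ one at a time, a step in the vertical (resp.\ horizontal) direction signals that we are crossing a face bisected by an edge of $T$ (resp.\ $T^*$), with a $+$ step marking the first of the two triangles in that face and a $-$ step marking the second; matching each $+$-step to the corresponding $-$-step of the same type gives a noncrossing pairing of the $2n$ steps, hence a planar-tree structure. The vertical steps, with their $\pm 1$ increments stripped of the horizontal steps, are exactly the contour function of a rooted planar tree, which I will declare to be $T$; symmetrically the horizontal steps give the contour function of a second rooted planar tree, which I will declare to be $T^*$. I then glue these two trees along the cyclic sequence of $2n$ "triangle slots" dictated by the order of steps in $\cZ$: this produces the quadrangulation $Q$ (its faces are the faces being crossed, each split into the two triangles seen consecutively or in matched pairs), together with the distinguished edge $\BB e_0$ corresponding to the starting slot, and the path $\lambda$ is simply the slot-order itself. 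From $Q$ together with the distinguished diagonals one reads off $M$ (the submap on the "primal" vertex class) with its spanning tree $T$, and the root edge $e_0$ is recovered from $\BB e_0$ by the stated adjacency/counterclockwise rule.

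The two remaining tasks are routine verifications. Going $(M,e_0,T) \to \cZ \to (M',e_0',T')$, one checks that the noncrossing pairing and the two contour functions extracted from $\cZ$ reproduce exactly $T$, $T^*$, the gluing pattern along $\lambda$, and hence $Q$, $\BB e_0$, and finally $(M,e_0,T)$; this is bookkeeping about how the Peano curve $\lambda$ interleaves with $T$ and $T^*$ and how it determines $M$ via planar duality. Going the other way $\cZ \to (M,e_0,T) \to \cZ'$, one checks that the contour-function construction applied to the reconstructed triple returns the original step sequence, which is immediate once one knows that the reconstructed $\lambda$ traverses triangles in the order prescribed by $\cZ$ and that the $T$-vs-$T^*$ and first-vs-second distinctions are preserved.

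The main obstacle, and the only genuinely nontrivial point, is verifying that the gluing of the two planar trees along the prescribed cyclic slot-sequence really does yield a \emph{planar} map $Q$ with the claimed structure — i.e.\ that the interleaving of "$T$-triangles" and "$T^*$-triangles" encoded by a walk in $\LW^n$ is always realizable by an honest planar embedding in which $T$ and $T^*$ are dual spanning trees, rather than producing a non-planar or inconsistent gluing. This is precisely the content hidden in the word "easily" in the statement, and the clean way to handle it is to track a single marked corner along $\lambda$ and argue by induction on $n$ (or on the number of steps of $\cZ$): each step of $\cZ$ corresponds to a local move — adding a triangle adjacent along a $T$- or $T^*$-edge — and one checks that the planarity and the tree/dual-tree structure are preserved under this local move, exactly mirroring how a single $\pm 1$ step extends the contour function of a planar tree. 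Once this inductive step is in place, both composition checks collapse to matching indices, and the bijection follows.
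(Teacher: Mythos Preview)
Your proposal is correct and follows the standard argument for the Mullin bijection: extract the contour functions of $T$ and $T^*$ from the vertical and horizontal steps of $\cZ$, reconstruct the two trees, and glue them along the cyclic interleaving dictated by the walk to recover $Q$ and hence $(M,e_0,T)$. The paper itself does not give a proof---it simply asserts that the statement ``is easily verified'' and cites Mullin, Bernardi, and Sheffield for the original formulations---so your sketch is more detailed than what appears there, but it is exactly the argument those references contain and there is nothing to correct.
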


Theorem~\ref{thm:Mullin} was first observed by Mullin~\cite{mullin-maps}. 
Our formulation is essentially due to Sheffield~\cite{shef-burger}, and is equivalent to the formulation due to Bernardi~\cite{bernardi-maps}.  

We now discuss the infinite-volume variant of the Mullin bijection. 
This bijection is in some ways easier to work with than the finite-volume version since the corresponding random walk is just a bi-infinite simple random walk, with no conditioning. It is a discrete analog of the infinite-volume mating-of-trees theorem for SLE/LQG (see Section~\ref{sec-mating-main}).

Let $(M^n,e^n_0,T^n)$ be a uniform sample from $\MT^n$. 
Using the local nature of the Mullin bijection, it can be shown that for each integer $r>0$,  the graph distance ball of radius $r$ centered at the root vertex of $M^n$ has a weak limit, viewed as a planar map marked with a directed edge and an edge subset \cite{chen-fk}. 
(This is a generalization of the so-call Benjamini-Schramm convergence \cite{benjamini-schramm-topology}.)
As $r$ varies, the limiting objects consistently define a triple $(M^\infty,e^\infty,T^\infty)$ where $e^\infty $ is a marked directed edge on an infinite planar map $M^\infty$, and $T^\infty$ is an edge subset of $M^\infty$. It is also easy to see that  almost surely $M^\infty$ is one ended and $T^\infty$ is a spanning tree on $M^\infty$.
We call the random planar map $(M^\infty,e^\infty,T^\infty)$ the \emph{infinite spanning-tree-decorated map}.

We now summarize some basic facts about the mating-of-trees encoding of the infinite spanning-tree-decorated map which can be found, e.g., in~\cite{chen-fk,blr-exponents,gms-burger-cone}.
Given $(M^\infty,e^\infty,T^\infty)$, we can perform the construction of the Mullin bijection as in the finite-volume case by considering the dual tree of $T^\infty$ and define the adjacency graph of triangles $\cT^\infty$ analogously to $\cT$. Then  we can still consider the Peano curve $\lambda^\infty$ between $T^\infty$ and its dual tree as a path on $\cT^\infty$. In this setting,   $\lambda^\infty$ can be parametrized as  a function $\Z \rta \cT^\infty$ instead of $\{1,\dots,2n\} \rta \cT$.
To fix the parametrization, we require $\lambda^\infty(1)$ to be the triangle in $\cT^\infty$ on the right side of $e^\infty$.
By defining the increments in the same way as in the finite-volume case,  $\lambda^\infty$ produces a contour function $\cZ^\infty$, whose laws is a two-sided simple random walk $\cZ^\infty$ on $\Z^2$ with $\cZ^\infty(0)= (0,0) $.
This encoding is also bijective in the following sense. 

\begin{prop}\label{prop:Mullin-inf}
	$(M^\infty,e^\infty,T^\infty)$ and $\cZ^\infty$ almost surely determine each other.
\end{prop}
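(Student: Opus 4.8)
One direction is immediate: given $(M^\infty,e^\infty,T^\infty)$, every object used to build $\cZ^\infty$ --- the dual tree of $T^\infty$, the quadrangulation $Q^\infty$, the triangle adjacency graph $\cT^\infty$, the Peano curve $\lambda^\infty$ with its parametrization pinned by the requirement that $\lambda^\infty(1)$ be the triangle on the right side of $e^\infty$, and the increment rule --- is a deterministic measurable functional of $(M^\infty,e^\infty,T^\infty)$. So $(M^\infty,e^\infty,T^\infty)$ determines $\cZ^\infty$ by construction, and the content of the proposition is the reverse direction: reconstructing $(M^\infty,e^\infty,T^\infty)$ from $\cZ^\infty$.

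For the reverse direction I would exhibit an explicit deterministic reconstruction map, mirroring the finite-volume bijection of Theorem~\ref{thm:Mullin}. View the increment sequence $(\cZ^\infty(i)-\cZ^\infty(i-1))_{i\in\Z}$ as a bi-infinite word in the four symbols $(0,1),(0,-1),(1,0),(-1,0)$, the first two recording moves along $T^\infty$ and the last two moves along the dual tree $T^{\infty,*}$. Each ``$(0,-1)$'' symbol records that $\lambda^\infty$ crosses back over an edge of $T^\infty$, hence that triangle shares that edge with an earlier one; match it to the corresponding earlier ``$(0,1)$'' symbol by the usual last-in-first-out rule (match each down-move of a coordinate to the most recent as-yet-unmatched up-move of the same coordinate; equivalently, time $j$ is matched to the maximal $i<j$ with $\cZ^\infty_1(i-1)=\cZ^\infty_1(j)=\min\{\cZ^\infty_1(\ell):i-1\le\ell\le j\}$ and $i$ an up-step), and likewise on the second coordinate for the pair ``$(1,0)$''/``$(-1,0)$'' along $T^{\infty,*}$. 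Since $\cZ^\infty$ is a two-sided simple random walk on $\Z^2$, each of its two coordinates is a (lazy) one-dimensional simple random walk, which almost surely oscillates; hence almost surely every down-symbol is preceded by a matching up-symbol and every up-symbol is followed by a matching down-symbol, so the matching is well defined and complete. From the matched word one rebuilds $\cT^\infty$ --- vertex set $\Z$ (one vertex per symbol/triangle), with $i\sim i+1$ for all $i$ and $i\sim j$ for each matched pair $\{i,j\}$ --- then takes the planar dual of $\cT^\infty$, using the pinned triangle $\lambda^\infty(1)$ to fix the embedding, recovering $Q^\infty\cup T^\infty\cup T^{\infty,*}$ with its bisecting edges, from which $M^\infty$, $T^\infty$, and the root edge $e^\infty$ are read off. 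On the almost sure event that the matching is complete this is the inverse of the forward map. Alternatively, one can deduce the reverse direction from Theorem~\ref{thm:Mullin} directly, as in~\cite{chen-fk,gms-burger-cone}: for each $r$, the decorated graph-distance ball $B_r$ of radius $r$ about the root of $M^\infty$ is a deterministic function of $(\cZ^\infty(i)-\cZ^\infty(-N))_{|i|\le N}$ for a random but almost surely finite $N=N(r,\cZ^\infty)$, since only finitely many matching bonds of $\cT^\infty$ are needed to span $B_r$ and these bonds close up in a finite window by recurrence of the coordinate walks; letting $r\to\infty$ and using $\bigcup_r B_r=M^\infty$ then gives that $\cZ^\infty$ determines $(M^\infty,e^\infty,T^\infty)$.

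The main obstacle is the almost sure completeness of the matching --- that every ``burger'' is eventually ``ordered'' and every ``order'' eventually ``served'' --- which is exactly where recurrence and oscillation of the two one-dimensional marginal walks of $\cZ^\infty$ enters; without this a positive density of symbols would remain unmatched and the reconstructed object would fail to be a one-ended, spanning-tree-decorated map. A secondary point requiring care is verifying that the combinatorial reconstruction actually produces the one-ended infinite map obtained as the Benjamini--Schramm limit of $(M^n,e^n_0,T^n)$, rather than some other infinite-volume configuration satisfying the same local constraints; here one uses that the finite Mullin bijection is local and that the finite bridges $\cZ^n$ converge locally to $\cZ^\infty$, so the reconstruction commutes with the two limiting procedures.
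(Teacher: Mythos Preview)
The paper does not actually supply a proof of this proposition: it states the result and defers to the references \cite{chen-fk,blr-exponents,gms-burger-cone} for the details. Your proposal is correct and is essentially the standard argument found in those references --- the forward direction is tautological, and the reverse direction is the explicit last-in-first-out matching of up-steps and down-steps in each coordinate, whose almost sure completeness follows from the oscillation (recurrence) of the one-dimensional simple random walk coordinates of $\cZ^\infty$. Your alternative route via the finite-volume bijection and local convergence is also the way it is typically packaged in \cite{chen-fk,gms-burger-cone}, so there is no substantive difference to report.
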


\newcommand{\TP}{\mathcal{TP}}

\subsection{Site-percolated loopless triangulations}\label{subsec:site}

In this section we review the bijection of Bernardi, Holden, and Sun \cite{bernardi-dfs-bijection,bhs-site-perc}. We will focus on the disk version. Our presentation  is close to \cite[Section~2]{ghs-metric-peano}. 

Given a rooted planar map, we call the face to the right of the root edge the \emph{root face}.
A planar map $M$ is called a \emph{triangulation with boundary} if every  face of $M$  has degree 3 
except possibly the root face. Given a triangulation with boundary, we always embed it in the plane  so that the root face is the unbounded face, namely, the face containing $\infty$.
This way, edges and vertices on the root face naturally form the \emph{boundary} of $M$, which we denote by $\bdy M$.

A graph is called \emph{2-connected} if removing any vertex does not disconnect the  graph. 
If a  triangulation with boundary is 2-connected, then it has no self-loops (i.e., edges whose two endpoints coincide) and  its boundary is a simple curve. 
For an integer $\ell\ge 2$, let $\frk T(\ell)$ be the  set of such maps whose boundary has $\ell$ edges.  
By convention, we view a map with a single edge as an element in $\frk T(2)$ and call it \emph{degenerate}. 
Given $(M, e)\in \cup_{\ell\ge 2} \frk T(\ell)$, a \emph{site percolation} on $M$ is a coloring $\omega$ of its vertices in two colors, say, red and blue. 
The coloring of the boundary vertices is called the \emph{boundary condition} of $\omega$. We say that $\omega$  has \emph{dichromatic boundary condition} if the following condition  is satisfied. The tail (resp.\ head) of $e$ is red (resp.\ blue), and, moreover, if $M$ is non-degenerate, there exists a unique edge $\wh e \neq e$ on $\bdy M$ such that the colors of its two endpoints are  different. 
We call $\wh e$ the \emph{target edge} of $(M,e,\omega)$. 

Let $\DP$ be the set of triples $(M,e,\omega)$  where $(M, e)\in \cup_{\ell\ge 2} \frk T(\ell)$ and $\omega$ is a site percolation on $M$ with dichromatic boundary condition. 
For each $(M, e,\omega)\in \DP$, we now associate a total ordering $\prec$ on the edge set $\cE(M)$ of $M$. 
We will do it iteratively via an induction on $\#\cE(M)$, where $\#\cE(M)$ is the cardinality of $\cE(M)$. 
The construction is closely related to the so-called \emph{peeling process} of $(M,\omega)$~\cite{angel-peeling,curien-legall-peeling}. But, instead of just exploring a single interface between red and blue vertices, it enters the bubbles which are disconnected from $\infty$ by the interface.
See Figure~\ref{fig-site-bij} for an illustration.

The case $\#\cE(M)=1$, where $M$ is degenerate, is trivial.
For $\#\cE(M)>1$, we first declare that the marked edge $e$ is the smallest edge in the ordering  $\prec$.
Let $t$ be the unique triangle of $M$ which is incident to $e $ and let $v$ be the vertex on $t$ which is not an endpoint of $e$. Such a vertex exists since $M$ has no self-loops. If $v$ is not in $\bdy M$, then $M \setminus \{e\}$ is still a 2-connected planar map. We set $M' := M\setminus \{e\}$.  
In this case, we let  $e'$ be the edge on $t$ other than $e$ for which the two endpoints have opposite colors.
If $v\in\bdy M$, then $M \setminus \{e\}$ is connected but has two \emph{2-connected components}, i.e., $M \setminus \{e\}$ has two 2-connected maximal subgraphs. In this case, we let $M'$ be the $2$-connected component of $M \setminus \{e\}$ containing the target edge $\wh e$ and let $e'$
be the edge shared by $t$ and $M'$.  
 
In both cases, we let $\omega'$ be the restriction of $\omega$ to $\cV(M')$, where $\cV(\cdot)$ denotes the vertex set. We orient  $e'$ such that $(M',e',\omega')\in \DP$ and require that the restriction of $\prec$ to $\cE(M')$ is defined by $(M',e',\omega')$  via the inductive hypothesis, which we can do since $(M',e',\omega')$  belongs to $\DP$ and has at most $\#\cE(M)-1$ edges.

If $M \setminus \{e\}$ has two 2-connected components, let $M''$ be the component other than $M'$. Let   $e''$
be the edge shared by $t$ and $M''$.  
Define a coloring  $\omega''$ on $\cV(M'')$ by  letting $\omega''=\omega$ on $\cV(M'')\setminus\{v\}$ and requiring that $\omega''(v) \not= \omega(v)$.  
We  orient  $e''$ such that $(M'',e'',\omega'')\in \DP$.
We require that the restriction of $\prec$ to  $\cE(M'')$ is defined by  $(M'',e'',\omega'')$ via the inductive hypothesis.
Moreover, $e_2\prec e_1$ for all $e_1\in \cE(M')$ and $e_2\in \cE(M'')$. 

These rules allow us to inductively define $\prec$ on $\cE(M)$. We note that $e$ and $\wh e$ are the first and last, respectively, edges in this ordering.  
\begin{definition}\label{def:Peano-site}
	Given  $(M, e,\omega)\in \DP$, for $1\le i\le \#\cE(M)$,  let $\lambda(i)=e_i$ where $e_i$ is the $i$-th edge in $\cE(M)$ according to $\prec$. We call $\lambda$ the \emph{Peano curve} of $(M,e,\omega)$.
\end{definition}
In the setting of Definition~\ref{def:Peano-site}, for $0\le k<\#\cE(M)$, 
once edges in $\lambda([1,k] \cap \Z )$ are  removed from $M$, the remaining map $M_k$ is still a triangulation with boundary where both $e_{k+1}$ and $\wh e$ are on $\bdy M_k$ (note that $M_0=M$ and that $\bdy M_k$ is not necessarily simple for $k\geq 1$).
Therefore we can define the left and right boundary between $e_{k+1}$ and $\wh e$, and their boundary lengths $\cL_k$ and $\cR_k$, respectively. 
When $\bdy M_k$ is a simple curve, $\cL_k$ and $\cR_k$ are simply the number of edges on the clockwise and counterclockwise, respectively, arcs from $e_{k+1}$ to $\wh e$, not counting $e_{k+1}$ and $\wh e$.  
See \cite[Section~2]{ghs-metric-peano} for a more detailed description.
Set $\cZ(k)=(\cL(k),\cR(k))$ for $0\le k<\#\cE(M)$ and  note that $\cZ(\#\cE(\cM)-1)=(0,0)$. We call $\cZ$ the \emph{boundary length process} of $(M,e,\omega)$.

\newcommand{\KW}{\cK\cW}

For integer $\ell\ge 2$, let $\TP^\ell$ be the set of triples  $(M, e,\omega)$ where $(M,e)\in \cup_{\ell\ge 2} \frk T(\ell)$, and $\omega$ is  a site percolation on  $M$ with  \emph{monochromatic} boundary condition, namely,  all boundary vertices of $M$ have the same color.
If $(M, e,\omega)\in \TP^\ell$, by flipping the color of one of the endpoints of $e$ so that its tail is red and its head is blue, we can identify $(M, e,\omega)$ with an element in $\DP$. 
Therefore we can associate a Peano curve $\lambda$ and a boundary length process $\cZ$ to $(M,e,\omega)$. 
Let $ \KW^\ell $ be the set of walks  on $[0,\infty)^2$ taking steps in $\{(1,0), (0,1), (-1,-1) \}$, starting from $(\ell-2,0)$ or $(0,\ell-2)$, and ending at $(0,0)$.

\begin{figure}
	\centering
	\includegraphics[scale=0.92]{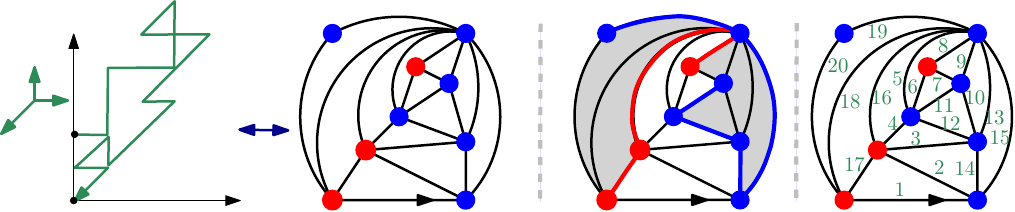}
	\caption{ \label{fig-site-bij}
	{\bf Left}: Illustration of the bijection in Proposition \ref{prop:site-mating}. 
	{\bf Middle}: The planar map on gray background is $M_6$, and the boundary length process satisfies $(\cL (6) ,\cR(6) )=(3,5)$ since the length of the red (resp.\ blue) path is 3 (resp.\ 5).
	{\bf Right}: The edges of the map are labeled based on the order in which they are hit by the discrete Peano curve $\lambda$. 
}
\end{figure}

The following is a consequence of \cite[Corollary 2.12]{bhs-site-perc}.
\begin{prop}\label{prop:site-mating}
	In the setting just above,  $(M, e,\omega)\mapsto \cZ$ is a bijection between $\TP^\ell$ and $\KW^\ell$.
\end{prop}
The inverse bijection can also be described easily in an explicit way by associating each of the three steps $(1,0), (0,1), (-1,-1)$ with a certain operation on a planar map, and then building the map dynamically by performing the operations associated with the steps of the walk one by one. See e.g.\ \cite[Figures 3 and 4]{bhs-site-perc}.
\begin{remark}\label{rmk:sphere}
	A triangulation  without self-loops can be identified with an element in $\frk T(2)$ by splitting the root edge into two edges with common endpoints.
	This identification and Proposition~\ref{prop:site-mating} give a bijection between percolated triangulations  and  $\KW^{2}$, which is the sphere version of the Bernardi-Holden-Sun bijection. 
\end{remark}
\begin{remark}
	Walks in the first quadrant $[0,\infty)^2$ starting from the origin and with steps $(0,-1)$, $(-1,0)$, and $(1,1)$ (i.e., the reversal of the steps we consider) are called \emph{Kreweras walk}s. Bernardi \cite{bernardi-dfs-bijection} found a bijection between Kreweras walks ending at the origin of length $3n$ and bridgeless cubic maps of size $n$, decorated by a depth-first search tree. This gave a combinatorial proof of a formula discovered by Kreweras \cite{kreweras65} for the number of Kreweras walks. The bijection in \cite{bhs-site-perc} is a generalization of the bijection in \cite{bernardi-dfs-bijection} where the depth-first search tree in \cite{bernardi-dfs-bijection} can be interpreted as an exploration of the percolation.
\end{remark}

Given $(M, e,\omega)\in \DP$, there is an interface between the  two clusters containing the red and  blue boundary arcs between $e$ and $\wh e$, which we call the \emph{percolation interface} of $\omega$.  Interfaces are  convenient observables when considering the scaling limit of percolation.
For some $\ell \in \BB N$ consider $(M,a,\omega)\in \TP^\ell$ and let $\lambda$ denote the Peano curve as in Definition~\ref{def:Peano-site}. 
Let $b\neq a$  be a boundary edge of $M$ and let $\omega_a^b$ be the site percolation obtained by flipping the color on one of the arcs between $a$ and $b$ such that $(M,a,\omega_a^b)\in \DP$ and $b$ is the target edge.  
Let $\lambda^{ab}$ be the percolation interface of $(M,a,\omega_a^b)\in \DP$.  As explained in~\cite[Section~2.2]{ghs-metric-peano}, $\lambda^{ab}$ can be viewed as an ordered set of edges and 
$\lambda$  traces $\lambda^{ab}$ in the same order as $\lambda^{ab}$. Moreover,  after removing $\lambda^{ab}$ from $M$, given two edges $e_1$ and $e_2$ in two different 2-connected components, it is easy to tell which one is visited by $\lambda$ first by inspecting the boundary condition of $\omega$ restricted to these components.  
This is the discrete intuition behind the definition of space-filling SLE in Section~\ref{sec-space-filling}.

There is a particularly natural probability measure on loopless triangulations with simple boundary called the \emph{(critical) Boltzmann measure}.  Given some perimeter $\ell\geq 2$, it assigns weight proportional to $(2/27)^n$\footnote{The number of  loopless triangulations with $n$ vertices grows as $(27/2)^n(1+o_n(1))$. See e.g. \cite{angel-schramm-uipt}.} to each triangulation with simple boundary having $n$ vertices. 
Fix $\ell$ and consider a Boltzmann triangulation with perimeter $\ell$ and Bernoulli-$\frac 12$ site percolation, i.e., a uniform and independent coloring of the inner vertices in red or blue. We assume the boundary is monocolored either red or blue. 
If we apply the bijection in Proposition~\ref{prop:site-mating} to this object, then the right side will be a simple random walk in the first quadrant with step distribution uniform on $\{(1,0), (0,1) , (-1,-1)\}$, conditioned to start at $(\ell-2,0)$ or $(0,\ell-2)$ (depending on the boundary data) and end at the origin.

One consequence of the previous paragraph is that the infinite-volume limit of the Boltzmann loopless triangulation exists. We call this the (type II) \emph{uniform infinite planar triangulation (UIPT)}. The existence of the UIPT was first proved by Angel and Schramm~\cite{angel-schramm-uipt}. The proof of its existence based on the Bernardi-Holden-Sun bijection is done in \cite[Section~8]{bhs-site-perc}. It is also proved there that there is an infinite-volume variant of the bijection in Proposition \ref{prop:site-mating}. In this infinite-volume bijection, walks with i.i.d.\ steps $(1,0),(0,1),(-1,-1)$ are related to the site-percolated UIPT.

\section{SLE and LQG}
\label{sec-sle-lqg}
In this section  we introduce the necessary background for continuum  mating-of-trees theory, 
including the motivation from the scaling limit of decorated random planar maps, and the definitions of the GFF, LQG, certain special LQG surfaces, and SLE. 
We give precise definitions of most of the objects involved for the sake of completeness, but if the reader only wants to understand the main ideas in Sections~\ref{sec-mating} and~\ref{sec-applications}, he or she can read only Section~\ref{subsec:scaling} and skim the rest of this section.  

Throughout this section we apply the following notions and conventions. 
Given two random variables $X$ and $Y$, $X\eqD Y$ means that they agree in law. 
If we say that $X$ is almost surely  determined by $Y$ we mean that  $X$ and $Y$ are defined on the same probability space and $X=f(Y)$ a.s.\ for some measurable function $f$. 
The upper half-plane is denoted by $\BB H$ and the unit disk is denoted by $\D$.
Given a planar domain $D$, $\p D$ is understood as the set of prime ends in complex analysis \cite{pom-book} and $\ol D=D\cup\p  D$. Unless explicitly mentioned otherwise, for a simply connected domain $D$ we assume that $\p D$ can be parametrized as a closed curve on the Riemann sphere $\CC\cup\{\infty\}$, so that conformal maps from $\D$ to $D$ can be continuously extended to $\p\D$. Note that we do not require that $\bdy D$ is a \emph{simple} curve.

\subsection{SLE on LQG as the scaling limit of decorated random planar maps}\label{subsec:scaling}

In this section, we review some precise scaling limit conjectures for random planar maps decorated with statistical mechanics models, which motivates the theory of Liouville quantum gravity and mating of trees.
The continuum objects mentioned in this section will be  defined in the later subsections.

Liouville quantum gravity (LQG) is a one-parameter family of random geometries which describe 2D quantum gravity coupled with matter.
For simplicity, in most of this paper we focus on the case when the underlying topological surface is simply connected, but see Section~\ref{subsec:rv} for discussion and references concerning more general topologies.
From the differential geometry point of view, we consider a parameter $\mathbf c \in (-\infty,1)$ which we call the \notion{matter central charge} (i.e., the central charge of the matter field which can be coupled with the LQG). 
Heuristically speaking, for a simply connected topological surface $S$, the LQG surface with the $S$-topology and matter central charge $\mathbf c$ is the measure on all 2D Riemannian manifolds homeomorphic to $S$ whose probability density with respect to the ``Lebesgue measure on surfaces homeomorphic to $S$" is proportional to $(\det \Delta)^{-\mathbf c/2}$, where $\Delta$ is the Laplace-Beltrami operator. The determinant  $(\det  \Delta)^{-\mathbf c/2}$ should be understood as the partition function of a statistical mechanics model in 2D conformal field theory (see e.g.,~\cite{bpz-conformal-symmetry,dms-cft-book}) with central charge $\mathbf c$.

By formal differential geometric considerations, Polyakov~\cite{polyakov-qg1}, David~\cite{david-conformal-gauge}, and Distler-Kawai~\cite{dk-qg} argued that this random Riemannian geometry, after applying the uniformization theorem,  can be realized on Riemann surfaces with $S$-topology endowed with a volume form $e^{\gamma h} \, d^2z $, 
where $h$ is a variant of Gaussian free field and $\gamma$ is the unique solution in $(0,2)$ of the equation
\begin{equation}\label{eq:parameter}
\mathbf c=25-6\Big(\frac{\gamma}2+\frac2\gamma\Big)^2.
\end{equation}
Since \eqref{eq:parameter} gives a bijection between $\mathbf c\in(-\infty,1)$ and $\gamma\in(0,2)$, we can parametrize LQG using $\gamma$ instead of $\mathbf c$ and talk about $\gamma$-LQG. Note that $\gamma=\sqrt{8/3}$ corresponds to $\mathbf c = 0$ and $\gamma=\sqrt 2$ corresponds to $\mathbf c = -2$.

Polyakov's reasoning is hard to make rigorous directly, as it involves ``measures'' on the infinite dimensional space  of Riemannian manifolds. As in the construction of Wiener measure using simple random walk,  we can use measures on planar maps to approximate LQG surfaces. 
For example, let  $(M^n,e^n)$ be sampled from the uniform measure on all rooted planar maps with $n$ edges.
We can think of $M^n$ as a discrete uniform random surface by endowing each face with the surface structure of a polygon.
From this perspective, it is natural to believe that when $n$ is large, $M^n$ is a good approximation of LQG with the spherical topology and matter central charge $\mathbf c=0$  (equivalently, $\gamma=\sqrt{8/3}$).

As another example, let $(M^n,e^n,T^n)$ be a uniform sample from the set $\MT^n$ of edge-rooted, spanning tree-decorated planar maps with $n$ edges as in Section~\ref{subsec:UST}.
Then the marginal law of $(M^n,e^n)$ is the uniform measure on all rooted planar maps with $n$ total edges reweighted by the number of spanning trees they admit. 
Conditioning on $(M^n,e^n)$, the conditional law of $T^n$ is the uniform measure on the spanning trees of $M^n$, i.e., the UST on $M^n$.
By Kirchhoff's theorem the number of spanning trees on $M^n$ is equal to $\det \Delta_{M^n} $, where $\Delta_{M^n}$ is the graph Laplacian matrix of $M^n$.
Therefore it is natural to conjecture that as $n\to\infty$, $(M^n,e^n)$  is a good approximation of LQG with the spherical topology and $\mathbf c=-2$ (equivalently, $\gamma=\sqrt{2}$).

For each $\mathbf c\in(-\infty,1)$, there are statistical mechanics models whose partition functions are expected to be asymptotically equivalent to  $(\det  \Delta)^{-\mathbf c/2}$ (see e.g.,~\cite{bpz-conformal-symmetry,dms-cft-book}). 
For example, the critical Ising model corresponds to $\mathbf c=1/2$ (i.e., $\gamma=\sqrt 3$).
This generates a family of conjectures on the convergence  of random planar maps to LQG. 

To make a precise statement, let  $(M^\infty,e^\infty,T^\infty)$ be a sample of the infinite spanning-tree-decorated planar map as in Section~\ref{subsec:UST}. 
For each face in $M^\infty$, we decompose it into triangles by drawing an edge from its center to its vertices, 
and endow each triangle with the surface structure of an equilateral triangle with unit side length. 
This makes  $M^\infty$ a piecewise linear manifold with conical singularities.
As proved in \cite{gill-rohde-type},  $M^\infty$ is conformally equivalent to $\CC$. That is, 
there is a unique conformal map from  $M^\infty$ to the complex plane $\CC$ modulo complex affine transformations.
For $n\in\N$, let $\mu^n$ be the measure on $\CC$  which is the pushforward of $n^{-1}$ times the counting measure on the vertex set of $M^\infty$.
We further require that the tail of $e^\infty$ is mapped to $0$ and $\mu^n(\D)=1$, which fixes $\mu^n$ up to rotation. The scaling limit conjecture can be stated as follows. See, e.g.,~\cite{shef-zipper,legall-sphere-survey,dkrv-lqg-sphere,curien-glimpse} for similar conjectures.  

\begin{conj}\label{conj:lqg}
	There exists a variant of the Gaussian free field, denoted by $\fh$, such that 
	as $n\to\infty$, $\mu^n$  converges in law  to  $\mu_\fh=e^{\sqrt2 \fh} \,d^2 z$ with respect to the vague topology, modulo rotations.  
\end{conj}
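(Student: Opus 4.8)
The plan is to run the mating-of-trees program in three stages; this is the route that has been carried through in closely related settings, and it is why the conjecture is phrased as it is. \emph{First}, by Proposition~\ref{prop:Mullin-inf} the infinite spanning-tree-decorated map $(M^\infty,e^\infty,T^\infty)$ and the two-sided simple random walk $\cZ^\infty$ on $\Z^2$ determine each other, the walk having i.i.d.\ increments uniform on $\{(\pm1,0),(0,\pm1)\}$. Since $\gamma=\sqrt2$ corresponds to $\kappa=16/\gamma^2=8$ and to correlation $-\cos(\pi\gamma^2/4)=0$, and since the two coordinate walks of $\cZ^\infty$ are uncorrelated with equal variances, Donsker's invariance principle gives that $t\mapsto n^{-1/2}\cZ^\infty(\lfloor 2nt\rfloor)$ converges in law to a two-sided pair $Z=(L,R)$ of independent standard linear Brownian motions, up to a fixed variance normalization.

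\emph{Second}, one would apply the whole-plane version of the mating-of-trees theorem (Theorem~\ref{thm-mating}): the process $Z$ encodes, measurably in both directions, a $\sqrt2$-quantum cone $(\C,\fh,0,\infty)$ together with an independent whole-plane space-filling $\SLE_8$ from $\infty$ to $\infty$, where $L$ and $R$ record the left and right quantum boundary lengths swept out by the curve. This produces the candidate field $\fh$ and area measure $\mu_\fh=e^{\sqrt2\fh}\,d^2z$ appearing in the statement, and matches the continuum Peano curve with the scaling limit of the discrete Peano curve $\lambda^\infty$.

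\emph{Third}, and this is the hard part, one must show that the \emph{conformal} structure of $M^\infty$ passes to the limit. Writing $\phi_n$ for the conformal map from the piecewise-linear surface $M^\infty$ to $\C$ normalized so that the tail of $e^\infty$ maps to $0$ and $\mu^n(\D)=1$, one must show that $\mu^n=(\phi_n)_*\big(n^{-1}\#\big)$ converges vaguely to $\mu_\fh$, modulo rotation. The obstruction is that $\phi_n$ is not a continuous functional of the mating-of-trees encoding, so the first two stages do not suffice on their own. A workable route is: (i) identify $M^\infty$, up to a rough isometry respecting the root and the tree/dual-tree structure, with the mated-CRT map built directly from $Z$, so that the discrete conformal embedding can be compared with the harmonic (Tutte) embedding of the mated-CRT map; (ii) use that random walk on the embedded mated-CRT map converges to Brownian motion on the $\sqrt2$-LQG cone, which identifies the scaling limit of the Tutte embedding with the LQG embedding; and (iii) upgrade from the Tutte embedding to the conformal embedding of the piecewise-linear surface, for instance by controlling the maps $\phi_n$ uniformly on compact sets via extremal-length/capacity estimates, establishing tightness of $\{\mu^n\}$, and identifying every subsequential limit through the KPZ relation, which forces it to equal $\mu_\fh$.

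I expect step (iii) to be the main obstacle: it requires quantitative conformal-geometry control for a random piecewise-linear surface carrying order-$n$ conical singularities, together with a proof that this embedding has the same scaling limit as the intrinsically defined LQG embedding --- input that the mating-of-trees theorem does not supply and that must be developed separately.
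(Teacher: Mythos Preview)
The statement is labeled a \emph{conjecture} in the paper, and the paper gives no proof of it; indeed, the text explicitly says that Conjecture~\ref{conj:lqg} and its variants ``remain central questions in this subject.'' So there is no paper proof to compare your proposal against.

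That said, your outline is a fair summary of the program the paper describes and of the partial progress surveyed in Sections~\ref{sec-strong-coupling}--\ref{sec-kappa6}. Stages one and two are genuinely established: the Mullin walk converges to uncorrelated planar Brownian motion, and Theorem~\ref{thm-mating} identifies the limiting $Z$ with a $\sqrt{2}$-quantum cone decorated by whole-plane $\SLE_8$. Your step (iii) is exactly where the problem is open. The mated-CRT map route you sketch has been carried out for the \emph{Tutte embedding of the mated-CRT map} (Theorem~\ref{thm-tutte-conv}), but transferring this to the \emph{conformal uniformization of the piecewise-linear surface built from $M^\infty$} is not known: the strong-coupling comparison of Theorem~\ref{thm-strong-coupling} only controls graph distances and Dirichlet energies up to polylogarithmic factors, which is too coarse for a scaling-limit statement, and there is no known argument showing that the Riemann uniformization and the Tutte embedding agree in the limit. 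Your appeal to ``extremal-length/capacity estimates'' and to ``the KPZ relation'' to force identification of subsequential limits is not a method that has been made to work here; in particular, KPZ relates dimensions, not measures, and does not by itself pin down $\mu_\fh$ among possible limits. So what you have written is a reasonable heuristic roadmap, correctly flagging the genuine obstruction, but it is not a proof and the paper does not claim one.
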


The  field $\fh$ should be a certain embedding of the so-called \notion{$\sqrt2$-quantum cone}, see Definition~\ref{def:cone} and Lemma~\ref{lem:circle}.
It is only a generalized function (rather than a true function), thus $\mu_\fh=e^{\sqrt2 \fh} d^2 z$ has to be defined via regularization. See Section~\ref{subsec:GFF}. 
By ``convergence modulo rotations" we mean that for each $n\in\BB N$ there is a random variable $\theta_n \in [0,2\pi)$ such that $\mu^n(e^{i \theta_n} \cdot)$ converges in law to $\mu_\fh$.

There are a number of variants of Conjecture~\ref{conj:lqg}.  If we weight our planar maps by a statistical mechanics model with central charge $\mathbf c\in(-\infty,1)$, 
then $\fh$ will be the $\gamma$-quantum cone with $\gamma$ as in  \eqref{eq:parameter} and $\mu_\fh$ will be $e^{\gamma \fh}  d^2 z $. For example, we can consider the UIPT as introduced in Section \ref{subsec:site}, which is in the $\mathbf c=0$ universality class.
We can also replace the uniformization map by other discrete approximations of a conformal map such as a circle packing or Tutte embedding. Conjecturally, this will not change the limiting object.
We can also consider surfaces with other topologies, such as the sphere, disk, or torus topologies.
As in the whole-plane case, there are explicit descriptions of the variant of the GFF which one should get in the limit (see Sections~\ref{subsec:cone-wedge} and~\ref{subsec:finite}).
In the case of surfaces with boundary, one also expects the convergence of the counting measure on vertices of the boundary of the map to the $\gamma$-LQG boundary length measure.

There is also a metric variant of Conjecture~\ref{conj:lqg}: the scaling limit of the graph distance on the embedded planar map is supposed to be the Riemannian distance function $\frk d_\fh$ associated with the Riemannian metric tensor $ e^{2\gamma\fh/\dim_\gamma}(d^2x+d^2y)$ where $\dim_\gamma$ is a $\gamma$-dependent constant, equal to the Hausdorff dimension of the metric space $(\BB C , \frk d_\fh)$. The metric $\frk d_\fh$ has been constructed recently in~\cite{dddf-lfpp,gm-uniqueness}.
The metric properties of random planar maps and LQG are mostly outside the scope of the present article, but see Sections~\ref{sec-strong-coupling} and~\ref{sec-kappa6} for some results related to graph distances in random planar maps.

It is believed that lattice models which have conformally invariant scaling limits on regular lattices also have conformally invariant scaling limits on the random lattice defined by a conformally embedded random planar map. Furthermore, the scaling limit result should hold in a quenched sense (i.e., the conditional law of the statistical mechanics model given the random planar map should converge). This means in particular that the limit of the statistical mechanics model should be independent of the GFF-type distribution describing the limit of the random planar map. 

For example, it is believed that the simple random walk on a large class of conformally embedded random planar maps converges to \emph{Liouville Brownian motion}, the natural quantum time parametrization of Brownian motion on an LQG surface which was introduced in~\cite{berestycki-lbm,grv-lbm}. This convergence is rigorously proven for one type of random planar map in~\cite{gms-tutte,bg-lbm}; see Section~\ref{sec-tutte-conv}. Other models one could consider where the convergence result on a regular lattice has been established include critical percolation on the triangular lattice \cite{smirnov-cardy}, the Ising model \cite{smirnov-ising}, the loop erased random walk/uniform spanning tree \cite{lsw-lerw-ust}, and level lines of the discrete Gaussian free field \cite{ss-dgff}. For one of these models the analogous result on a random planar map has also been established: the Peano curve on a uniform site-percolated triangulation converge to a space-filling $\SLE_6$ under the Cardy embedding~\cite{ghs-metric-peano,hs-cardy-embedding} (see Section~\ref{sec-kappa6}). For many models whose partition function is expected to be asymptotically equivalent to $(\det\Delta)^{-\mathbf c/2}$, such as random cluster models and O($n$) loop models, there are natural ways to find curves which conjecturally have $\SLE_\kappa$- or $\SLE_{\ul\kappa}$-type curves as the scaling limit, where 
\begin{equation}\label{eq:parameter2}
	\kappa=16/\gamma^2 \quad\textrm{and}\quad \ul\kappa =\gamma^2 \qquad\textrm{with }\gamma\textrm{ as in }\eqref{eq:parameter}.
\end{equation}	

It is particularly natural to consider the statistical mechanics model on the random planar map whose partition function is used for the reweighting of the map. 
In this case, we often have exact Markov properties which are useful in the study of the planar map. For example, in the case of the infinite spanning-tree decorated random planar map $(M^\infty,e^\infty,T^\infty)$ studied above, the union of the infinite branch in $T^\infty$ starting from $e^\infty$ and the infinite branch in the dual tree also starting from $e^\infty$ divide $(M^\infty,T^\infty)$ into two \emph{independent} connected components. 
The continuum analogue of this property follows from the theory of \emph{conformal welding} of quantum surfaces (Lemma \ref{lem-past-future-ind}) and is essential in the mating-of-trees theory.

We illustrate the general conjecture about statistical physics models on random planar maps using the UST.
Let us start from the case of a regular lattice. Consider the UST on $[-N,N]^2\cap \Z^2$. 
For any fixed $n\in\BB N$, the intersection of this UST with $[-n,n]^2$ converges in the total variation sense as $N\rta\infty$~\cite{pemantle-ust}. 
This defines a limiting law on random subsets of $\BB Z^2$ which is a.s.\ supported on spanning trees of $\BB Z^2$.
We call this object the UST on $\BB Z^2$.
Given  a sample of the UST on $\Z^2$, we can define its dual tree and the associated Peano curve as in Section~\ref{subsec:UST}.  Lawler, Schramm, and Werner~\cite{lsw-lerw-ust} showed that if we rescale space by $\ep$ and send $\ep\rta 0$, then the Peano curve of the UST on $\BB Z^2$ converges in law to a random space-filling curve on $\CC$ which is called the \emph{whole-plane SLE$_8$ from $\infty$ to $\infty$}.\footnote{In \cite{lsw-lerw-ust} the result is proved in the setting of a finite domain with two boundary points, from which the whole-plane case  can be deduced, see~\cite{hs-euclidean}.} See Definition~\ref{def:whole-plane}.

\begin{conj}\label{conj:scaling}
	In the setting of Conjecture~\ref{conj:lqg}, let $\lambda^{\infty,n}$ be the curve on $\CC$ which is the image of the Peano curve $\lambda^\infty$  of $T^\infty$ under the uniformization map.
	Let $\eta$ be a whole-plane $\SLE_8$ from $\infty$ to $\infty$ independent of $\mu_\fh$. 
	Viewing $\lambda^{\infty,n}$ and $\eta$ as curves modulo monotone reparametrizations, $(\mu^n, \lambda^{\infty,n})$ converges jointly in law to  $(\mu_\fh,\eta)$ modulo rotations.
\end{conj}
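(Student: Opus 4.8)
The plan is to transfer the discrete mating-of-trees bijection to the continuum via Donsker's theorem, obtain convergence in the ``peanosphere'' sense, and then upgrade this to convergence of the conformally embedded objects by comparison with the mated-CRT map. First I would record that, by Proposition~\ref{prop:Mullin-inf}, the decorated map $(M^\infty,e^\infty,T^\infty)$, its dual tree, its Peano curve $\lambda^\infty$, and the record of which edge of $M^\infty$ (equivalently, which triangle of the associated quadrangulation) is visited at each step are all deterministic measurable functions of the two-sided simple random walk $\cZ^\infty$ on $\Z^2$. By Donsker's invariance principle, $t\mapsto n^{-1/2}\cZ^\infty(\lfloor nt\rfloor)$ converges in law, locally uniformly, to a two-sided planar Brownian motion $Z=(L,R)$ with independent coordinates. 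This is consistent with Theorem~\ref{thm-mating}: at $\gamma=\sqrt2$ one has correlation $-\cos(\pi\gamma^2/4)=-\cos(\pi/2)=0$ and $\kappa=16/\gamma^2=8$, matching the whole-plane $\SLE_8$ that arises as the scaling limit of the UST Peano curve on $\Z^2$ \cite{lsw-lerw-ust}.

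Next I would invoke the continuum mating-of-trees theorem (Theorem~\ref{thm-mating}) at $\gamma=\sqrt2$: the $\sqrt2$-quantum cone $(\C,\fh)$ of Lemma~\ref{lem:circle}, decorated by an independent whole-plane $\SLE_8$ curve $\eta$ parametrized by $\mu_\fh$-area, is encoded by a planar Brownian motion $Z=(L,R)$ in which $L_t$ and $R_t$ are the $\mu_\fh$-boundary lengths of the left and right sides of $\eta((-\infty,t])$ as seen from $\eta(t)$, and conversely $(\mu_\fh,\eta)$ is a measurable function of $Z$. Combined with the previous paragraph, this gives convergence of $(M^\infty,e^\infty,T^\infty,\lambda^\infty)$ to $(\mu_\fh,\eta)$ in the ``peanosphere'' sense: once each side is coordinatized by its encoding walk, the tree/gluing structure and the space-filling path converge together. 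This records the combinatorics of the mating of the two trees, but not the conformal embedding, so it is not yet the statement of the conjecture.

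To reach the embedded statement I would introduce the mated-CRT map $\cG^\ep$ built directly from the Brownian limit $Z$: divide time into length-$\ep$ intervals and join two intervals by an edge whenever a horizontal chord under the graph of $L$, or under the graph of $R$, connects them over an intervening interval. On one hand, through the bijection above, $\cG^\ep$ with $\ep\approx n^{-1}$ is the continuum counterpart of a coarse-graining of $M^\infty$, with the pushforward measure $\mu^n$ comparable to the natural counting measure on $\cG^\ep$; on the other hand, $\cG^\ep$ is a coarse-graining of $(\C,\fh,\eta)$, each time interval $[x,x+\ep]$ carrying the cell $\eta([x,x+\ep])$ of $\mu_\fh$-mass $\ep$. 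It then remains to supply (a) an a priori tightness/non-degeneracy estimate for the uniformizing embeddings of $M^\infty$ (equivalently, of $\cG^\ep$), ruling out that macroscopic portions of mass collapse to a point, escape to infinity, or otherwise degenerate, so that $(\mu^n,\lambda^{\infty,n})$ is tight modulo rotations as a measure and a curve on $\C\cup\{\infty\}$; and (b) an identification argument showing that any subsequential limit $(\wt\mu,\wt\eta)$, which by passing to the limit in the bijection is coupled to $Z$, must together with $Z$ satisfy the mating-of-trees relations characterizing the \emph{embedded} $\sqrt2$-quantum cone decorated by $\SLE_8$; since those relations pin down $(\mu_\fh,\eta)$ up to rotation (a conformal-embedding analogue of the uniqueness half of Theorem~\ref{thm-mating}), this forces $(\wt\mu,\wt\eta)\eqD(\mu_\fh,\eta)$ modulo rotations and completes the proof.

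The hard part will be ingredients (a) and (b): these are exactly what has been carried out for the Tutte embedding of the mated-CRT map in \cite{gms-tutte} and for the Cardy embedding of site-percolated triangulations in \cite{ghs-metric-peano,hs-cardy-embedding}, but here the relevant embedding is the uniformization of the piecewise-linear surface structure on $M^\infty$ supplied by \cite{gill-rohde-type}, whose conformal modulus is a genuinely global, nonlocal function of the combinatorics. Controlling this embedding well enough to obtain both the a priori estimates in (a) and the identification of the limiting conformal structure in (b) is precisely why the statement remains a conjecture.
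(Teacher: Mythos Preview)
The statement you are attempting to prove is labeled and discussed in the paper as a \emph{conjecture}, not a theorem; the paper contains no proof of it and explicitly says that Conjectures~\ref{conj:lqg} and~\ref{conj:scaling} ``remain central questions in this subject.'' So there is no proof in the paper to compare against, and your write-up should not be presented as a proof.

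That said, your outline is a faithful summary of the heuristic the paper itself advances. The first two paragraphs are correct and essentially rigorous: Donsker for $\cZ^\infty$ plus Theorem~\ref{thm-mating} at $\gamma=\sqrt2$ does give peanosphere convergence of $(M^\infty,e^\infty,T^\infty,\lambda^\infty)$ to the $\sqrt2$-quantum cone decorated by whole-plane $\SLE_8$, exactly as the paper explains in Section~\ref{subsec:rmp}. You also correctly flag that peanosphere convergence says nothing about the conformal embedding, which is the whole content of the conjecture.

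Where your proposal is not a proof is precisely where you say it is not: steps (a) and (b). Tightness and non-degeneracy of the Riemann uniformization of the piecewise-flat surface on $M^\infty$ are genuinely open (this is closely related to Problems~\ref{prob-embedding} and~\ref{prob-max-face} in the paper), and the identification argument you sketch would require knowing that the uniformization embedding is asymptotically close to an embedding, such as the Tutte embedding of the mated-CRT map, for which the limit has been identified. No such comparison is currently available; the strong coupling of Theorem~\ref{thm-strong-coupling} controls graph distances and Dirichlet energies up to polylogarithmic factors, which is far too coarse to pin down a conformal structure. So your final paragraph is the honest assessment: the obstruction is exactly the control of the uniformizing map, and until that is supplied the argument is a strategy, not a proof.
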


We emphasize that $\lambda^{\infty,n}$ is not independent from the planar map $(M^\infty , e^\infty)$ or the measure $\mu^n$, but nevertheless the limiting objects $\mu_\fh$ and $\eta$ are independent.
The reason why we expect this to be the case is that the conditional law of $\lambda^{n,\infty}$ given $(M^\infty,e^\infty)$ should converge to the law of $\eta$. 

\begin{remark}\label{rmk:conj}
	If we view $ \lambda^{\infty,n}$ and $\eta$ as parametrized curves, then  the most natural way to parametrize $\lambda^{\infty,n}$ is to require $\lambda^{\infty,n}(0)=0$ and let $\lambda^{\infty,n}$ traverse one unit of $\mu^n$-mass in each unit of time. Under this parametrization, we expect that $\lambda^n$ converges to $\eta$ parametrized by $\mu_\fh$ with respect to the local uniform topology, modulo rotations.
\end{remark}

For $(M^\infty,e^\infty,T^\infty)$ in Conjecture~\ref{conj:scaling}, we have a random walk $\cZ^\infty$ by the Mullin bijection (Proposition~\ref{prop:Mullin-inf}). 
Let $Z^n(\cdot)=n^{-1/2}\cZ^\infty(n\cdot)$ so that $Z^n$ converge to $Z$, which is  a two-sided Brownian motion with independent coordinates.  
It is natural to conjecture that $(\mu^n,\lambda^n,\cZ^n)$ jointly converges to $(\fh,\eta,Z)$ where $(\fh,\eta)$ is encoded by $Z$ as in the discrete setting. 
A precise description of the limiting coupling of the triple for general $\gamma$ is the content of the continuum mating-of-trees theorem (Theorem~\ref{thm-mating}). 

Conjectures~\ref{conj:lqg},~\ref{conj:scaling}, and their variants constitute a  guiding principle for the study of random planar maps and remain central questions in this subject. 
There have been various degrees of success for different random planar maps models. Mating of trees plays a key role in these developments. See Sections~\ref{subsec:rmp}, \ref{sec-tutte-conv}, \ref{sec-kappa6}, and the references therein.

\subsection{The Gaussian free field} \label{sec-gff-prelim}

In this subsection we review the definition of the Gaussian free field. The reader who is already familiar with the GFF can safely skip this subsection. 
Most of the material in the subsection is taken from~\cite{shef-gff} and the introductory sections of~\cite{shef-zipper,ig4,wedges}, to which we refer for more details.
The reader may also want to consult~\cite{berestycki-lqg-notes,pw-gff-notes} for a more detailed exposition of the GFF. 

\subsubsection{Zero-boundary GFF}

Let $D \subset \BB C$ be a proper open domain with harmonically non-trivial boundary (i.e., Brownian motion started from a point in $D$ a.s.\ hits $\bdy D$).
We define $\mcl H_0(D)$ to be the Hilbert space completion of the set of smooth, compactly supported functions on $D$ with respect to the \emph{Dirichlet inner product},
\eqb \label{eqn-dirichlet}
(f ,g)_\nabla = \frac{1}{2\pi} \int_D \nabla f(z) \cdot \nabla g(z) \,d^2z.
\eqe
The \emph{(zero-boundary) Gaussian free field} on $D$ is defined by the formal sum
\eqb \label{eqn-gff-sum}
h = \sum_{j=1}^\infty X_j f_j 
\eqe
where the $X_j$'s are i.i.d.\ standard Gaussian random variables and the $f_j$'s are an orthonormal basis for $\mcl H_0(D)$. The sum~\eqref{eqn-gff-sum} does not converge pointwise, but it is easy to see that for each fixed $f \in \mcl H_0(D)$, the formal inner product $(h ,f)_\nabla := \sum_{j=1}^\infty X_j (f_j  ,f )_\nabla $ is a mean-zero Gaussian random variable and these random variables have covariances $\BB E [(h,f)_\nabla (h,g)_\nabla] = (f,g)_\nabla$. 

One can use integration by parts to define the ordinary $L^2$ inner products $(h,f) := -2\pi (h,\Delta_0^{-1}f)_\nabla$, where $\Delta_0^{-1}$ is the inverse Laplacian with zero boundary conditions, whenever $\Delta_0^{-1} f \in \mcl H_0(D)$. Then the random variables $(h,f)$ are jointly centered Gaussian with covariances
\eqb
\op{Cov}\left( (h,f) , (h,g) \right) = \frac{1}{2\pi}\int_D \int_D f(z) g(w) G_0^D(z,w) \,d^2 z\, d^2 w
\eqe
where $G_0^D(z,w)$ is the Green's function on $D$ with zero boundary conditions. With this definition, one can check that $h$ belongs to the Sobolev space $H^{-\ep}(D)$ for any $\ep>0$~\cite[Section 2.3]{shef-gff}.

It is easily seen from the conformal invariance of the Dirichlet inner product that the law of the GFF is conformally invariant, meaning that if $\phi : \wt D \rta D$ is a conformal map, then $h\circ \phi$ is the GFF on $\wt D$. 

\subsubsection{Whole-plane and free-boundary GFF}

We now also allow $D=\BB C$. 
Let $\mcl H(D)$ be the Hilbert space completion of the set of smooth (not necessarily compactly supported) functions $f$ on $D$ such that $(f,f)_\nabla  <\infty$ and $\int_D f(z) \,d^2 z = 0$ with respect to the inner product~\eqref{eqn-dirichlet}. Note that we need to require that $\int_D f(z)\,d^2 z = 0$ to make the inner product $(\cdot,\cdot)_\nabla$ positive definite.
The \emph{free-boundary} (if $D\not=\BB C$) or \emph{whole-plane} (if $D=\BB C$) GFF on $D$ is defined by the formal sum~\eqref{eqn-gff-sum} but with the $f_j$'s equal to orthonormal basis for $\mcl H(D)$ instead of for $\mcl H_0(D)$. 
As in the zero-boundary case, the formal inner products $(h ,f)_\nabla$ for $f\in \mcl H(D)$ are well-defined and are jointly centered Gaussian random variables with covariances $\BB E [(h,f)_\nabla (h,g)_\nabla] = (f,g)_\nabla$. 

Now let $\Delta^{-1}$ be the inverse of the Laplacian restricted to the space of functions (or generalized functions) with $\int_D f(z) \,d^2 z = 0$, normalized so that $\int_D \Delta^{-1} f(z) \,dz = 0$, with Neumann boundary conditions in the case when $D\not=\BB C$.  
Whenever $\Delta^{-1} f \in \mcl H(D)$, we can define the $L^2$ inner product $(h,f)  = -2\pi (h ,\Delta^{-1}f)_\nabla$. 
These $L^2$ inner produces are jointly centered Gaussian with variances 
\eqb
\op{Cov}\left( (h,f) , (h,g) \right) = \frac{1}{2\pi}\int_D \int_D f(z) g(w) G^D(z,w) \,d^2 z\, d^2 w
\eqe
where $G^D$ is the Green's function with Neumann boundary conditions in $D\not=\BB C$ and $G^D(z,w) = - 2\pi \log|z-w|$ if $D=\BB C$. 

We want to also define $(h,f)$ when $(\Delta^{-1} f , \Delta^{-1} f)_\nabla < \infty$ but $\int_D f(z) \,d^2 z$ is not necessarily equal to zero. 
To do this fix some $f_0$ with $(\Delta^{-1} f_0 , \Delta^{-1} f_0)_\nabla < \infty$ and $\int_D f_0(z) \,d^2 z  = 1$.
If we declare that $(h,f_0) := c$ for some $c\in\BB R$, then this together with the requirement that $f\mapsto (h,f)$ must be linear gives a unique way of defining $(h,f)$ for every function (or generalized function) $f$ with $(\Delta^{-1} f , \Delta^{-1} f)_\nabla < \infty$. Indeed, the function $\ol f := f - \left(\int_{D} f(z) \,d^2 z\right) f_0$ has total integral zero and so we can set $(h,f) := (h,\ol f) + c \int_{D} f(z) \,d^2 z$. As in the zero-boundary case, with this definition $h$ is an element of the Sobolev space $H^{-\ep}(D)$ for every $\ep > 0$~\cite[Section 3.3]{shef-kpz}. 

We made an arbitrary choice of $c$ in the above definition (all of the possible degrees of freedom can be captured by varying $c$, regardless of the choice of $f_0$). As such, the whole-plane and free-boundary GFF's are each only defined modulo a global additive constant. That is, we can view $h$ as a random equivalence class of distributions under the equivalence relation whereby $h_1 \sim h_2$ if $h_1-h_2$ is a constant for two (non-random) distributions $h_1,h_2$.  
In the case when $D = \BB C$ (resp. $D=\BB H$), we will typically fix the additive constant by requiring that the circle average $h_1(0)$ of $h$ over the unit circle $\bdy\BB D$ (resp.\ the unit semi-circle $\bdy\BB D\cap \BB H$), the definition of which we recall just below, is zero, i.e., we consider the field $h - h_1(0)$ which is well-defined not just modulo additive constant.  

The law of the whole-plane or free-boundary GFF is conformally invariant modulo additive constant, i.e., if $\phi : \wt D \rta D$ is a conformal map then $h\circ  \phi$ agrees in law with the (whole-plane of free-boundary, as appropriate) GFF on $\wt D$ modulo additive constant.

\subsubsection{Circle averages}
\label{sec-circle-average}

Suppose $D\subset\BB C$ and $h$ is a zero-boundary, whole-plane, or free-boundary GFF on $D$ (in the latter two cases we assume that we have fixed a choice of additive constant).  Then for $z\in D$ and $r > 0$ such that $\bdy B_r(z) \subset D$ we can define the circle average $h_r(z)$ over $\bdy B_r(z)$, following~\cite[Section 3.1]{shef-kpz}. 
Indeed, if we let $\rho_{z,r}$ be the uniform measure on $\bdy B_r(z)$ then the inverse Laplacian of $\rho_{z,r}$ has finite Dirichlet energy so as explained in the previous subsections we can define $h_r(z) := (h,\rho_{z,r})$. 

It is shown in~\cite[Section 3.1]{shef-kpz} that the circle average process a.s.\ admits a modification which is continuous in $z$ and $r$. We always assume that $h_r(z)$ has been replaced by such a modification. Furthermore,~\cite[Section 3.1]{shef-kpz} provides an explicit description of the law of the circle average process (it is a centered Gaussian process with an explicit covariance structure). For our purposes, the main fact which we will need about this process is the following. If $h$ is a whole-plane GFF and $z\in\BB C$ is fixed, then the process $t\mapsto h_{e^{-t}}(z) - h_1(z)$ is a standard two-sided linear Brownian motion.

If $h$ is a free-boundary GFF on a domain $D$ whose boundary has a linear segment $L$, then for $z\in L$ such that $\bdy B_r(z)$ does not intersect $\bdy D\setminus L$, we can similarly define the semicircle average $h_r(z)$ over $\bdy B_r(z)\cap D$. 
Similarly to the above, if $h$ is a free-boundary GFF on $\BB H$ and $z\in \BB R$ then $t\mapsto h_{e^{-t}}(z) - h_1(z)$ has the law of $\sqrt 2$ times a standard two-sided linear Brownian motion. See~\cite[Section 6]{shef-kpz} for details.

\subsubsection{Decomposition as a sum of independent fields}

If $\mcl H' \subset \mcl H_0(D)$ is a closed linear subspace, we can define the projection $\op{Proj}_{\mcl H'} h$ of $h$ onto $\mcl H'$ to be the distribution such that $(\op{Proj}_{\mcl H'} h , f)_\nabla = (h , \op{Proj}_{\mcl H'} f)_\nabla$ for each $f\in\mcl H_0(D)$. 
The covariance structure, and hence the law, of the zero-boundary GFF does not depend on the choice of orthonormal basis in~\eqref{eqn-gff-sum}. Consequently, if $\mcl H_0(D) = \mcl H^1 \oplus \mcl H^2$ is decomposed as the direct sum of two orthogonal subspaces, then by taking $\{f_j\}_{j\in\BB N}$ to be the the disjoint union of an orthonormal basis for $\mcl H^1$ and an orthonormal basis for $\mcl H^2$, we get that $h = \op{Proj}_{\mcl H^1} h + \op{Proj}_{\mcl H^2} h$ and the two summands are independent. 
Similar considerations apply for the whole-plane or free-boundary GFF, provided we interpret all of the distributions involved as being defined modulo additive constant. This decomposition has several useful consequences.

\begin{example}[Markov property]
	Let $h$ be the zero-boundary GFF on $D$ and let $V\subset D$ be open.
	The space $\mcl H_0(D)$ is the orthogonal direct sum of the space of functions in $\mcl H_0(D)$ which are supported on $V$ and the space of functions in $\mcl H_0(D)$ which are harmonic on $V$~\cite{shef-gff}. Consequently, $h$ can be decomposed as the sum of a zero-boundary GFF $h^V$ on $V$ and an independent distribution $\frk h^V$ on $D$ which is harmonic on $V$. 
	The distributions $h^V$ and $\frk h^V|_V$ are called the \emph{zero-boundary part} and \emph{harmonic part} of $h|_V$, respectively.
	In the case of the whole-plane or free-boundary GFF, one has a similar Markov property but $\frk h^V$ is only independent from $h^V$ viewed as a distribution modulo additive constant (one can get exact independence if the additive constant is fixed in a way which does not depend on what happens in $V$; see~\cite[Lemma 2.2]{gms-harmonic}). 
\end{example}

\begin{example}[Radial/lateral decomposition: whole-plane case] \label{example-radial-plane}
	Let $\mcl H^{\op{R}}(\BB C)$ (resp.\ $\mcl H^{\op{L}}(\BB C)$) be the space of functions in $\mcl H(\BB C)$ which are constant (resp.\ have mean zero) on each circle centered at zero. By~\cite[Lemma 4.9]{wedges}, $\mcl H(\BB C)$ is the orthogonal direct sum of $\mcl H^{\op{R}}(\BB C)$ and $\mcl H^{\op{L}}(\BB C)$. 
	Therefore, the projections of a whole-plane GFF $h$ onto $\mcl H^{\op{R}}(\BB C)$ and $\mcl H^{\op{L}}(\BB C)$ are independent.
	The projection onto $\mcl H^{\op{R}}(\BB C)$ is the function $h_{|\cdot|}(0)$ whose value on each circle centered at zero is the circle average of $h$ over that circle. 
	This function is defined modulo a global additive constant.
	The projection onto $\mcl H^{\op{L}}(\BB C)$ is $h - h_{|\cdot|}(0)$, which is well-defined not just modulo additive constant (since adding a constant $c$ to $h$ also adds $c$ to its circle average process). This projection is called the \emph{lateral part} of $h$. 
\end{example}

\begin{example}[Radial/lateral decomposition: half-plane case] \label{example-radial-half-plane}
	Let $\mcl H^{\op{R}}(\BB H)$ (resp.\ $\mcl H^{\op{L}}(\BB H)$) be the space of functions in $\mcl H(\BB H)$ which are constant (resp.\ have mean zero) on each semi-circle centered at zero. By~\cite[Lemma 4.2]{wedges}, $\mcl H(\BB H) = \mcl H^{\op{R}}(\BB H) \oplus \mcl H^{\op{L}}(\BB H)$. 
	As in Example~\ref{example-radial-plane}, this shows that the semi-circle average process $h_{|\cdot|}(0)$ is independent from the lateral part $h-h_{|\cdot|}(0)$. 
\end{example}

\subsection{Liouville quantum gravity}\label{subsec:GFF}

Throughout the rest of this section, we fix the LQG parameter $\gamma\in(0,2)$ and set 
\eqb \label{eqn-Q-def}
Q=\frac{\gamma}2 + \frac2\gamma .
\eqe
Let 
\eqb \label{eqn-domain-pairs}
\Dh=\{(D,h): \textrm{$D\subset \CC$  is an open set, $h$ is a distribution on $D$} \} . 
\eqe 
If $(D,h), (\wt D,\wt h) \in \Dh$ and $\phi:\wt D\to D$ is a conformal map, we write
\begin{equation}\label{eq:coord}
(D,h)\overset{\phi}{\sim}_{\gamma} (\wt D,\wt h)\textrm{ if and only if } \wt h = h \circ \phi + Q\log |\phi'|  .
\end{equation}
We write  $(D,h)\sim_{\gamma} (\wt D,\wt h)$ if and only if there exists a conformal map $\phi:\wt D\to D$ such that $(D,h)\overset{\phi}{\sim}_{\gamma} (\wt D,\wt h)$. Then $\sim_\gamma$ defines an equivalence relation on $\Dh$. If $(D,h)\sim_{\gamma} (\wt D,\wt h)$ then we think of $(D,h)$ and $(\wt D,\wt h)$ as two parametrizations of the same $\gamma$-LQG surface.

\begin{definition} \label{def-surface}  
	A \notion{$\gamma$-quantum surface} (a.k.a.\ a \notion{$\gamma$-LQG surface})
	is an equivalence class of pairs $(D,h)\in \Dh$ 
	under the equivalence relation $\sim_\gamma$. 
	An \notion{embedding} of a quantum surface is a choice of representative $(D,h)$ from the equivalence class.
	
	A quantum surface with $k$ marked points is an equivalence class of elements of the form
	$(D,h,x_1,\cdots x_k)$, with $(D,h)\in\Dh$ and $x_i\in \ol D$, under the equivalence relation $\sim_\gamma$ with the further requirement that marked points (and their ordering) are preserved by the conformal map.
	The transform in \eqref{eq:coord} is called a \notion{coordinate change}.
\end{definition}

In order to specify a quantum surface, it suffices to specify an embedding $(D,h)$. 
Therefore, 
the topology of distributions (i.e., the weakest topology under which integrals against smooth compactly supported test functions are continuous) induces a topology on the set of quantum surfaces  whose domains are of a particular conformal type.
For this topology, a sequence of quantum surfaces $\{\mcl S_n\}_{n\in\BB N}$ converges to a quantum surface $\mcl S$ if and only if there is a domain $D$ and embeddings $(D,h_n)$ of $\mcl S_n$ and $(D,h)$ of $\mcl S$ such that $h_n \rta h$ in the distributional sense.  
We equip the space of quantum surfaces with the Borel $\sigma$-algebra for this topology. 
Note that we will not talk about convergence of quantum surfaces with respect to this topology --- the topology is only used to define the $\sigma$-algebra (our convergence statements will always be with respect to a stronger topology\footnote{Note that convergence of quantum surfaces (i.e., the topology of convergence) is defined somewhat differently in different literature \cite{wedges,shef-kpz}. See e.g.\ \cite[Footnote 8]{hp-welding} for an overview. }). 

Roughly speaking, we will work with quantum surfaces where the distribution $h$ is random and locally looks like a GFF.
For concreteness, we provide a formal definition of this notion.
\begin{definition}\label{def:GFF}
	Let  $h$ be a  random distribution on a planar domain $D$. For $z\in D$, we say that $h$ is \notion{GFF-like} near $z$ if 
	there exist a constant  $r>0$ such that the law of $h|_{B_r(z)}$ is absolutely continuous with respect to the law of $(\wt h+g)|_{B_r(z)}$, where $(\wt h,g)$ is a coupling of a zero-boundary GFF $\wt h$ on $\ol{B_r(z)}$ and
	a random continuous function $g$ on $B_r(z)$. 
	If $z\in \bdy D$ and $\bdy D$ is analytic near $z$,  we similarly declare that  $h$ is  \notion{free-boundary GFF-like} near $z$ if $h$  is locally absolutely continuous with respect to a free-boundary GFF plus a continuous function in a similar manner.
\end{definition}

Recall from Section~\ref{sec-circle-average} that for a GFF $h$ on a domain $D$, a point $z\in D$, and a radius $\ep>0$ such that $B_\ep(z)\subset D$, we write $h_\ep(z)$ for the average of $h$ over $\bdy B_\ep(z)$. 
For $\gamma\in(0,2)$, it is proved in \cite{shef-kpz,shef-wang-lqg-coord} that the measure
$\mu_h:=\lim_{\ep\to 0}\ep^{\gamma^2/2} e^{\gamma h_\ep(z)}d^2z$  exists almost surely in the vague topology of Borel measures on $D$, where $d^2 z$ is the Lebesgue measure on $D$.
If $h$ is a random distribution which is GFF-like near $z$, then $\mu_h$ can be defined in $B_r(z)$  with $r$ as in Definition~\ref{def:GFF}.
We call $\mu_h$ the  \notion{$\gamma$-LQG area measure} with respect to $h$, or simply the \emph{quantum area}.

Suppose $h$ is a free-boundary GFF on $D$ and that $\bdy D$ has a linear segment $L$. For $z\in L$,  let $h_\ep(z)$ be the mean value of $h$ on $\bdy B_\ep(z)\cap D$. Then  
$\nu_h:=\lim_{\ep\to 0}\ep^{\gamma^2/4} e^{\gamma h_\ep(z)/2}dz$  exists almost surely, where $dx$ is the Lebesgue measure on $L$~\cite[Section 6]{shef-kpz}.
If $h$ is a random distribution which is GFF-like near $x\in L$, then   $\nu_h$ can be similarly defined near $x$.
We call $\nu_h$ the  \notion{$\gamma$-LQG length measure} with respect to $h$, or simply the \emph{quantum length}.
See~\cite{garban-survey} for expository notes on the results of~\cite{shef-kpz}. 

The measures $\mu_h$ and $\nu_h$ are a special case of a more general family of random measures associated with log-correlated Gaussian fields called \notion{Gaussian multiplicative chaos}, which was initiated in~\cite{hk-gmc,kahane}; see~\cite{rhodes-vargas-review,berestycki-gmt-elementary,aru-gmc-survey} for introductions to this theory. 
We also mention the paper of Shamov~\cite{shamov-gmc} which gives an alternative, more axiomatic approach to GMC and~\cite{rhodes-vargas-review} which proves closely related results to the ones of~\cite{shef-kpz} but in a way which is more directly connected to GMC theory.

Suppose $(D,h)$, $(\wt D,\wt h)$, and $\phi$ are  as in    \eqref{eq:coord}.
If $h$ is a GFF-like near every point of $D$,  then so is  $\wt h$.
By \cite[Proposition 2.1]{shef-kpz}, almost surely  $\mu_{\wt h}$ is the pushforward of $\mu_h$  under $\phi^{-1}$. 
Namely, 
$\mu_{\wt h}(A)=\mu_h(\phi (A))$ for each Borel set $A\subset \wt  D$.
This is the so-called coordinate change formula for the $\gamma$-LQG area measure. 
If $h_\ep(z)$ is defined via  bump function averages instead of circle averages, it is shown in \cite{shef-wang-lqg-coord} that almost surely 
the coordinate change formula holds for all conformal maps simultaneously. 
Similarly, $\nu_{\wt h}$ is a.s.\ the pushforward of $\nu_h$ under $\phi^{-1}$ if
$\bdy D$ and $\bdy \wt D$ are both Jordan domains with piecewise linear  boundaries and $\phi$ extends to a homeomorphism between the closure of the domains.

When $\wt D$ is a simply connected domain 
whose boundary is the image of continuous closed curve (viewed on the Riemann sphere),
$\phi^{-1}$ can still be continuously extended to  $\bdy D$ so that $\phi^{-1}|_{\bdy D}$ provides a parameterization of $\bdy \wt D$.
Then we define  $\nu_{\wt h}$ to be the pushforward of $\nu_h$ under $\phi^{-1}$.
The coordinate change formula ensures that $\nu_{\wt h}$ defined this way does not depend on the choice of $D$.
We can define the quantum length measure  for more general domains,  but the generality presented here is sufficient for our article.

In mating-of-trees theory, the measures $\mu_h$ and $\nu_h$ (rather than the GFF $h$ itself) are the main observables.
We note that $h$ is locally determined by $\mu_h$~\cite{bss-lqg-gff}. 

We will sometimes have occasion to consider an LQG surface decorated by a curve (this is the continuum analog of a random planar map decorated by a statistical mechanics model). 
As such, we make the following definition.

\begin{defn} \label{def-curve-decorated}
	A \notion{curve-decorated quantum surface} is an equivalence class of triples $(D,h,\eta)$ where $D\subset\BB C$ is open, $h$ is a distribution on $D$, and $\eta$ is a curve in $\ol D$, under the equivalence relation whereby two such triples $(D,h,\eta)$ and $(\wt D, \wt h , \wt\eta)$ are equivalent if there is a conformal map $\phi : \wt D \rta \wt h$ such that $ (D,h)\overset{\phi}{\sim}_{\gamma} (\wt D,\wt h)$ in the sense of~\eqref{eq:coord} and $\phi\circ\wt\eta = \eta$. 
	As in Definition~\ref{def-surface}, we similarly define a \notion{curve-decorated quantum surface with $k \in \BB N$ marked points}.
\end{defn}

We can define a topology, hence also a $\sigma$-algebra, on the space of curve-decorated quantum surfaces analogously to the discussion just after Definition~\ref{def-surface} except that we also require that the associated curves converge uniformly under the chosen embeddings.

\subsection{Quantum cones and wedges}\label{subsec:cone-wedge}

The local properties of the LQG measures $\mu_h$ and $\nu_h$ do not depend on which GFF-type distribution we are considering. 
However, there are exact symmetries for certain special LQG surfaces which allow us to connect such surfaces to SLE curves and to random planar maps.  
In this subsection and the next we introduce some of these special LQG surfaces. 

\subsubsection{Quantum cones and thick quantum wedges}

In the definitions below, we will consider drifted Brownian motion conditioned on staying positive or negative.
Suppose $B_t$ is a standard linear Brownian motion starting at 0 and $a>0$.  
Then $(B_t+at)_{t\ge 0}$ conditioned to stay positive can be defined by the weak limit as $\ep\to0$ of the conditional law of $(B_t+at)_{t\ge 0}$  
conditioned to stay above $-\ep$. It is easy to show that the limit exists and can also be described as $(B_{t+\tau}+a(t+\tau))_{t\ge 0}$ where $\tau$ is the last zero of $B_t+at$.  

We will now define quantum cones and quantum wedges, which are the most natural LQG surfaces with the whole-plane and half-plane topology, respectively (e.g., in the sense that they arise as the scaling limits of random planar maps with these topologies; see Section~\ref{subsec:scaling}). 
We give the definitions in the context of the radial/lateral decomposition of Examples~\ref{example-radial-plane} and~\ref{example-radial-half-plane}. 
The following two definitions are~\cite[Definitions 4.10 and 4.5]{wedges}. 
The motivation for the definitions is to make Lemmas~\ref{lem:circle}, \ref{lem:inv}, and~\ref{lem:gamma} below true.

\newcommand{\bh}{\mathsf{h}}

\begin{defn}[Quantum cone] \label{def:cone}
	Let $\alpha \in (-\infty, Q)$. 
	Let $A : \BB R \to \BB R$ be the process $A_t:= B_t + \alpha t$, where $B_t$ is a standard linear Brownian motion conditioned so that $ B_t  - (Q-\alpha) t > 0$ for all $t< 0$. In particular, $B|_{[0,\infty)}$ is an unconditioned standard linear Brownian motion. 
	The \notion{$\alpha$-quantum cone} is the $\gamma$-LQG surface $(\BB C ,\bh , 0,\infty)$ where $\bh$ is the random distribution on $\BB C$ defined as follows.
	If $\bh_r(0)$ denotes the circle average of $\bh$ on $\partial B_r(0)$ (as in Section~\ref{sec-circle-average}), then $t\mapsto \bh_{e^{-t}}(0)$ has the same law as the process $A$; and the lateral part $\bh - \bh_{|\cdot|}(0)$ (Example~\ref{example-radial-plane}) is independent from $\bh_{|\cdot|}(0)$ and has the same law as the analogous process for a whole-plane GFF. That is, $\bh - \bh_{|\cdot|}(0) = \sum_{j=1}^\infty X_j f_j$ where $\{f_j\}_{j\in\BB N}$ is an orthonormal basis for the space $\mcl H^{\op{L}}(\BB C)$ of Example~\ref{example-radial-plane} and $\{X_j\}_{j\in\BB N}$ are i.i.d.\ standard Gaussians.
\end{defn}

\begin{defn}[Quantum wedge] \label{def:wedge}
	Let $\alpha \in (-\infty, Q)$. 
	Let $A : \BB R \to \BB R$ be the process as in Definition~\ref{def:cone} except with $B_t$ replaced by $B_{2t}$ throughout. 
	The \notion{$\alpha$-quantum wedge} is the  LQG surface $(\BB H ,\bh , 0,\infty)$ where $\bh$ is the random distribution on $\BB H$ defined as follows. 
	If $\bh_r(0)$ denotes the semi-circle average of $\bh$ on $\partial B_r(0) \cap \BB H$, then $t\mapsto \bh_{e^{-t}}(0)$ has the same law as the process $A$; and the lateral part $\bh - \bh_{|\cdot|}(0)$ (Example~\ref{example-radial-half-plane}) is independent from $\bh_{|\cdot|}(0)$ and has the same law as the analogous process for a free-boundary GFF on $\BB H$. 
\end{defn}

Note that we have $B_{2t}$ instead of $B_t$ in Definition~\ref{def:wedge} since the variance of the semicircle average process for a free-boundary GFF is twice as large as the variance for the circle average process of a whole-plane GFF (Section~\ref{sec-circle-average}). 

Each of Definition~\ref{def:cone} and~\ref{def:wedge} specifies only one possible embedding of the quantum cone (resp.\ wedge).
The particular embedding used in these definitions is called the \emph{circle average embedding} and is uniquely characterized (modulo rotation about the origin in the whole-plane case) by the requirement that $1 = \sup\{r > 0 : \bh_r(0) + Q\log r = 0\}$. The circle average is especially convenient to work with due to the following lemma, which is immediate from the definitions.  

\begin{lem}\label{lem:circle}
	Let $\bh$ be the circle average embedding of an $\alpha$-quantum cone and let $ h$ be a whole-plane GFF with the additive constant chosen so that $h_1(0) = 0$.
	Then $\bh|_{\D}  \eqD \left( h -\alpha \log |\cdot| \right)_{z\in \D}$. 
	The same holds if $\bh$ is the circle average embedding of an $\alpha$-quantum wedge,  $ h$ is a free-boundary GFF on $\BB H$ with the additive constant chosen so that $h_1(0) = 0$, and $\BB D$ is replaced by $\BB D\cap\BB H$. 
\end{lem}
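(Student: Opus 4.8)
The plan is to unwind the definitions of the circle-average embedding and of the whole-plane (resp. free-boundary) GFF, and to check that the radial/lateral decomposition of the two sides matches. Recall from Example~\ref{example-radial-plane} that a whole-plane GFF $h$ decomposes as $h = h_{|\cdot|}(0) + (h - h_{|\cdot|}(0))$ into independent radial and lateral parts, and by Section~\ref{sec-circle-average} the process $t \mapsto h_{e^{-t}}(0) - h_1(0)$ is a standard two-sided Brownian motion. Subtracting $\alpha \log|\cdot|$ changes only the radial part: the circle average of $h - \alpha\log|\cdot|$ over $\partial B_{e^{-t}}(0)$ is $h_{e^{-t}}(0) + \alpha t$, while the lateral part is unchanged. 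So on the right-hand side of the asserted identity, after fixing $h_1(0) = 0$, the radial part at time $t$ is $B_t + \alpha t$ where $B$ is a standard two-sided Brownian motion (unconditioned), and the lateral part is the lateral part of a whole-plane GFF, independent of the radial part.

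First I would compare this with the law of $\bh|_{\D}$ for the circle-average embedding of an $\alpha$-quantum cone. By Definition~\ref{def:cone}, writing $A_t = \bh_{e^{-t}}(0)$, we have $A_t = B_t + \alpha t$ for $t \ge 0$ with $B|_{[0,\infty)}$ an unconditioned standard Brownian motion, while for $t < 0$ the process is conditioned to stay in a region; the lateral part is independent and distributed as the lateral part of a whole-plane GFF. The key observation is that the disk $\D$ corresponds precisely to $\{r < 1\}$, i.e., to $t = -\log r > 0$, so \emph{only the $t \ge 0$ part of $A$} is relevant for $\bh|_{\D}$, and on that range $A_t = B_t + \alpha t$ is exactly an unconditioned Brownian motion with drift $\alpha$ started from $0$. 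Hence the radial part of $\bh|_\D$ agrees in law with the radial part of $(h - \alpha\log|\cdot|)|_\D$, and likewise the lateral parts agree (both being the restriction of the lateral part of a whole-plane GFF). Since radial and lateral parts are independent on both sides, this gives $\bh|_{\D} \eqD (h - \alpha\log|\cdot|)|_{\D}$. One should note here that the circle-average embedding normalization $1 = \sup\{r > 0 : \bh_r(0) + Q\log r = 0\}$ is automatically consistent with this: it is the translation of the ``last zero'' structure in Definition~\ref{def:cone} built into the $t < 0$ behavior, and plays no role for the restriction to $\D$ beyond pinning down which process value sits at $t = 0$.

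For the quantum wedge case, the argument is identical, with $\BB C$ replaced by $\BB H$, the circle average replaced by the semicircle average over $\partial B_r(0) \cap \BB H$, the whole-plane GFF replaced by the free-boundary GFF on $\BB H$, and $\D$ replaced by $\D \cap \BB H$. The one point to remember is that, per the remark after Definition~\ref{def:wedge} and Section~\ref{sec-circle-average}, the semicircle-average process of a free-boundary GFF has twice the variance of the whole-plane circle-average process; this is already built into Definition~\ref{def:wedge} (which uses $B_{2t}$ in place of $B_t$) so that the process $A$ there matches $t \mapsto (h - \alpha\log|\cdot|)_{e^{-t}}(0)$ for the free-boundary $h$ on $\BB H$ with $h_1(0) = 0$, and the identity goes through verbatim.

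The main obstacle — really the only content beyond bookkeeping — is making the ``only $t \ge 0$ matters'' step airtight: one must check that the restriction of a radial/lateral-decomposed field to $\D$ is determined (in law, jointly) by the $t \ge 0$ part of the radial process together with the lateral part, so that the conditioning imposed on the $t < 0$ part of the cone/wedge radial process genuinely does not affect $\bh|_\D$. This is where the independence of the radial and lateral parts (Example~\ref{example-radial-plane}, \ref{example-radial-half-plane}) and the fact that the lateral part of the $\alpha$-quantum cone/wedge is \emph{globally} distributed as that of the corresponding GFF (not conditioned) are both used; once those are in hand, the lemma is ``immediate from the definitions'' as claimed.
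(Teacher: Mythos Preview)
Your proof is correct and is precisely the unpacking of ``immediate from the definitions'' that the paper gestures at without writing out. The only content is the observation that restricting to $\D$ (resp.\ $\D\cap\BB H$) picks out the $t\ge 0$ portion of the radial process, where the conditioning in Definition~\ref{def:cone} (resp.~\ref{def:wedge}) is absent, together with the fact that the lateral part is globally that of the GFF; you have both of these right.
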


Quantum cones and quantum wedges possess a certain scale invariance property which is not true for the whole-plane or free-boundary GFF.
This makes these surfaces in some sense more canonical objects than the whole-plane or free-boundary GFF. 
The following lemma is a consequence of~\cite[Propositions 4.7 and 4.13]{wedges}.

\begin{lem}\label{lem:inv}
	Suppose  $(D,\bh,a,b)$ is an embedding of an  $\alpha$-quantum cone (resp.\ wedge) for $\alpha \in (-\infty ,Q)$. 
	For each $c\in\R$,  $(D,\bh+c,a,b)$ and $(D, \bh,a,b)$ have the same law as quantum surfaces. 
Equivalently, if $\bh$ is the circle average embedding, then
	\eqbn
	 \bh(R_c\cdot) + Q\log R_c + c\eqD \bh \quad \text{where} \quad R_c := \sup\{r  > 0 : \bh_r(0) + Q\log r = -c\}.
	\eqen 
\end{lem}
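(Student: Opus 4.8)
\textbf{Proof proposal for Lemma~\ref{lem:inv}.}

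The plan is to reduce the statement to a direct computation with the defining Brownian motions of Definitions~\ref{def:cone} and~\ref{def:wedge}, using that the law of a quantum surface is invariant under the coordinate change~\eqref{eq:coord}. First I would observe that the two assertions in the lemma are equivalent: if $(D,\bh,a,b)$ is the circle-average embedding of an $\alpha$-quantum cone, then for $c \in \R$ the pair $(D,\bh+c,a,b)$ is another embedding of the \emph{same} surface only after we rescale to restore the circle-average normalization $1 = \sup\{r>0 : \bh_r(0) + Q\log r = 0\}$. Concretely, if we apply the conformal automorphism $z \mapsto R_c z$ of $\BB C$ (resp.\ of $\BB H$) fixing $0$ and $\infty$, then by~\eqref{eq:coord} the field $\bh+c$ gets transformed to $\bh(R_c \cdot) + Q\log R_c + c$, and the choice $R_c := \sup\{r>0 : \bh_r(0)+Q\log r = -c\}$ is exactly what makes this transformed field satisfy the circle-average normalization. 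So the displayed identity is just the statement that adding $c$ and then re-embedding gives back a circle-average-embedded $\alpha$-quantum cone, i.e.\ that $(D,\bh+c,a,b) \sim_\gamma (D,\bh,a,b)$.

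Next I would verify the distributional identity at the level of the radial and lateral parts separately, using the independence built into Definitions~\ref{def:cone} and~\ref{def:wedge}. The lateral part is unaffected by adding a constant and is invariant in law under the rotation/rescaling $z \mapsto R_c z$ (the space $\mcl H^{\op L}$ is preserved and the law of the lateral part of a whole-plane/free-boundary GFF is scale-invariant modulo additive constant), and it remains independent of the radial part. So the whole content is about the radial part: writing $A_t = \bh_{e^{-t}}(0)$, one must check that the process $t \mapsto A_{t + T_c} + Q(-T_c) + c$, where $e^{-T_c} = R_c$, again has the law of the process $A$ from Definition~\ref{def:cone}. Here $T_c$ is the \emph{last} time $t$ that $A_t + Q(-t) \cdot(-1)$... more carefully, $R_c = \sup\{r : \bh_r(0) + Q\log r = -c\}$ translates via $r = e^{-t}$ into $T_c = \inf\{t : A_t - Qt = -c\}$ being the first (equivalently, using the transience, the relevant last) hitting time of $-c$ by the process $t \mapsto A_t - Q t = B_t - (Q-\alpha)t$, which is a Brownian motion with negative drift $-(Q-\alpha)$ on $[0,\infty)$ and a conditioned-positive Brownian motion on $(-\infty,0]$. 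The claim then follows from the strong Markov property of Brownian motion together with the standard description of $(B_t + at)$ conditioned to stay positive as $(B_{t+\tau} + a(t+\tau))$ recalled just before Definition~\ref{def:cone}: re-rooting $B_t - (Q-\alpha)t$ at its hitting time of a fixed level and recentering reproduces the same law, which is precisely the content of that re-rooting description. The wedge case is identical with $B_t$ replaced by $B_{2t}$ throughout, and with semicircle averages in place of circle averages.

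The main obstacle, and the step that requires genuine care rather than bookkeeping, is the re-rooting argument for the radial part: one must match up the ``conditioned to stay positive on $(-\infty,0]$, unconditioned on $[0,\infty)$'' decomposition of the process $B_t - (Q-\alpha)t$ before and after shifting the time origin to a hitting time, and confirm that the last-zero description of the conditioned process is exactly what makes the two laws agree. This is where the hypothesis $\alpha < Q$ is used — it guarantees $Q - \alpha > 0$ so that the drift points the right way and the relevant hitting time $T_c$ is a.s.\ finite — and it is the reason the statement is phrased in terms of the specific law $A$ rather than a general Brownian motion. Everything else (the equivalence of the two formulations, the invariance of the lateral part, the coordinate-change formula) is routine given the results quoted from~\cite{wedges}.
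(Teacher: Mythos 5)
Your reduction of the lemma to a statement about the radial part is correct and is essentially the standard one: the equivalence of the two formulations via the coordinate change $z\mapsto R_c z$, the scale invariance and independence of the lateral part, and the identification of where $\alpha<Q$ enters are all fine. Note, though, that the paper does not prove this lemma at all --- it cites \cite[Propositions 4.7 and 4.13]{wedges} --- so the relevant comparison is with the argument there, which runs through the Bessel-process encoding of the radial part (the one recalled around~\eqref{eq:Bessel}): under that encoding, adding $c$ to the field corresponds to multiplying a Bessel process started from $0$ by $e^{-\gamma c/2}$ and reparametrizing by quadratic variation, and the claim reduces to Brownian scale invariance of the Bessel process. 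That route avoids the re-rooting issue entirely.

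The gap in your argument is precisely at the step you flag as the crux, and the ingredients you cite there are not sufficient. Write $V_t:=A_t-Qt=B_t-(Q-\alpha)t$ and $T_c:=\inf\{t:V_t=-c\}$; you must show $(V_{t+T_c}+c)_{t\in\R}\eqD (V_t)_{t\in\R}$. For $c>0$ (so $T_c>0$), the strong Markov property handles the new \emph{forward} part $(V_{t+T_c}+c)_{t\ge 0}$, and for $c<0$ the last-hitting-time description handles the new \emph{backward} part. But in either case the segment of the path between the old origin and the new origin migrates from one side of the decomposition to the other: e.g.\ for $c>0$ the new backward part is the time reversal of $(V_t)_{0\le t\le T_c}$ (an \emph{unconditioned} drifted Brownian path run to a first hitting time) concatenated with the old conditioned backward part shifted up by $c$, and you must show this concatenation has the law of the drift-$(Q-\alpha)$ Brownian motion conditioned to stay positive. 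This requires a Williams-type time-reversal/last-exit decomposition (the reversal of a BM with drift $-(Q-\alpha)$ from $0$ to its first hitting time of $-c$ is the conditioned process from $0$ to its last visit to $c$, independent of what comes after), not merely the strong Markov property and the last-zero description; note in particular that the conditioned process started from $0$ and shifted up by $c$ is \emph{not} the conditioned process started from $c$, since the relevant $h$-transform is not translation invariant, so the pieces cannot be matched naively. An alternative fix is to first establish the unified description of $(V_t)_{t\in\R}$ as the local limit, as $N\to\infty$, of a BM with drift $-(Q-\alpha)$ started from $N$ and re-rooted at its first hitting time of $0$; the two-sided re-rooting identity is then immediate. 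Either way, this extra input must be supplied before the proof is complete.
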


Since adding $c$ to $\bh$ scales the LQG area measure by $e^{\gamma c}$, Lemma~\ref{lem:inv} says that the law of a quantum cone or wedge is invariant under scaling its LQG area measure by a constant factor. This is one piece of evidence for why these surfaces are the ones which arise as the scaling limits of random planar maps. It turns out that Lemmas~\ref{lem:circle} and~\ref{lem:inv} uniquely characterize the law of the $\alpha$-quantum cone or wedge; see~\cite[Propositions 4.8 and 4.14]{wedges}. 

The case when $\alpha = \gamma$ is special since a $\gamma$-quantum cone is the local limit of a quantum surface around an interior quantum typical point. See~\cite[Proposition 4.13]{wedges}. 

\begin{lem}\label{lem:gamma}
	Let $h$ be a zero-boundary GFF on a bounded domain $D$. Conditional on $h$, let $u$ be a point sampled according to $\mu_h(\cdot)/\mu_h(D)$.
	For $\ep>0$, let $r_\ep$ be such the $\mu_h(B_{r_\ep}(u))=\ep$ and let $\phi_\ep(z)=r_\ep z+u$. Let $(D^\ep, h^\ep)$ be such that 
	$(D,h  + \gamma^{-1} \log \ep^{-1}  )\overset{\phi_\ep}{\sim}_{\gamma}  (D^\ep, h^\ep )$ as in \eqref{eq:coord}.  
	Then $h^\ep$ converges in law as $\ep\rta 0$ in the distributional sense to the random distribution $\fh$, where $(\CC,\fh,0,\infty)$ is the embedding of a $\gamma$-quantum cone for which $\mu_{\fh}(\D) = 1$. 
\end{lem}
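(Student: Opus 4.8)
The plan is to combine Girsanov's theorem with the scale-invariance characterisation of the $\gamma$-quantum cone. First, I would recall the standard description of the law of $h$ near a $\mu_h$-typical point. Applying the Cameron--Martin theorem to the regularised expressions $\ep^{\gamma^2/2}e^{\gamma h_\ep(u)}\,d^2u$ used to define $\mu_h$, one finds that the joint law of $(h,u)$ in the lemma is mutually absolutely continuous with the following explicit law: sample $u\in D$ from a probability density proportional to a conformal-radius type factor, sample an independent zero-boundary GFF $h_0$, and set $h=h_0+\gamma g_u$, where $g_u$ is a constant multiple of the Green's function $G^D_0(\cdot,u)$, so that near $u$ one has $\gamma g_u=-\gamma\log|\cdot-u|+f_u$ with $f_u$ continuous and harmonic near $u$. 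The Radon--Nikodym derivative between the two joint laws is a positive and a.s.\ finite function of $h$ (morally $1/\mu_h(D)$), so passing between them does not change membership in an absolute-continuity class; I will use this to discard it below.

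The key structural observation is that the family $\{h^\ep\}$ is self-similar under the coordinate change \eqref{eq:coord}. Writing $h^\ep(z)=h(r_\ep z+u)+\gamma^{-1}\log\ep^{-1}+Q\log r_\ep$ and using the coordinate-change formula for $\mu_h$ together with the defining relation $\mu_h(B_{r_\ep}(u))=\ep$, one checks directly that $\mu_{h^\ep}(\D)=1$ for every $\ep$, and that for $\delta\in(0,1)$ the field $h^{\delta\ep}$ is exactly the field obtained from $h^\ep$ by the coordinate change for the map $z\mapsto\rho_\delta z$ --- where $\rho_\delta$ is the a.s.\ unique radius with $\mu_{h^\ep}(B_{\rho_\delta}(0))=\delta$ --- followed by adding the constant $\gamma^{-1}\log(1/\delta)$. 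Tightness of $\{h^\ep\}$ modulo additive constant is immediate from the Girsanov form above, $h^\ep$ being a rescaled zero-boundary GFF plus an explicit field that converges locally uniformly; combining this with $\mu_{h^\ep}(\D)=1$ pins down the additive constant and gives honest tightness. Hence any subsequential limit $\fh$ satisfies $\mu_{\fh}(\D)=1$ together with the scale-invariance relation: the field obtained from $\fh$ by zooming to the radius of $\mu_{\fh}$-mass $\delta$ and adding $\gamma^{-1}\log(1/\delta)$ has the law of $\fh$, for every $\delta\in(0,1)$.

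It then remains to identify the local law of $\fh$ near $0$. Substituting $h=h_0+\gamma g_u$ into the formula for $h^\ep$, using that the $-\gamma\log|\cdot-u|$ part becomes $-\gamma\log|z|-\gamma\log r_\ep$ and that $f_u(r_\ep z+u)\to f_u(u)$ uniformly on $\D$, the Markov property of the GFF (to write $h_0$ near $u$ as a zero-boundary GFF on a small ball plus an independent harmonic function), and the standard fact that a zero-boundary GFF restricted to a shrinking ball and suitably rescaled converges in law modulo additive constant to the whole-plane GFF, one obtains that $\fh|_{\D}$ is absolutely continuous with respect to $(\widehat h-\gamma\log|\cdot|)|_{\D}$ for a whole-plane GFF $\widehat h$; the additive constant here is not free, but is fixed by the constraint $\mu_{\fh}(\D)=1$ established above.

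Finally, the scale-invariance relation and the $\D$-restriction law together match the characterisation of the $\gamma$-quantum cone --- the analogue of Lemmas~\ref{lem:circle} and~\ref{lem:inv} with the quantum-area radius in place of the circle-average radius, cf.\ \cite[Propositions~4.8 and~4.13]{wedges} --- and the normalisation $\mu_{\fh}(\D)=1$ singles out the embedding claimed in the statement; since all subsequential limits coincide, $h^\ep$ converges in law to this $\fh$. The step I expect to require the most care is the local identification of $\fh|_\D$: tracking the additive constant through the rescaling so that the rescaled zero-boundary GFF converges to the whole-plane GFF with exactly the normalisation forced by $\mu_{\fh}(\D)=1$, and checking that the quantum-area form of the scale-invariance characterisation genuinely pins down the $\gamma$-quantum cone (and not merely its absolute-continuity class), which amounts to invoking the uniqueness statement \cite[Proposition~4.8]{wedges} in a form adapted to the quantum-area radius rather than the circle-average radius.
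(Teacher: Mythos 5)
The paper does not actually prove this lemma---it quotes \cite[Proposition 4.13]{wedges} and sketches precisely the route you take (the rooted/Peyri\`ere measure description of $(h,u)$, the local appearance of $h$ as a whole-plane GFF minus $\gamma\log|\cdot-u|$, and the additive-constant/scale-invariance characterisation of the $\gamma$-quantum cone), so your proposal is a correct fleshing-out of the same approach. The one point you pass over too quickly is discarding the Radon--Nikodym derivative $1/\mu_h(D)$ for the purpose of convergence in \emph{law} rather than merely of absolute-continuity classes: this needs the standard decoupling argument that $h^\ep$ restricted to a compact set becomes asymptotically independent of $\mu_h(D)$ as $\ep\to 0$, and it should be carried out together with the upgrade---which your Girsanov computation in fact yields---from absolute continuity of $\fh|_{\D}$ to the exact limiting law that the uniqueness characterisation of the cone requires.
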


Heuristically, Lemma~\ref{lem:gamma} holds since near $u$, the field $h$ looks like $h-\gamma\log|\cdot - u|$ where $h$ is a whole-plane GFF, as in Lemma~\ref{lem:circle}.
This intuition can be made rigorous using the so-called rooted measure corresponding to $\mu_h$~\cite{peyriere-rooted-measure,shef-kpz,kahane,rhodes-vargas-review}.
There is also a straightforward analog of Lemma~\ref{lem:gamma} for the $\gamma$-quantum wedge, where we consider a free-boundary GFF $h$ on a Jordan domain $D$ with a linear segment and let $u$ be sampled from the segment according to $\nu_h$ restricted to this linear segment and normalized to be a probability measure~\cite[Proposition 4.7]{wedges}.  
There are also variants of Lemma~\ref{lem:gamma} when we sample from the $\alpha$-LQG measure for $\alpha\in (-2,2)$ instead of the $\gamma$-LQG measure, and we get an $\alpha$-quantum cone instead of a $\gamma$-quantum cone in the limit.

\begin{remark}[Parametrizing by the strip/cylinder] \label{remark-strip}
	It is sometimes convenient to parametrize the $\alpha$-quantum wedge by the infinite strip $\cS = \R \times (0,\pi)$ instead of by $\BB H$. 
	Let $\phi : \cS \rta \BB H$ be defined by $\phi(z) = e^{-z}$ and let $\wt{\bh}$ be the random distribution on $\cS$ such that $(\cS , \wt{\bh})\overset{\phi}{\sim}_\gamma (\BB H , \bh)$. 
	Then $\wt{\bh}$ can be described as follows.
	If we let $X_t$ be the average of $\wt{\bh}$ along the line segment $\{t\}\times [0, \pi]$, then $\{X_t\}_{t\geq 0} \eqD \{B_{2t} -  (Q-\alpha) t\}_{t\geq 0}$, where $B$ is a standard linear Brownian motion, and $\{X_{-t}\}_{t\geq 0}$ is independent from $\{X_t\}_{t\geq 0}$ and has the law of $\{B_{2t} + (Q-\alpha) t\}_{t\geq 0}$ conditioned to stay positive. 
	The lateral part $\wt{\bh} - X_{\re \cdot}$ is independent from $X$ and has the law of $h^{\op{L}}\circ\phi$, where $h^{\op{L}}$ is the lateral part of a free-boundary GFF on $\BB H$ (Example~\ref{example-radial-half-plane}).
	We have a similar description if we parametrize an $\alpha$-quantum cone by the cylinder $\R \times [0,2\pi]$ with $\BB R\times \{0\}$ identified with $\BB R\times\{2\pi\}$, except with $B_{ t}$ instead of $B_{2t}$. 
	One advantage with parametrization using the strip, is that if we do a change of coordinates corresponding to a translation $\psi:\cS\to\cS$ with $\psi(z)=z+b$ and $b \in \BB R$, then the term $Q\log|\psi'|$ is identically equal to zero, which sometimes simplifies calculations. 
\end{remark}

It is also possible to define quantum cones and quantum wedges for $\alpha =Q$. We will only need the quantum wedge case, so we will only give the precise definition in this setting, but we note that the quantum cone definition is similar. 
We first note that as $a \to 0$,  the law of $(B_t+at)_{t\ge 0}$ conditioned to stay positive has a weak limit, which is the 3-dimensional Bessel process. 
By convention, we call this process \emph{Brownian motion conditioned to stay positive} and the negative of this process \emph{Brownian motion conditioned to stay negative}.

\begin{defn}[Quantum wedge for $\alpha=Q$] \label{def:Qwedge} 
	Let $A : \BB R \to \BB R$ be the process $A_t:= B_{2t} + Q t$, where $B_t$ is a standard two-sided linear Brownian motion conditioned so that $ B_t   < 0$ for all $t >  0$.   
	The \notion{$Q$-quantum wedge} is the  LQG surface $(\BB H ,\bh , 0,\infty)$ where $\bh$ is the random distribution on $\BB H$ defined as follows. 
	If $\bh_r(0)$ denotes the semi-circle average of $\bh$ on $\partial B_r(0) \cap \BB H$, then $t\mapsto \bh_{e^{-t}}(0)$ has the same law as the process $A$; and the lateral part $\bh - \bh_{|\cdot|}(0)$ (Example~\ref{example-radial-half-plane}) is independent from $\bh_{|\cdot|}(0)$ and has the same law as the analogous process for a free-boundary GFF on $\BB H$. 
\end{defn}

The conditioning in Definition~\ref{def:Qwedge} is for $t > 0$, rather than for $t <0$ as in Definition~\ref{def:wedge}.
The conditioning in Definition~\ref{def:Qwedge} ensures that $\mu_{\bh}(B_r(0))$ is finite for all $r>0$ and therefore this embedding is often nicer to work with in the case $\alpha=Q$. 
As a consequence, Lemma~\ref{lem:circle} is not true for $\alpha =Q$.
However, one has the obvious analogs of Lemma~\ref{lem:inv} and Remark~\ref{remark-strip} for $\alpha=Q$.

\subsubsection{Thin quantum wedges}

Definition~\ref{def:wedge} has a nontrivial generalization to the case $\alpha \in (Q,Q+\gamma/2)$, whose  motivation will be clear in Section~\ref{subsec:zipper}. 
Let $Z$ be a Bessel process of dimension $\delta\ge 2$ starting from 0.   
By It\^o's calculus, $-2\gamma^{-1}\log Z$   has a unique reparametrization $(X_t)_{t\in \R}$   such that  $0=\inf\{t\in\R\,:\, X_t =0\}$  and  
the quadratic variation of $X$ during $[t',t]$ equals  $2(t-t')$ for each $t'<t$. 
Moreover, the law of $X_t$ is as described in Remark~\ref{remark-strip} where $\alpha$ is such that 
\begin{equation}\label{eq:Bessel}
2(Q-\alpha)= \gamma(\delta-2).
\end{equation}
This gives a way of constructing an $\alpha$-quantum wedge using a $\delta$-dimensional Bessel process for all $\alpha\le Q$.

\begin{figure}[ht!]
	\begin{center}
		\includegraphics[scale=.8]{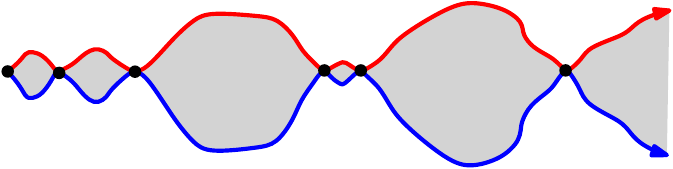} 
		\caption{Sketch of a thin quantum wedge. The surface consists of an infinite string of ``beads", each of which has the topology of the disk, finite total LQG area, and two marked points, strung together end-to-end. The figure is not entirely accurate since in actuality there are infinitely many small beads between any two given beads (similarly to the situation for excursions of a Brownian motion or a Bessel process). However, the total area and the total boundary length of the beads which come before any fixed bead is finite. Note that the embedding in the figure is \emph{not} the same as the one discussed after Definition~\ref{def:thin-wedge} (under the latter embedding, the entire right boundary of the quantum wedge is mapped to a straight line). 
		}\label{fig-thin-wedge}  
	\end{center}
\end{figure}

Now we extend this construction to $\alpha\in (Q,Q+\gamma/2)$. We still let  $\delta$ satisfy \eqref{eq:Bessel} and let $Z$ be the Bessel process starting from 0 as before. 
In this case, $\delta\in(1,2)$, hence $Z$ hits zero infinitely many times.  The It\^o excursion decomposition of $Z$ away from $0$
provides a Poisson point process (p.p.p)  on $\cE\times (0,\infty)$, where $\cE$ is the space of \emph{excursions}, namely, non-negative continuous functions $[0,\infty) \rta [0,\infty)$ which are zero at time 0 and for all sufficiently large times.
For the points $(e,u)$ appearing in this  p.p.p., $2\gamma^{-1}\log e$   has a unique reparametrization $(X^e_t)_{t\in \R}$   such that  $X^e_0=\max\{X^e_t:t\in \R\}$  and  
the quadratic variation of $X^e$ during $[t',t]$ equals  $2(t-t')$ for each $t'<t$.

\begin{definition}[Thin wedge] \label{def:thin-wedge}
	Fix $\alpha\in (Q,Q+\gamma/2)$ and recall the infinite strip $\cS = \R \times (0,\pi)$ as in Remark~\ref{remark-strip}. Consider the Bessel process $Z$ of dimension $\delta$ and its corresponding p.p.p.\ as above.
	For each $(e,u)$ in  the p.p.p., let $(\mcl S , h^e , +\infty, - \infty)$ be the quantum surface where $h^e$ is defined as follows. 
	The average of $h^e$ on each vertical segment $\{t\} \times [0,\pi]$ is equal to $X_t^e$. 
	The lateral part $h^e - X_{\re \cdot}^e$ is independent from $X^e$ and $\{ h^{e'}\,:\, e'\neq e\}$ and has the law of $h^{\op{L}}\circ\phi$, where $h^{\op{L}}$ is the lateral part of a free-boundary GFF on $\BB H$ (Example~\ref{example-radial-half-plane}). 
	The countable ordered collection of quantum surfaces $(\cS,h^e, +\infty,-\infty)$ is called an \notion{$\alpha$-quantum wedge}.
\end{definition}
 
The quantum surfaces $(\cS,h^e, +\infty,-\infty)$ in Definition~\ref{def:thin-wedge} are called the \notion{beads} of the quantum wedge.
In the setting of Definition~\ref{def:thin-wedge}, consider the region $S=\{ (s,y)\in [0,\infty)^2  : 0 <y <  Z_s  \}$ under the graph of $Z$.
Then $S$ has countably many components, each of which is homeomorphic to the disk and intersects $ \{0 \}\times [0,\infty)$ at precisely two marked points.  
These components are in bijection with  excursions $e$ of the p.p.p.\ corresponding to $Z$.  
Definition~\ref{def:thin-wedge} can be thought of as a procedure for associating a quantum surface structure to each component, which in turn gives a quantum surface structure to $\mcl S$. 
Hence for $\alpha \in (Q,Q+\gamma/2)$, an $\alpha$-quantum wedge no longer has the topology of the half-plane (unlike for $\alpha \leq Q$).
We call quantum wedges for $\alpha \leq Q$ \notion{thick} and quantum wedges for $\alpha \in (Q , Q+\gamma/2)$ \notion{thin}.  

See Figure~\ref{fig-thin-wedge} for an illustration of a thin quantum wedge. One motivation for the definition is that thin wedges arise when we cut a quantum wedge or a quantum cone by an SLE$_{\gamma^2}$-type curve which intersects itself or the domain boundary. This is explained in Section~\ref{subsec:zipper}.

\subsection{Finite-volume quantum surfaces}
\label{subsec:finite}

There are several special quantum surfaces which (unlike quantum cones and wedges) have finite total LQG area. In this subsection we will review the definition of one of these quantum surfaces, namely the \emph{quantum disk}. A similar definition gives the \emph{quantum sphere}. 
The definitions of these surfaces are somewhat less intuitive than the definitions of quantum cones and thick quantum wedges, and the details of the definitions are not used in subsequent sections.
As such, the reader may wish to skip this section on a first read. 
As explained in Section~\ref{subsec:rv}, alternative (equivalent) definitions of these surfaces based on the path integral perspective are given in~\cite{dkrv-lqg-sphere,hrv-disk}.

We first define an infinite measure on quantum surfaces, then condition this measure on certain events to obtain probability measures. 
Our exposition is similar to that of~\cite[Appendix A]{gwynne-miller-sle6}. 
As in Remark~\ref{remark-strip}, is convenient to define a quantum disk parameterized by the infinite strip $\mcl S = \BB R\times (0,\pi)$ since the field takes a simpler form in this case (one can parameterize by the unit disk instead by applying a conformal map and using~\eqref{eq:coord}). 
The following definition is given in~\cite[Section~4.5]{wedges}. 

\begin{defn} \label{def-infinite-disk}
	For $\gamma \in (0,2)$, the \emph{infinite measure on quantum disks} is the measure $ \mcl M^{\op{disk}}$ on doubly marked quantum surfaces $(\mcl S ,h^{\op{D}} , -\infty,\infty)$ defined as follows. ``Sample" $e$ from the infinite excursion measure of a Bessel process of dimension $3-\frac{4}{\gamma^2}$ (see~\cite[Remark 3.7]{wedges}). Let $X^e : \BB R\rta \BB R$ be equal to $2\gamma^{-1} \log e$ reparameterized to have quadratic variation $2\,dt$ (note that this process is only defined modulo translations -- different translations give equivalent quantum surface). 
	The average of $h^{\op{D}}$ on each vertical segment $\{t\} \times (0,\pi)$ is equal to $X_t^e$. 
	The lateral part $h^{\op{D}} - X_{\re \cdot}^e$ is independent from $X^e$ and  has the law of $h^{\op{L}}\circ\phi$, where $h^{\op{L}}$ is the lateral part of a free-boundary GFF on $\BB H$ (Example~\ref{example-radial-half-plane}) and $\phi:\cS\to\BB H$ is defined by $\phi(z)=e^{-z}$. 
\end{defn}

\begin{remark} \label{remark-beads}
	By~\eqref{eq:Bessel}, when $\gamma \in (\sqrt 2 ,2)$ and $\alpha = 2Q-\gamma$, the measure $\mcl M^{\op{disk}}$ is exactly the same as the intensity measure of the Poisson point process used to define the the (thin) $\alpha$-quantum wedge. 
	Hence in this case the beads of an $\alpha$-quantum wedge are quantum disks.
\end{remark} 

For each $t> 0$, the measure $\mcl M^{\op{disk}}$ assigns finite mass to the set of surfaces whose corresponding excursion $e$ has time length at least $t$ (as the Bessel excursion measure assigns finite mass to those Bessel excursions with length at least $t$). From this, one can deduce that for each $  \ell > 0$, $\mcl M^{\op{disk}}$ assigns finite mass to the set of surfaces with LQG boundary length at least $\ell$. 
Hence the following definition makes sense (at least for Lebesgue-a.e.\ $\ell > 0$).

\begin{defn} \label{def-disk} 
	Let $\mcl M^{\op{disk}}$ be as in Definition~\ref{def-infinite-disk}. For $\ell > 0$, the \emph{quantum disk with boundary length $\ell$} is the regular conditional distribution of the measure $  \mcl M^{\op{disk}} $ given that $\nu_{h^{\op{D}}}(\bdy\mcl S) = \ell$. 
\end{defn}

The above definitions give us doubly marked quantum disks. One can define singly marked or unmarked quantum disks by forgetting one or both of the marked points. 
It is shown in~\cite[Proposition~A.8]{wedges} that the marked points for a doubly marked quantum disk are independent samples from the $\gamma$-LQG boundary length measure if we condition on the disk viewed as an unmarked quantum surface. Equivalently, suppose that $(\mcl S, h^{\op{D}} , -\infty,+\infty)$ is a quantum disk, that $x,y \in \partial \mcl S$ are picked independently from the $\gamma$-LQG boundary measure $\nu_{h^{\op{D}}}$, and that $\phi \colon \mcl S \to \mcl S$ is a conformal transformation with $\phi(-\infty) = x$ and $\phi(+\infty) = y$.  Then the fields $ h^{\op{D}} \circ \phi + Q \log|\phi'|$ and $h^{\op{D}}$ have the same law modulo a horizontal translation of~$\mcl S$.

A priori the regular conditional laws of the infinite quantum disk measure given the boundary length only make sense for a.e.\ $\ell > 0$. However, as explained just after~\cite[Definition~4.21]{wedges}, the quantum disk possesses a scale invariance property which allows us to define these regular conditional laws for every choice of $\ell > 0$. In particular, if $(\mcl S  , h^{\op{D}} , -\infty,\infty)$ is a quantum disk with boundary length $\ell$ and $C >0$, then $(\mcl S , h^{\op{D}} + C , -\infty,\infty)$ is a quantum disk with boundary length $e^{\gamma C} \ell$.

The \emph{quantum sphere} is a finite-volume LQG surface with the topology of the Riemann sphere which can be defined in a similar manner to the quantum disk above, except that it is natural to condition on the LQG area rather than the LQG boundary length; see~\cite[Section 4.5]{wedges}.

There are analogs of Lemma~\ref{lem:gamma} for both the quantum disk and the quantum sphere. See~\cite[Appendix A]{wedges}.
For example, the quantum sphere can be obtained as the weak limit of a zero-boundary GFF on a bounded domain conditioning on a rare event~\cite[Proposition A.11]{wedges}. 	
This limit is also considered  in \cite{rv-tail,wong-tail}. There  the precise asymptotic of the probability of rare event is obtained, which  can be viewed as a convergence of quantum surfaces at the partition function level.

\subsubsection{Path integral approach}
\label{subsec:rv}

There is an entirely different approach to constructing LQG surfaces from the one considered in this article which was initiated by David, Kupiainen, Rhodes, and Vargas~\cite{dkrv-lqg-sphere}.
This approach is based on a rigorous version of Polyakov's path integral~\cite{polyakov-qg1} wherein one directly makes sense of the partition function of the random surface coming from Polyakov's action.
Reviewing the details of this path integral approach is beyond the scope of this survey. 
We simply point out that for any given topological surface $S$, the path integral approach in principle gives a systematic way of constructing the canonical LQG surface with $S$-topology.
This has been carried out when $S$ is the sphere~\cite{dkrv-lqg-sphere},  disk~\cite{hrv-disk}, torus~\cite{drv-torus}, annulus~\cite{remy-annulus}, and
closed Riemann surface of genus $g \geq 2$~\cite{grv-higher-genus}. 
The advantage of this approach is that it is directly related to the conformal bootstrap of Liouville conformal field theory, which is in principle integrable. 
Important developments in this direction an be found in~\cite{krv-dozz,remy-fb-formula,rz-gmc-interval}.
See~\cite{vargas-dozz-notes} for lecture notes on the path integral approach to LQG.

When the surface $S$ is the sphere, it is proved in \cite{ahs-sphere} that the aforementioned Bessel process definition of the quantum sphere from~\cite{wedges} is equivalent the one based on path integrals in~\cite{dkrv-lqg-sphere}. The analogous statement for the quantum disk is proven in~\cite{cercle-quantum-disk}. When $S$ is a not simply connected, so far the the canonical LQG surface has not been constructed along the lines of the ideas surveyed in this paper.  It is an open question to find mating-of-trees type results for LQG surfaces with non-simply-connected topology; see Problem~\ref{prob-higher-genus}.

\subsection{Schramm-Loewner evolution}\label{subsec:sle}

In this subsection we will review the definition of ordinary SLE$_\kappa$, SLE$_\kappa(\rho)$, and space-filling SLE$_\kappa$. 
All of the results and definitions in this subsection can be taken as black boxes elsewhere in the paper. 
In particular, some of the results discussed in this section are proven using the theory of imaginary geometry~\cite{ig1,ig2,ig3,ig4}, but the reader does not need to understand this theory to understand the rest of this survey. Neither imaginary geometry nor Loewner evolution theory is mentioned outside of this subsection.  

\subsubsection{Ordinary SLE$_\kappa$}
The Schramm-Loewner evolution (SLE$_\kappa$) is a one-parameter family of random fractal curves introduced  by Schramm~\cite{schramm0}.
We give only a brief introduction and refer to~\cite{werner-notes,lawler-book,bn-sle-notes} for more detail. 
Fix $\kappa> 0$.
Let $\{B_t\}_{t\geq 0}$ be a standard linear Brownian motion and let $\{g_t\}_{t\geq 0}$ be the unique family of conformal maps which satisfies the \emph{Loewner equation} with \emph{driving function} $W_t = \sqrt\kappa B_t$, namely 
\begin{equation}\label{eq:Loewner}
\p_tg_t(z)=\frac{2}{g_t(z)-  W_t} ,\qquad g_0(z)=z . 
\end{equation} 
Then each $g_t$ maps a subdomain $H_t$ of $\BB H$ to $\BB H$.
The maps $\{g_t\}_{t\geq 0}$ are called the \emph{Loewner maps}.\footnote{In some other literature, the term \emph{Loewner map} is used for the functional which takes in the driving function and outputs the growing family of hulls. We will not make explicit reference to this functional here, so there is no danger of ambiguity.}

It is shown in~\cite{schramm-sle} that there is a curve $\eta$ from 0 to $\infty$ in $\ol{\BB H}$ such that the \emph{hull} $\BB H\setminus H_t$ is the set of points in $\BB H$ which are disconnected from $\infty$ by $\eta([0,t])$.
This curve $\eta$ is defined to be the chordal SLE$_\kappa$ on $(\BB H , 0,\infty)$ with the capacity parametrization (this means that $\lim_{z\rta\infty} z(g_t(z) -z) = 2t$).  

Chordal $\SLE_\kappa $ on $(\BB H, 0,\infty)$ under the capacity parameterization is uniquely characterized by the following properties.
\begin{itemize}
	\item \textbf{Scale invariance.} $(\eta(at))_{t \ge 0}\eqD(a^{1/2}\eta(t))_{t\ge 0}$. 
	\item \textbf{Domain Markov property.} For each stopping time $\tau$ for $\sqrt\kappa B_t$, the conditional law given $(\eta(t))_{t\in[0,\tau]}$ of the image of $(\eta(t+\tau))_{t\ge 0}$ under $g_\tau(\cdot) -W_\tau$ has the same law  as $\eta$..
\end{itemize}
Given a simply connected domain $D$ whose boundary is parameterizable as a closed curve, and two points $a\neq b$ on $\bdy D$, 
let $\phi:\BB H\to D$ be a conformal map such that $\phi(0)=a$ and $\phi(\infty)=b$. 
By the scale invariance property of SLE$_\kappa$, the law of $\phi(\eta)$ as a curve in $\ol D$ \emph{modulo monotone time reparametrization} does not depend on the choice of $\phi$. We call a curve with this law \notion{chordal SLE$_\kappa$ on $(D,a,b)$}.  

If $a \in \bdy D$ and $b\in  D$ instead,
we may similarly define  \notion{radial SLE$_\kappa$ on $(D,a,b)$} using radial Loewner chains.  See \cite[Sections 4.2 and 6.5]{lawler-book}.
We can also define \emph{whole-plane SLE$_\kappa$} between any two specified distinct points $a,b\in\BB C \cup\{\infty\}$~\cite[Sections 4.3 and 6.6]{lawler-book}.
We skip the construction as well as other details on the Loewner chain definition of SLE as they are not essential to understand this article.  
In fact,  mating of trees provides a framework to study SLE without any direct use of Loewner chains (see Section~\ref{subsec:app-sle} for some examples). 

It is shown in \cite{schramm-sle} that, chordal, radial, and whole-plane $\SLE_\kappa$ as $\kappa$ varies has three topological phases.
When $\ul\kappa \in(0,4]$, SLE$_\kappa$ curves are simple and do not touch the domain boundary except at their endpoints. When $\kappa\in(4,8)$, SLE$_\kappa$ curves are non-simple and touch the domain boundary but are not space-filling.\footnote{Here and throughout the rest of the paper, we will typically write $\ul\kappa$ for the SLE parameter when it is constrained to be in $(0,4]$ and $\kappa$ for the SLE parameter when it is constrained to be larger than 4 or when it is unconstrained. The reason for this convention is that we will more often consider $\kappa > 4$. Note that this differs from the convention in~\cite{ig1,ig2,ig3,ig4}.} 
When $\kappa\ge 8$, SLE$_\kappa$ curves are space-filling. 

\subsubsection{SLE$_\kappa(\rho)$} 
\label{sec-sle-rho}

Let $\rho^{\op L} , \rho^{\op R}  > -2$ and $\kappa > 0$. \notion{Chordal SLE$_\kappa(\rho^{\op L};\rho^{\op R})$} is a variant of SLE$_\kappa$ where one keeps track of two extra marked \emph{force points}, which was first studied in~\cite{lsw-restriction,sw-coord} (one can also consider more than 2 force points, but we will only need two here).  
Chordal SLE$_\kappa(\rho^{\op L};\rho^{\op R})$ on $(\BB H , 0,\infty)$ with force points started at $x^{\op L} < 0$ and $x^{\op R} > 0$ is the random curve $\eta$ which generates the Loewner evolution (recall~\eqref{eq:Loewner}) whose driving function $W_t$ solves the system of SDEs
\eqb
dW_t = \sqrt \kappa dB_t + \sum_{q\in \{L,R\}} \frac{\rho^q}{W_t - V_t^q} \,dt ,\quad dV_t^q = \frac{2}{V_t^q - W_t}\,dt
\eqe
with the initial conditions $W_0 = 0$, $V_0^{\op L}  =x^{\op L}$, $V_0^{\op R} = x^{\op R}$. 
See~\cite[Theorem 1.3 and Section 2.2]{ig1} for a proof that the solution to this SDE exists and that the resulting Loewner evolution is driven by a curve.
Note that SLE$_\kappa(0;0)$ is just ordinary SLE$_\kappa$. 

We will only need to consider the case when the force points are started immediately to the left and right of the origin, i.e, at $0^-$ and $0^+$, which can be obtained from the case above by taking a limit as $x^{\op L}\rta 0^-$ and $x^{\op R}\rta 0^+$. In this case $V_t^{\op L}$ (resp.\ $V_t^{\op R}$) is the image of the leftmost (resp.\ rightmost) point of $\eta([0,t]) \cap \BB R$ under the Loewner map $g_t$. 
From now on, we will assume that the force points are immediately to the left and right of the starting point whenever we talk about SLE$_\kappa(\rho^{\op L};\rho^{\op R})$. 

As for ordinary SLE$_\kappa$, the law of SLE$_\kappa(\rho^{\op L};\rho^{\op R})$ is invariant under spatial scaling, but unlike for ordinary SLE$_\kappa$ it does not satisfy the domain Markov property. For a simply connected domain $D\subset\BB C$ with distinct marked boundary points $a,b$, we define SLE$_\kappa(\rho^{\op L};\rho^{\op R})$ on $(D,a,b)$ to be the image of SLE$_\kappa(\rho^{\op L};\rho^{\op R})$ on $(\BB H , 0,\infty)$ under a conformal map (the choice of conformal map does not matter due to scale invariance). 

For $\ul\kappa \leq 4$, chordal SLE$_{\ul\kappa}(\rho^{\op L};\rho^{\op R})$ is always a simple curve, but for some values of $\rho^{\op L},\rho^{\op R}$ it can have boundary intersections. The SLE$_{\ul\kappa}$ curve locally looks like an SLE$_{\ul\kappa}$ curve away from its boundary intersections (in the sense of absolute continuity). The following lemma is proven in~\cite[Lemma 15]{dubedat-duality} (see also~\cite[Section 4]{ig1}).

\begin{lem} \label{lem-sle-rho-hit}
	For $\ul\kappa\leq 4$, SLE$_{\ul\kappa}(\rho^{\op L};\rho^{\op R})$ on $(\BB H  , 0,\infty)$ hits $(-\infty,0)$ (resp.\ $(0,\infty)$) if and only if $\rho^{\op L} < \ul\kappa/2-2$ (resp.\ $\rho^{\op R} < \ul\kappa/2-2$).
\end{lem}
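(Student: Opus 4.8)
The plan is to reduce the statement to the classical fact that a Bessel process of dimension $d$ hits $0$ if and only if $d<2$, via the Loewner SDE for the driving function. Recall that $\SLE_{\ul\kappa}(\rho^{\op L};\rho^{\op R})$ on $(\BB H,0,\infty)$ with force points at $0^-$ and $0^+$ is generated by the Loewner evolution \eqref{eq:Loewner} whose driving function $W_t$ solves the SDE system displayed above, together with the force-point ODEs $dV_t^q = \tfrac{2}{V_t^q - W_t}\,dt$. Since $V_t^{\op R}$ is the image of the rightmost point of $\eta([0,t])\cap\BB R$ under $g_t$, the gap $X_t := V_t^{\op R} - W_t$ is nonnegative, is positive for small $t$, and (because $\ul\kappa\le 4$ makes $\eta$ a simple curve) the curve $\eta$ touches $(0,\infty)$ at some positive time if and only if $X_t$ returns to $0$ at some positive time. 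As $\rho^{\op R}>-2$, the SDE continues to have a unique solution past such a time, so this equivalence is unambiguous. (The analogous statement for $(-\infty,0)$ follows from the reflection symmetry $z\mapsto -\bar z$, which swaps $\rho^{\op L}\leftrightarrow\rho^{\op R}$, so it suffices to treat the right side.)

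Next I would compute $dX_t$ by Itô's formula; since $X_t$ is a difference of an Itô process and a finite-variation process there is no second-order correction, and writing $O_t := W_t - V_t^{\op L}>0$ for the gap to the left force point one gets
\[
dX_t = \frac{2+\rho^{\op R}}{X_t}\,dt \;-\; \frac{\rho^{\op L}}{O_t}\,dt \;-\; \sqrt{\ul\kappa}\,dB_t .
\]
Ignoring the middle term, the process $\tfrac{2+\rho^{\op R}}{X_t}\,dt - \sqrt{\ul\kappa}\,dB_t$ is, after a deterministic time change, $\sqrt{\ul\kappa}$ times a Bessel process of dimension $d^{\op R} = 1 + \tfrac{2(2+\rho^{\op R})}{\ul\kappa}$ (the condition $\rho^{\op R}>-2$ guarantees $d^{\op R}>1$, so this is a genuine nonnegative semimartingale reflecting at $0$). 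Hence this ``pure'' process reaches $0$ if and only if $d^{\op R}<2$, i.e.\ if and only if $\rho^{\op R} < \ul\kappa/2 - 2$, which already settles the one-force-point case $\rho^{\op L}=0$.

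It remains to show the extra drift $-\rho^{\op L}/O_t$ does not affect whether $X$ hits $0$. For $\ep>0$ let $\tau_\ep$ be the first time $\min(X_t,O_t)\le\ep$ (or a fixed large capacity time is reached); on $[0,\tau_\ep]$ this drift is bounded, so by Girsanov the law of $X|_{[0,\tau_\ep]}$ is mutually absolutely continuous with that of the Bessel-type process stopped at the analogous time, and therefore $X$ returns to $0$ before $O$ drops below $\ep$ with positive (resp.\ zero) probability exactly when $d^{\op R}<2$ (resp.\ $d^{\op R}\ge2$). Letting $\ep\to 0$, one must rule out the scenario in which $\eta$ approaches $(0,\infty)$ only as $O_t\to 0$, i.e.\ near the origin; here I would use the symmetric SDE $dO_t = \tfrac{2+\rho^{\op L}}{O_t}\,dt - \tfrac{\rho^{\op R}}{X_t}\,dt + \sqrt{\ul\kappa}\,dB_t$ (so $O$ reaches $0$ iff $\rho^{\op L}<\ul\kappa/2-2$) together with the observation that $X$ and $O$ cannot both tend to $0$ at the same time in the relevant regime, or simply invoke the existing SDE analysis in~\cite[Lemma~15]{dubedat-duality} and~\cite[Section~4]{ig1}. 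This decoupling of the two force points near the origin is the only delicate point; everything else is the routine Itô computation above plus the classical Bessel hitting criterion.
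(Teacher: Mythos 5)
The paper does not prove this lemma; it only cites \cite[Lemma 15]{dubedat-duality} and \cite[Section 4]{ig1}, and your route --- reducing boundary hitting to the zero set of the gap process $X_t=V_t^{\op R}-W_t$ and comparing with a Bessel process of dimension $d^{\op R}=1+\tfrac{2(2+\rho^{\op R})}{\ul\kappa}$ --- is exactly the standard argument used in those references. Your It\^o computations for $dX_t$ and $dO_t$ and the dimension count are correct, and you have correctly isolated the only delicate point, namely that the extra drift $-\rho^{\op L}/O_t$ must not alter the hitting behaviour of $X$ at $0$.

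However, that decoupling step is asserted rather than proved, and the fact that makes it work is worth stating explicitly: $X_t+O_t=V_t^{\op R}-V_t^{\op L}$ satisfies $\tfrac{d}{dt}(X_t+O_t)=\tfrac{2}{X_t}+\tfrac{2}{O_t}>0$, so it is strictly increasing from $0$. Hence for any fixed $s>0$ and all $t\ge s$ one has $X_t+O_t\ge c_s:=X_s+O_s>0$, so whenever $X_t<c_s/2$ one automatically has $O_t>c_s/2$ and the perturbing drift is bounded by $2|\rho^{\op L}|/c_s$ \emph{precisely in the region where $X$ is close to $0$}. This is stronger than your remark that $X$ and $O$ cannot vanish simultaneously: it rules out entirely the scenario you worry about, in which the blow-up of $-\rho^{\op L}/O_t$ drives $X$ to $0$, because that drift only acts when $X$ is already bounded away from $0$. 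With this in hand, the non-hitting direction for the critical and supercritical cases $d^{\op R}\ge 2$ still needs a localized Girsanov argument (stopping before $O$ drops below $\varepsilon$ and before a fixed capacity time, then removing the cutoffs); your proposed two-sided comparison with Bessel processes of dimensions $1+\tfrac{2(2+\rho^{\op R}\mp M\delta)}{\ul\kappa}$ handles $d^{\op R}>2$ and $d^{\op R}<2$ but fails at the boundary case $\rho^{\op R}=\ul\kappa/2-2$, where the perturbed dimensions straddle $2$. Finally, ``hits'' in the lemma means ``hits almost surely,'' which follows from positive probability of hitting together with scale invariance and a zero--one law; this upgrade should be mentioned. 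These are fixable gaps in an otherwise correct and standard argument.
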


In the case when SLE$_{\ul\kappa}(\rho^{\op L};\rho^{\op R})$ does hit the boundary, the boundary intersection is a totally disconnected uncountable fractal set. Its Hausdorff dimension is computed in~\cite{miller-wu-dim}. 

For $\kappa > 4$, we have $\kappa-6>-2$ and so we can define chordal SLE$_\kappa(\kappa-6;0)$ as above (we will typically drop the 0 in the parentheses and specify whether the force point is to the left or the right of the starting point instead). It follows from~\cite[Theorem 3]{sw-coord} that this process has the following special target invariance property which is important for the construction of conformal loop ensembles in~\cite{shef-cle} and for the construction of space-filling SLE just below.

\begin{lem} \label{lem-target-invariant}
	Let $\kappa > 4$, let $D\subset \BB C$ be simply connected and let $a,b,b'\in\bdy D$ be distinct.
	There is a coupling of chordal SLE$_\kappa(\kappa-6)$ curves on $(D,a,b )$ and $(D,a,b')$ such that the two curves agree until the first time at which they disconnect $b$ from $b'$. 
\end{lem}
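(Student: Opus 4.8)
The plan is to reduce to the upper half-plane and then deduce the statement from the coordinate change (target change) formula for $\SLE_\kappa(\rho)$ of Schramm and Wilson~\cite{sw-coord}. First I would use conformal invariance of the definition of $\SLE_\kappa(\rho)$ (Section~\ref{sec-sle-rho}) to reduce to the case $D=\BB H$, $a=0$, with the force point on a fixed side of $a$ (the two sides being related by reflection) and $b,b'$ two distinct points of $\bdy\BB H\setminus\{0\}=(\BB R\setminus\{0\})\cup\{\infty\}$. Write $(g_t)$, $W_t$, $V_t$ for the Loewner maps, driving function and force-point image of a chordal curve from $0$ targeted at $\infty$.

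The key input is the target-change formula~\cite[Theorem 3]{sw-coord} (see also~\cite[Section 2]{ig1}): applying a M\"obius automorphism $\phi$ of $\BB H$ with $\phi(0)=0$ to chordal $\SLE_\kappa(\rho_1,\dots,\rho_n)$ on $(\BB H,0,\infty)$ with force points $x_1,\dots,x_n$ produces chordal $\SLE_\kappa(\rho_1,\dots,\rho_n,\ \kappa-6-\sum_j\rho_j)$ on $(\BB H,0,\phi(\infty))$ with force points $\phi(x_1),\dots,\phi(x_n)$ and one additional force point, of weight $\kappa-6-\sum_j\rho_j$, at the new target $\phi(\infty)$. The point is that for a single force point of weight $\rho_1=\kappa-6$ this extra weight is $\kappa-6-(\kappa-6)=0$, so the extra force point is invisible. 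Consequently, chordal $\SLE_\kappa(\kappa-6)$ on $(\BB H,0,y)$ with force point immediately to the prescribed side of $0$ has, up to the first time $T_y$ at which $y$ is disconnected from $\infty$, the same law of Loewner hulls as chordal $\SLE_\kappa(\kappa-6)$ on $(\BB H,0,\infty)$ with the same force point; equivalently, its driving pair solves the $y$-independent system $dW_t=\sqrt\kappa\,dB_t+\tfrac{(\kappa-6)\,dt}{W_t-V_t}$, $dV_t=\tfrac{2\,dt}{V_t-W_t}$, up to time $T_y$.

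Given this, the coupling is built as follows. On one probability space take a Brownian motion $B$, solve the system above for $(W_t,V_t)$, and let $\eta$ be the resulting curve, i.e.\ chordal $\SLE_\kappa(\kappa-6)$ on $(\BB H,0,\infty)$ with the given force point. Let $T_b,T_{b'}$ be the first times $\eta$ disconnects $b$, resp.\ $b'$, from $\infty$, and put $\tau:=\min(T_b,T_{b'})$. One checks that before time $\tau$ both $b$ and $b'$ lie on the boundary of the unbounded complementary component of the hull, hence are not separated by $\eta$, while at time $\tau$ one of them is cut off from that component and therefore from the other; so $\tau$ is precisely the first time $\eta$ disconnects $b$ from $b'$. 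By the key input, the hull process of $\eta$ stopped at $\tau$ has the same law as that of chordal $\SLE_\kappa(\kappa-6)$ on $(\BB H,0,b)$ stopped when it disconnects $b$ from $b'$, and likewise with $b'$ in place of $b$; since a curve is recovered from its family of hulls, the laws of the two curves in the statement, each stopped at the disconnection time, coincide, and one may couple them to agree up to that time (realize both as $\eta$ up to $\tau$ and then continue independently according to the respective conditional laws). Transporting through the conformal map of the reduction step gives the lemma. The main obstacle is the key input itself: proving the $\kappa-6-\sum_j\rho_j$ coordinate-change formula from scratch requires the It\^o computation of how the Loewner driving function transforms under a M\"obius map --- which is exactly where the special constant $\kappa-6$, hence the special role of $\SLE_\kappa(\kappa-6)$, enters --- and this is carried out in~\cite{sw-coord}; a secondary point needing care, for $\kappa\in(4,8)$ where the curve is non-simple but not space-filling, is the identification of $\tau=\min(T_b,T_{b'})$ with the disconnection time of $b$ and $b'$.
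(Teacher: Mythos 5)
Your argument is correct and is exactly the route the paper takes: the paper simply cites~\cite[Theorem 3]{sw-coord}, and your proof is the standard unwinding of that citation (the added force point has weight $\kappa-6-(\kappa-6)=0$, so the driving SDE is target-independent up to the disconnection time, from which the coupling follows). No gaps worth noting.
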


If $\rho > -2$, one can also define \notion{whole-plane SLE$_\kappa(\rho)$} as a random continuous curve between any two points in $\BB C \cup\{\infty\}$; see~\cite[Section 2.1]{ig4} or~\cite[Section 6.3]{wedges}.  
Whole-plane SLE$_\kappa(\rho)$ from 0 to $\infty$ can be described as a whole-plane Loewner evolution with a certain explicit driving function, and its law is scale invariant.
As for chordal SLE$_\kappa(\rho^L;\rho^R)$, whole-plane SLE$_\kappa(\rho^L;\rho^R)$ has a distinguished ``force point" which one keeps track of. To explain this, let $K_t$ be the set of points disconnected from $\infty$ by the curve at time $t$ and view $\bdy K_t$ as a collection of prime ends.
For each $t\geq 0$, the force point is the image under the Loewner map $g_t : \BB C\setminus K_t \rta \BB C\setminus \ol{\BB D}$ of the unique point on $\bdy K_t$ other than the tip of the curve where the arcs of $\bdy K_t$ traced by the left and right sides of the curve meet. In the case when the curve is simple, this special point of $\bdy K_t$ is just the origin. 
Whole-plane SLE$_\kappa(\rho)$ between two distinct points in the Riemann sphere is the image of whole-plane SLE$_\kappa(\rho)$ from 0 to $\infty$ under an appropriate M\"obius transformation. 
The following lemma is a consequence of~\cite[Lemmas 2.4 and 2.6]{ig4}.

\begin{lem} \label{lem-sle-rho-whole-plane}
	For $\ul\kappa\leq 4$, whole-plane SLE$_{\ul\kappa}(\rho)$ has self-intersections if and only if $\rho < \ul\kappa/2-2$. 
\end{lem}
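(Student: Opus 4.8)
The plan is to reduce the statement to the boundary-hitting criterion for chordal SLE$_{\ul\kappa}(\rho)$ (Lemma~\ref{lem-sle-rho-hit}) via the conformal Markov property of whole-plane SLE$_{\ul\kappa}(\rho)$. Since whole-plane SLE$_{\ul\kappa}(\rho)$ between two distinct points of $\C\cup\{\infty\}$ is a M\"obius image of whole-plane SLE$_{\ul\kappa}(\rho)$ from $0$ to $\infty$ and having a self-intersection is conformally invariant, I would first reduce to the case of a whole-plane SLE$_{\ul\kappa}(\rho)$ curve $\eta$ from $0$ to $\infty$, with $\rho>-2$ assumed throughout so that $\eta$ is defined. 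Parametrize $\eta$ so that minus the log conformal radius of the unbounded component $U_t$ of $\C\setminus\eta([0,t])$ seen from $\infty$ grows at unit speed. By~\cite[Lemma 2.4]{ig4}, for each fixed $t>0$ the conditional law of $\eta|_{[t,\infty)}$ given $\eta|_{[0,t]}$ is that of a radial SLE$_{\ul\kappa}(\rho)$ from $\eta(t)$ to $\infty$ in $U_t$ with a single force point at the prime end of $\partial U_t$ lying immediately counterclockwise from the tip $\eta(t)$. Away from its target $\infty$, this curve is mutually absolutely continuous with respect to the chordal SLE$_{\ul\kappa}(\rho)$ from $\eta(t)$ to $\infty$ in $U_t$ with the same force point; transporting to $\BB H$ by a conformal map carrying $\eta(t)\mapsto 0$ and the force point to $0^-$ gives the chordal SLE$_{\ul\kappa}(\rho)$ with a single force point at $0^-$ of Section~\ref{sec-sle-rho}.

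Next I would establish the topological equivalence: $\eta$ has a self-intersection if and only if for some $t>0$ the curve $\eta|_{[t,\infty)}$ meets $\partial U_t$ at a point other than $\eta(t)$. For the forward direction, if $\eta(s)=\eta(u)$ with $0<s<u$, choose $t\in(s,u)$ with $\eta(t)\neq\eta(s)$; then $\eta(s)\in\eta([0,t])\subseteq\C\setminus U_t$ while $\eta(s)=\eta(u)$ lies on the trace of $\eta|_{[t,\infty)}$, which is contained in $\overline{U_t}$, so $\eta(s)\in\overline{U_t}\cap(\C\setminus U_t)=\partial U_t$ with $\eta(s)\neq\eta(t)$. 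The reverse direction is immediate from $\partial U_t\subseteq\eta([0,t])$. Combining this with the first paragraph, $\eta$ has a self-intersection if and only if the chordal SLE$_{\ul\kappa}(\rho)$ with single force point at $0^-$ hits $\partial\BB H\setminus\{0\}$.

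I would conclude by invoking Lemma~\ref{lem-sle-rho-hit}: for $\ul\kappa\le 4$ that chordal SLE$_{\ul\kappa}(\rho)$ hits $(-\infty,0)$ precisely when $\rho<\ul\kappa/2-2$, and since it has no force point on the right it is locally absolutely continuous with respect to ordinary SLE$_{\ul\kappa}$ near $(0,\infty)$ and hence never hits $(0,\infty)$ for $\ul\kappa\le 4$; so it hits $\partial\BB H\setminus\{0\}$ iff $\rho<\ul\kappa/2-2$, which by the previous paragraph is equivalent to $\eta$ having a self-intersection. (The two sides of the tip are interchangeable by reflection; this transfer of the chordal boundary-hitting dichotomy to the whole-plane case is precisely the content of~\cite[Lemma 2.6]{ig4}, so in practice one would simply cite Lemmas~2.4 and~2.6 of~\cite{ig4} in place of re-deriving the Markov property and absolute continuity.) The step I expect to be the main obstacle is the topological bookkeeping of the middle paragraph --- faithfully matching self-intersections of $\eta$ with hits of $\partial U_t$ by the future curve, together with keeping track of which side of the tip carries the single force point so that the correct threshold $\ul\kappa/2-2$ (the one-force-point chordal hitting threshold, rather than a two-sided variant) emerges.
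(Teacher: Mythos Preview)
Your proposal is correct and matches the paper's approach exactly: the paper does not give an argument but simply states that the lemma is a consequence of \cite[Lemmas 2.4 and 2.6]{ig4}, which is precisely the reduction you outline (and explicitly acknowledge at the end of your write-up). Your additional topological bookkeeping and the transfer via absolute continuity to the chordal boundary-hitting criterion of Lemma~\ref{lem-sle-rho-hit} are the content behind that citation.
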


In the case when $\eta$ intersects itself, it does so infinitely many times and the complement of $\eta$ is a countable union of Jordan domains. 
The boundary of each of these Jordan domains is the union of two segments of $\eta$ (each of which looks like an SLE$_{\ul\kappa}$ curve) which intersect only at their endpoints. See Figure~\ref{fig-sle-rho} for illustrations of chordal SLE$_{\ul\kappa}(\rho^{\op{L}} ; \rho^{\op{R}})$ and whole-plane SLE$_{\ul\kappa}(\rho)$ curves in the regime when the weights are smaller than $\ul\kappa/2-2$.

\begin{figure}[ht!]
	\begin{center}
		\includegraphics[scale=0.92]{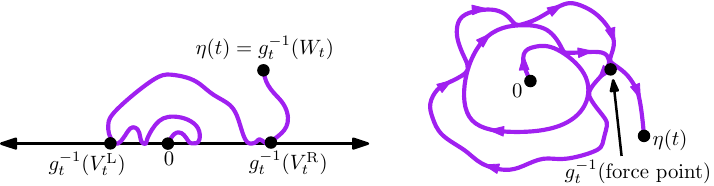}
	\end{center}
	\caption{\label{fig-sle-rho} 
		\textbf{Left:} Sketch of a segment of chordal SLE$_{\ul\kappa}(\rho^{\op{L}} ,\rho^{\op{R}})$ for $\ul\kappa \leq 4$ and $\rho^{\op L} ,\rho^{\op R} \in (-2,\ul\kappa/2-2)$. In actuality the boundary intersection is an uncountable, totally disconnected fractal set which intersects every neighborhood of $0$. If only one of $\rho^{\op L}$ or $\rho^{\op R}$ is less than $\ul\kappa/2-2$, the curve only intersects the boundary to one side of the origin. 
		\textbf{Right:} Sketch of a segment of whole-plane SLE$_{\ul\kappa}(\rho)$ for $\rho < \ul\kappa/2-2$. The arrows indicate the direction that the curve is traveling. 
		Unlike SLE$_\kappa$ for $\kappa \in (4,8)$, the curve can only intersect itself after winding around the origin so there are non-trivial segments of the curve which do not include any self-intersection points. 
	}
\end{figure}

\begin{remark}
	Both chordal SLE$_{\ul\kappa}(\rho^{\op L};\rho^{\op R})$ and whole-plane SLE$_{\ul\kappa}(\rho)$ for $\ul \kappa < 4$ arise as \emph{flow lines} of the Gaussian free field in the theory of imaginary geometry~\cite{ig1,ig2,ig3,ig4}. For $\kappa > 4$, such curves instead arise as \emph{counterflow lines}. 
\end{remark}

\subsubsection{Definition of space-filling SLE$_\kappa$}
\label{sec-space-filling}

Recall that for $\kappa \in (4,8)$, SLE$_\kappa$ hits (but does not cross) itself and makes ``bubbles", but does not fill space. 
In the mating-of-trees theory for $\gamma\in (\sqrt 2 ,2)$, it is crucial to have a space-filling variant of $\SLE_\kappa$ for $\kappa\in(4,8)$.
Intuitively, space-filling SLE$_\kappa$ for $\kappa \in (4,8)$ is obtained by starting with ordinary chordal SLE$_\kappa$ and iteratively filling in the ``bubbles" it makes by chordal SLE$_\kappa$ curves, then concatenating these curves in an appropriate order. 
The definition is the continuum analog of the construction of the discrete Peano curve $\lambda$ in Section~\ref{subsec:site}, with ordinary chordal SLE$_\kappa$ curves playing the role of percolation interfaces. 
We emphasize that the construction given in this section is equivalent to, but presented somewhat differently than, the original construction of space-filling SLE$_\kappa$ in~\cite{ig4}. The construction given here makes some of the properties of space-filling SLE$_\kappa$ as well as the link to the discrete setting more transparent. 

For pedagogical purposes, we first present the construction of  space-filling $\SLE_\kappa$ for $\kappa=6$, 
as it avoids one technicality and it is directly linked to Section~\ref{subsec:site} (see~Section~\ref{subsec:scaling}).  
For a given Jordan domain $D$ and  $a\in \p D$,
we first iteratively sample  two random orderings $\prec_+$ and $\prec_-$ on $\Q^2\cap D$, which we call the counterclockwise and clockwise ordering, respectively. 
These orderings will correspond to the order in which the counterclockwise and clockwise, respectively, space-filling SLE hits points of $\Q^2\cap D$.
See Figure~\ref{fig-sle6-decomp} for an illustration.

Suppose we want to define $\prec_\bullet$ with $\bullet=+$ or $-$. 
We start by assigning an orientation to $\p D$.
When $\bullet =+$ (resp.\ $\bullet=-$) we assign clockwise (resp.\ counterclockwise) orientation to $\p D$.\footnote{At first sight it may seem counterintuitive that we call the space-filling SLE counterclockwise (resp.\ clockwise) if in the iterative construction of the total ordering, $\bdy D$ is oriented clockwise (resp.\ counterclockwise). The reason for this convention is that, as we will see later, with this choice, the counterclockwise (resp.\ clockwise) SLE covers up the boundary of the domain in a counterclockwise (resp.\ clockwise) order.}
Let $b$ be a point on $\p D$ with maximal Euclidean distance to $a$, i.e., $b$ is diametrically opposite from $a$. 
Let $\ol{ab}$ be the  segment on $\bdy D$ from $a$ to $b$ with the \emph{same} orientation as $\p D$.
Let $\eta_D$ be an $\SLE_6$ on $(D,a,b)$. 

Suppose $\cB$ is a connected component of $D\setminus\eta_D$. If $\bdy \cB\cap \ol{ab}\neq \emptyset$, we call $\cB$ a \emph{dichromatic bubble}.\footnote{The convention for ``dichromatic" versus ``monochromatic" can be remembered by thinking of curves with clockwise orientation as being colored blue and curves with counterclockwise orientation as being colored red. Then dichromatic bubbles are the ones whose boundaries have both red and blue segments; see Figure~\ref{fig-sle6-decomp}.}
Otherwise, we call $\cB$ a \emph{first generation monochromatic bubble}, in which case we assign $\p \cB$ the orientation determined by the direction in which it is traced by $\eta_D$ and let its \emph{root} $x_\cB$ be the place where $\eta_D$ starts (equivalently, finishes) tracing $\bdy\cB$.

\begin{figure}[ht!]
	\begin{center}
		\includegraphics[scale=0.92]{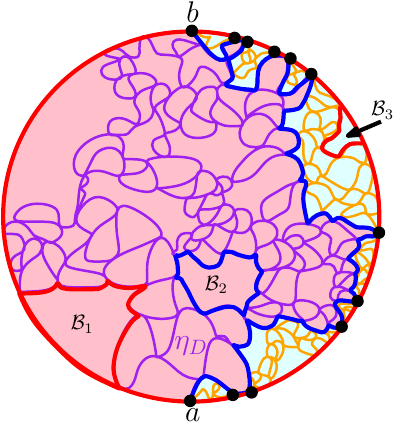}
	\end{center}
	\caption{\label{fig-sle6-decomp} 
		Illustration of the first stage of the inductive construction of the ordering $\prec_-$. Curves with clockwise (resp.\ counterclockwise) orientation are colored blue (resp.\ red). Points in first generation monochromatic bubbles (resp.\ dichromatic bubbles) are colored pink (resp.\ light blue). In each dichromatic bubble, the SLE$_6$ curve $\eta_\cB$ in $\cB$ between the two marked points $x_\cB$, $\wh x_\cB$ is shown in orange. The complementary connected components of this curve are the second-generation monochromatic bubbles. We have also outlined three representative monochromatic bubbles $\cB_1,\cB_2,\cB_3$. The space-filling SLE$_6$ curve will fill in $\cB_1$ before $\cB_2$ (since both are first generation and $\eta_D$ finishes tracing $\bdy\cB_1$ before $\bdy\cB_2$) and will fill in $\cB_2$ before $\cB_3$ (since $\cB_3$ is second generation).
		The restriction of $\prec_-$ to each monochromatic bubble with clockwise (resp.\ counterclockwise) oriented boundary is defined in the same way as $\prec_+$ (resp.\ $\prec_-$).
	}
\end{figure} 

For a dichromatic bubble $\cB$, let  $x_\cB$ and $\wh x_\cB$ be the last and first, respectively, point on $\bdy \cB$ visited by $\eta_D$. 
Conditioning on $\eta$, let $\eta_\cB$ be a chordal $\SLE_6$ on $(\cB,x_\cB,\wh x_\cB)$. 
We assume that these $\eta_\cB$'s for varying choices of $\cB$ are conditionally independent given $\eta$.
If $\cB'$ is a connected component of  $\cB\setminus \eta_\cB$, we call $\cB'$ a  \emph{second generation monochromatic bubble}.
We assign $\p \cB'$ the orientation in which it is traced by $\eta_\cB$ and  let its \emph{root} $x_{\cB'}$ be 
the last point traced by $\eta_{\cB}$.

The probability that each of the SLE$_6$ curves above hits a rational point is zero. Therefore, almost surely, $\Q^2\cap D$ is the disjoint union of 
\eqbn
\{\cB\cap \Q^2:\textrm{ $\cB$ is a first or second generation monochromatic bubble} \}.
\eqen
If $\cB$ and $\cB'$ are two monochromatic bubbles, then for $q\in \Q^2\cap \cB$ and $q'\in \Q^2\cap \cB'$,  we declare that
\begin{itemize}
	\item if $\cB$ and $\cB'$ are first generation, then $q\prec_\bullet q'$ if  $\eta_D$ finishes tracing $\p \cB$ before $\p \cB'$;
	\item if $\cB$ is first generation  and $\cB'$ is second generation, then $q\prec_\bullet q'$;
	\item if $\cB$ and $\cB'$ are both second  generation and in different dichromatic bubbles, then $q\prec_\bullet q'$  if  $\eta_D$ finishes tracing $\p \cB$ after  $\p \cB'$;
	\item if $\cB$ and $\cB'$ are both second  generation and in the same dichromatic bubble $\wt \cB$, then $q\prec_\bullet q'$ if  $\eta_{\wt \cB}$ finishes tracing $\p \cB$ before $\p \cB'$.
\end{itemize}
The above rules allow us to compare points in different monochromatic bubbles.

We now need to compare points within the same monochromatic bubble. We do this by induction. 
For each monochromatic bubble $\mcl B$, we sample an SLE$_6$ curve $\eta_\cB$ from $x_\cB$ to a point on $\bdy \cB$ which lies at maximal Euclidean distance from $x_\cB$, in such a way that the $\eta_\cB$'s for different choices of $\cB$ are independent.  
We extend $\prec_\bullet$ to the set of pairs $q,q' \in \Q^2 \cap \cB$ in the following manner. 
If $\p \cB$ is oriented clockwise (resp.\ counterclockwise), we define the restriction of $\prec_\bullet$ to $\Q^2\cap \cB$ in the same way we defined $\prec_+$ (resp.\ $\prec_-$) above but using $(\cB,\eta_\cB)$ in place of $(D,\eta_D)$. We require that these restricted orderings are conditionally independent given $\eta_D$ and the curves $\{\eta_\cB : \text{$\cB$ is a dichromatic bubble}\}$. 

We then iterate this procedure countably many times.
It is implicit in the construction of space-filling SLE$_\kappa$ in~\cite{ig4} that any distinct  points $q,q' \in \Q^2 \cap D$ lie in different monochromatic bubbles after finitely many steps, and hence $\prec_\bullet$ is defined for $q,q'$.
This defines the two desired random total orderings on $\Q^2\cap D$. We note that certain variants of the above construction would still give an ordering $\prec_\bullet$ with the same law; for example, by locality of SLE$_6$, we could change the way in which we choose the points $\wh x_{\cB}$ for $\cB$ a monochromatic bubble.

	\begin{remark}
		The space-filling SLE$_6$ is an exact continuum analog of the Peano curve on a site-percolated triangulation (see Section \ref{subsec:site}); namely the ordering $\prec_\bullet$ corresponds to the ordering defined by the function $\lambda$. The SLE$_6$ $\eta_D$ corresponds to the percolation interface $\lambda^{ab}$ in the discrete setting, and the relative ordering of the complementary components of this curve is similar in the discrete and the continuum settings. Boundary segments which are ordered clockwise (resp.\ counterclockwise) are blue (resp.\ red), and the terms monochromatic and dichromatic in this section refer to the boundary data of the associated discrete disk.
	\end{remark}

For $\kappa\in (4,8)$ with $\kappa\neq 6$ and $\bullet\in \{+,-\}$, we can sample $\prec_\bullet$ via the exact same iterative procedure except with one modification. 
Instead of chordal $\SLE_6$ on $(D,a,b)$, we take $\eta_D$ to be the $\SLE_\kappa(\kappa-6)$ on $(D,a,b)$ with the force point at $a^+$ (resp.\ $a^-$) if $\bullet=-$ (resp.\  $\bullet=+$).
We similarly take each $\eta_\cB$ for monochromatic bubbles $\cB$ to be chordal SLE$_\kappa(\kappa-6)$. 
The curves $\eta_\cB$ for dichromatic bubbles are still ordinary chordal SLE$_\kappa$.

\begin{thm}\label{thm:space-filling}
	Fix $\kappa\in (4,8)$ and  $\bullet=\{+,-\}$. Let $D$ be a Jordan domain and $a\in\p D$.
	Suppose $\prec_\bullet$ is the ordering on $\Q^2\cap D$ sampled  as above. 
	Then there almost surely exists a unique (modulo monotone parametrization) space-filling curve $\eta$ which does not cross itself or trace its past for a non-trivial interval of time, such that $\eta$ visits the points $\Q^2\cap D$ in the order of $\prec_\bullet$ and $\eta^{-1}(\Q^2 \cap D)$ is a dense set of times.
	Moreover, $\eta$ is continuous when parametrized so that the Lebesgue measure of $\eta([0,t])$ is $t$ for each $t\geq0$. 
\end{thm}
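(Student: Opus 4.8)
The plan is to deduce the theorem from the construction of space-filling $\SLE_\kappa$ in~\cite{ig4}. Concretely, I would show that the recursive sampling scheme of Section~\ref{sec-space-filling} --- first the curve $\eta_D$, then the curves $\eta_\cB$ in the dichromatic bubbles, then the curves $\eta_\cB$ in the monochromatic bubbles, and so on iteratively --- is a valid way to generate, jointly with the ordering $\prec_\bullet$, the space-filling $\SLE_\kappa$ started at $a$ and targeted at the point $b\in\bdy D$ diametrically opposite $a$, with force point at $a^\bullet$. Granting this \emph{identification}, $\eta$ is a measurable function of the curves sampled in Section~\ref{sec-space-filling}, namely the space-filling $\SLE_\kappa$ that they determine, and every assertion of the theorem --- existence, the visiting order, density of $\eta^{-1}(\Q^2\cap D)$, the non-crossing/non-retracing property, and continuity in the Lebesgue-area parametrization --- is inherited from the corresponding statement about space-filling $\SLE_\kappa$ proved in~\cite{ig4}. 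Uniqueness modulo monotone reparametrization is the one point that does not literally appear there, and I address it separately below.

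For the identification the key structural facts about space-filling $\SLE_\kappa$, $\kappa\in(4,8)$, from~\cite{ig4} are: (i) the branch of the curve toward a fixed target boundary point $b$ is a chordal $\SLE_\kappa(\kappa-6)$ from $a$ to $b$ (for $\kappa=6$, ordinary $\SLE_6$) with the force point on the appropriate side; and (ii) conditionally on this branch, the curve fills the complementary components one at a time, each component being filled by an independent space-filling $\SLE_\kappa$ inside it, targeted at one of its own boundary points, with the order of components and of targets prescribed by the branch. Reading this off: $\eta_D$ plays the role of the branch toward the diametrically opposite point $b$; the monochromatic bubbles are the components cut off on a single side, each filled recursively by a space-filling $\SLE_\kappa$ targeted at its own diametrically opposite point; and the dichromatic bubbles are the components into which the branch re-enters after separating $b$, where the further curve $\eta_\cB$ between the last and first points of $\bdy\cB$ visited by $\eta_D$ supplies the needed sub-ordering. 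I would verify that the conditional law of each recursive piece, given the earlier ones, coincides in the two constructions; the inputs are the domain Markov property and conformal invariance of $\SLE_\kappa(\kappa-6)$, the target-invariance property (Lemma~\ref{lem-target-invariant}), the further properties of $\SLE_\kappa(\kappa-6)$ established in~\cite{ig1,ig2,ig3,ig4}, and the conditional independence built into both schemes. The case $\kappa=6$ is cleaner because locality renders the $\SLE_6$ inside a dichromatic bubble insensitive to which of its two marked points is the start and which the end; for $\kappa\neq 6$ one must track the force-point positions, which is the ``technicality'' referred to in the text.

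Given the identification, the density of $\eta^{-1}(\Q^2\cap D)$ follows because $\eta$ is continuous and space-filling and, in the area parametrization, $\eta([t,t+\varepsilon])$ has positive Lebesgue measure, hence nonempty interior, for all $t,\varepsilon>0$, so it contains a rational point; the remaining properties are immediate from~\cite{ig4}. For uniqueness, suppose $\eta_1,\eta_2$ both satisfy the stated conditions. Since each $\eta_i$ is continuous, space-filling, non-self-crossing and non-retracing, for any time $t$ at which $\eta_i$ visits a rational point the closed set $\eta_i([0,t])$ is recovered from $\prec_\bullet$ as the closure of the set of rational points visited by $\eta_i$ up to time $t$ (together with the already-covered portion of $\bdy D$); these sets are nested in $t$ and the defining times are dense, so matching the two families of closed sets produces the required increasing homeomorphism $\psi$ with $\eta_2=\eta_1\circ\psi$.

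The main obstacle is precisely the continuity of $\eta$ in the Lebesgue-area parametrization, which is one of the substantial inputs of~\cite{ig4} and which the identification above lets us avoid reproving. If one sought a self-contained argument, the crux would be a modulus-of-continuity estimate: realize $\eta$ as the uniform limit of the curves obtained by running the recursion only to generation $n$ (collapsing each unfilled generation-$(n+1)$ bubble to a point), and prove convergence by showing that the maximal Euclidean diameter of a generation-$n$ bubble tends to $0$, with a summable moment bound on bubble diameters coming from standard estimates for $\SLE_\kappa$, $\kappa\in(4,8)$ (for instance moments of the conformal radii of complementary components), strong enough that the area parametrizations converge uniformly as well. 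This quantitative control, rather than the combinatorial bookkeeping in the identification, is where the real difficulty lies.
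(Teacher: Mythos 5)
Your proposal is correct and follows essentially the same route as the paper: the paper's proof consists precisely of identifying the iterative sampling of $\prec_\bullet$ with the branching $\SLE_\kappa(\kappa-6)$ (equivalently, imaginary-geometry flow-line) construction of \cite{ig4} and then invoking \cite[Theorem 1.6]{ig4} for existence, continuity in the area parametrization, and uniqueness. Your additional remarks on density of $\eta^{-1}(\Q^2\cap D)$ and on extracting the time change for uniqueness are sound elaborations of points the paper leaves to the cited reference.
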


\begin{defn} \label{def:space-filling}
	If $\bullet=+$ (resp.\ $\bullet=-$), we call the curve $\eta$ of Theorem~\ref{thm:space-filling} the
	counterclockwise (resp.\ clockwise) \notion{space-filling $\SLE_\kappa$ loop} on $(D,a)$.
\end{defn}

Theorem~\ref{thm:space-filling} is proved in \cite[Theorem 1.6]{ig4}.
In fact, our definition of $\prec_\bullet$ coincides with the one in \cite{ig4} in terms of the so-called branching $\SLE_\kappa(\kappa-6)$ introduced in \cite{shef-cle}.
As explained in \cite[Section~4.3]{ig4},  the branching $\SLE_\kappa(\kappa-6)$ definition of $\prec_\bullet$ is in turn equivalent to a construction using the so-called \emph{flow lines} in the imaginary geometry prescribed by a zero-boundary GFF on $D$ plus a particular harmonic function. Using tools from imaginary geometry,   Theorem~\ref{thm:space-filling} is proved in this context.

\begin{remark}\label{rmk:CLE}
	In the iterative construction of $\prec_\bullet$ above, for each first generation dichromatic bubble $\cB$, let $\eta^\cB$ be the segment of $\eta_D$ in between the times it hits the marked points $x_\cB$ and $\wh x_\cB$. 
	Concatenating $\eta^\cB$ and $\eta_\cB$ gives an oriented loop $\ell_\cB$. 
	Let $\Gamma_a^b=\{\ell_\cB: \textrm{ $\cB$ is a first generation dichromatic bubble} \}$.
	For each monochromatic bubble, we can iterate the construction of $\Gamma^{a}_b$. 
	At the end the union of all loops forms the so-called \emph{conformal loop ensemble} ($\CLE_\kappa$), a collection of non-crossing SLE$_\kappa$-type loops whose law is conformally invariant which was first introduced in~\cite{shef-cle}. 
	This defines the canonical coupling of $\CLE_\kappa$ and the space-filling $\SLE_\kappa$ loop where each process determines the other one.  
\end{remark}

For $\kappa\ge 8$ and $(D,a)$ as in Theorem~\ref{thm:space-filling}, we define the clockwise space-filling $\SLE_\kappa$ on $(D,a)$ as the weak limit of chordal $\SLE_\kappa$ on $(D,a^-_n, a^+_n)$, where $a^+_n$ (resp.\ $a^{-}_n$) is converging to $a$ from the clockwise (resp.\ counterclockwise) direction. We define the counterclockwise space-filling $\SLE_\kappa$ on $(D,a)$ with $a^-_n$ and $a^+_n$ swapped. The existence of the limits can be checked using the flow line description of space-filling SLE$_\kappa$ from~\cite[Section 1.2.3]{ig4}; see~\cite[Appendix A.3]{bg-lbm} for details.

For all $\kappa>4$, we can also define  the  whole-plane variant of the space-filling $\SLE_\kappa$  as the local limit of space-filling $\SLE_\kappa$ on $(D,a)$.

\begin{lem}\label{lem:whole-plane}
	For $\kappa>4$, a Jordan domain $D$, $a\in \p D$, and $z\in D$, let $\wt \eta$ be a space-filling $\SLE_\kappa$ on $(D,a)$. 
	Suppose $\eta_n$ is a parameterization of  $\wt \eta$ such that $\eta_n(0)=z$ and in each unit of time $\eta_n$ traverses a region of Lebesgue area $n^{-2}$.
	Then $n(\eta_n-z)$ converge to a random curve $(\eta(t))_{t \in \R}$ in the local uniform topology, where  $\eta(0)=0$, and  in each unit of time $\eta$ traverses a region of unit Lebesgue area. The law of the limiting curve is independent of the orientation of $\wt\eta$. Moreover, the law of $\eta$ (viewed modulo time parameterization) is invariant under translations, rotations, and scalings of space.
\end{lem}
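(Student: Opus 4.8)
The plan is to reduce everything to the local absolute continuity between the Gaussian free field used to build space-filling $\SLE_\kappa$ on $(D,a)$ and a whole-plane GFF, together with the fact that the whole-plane space-filling $\SLE_\kappa$ already exists as an object built directly from imaginary geometry in~\cite{ig4}. Recall from Section~\ref{sec-space-filling} and~\cite[Section~4.3]{ig4} that the clockwise and counterclockwise space-filling $\SLE_\kappa$ on $(D,a)$ are measurable functions of a GFF $h_D$ on $D$ with appropriate boundary data. Fix an interior point $z \in D$. By the Markov property of the GFF and the explicit form of its covariance, for small $r>0$ the law of $(h_D - (h_D)_r(z))|_{B_r(z)}$ is mutually absolutely continuous with respect to the law of $(h - h_r(z))|_{B_r(z)}$ for a whole-plane GFF $h$, with Radon--Nikodym derivative tending to $1$ in probability as $r \to 0$. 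As in the local limit construction of whole-plane space-filling $\SLE_\kappa$ in~\cite{ig4}, this transfers to the curve: the law of the portion of $\wt\eta$ inside $B_r(z)$, viewed as an ordered collection of excursions away from $\p B_r(z)$ and recentered at $z$, is mutually absolutely continuous with respect to the corresponding portion of a whole-plane space-filling $\SLE_\kappa$ centered at a typical interior point, with Radon--Nikodym derivative again tending to $1$.

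First I would establish tightness of $\{n(\eta_n - z)\}_n$ in the local uniform topology on $C(\R,\C)$. For this it suffices to obtain, for each fixed $R>0$, a modulus-of-continuity bound --- uniform in $n$ --- for the restriction of $n(\eta_n - z)$ to the times at which it lies in $B_R(0)$. Such a bound follows from the H\"older continuity of the area-parametrized space-filling $\SLE_\kappa$ (Theorem~\ref{thm:space-filling} and~\cite[Section~4.3]{ig4}) together with the local absolute continuity above, since a H\"older modulus is preserved under a change of measure. Given tightness, any subsequential limit $\eta$ has, for each $R>0$, a restriction to $\{t : \eta(t)\in B_R(0)\}$ whose law is (by sending $r\to 0$ in the previous paragraph) exactly that of the corresponding restriction of a whole-plane space-filling $\SLE_\kappa$. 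Since the whole-plane object is a single consistently-defined curve on $\R$ and these restrictions to larger and larger balls determine it, the subsequential limit is unique in law; hence the full sequence $n(\eta_n - z)$ converges to a whole-plane space-filling $\SLE_\kappa$.

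The independence from the orientation of $\wt\eta$ follows because swapping clockwise and counterclockwise affects the limit only through a symmetry of the whole-plane construction --- time reversal, respectively a reflection of $\C$ combined with a sign change of the underlying field --- under which the law of whole-plane space-filling $\SLE_\kappa$ is known to be invariant~\cite{ig4}; applying the convergence argument to both orientations then gives the same limiting law. For the translation, rotation, and scale invariance of this limiting law, the cleanest route is to observe that the limit produced above does not depend on the triple $(D,a,z)$: given an affine map $\psi(w) = \lambda w + w_0$ with $\lambda \in \C\setminus\{0\}$, running the construction on $(\psi(D),\psi(a),\psi(z))$ and using the conformal covariance of the space-filling $\SLE_\kappa$ construction shows that $\psi$ applied to $\eta$, suitably reparametrized, has the same law as $\eta$; since $\psi$ ranges over all translations, rotations, and scalings, the claim follows. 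Equivalently, one argues directly from the scale, translation, and rotation invariance of the whole-plane GFF modulo additive constant, noting that the correction term $\chi\arg\psi'$ appearing in the flow-line/space-filling construction is a constant for such $\psi$ and so does not affect the law.

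The main obstacle will be the tightness step together with the bookkeeping of additive constants: one must produce a modulus-of-continuity bound for the area-parametrized space-filling curve that is genuinely uniform as the scale $r\to 0$, and one must verify that recentering the field by the circle average $(h_D)_r(z)$ is precisely the operation under which the imaginary-geometry construction is covariant, so that the local absolute continuity really transfers statements about the curve rather than only about the field. Both of these are handled in~\cite{ig4}; the remaining consistency and symmetry arguments are then soft.
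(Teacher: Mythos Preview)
Your proof sketch is correct and follows essentially the same route as the paper, which does not give a proof at all but simply says the lemma is an easy consequence of imaginary geometry and refers to \cite[Lemma~2.3]{hs-euclidean}. Your argument via local absolute continuity of the GFF on $D$ with the whole-plane GFF near $z$, combined with the imaginary geometry construction of whole-plane space-filling $\SLE_\kappa$ in~\cite{ig4}, is exactly what that reference does; you have simply spelled out the steps (tightness, identification of the limit, and the symmetry arguments) in more detail than either the paper or the cited lemma.
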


\begin{defn} \label{def:whole-plane}
	We call the curve $\eta$ of Lemma~\ref{lem:whole-plane} the \notion{whole-plane space-filling $\SLE_\kappa$ from $\infty$ to $\infty$} parametrized by the Lebesgue measure. 
\end{defn}

Lemma~\ref{lem:whole-plane} is another easy consequence of imaginary geometry. See, e.g., \cite[Lemma~2.3]{hs-euclidean} and its proof.

\subsubsection{Basic properties of space-filling SLE$_\kappa$}
\label{sec-space-filling-properties}

If $\eta$ is a whole-plane space-filling SLE$_\kappa$ from $\infty$ to $\infty$, then a.s.\ $\lim_{t\to \pm \infty}\eta(t)=\infty$. Setting  $\eta(\pm  \infty)=\infty$, we can  therefore view $\eta$ as a continuous curve on the Riemann sphere $\CC\cup \{\infty\}$. It is an immediate consequence of the flow line construction in~\cite{ig4} that the law of this curve is reversible: $\eta(-\cdot) \eqD \eta$. 

A.s.\ the curve $\eta$ visits $0$ only once, namely at time $t=0$.
The set  $\eta([0,\infty)  )\cap \eta((-\infty,0])$ is the union of two non-self-crossing curves $\eta^0_{\op L}$ and $\eta^0_{\op R}$ from $0$ to $\infty$. 
By convention, we assume that $\eta((-\infty,0])$  is on the  left (resp.\ right) side $\eta^0_{\op L}$ (resp.\ $\eta^0_{\op R}$). See Figure \ref{fig-space-filling}. 

\begin{definition}\label{def:bdy}
	We call  $\eta^0_{\op L}$ and $\eta^0_{\op R}$ the \notion{left boundary} and the \notion{right boundary}, respectively, of $\eta$ at $0$. 
\end{definition}

The imaginary geometry construction of space-filling SLE$_\kappa$ in~\cite{ig4} implies that $\eta_0^{\op L}$ and $\eta_0^{\op R}$ can be described as two flow lines of the same whole-plane GFF whose angles differ by $\pi$. 
Equivalently, using~\cite[Theorems 1.1 and 1.11]{ig4}, the joint law of $\eta_0^{\op L}$ and $\eta_0^{\op R}$ can be described as follows, where here $\ul\kappa = 16/\kappa \in (0,4)$. 
\begin{itemize}
	\item The law of the left outer boundary $\eta_0^{\op{L}}$ is that of a whole-plane SLE$_{\ul\kappa}(2-\ul\kappa)$ from 0 to $\infty$.
	\item Conditional on $\eta_0^{\op{L}}$, the law of the right outer boundary $\eta_0^{\op{R}}$ is that of a chordal SLE$_{\ul\kappa}(-\ul\kappa/2,-\ul\kappa/2)$ from 0 to $\infty$ in $\BB C\setminus \eta_0^{\op{L}}$ (with force points immediately to the left and right of zero). 
\end{itemize}
The same is true with the roles of $\eta_0^{\op{L}}$ and $\eta_0^{\op{R}}$ interchanged. 
This result is an instance of \emph{SLE duality}~\cite{dubedat-duality,zhan-duality1,zhan-duality2,ig1,ig4}

From this description and Lemma~\ref{lem-sle-rho-hit}, it follows that for $\kappa\ge 8$, $\eta_0^{\op L}$ and  $\eta_0^{\op R}$ are simple curves that do not intersect except at $0$ and $\infty$. 
Therefore, both $\eta([0,\infty])$ and $ \eta([-\infty,0])$ are homeomorphic to the closed half-plane $\ol{\BB H}$. 
If we condition on $\eta_0^{\op L}$ and $\eta_0^{\op R}$, then the conditional law of $\eta|_{[0,\infty)}$ (resp.\ the time reversal of $\eta|_{(-\infty,0]}$) is that of a chordal SLE$_\kappa$ from 0 to $\infty$ in $\eta([0,\infty))$ (resp.\ $\eta((-\infty,0])$) and these curves are conditionally independent (see~\cite[Footnote 4]{wedges}). 

When $\kappa\in (4,8)$, $\eta^0_{\op R}$ touches $\eta^0_{\op L}$ infinitely many times from both sides, but $\eta_0^{\op{L}}$ and $\eta_0^{\op{R}}$ do not cross and $\eta_0^{\op L} \cap \eta_0^{\op R}$ contains no non-trivial interval. 
Therefore the interior of each of $\eta([0,\infty)) $ and $ \eta((-\infty,0])$ is an infinite chain of Jordan domains. 
See Figure~\ref{fig-space-filling} for an illustration.  
In this case, if we condition on $\eta_0^{\op L}$ and $\eta_0^{\op R}$, then the conditional law of $\eta|_{[0,\infty)}$ (resp.\ the time reversal of $\eta|_{(-\infty,0]}$) is that of a concatenation of conditionally independent chordal space-filling SLE$_\kappa$ curves in the connected components of the interior of $\eta([0,\infty))$ (resp.\ $\eta((-\infty,0])$) and these curves are conditionally independent (see~\cite[Footnote 4]{wedges}).

\begin{figure}[ht!]
	\begin{center}
		\includegraphics[scale=0.92]{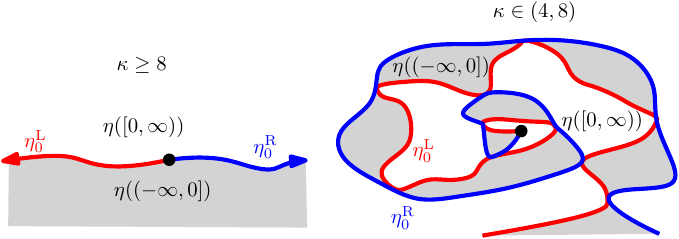}
	\end{center}
	\caption{\label{fig-space-filling} 
		The left and right outer boundaries of $\eta((-\infty,0])$ (red and blue) in the case when $\kappa \geq 8$ (left) and $\kappa \in (4,8)$ (right). Note that the pictures are not entirely accurate since the set of intersections of $\eta_0^{\op L}$ and $\eta_0^{\op R}$ for $\kappa \in (4,8)$ is in fact an uncountable fractal set.
	}
\end{figure} 

By the translation invariance of the law of $\eta$ (see Lemma~\ref{lem:whole-plane}),  for each fixed $z\in \CC$, we  define  the left boundary    $\eta_z^{\op L}$  and the  right boundary $\eta_z^{\op R}$ of $\eta$ at $z$ in the same way as above, and all of the above properties remain true a.s.\ with $z$ in place of 0. 
However, there exist random $z\in\BB C$ for which these properties are not true. 
For example, suppose $z\neq 0$ is on $\eta_0^{\op L}$. Then $\eta$ will visit $z$ at least twice, once before time 0 and once afterwards.
When $\kappa\in(4,8)$, there are even more intriguing special subsets, such as double points, cut points, points on an ordinary SLE$_\kappa$ curve, etc.

The construction of space-filling SLE$_\kappa$ in~\cite{ig4} shows that the curves $\eta_z^{\op{L}}$ and $\eta_z^{\op{R}}$ are all flow lines of a common whole-plane GFF.
From this it follows that a.s.\ $\eta_z^{\op L}$ and $\eta_w^{\op L}$ for distinct pairs of points $z$ and $w$ eventually merge into each other, and similarly with R in place of L. Hence the collections of curves $\{\eta^{\op L}_z\}_{z\in \Q^2}$ and $\{\eta_z^{\op R}\}_{z\in \Q^2}$ 
	each have the structure of a tree.
When $\kappa=8$, these trees constitute the joint scaling limit of the UST on $\Z^2$ and its dual~\cite{schramm0,lsw-lerw-ust,hs-euclidean}. As in the discrete setting of Section~\ref{subsec:UST}, we can think of the whole-plane space-filling $\SLE_\kappa$ curve $\eta$ as the Peano curve snaking in between the two trees.

Besides the aforementioned qualitative properties, some useful quantitative ones have also been established. 
In what follows, we fix $\kappa \in (4,\infty)$ and let $\eta$ be a whole-plane space-filling SLE$_\kappa$ parametrized by Lebesgue measure. 
The following proposition, which is a combination of~\cite[Proposition 3.4 and Remark 3.9]{ghm-kpz}, says that space-filling SLE segments are ``roughly spherical" in the sense that they are contained between two Euclidean balls of comparable radii (up to $o(1)$ errors in the exponent).

\begin{prop}[\cite{ghm-kpz}] \label{prop-sle-ball}
	Fix $\zeta \in (0,1)$ and a bounded open set $U\subset\BB C$.
	Except on an event of probability decaying faster than any positive power of $\ep$, the following is true.
	For each $a,b\in\BB R$ such that $\eta([a,b])\subset U$ and the Euclidean diameter satisfies $\op{diam}\eta([a,b])\leq \ep$, the space-filling SLE segment $\eta([a,b])$ contains a Euclidean ball of radius at least $[\op{diam}\eta([a,b])]^{1+\zeta}$. 
\end{prop}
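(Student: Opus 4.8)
The plan is to reduce the statement, using the scaling and translation invariance of whole-plane space-filling $\SLE_\kappa$ (Lemma~\ref{lem:whole-plane}) together with a union bound over dyadic scales, to a single-scale non-degeneracy estimate for a macroscopic space-filling $\SLE$ segment, and then to prove that estimate from the description of the outer boundaries of $\eta$ as $\SLE_{\ul\kappa}(\rho)$-type curves with $\ul\kappa=16/\kappa$ (Section~\ref{sec-space-filling-properties}). For the reduction, fix $\zeta'\in(0,\zeta)$; it suffices to show that for every $q>0$ there is $C_q>0$ such that, for all $\delta\in(0,1)$,
\[
\mathbb{P}\Big[\,\exists\,a<b\ :\ \eta([a,b])\subset B_1(0),\ \op{diam}\eta([a,b])\ge\tfrac12,\ \eta([a,b])\ \text{contains no ball of radius}\ \delta\,\Big]\ \le\ C_q\,\delta^{q}.
\]
Granting this, apply the affine map $w\mapsto(w-z)/r$, under which $\eta$ has the same law modulo time parametrization: a configuration as in the proposition contained in $B_r(z)$, with $\op{diam}\eta([a,b])\in[r/2,r]$ and containing no ball of radius $[\op{diam}\eta([a,b])]^{1+\zeta}$, is carried to a configuration as in the display with $\delta:=r^{\zeta'}$, because $[\op{diam}\eta([a,b])]^{1+\zeta}\le r^{1+\zeta}\le r^{1+\zeta'}=r\delta$ for $r<1$, so the absence of a ball of radius $[\op{diam}\eta([a,b])]^{1+\zeta}$ forces the absence of the larger ball of radius $r\delta$, which rescales to radius $\delta$. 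Covering a fixed bounded neighborhood of $U$ by $O_U(2^{2k})$ balls of radius $2^{-k}$ at each dyadic scale with $2^{-k}\lesssim\ep$ and summing $2^{2k}\,C_q\,(2^{-k})^{q\zeta'}$ over $k$ gives total probability $\lesssim_{U,q}\ep^{\,q\zeta'-2}$; since $q$ is arbitrary, this decays faster than any power of $\ep$.

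For the single-scale estimate, suppose $\op{diam}\eta([a,b])\ge\tfrac12$ but $\eta([a,b])$ contains no ball of radius $\delta$; then $\eta([a,b])$ lies in the $\delta$-neighborhood of $\partial\eta([a,b])$. By the structure results of Section~\ref{sec-space-filling-properties} (in particular SLE duality), $\partial\eta([a,b])$ is contained in the union of the four curves $\eta^{\op L}_{\eta(a)},\eta^{\op R}_{\eta(a)},\eta^{\op L}_{\eta(b)},\eta^{\op R}_{\eta(b)}$, each of which locally looks --- in the sense of absolute continuity --- like a whole-plane $\SLE_{\ul\kappa}(2-\ul\kappa)$ or a chordal $\SLE_{\ul\kappa}(-\ul\kappa/2,-\ul\kappa/2)$ curve, $\ul\kappa=16/\kappa\in(0,4)$. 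A connected set of diameter at least $\tfrac12$ squeezed into the $\delta$-neighborhood of these four arcs must contain a ``thin corridor'': a pair of sub-arcs of the boundary curves (possibly sub-arcs of a single curve) that run within distance $\delta$ of each other along a stretch of Euclidean diameter at least $\tfrac1C$. For a fixed location and scale, the probability of such a corridor decays faster than any power of $\delta$, in fact exponentially in $1/\delta$: one iterates, over the $\asymp1/\delta$ consecutive $\delta$-scale blocks along the corridor, a single-block non-degeneracy estimate for $\SLE_{\ul\kappa}(\rho)$ --- by the conformal Markov property, given the curve up to entering a block, there is conditional probability bounded away from $0$ that within that block the two arcs separate to distance of order $\delta$, which would violate thinness. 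A union bound over the $\op{poly}(1/\delta)$ possible locations and scales of a corridor is harmless against this decay, which yields the displayed estimate for every $q$.

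The main obstacle is to make the ``thin corridor'' step precise and uniform, and to do so in both topological regimes. For $\kappa\ge8$ the four boundary curves are simple and $\partial\eta([a,b])$ is a Jordan curve with four corners, so the passage from thinness to a corridor event and the chaining estimate can be carried out fairly directly. For $\kappa\in(4,8)$ the curves $\eta^{\op L}_{\eta(a)}$, etc.\ touch themselves (though with no interval of self-intersection), so $\partial\eta([a,b])$ decomposes into infinitely many arcs bounding the complementary pockets of the segment rather than four arcs of simple curves; before the chaining applies one must show that only boundedly many of these arcs can participate in a corridor at a macroscopic scale --- for instance by bounding the number of complementary pockets of diameter at least $\tfrac1{2C}$ --- and then control how the path realizing the macroscopic diameter weaves among them. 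These uniformity and combinatorial points, rather than the routine reduction steps, are where the real work lies.
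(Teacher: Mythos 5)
The survey does not prove this proposition --- it is quoted from~\cite{ghm-kpz} --- so I am judging your argument on its own terms. Your outer layer (translation/scale invariance of $\eta$, a union bound over $O(2^{2k})$ locations at each dyadic scale $2^{-k}\lesssim \ep$, and the reduction to a single-scale estimate decaying faster than any power of $\delta$, with the bookkeeping $\delta=r^{\zeta'}$) is correct and is the standard reduction. The gap is in the single-scale estimate, and it is twofold. First, the deterministic step fails: ``no ball of radius $\delta$'' does give that $\eta([a,b])$ lies in the $\delta$-neighborhood of its boundary, but it does \emph{not} force a pair of boundary sub-arcs to run within distance $\delta$ of each other over a stretch of diameter $\geq 1/C$. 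Take $D$ to be the $(\delta/10)$-neighborhood of a tree with $\asymp\delta^{-1}$ edges of length $10\delta$, branches alternating sides, total diameter $1/2$: this is a Jordan domain of diameter $1/2$ with no $\delta$-ball, yet any two boundary sub-arcs stay $\delta$-close only over stretches of diameter $O(\delta)$. What thinness actually yields is only that $\partial\eta([a,b])$ comes within $\delta$ of \emph{every} point of the segment, i.e.\ $\gtrsim\delta^{-1}$ local pinches along any path realizing the diameter; the iteration must be organized around these local events, not around a macroscopic corridor whose existence is exactly what is in question.

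Second, the one-step estimate is justified by a ``conformal Markov property given the curve up to entering a block'' that the boundary arcs do not straightforwardly have: $\eta^{\op L}_{\eta(a)},\eta^{\op R}_{\eta(a)},\eta^{\op L}_{\eta(b)},\eta^{\op R}_{\eta(b)}$ are flow lines of a common GFF started from the \emph{random} points $\eta(a),\eta(b)$ (the survey explicitly cautions that the $\SLE_{\ul\kappa}(\rho)$ descriptions hold for fixed $z$, not simultaneously for all $z$), they are mutually dependent, and the conditional law of their continuations given an arbitrary past is an $\SLE_{\ul\kappa}(\rho^{\op L};\rho^{\op R})$ in a complementary domain that may itself be pinched; moreover your union bound over ``which arcs form the corridor'' is not a discrete union bound, since the arcs vary with $a,b$. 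The workable version of your skeleton --- and the form of the argument in~\cite{ghm-kpz} --- iterates along $\eta$ itself: stop the curve each time it travels Euclidean distance $\asymp\delta$ from its previous stopping point (a segment of diameter $\geq 1/2$ contains $\gtrsim\delta^{-1}$ such increments), and use the spatial Markov property of the underlying GFF, together with an absolute-continuity argument to get uniformity over the past, to show each increment contains a $\delta$-ball with conditional probability $\geq p>0$; failure of all trials then has probability $\leq(1-p)^{c/\delta}$, which is the superpolynomial decay your reduction needs. So the count of trials and the shape of the bound are right, but the corridor reduction and the Markov structure you invoke to make the trials independent are the missing ideas.
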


The following proposition, which is~\cite[Proposition 6.2]{hs-euclidean}, gives an upper bound on how long it takes $\eta$ to fill in a given region of space.
\begin{prop}[\cite{hs-euclidean}] \label{prop-sle-fill}
	There exists $\xi = \xi(\kappa) >0$ such that 
	\eqbn
	\BB P\left[\BB D\subset \eta([-M,M]) \right] \geq 1 - O (M^{-\xi}) \quad \text{as $M\rta \infty$}. 
	\eqen
\end{prop}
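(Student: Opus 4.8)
The plan is to use the scale invariance and reversibility of whole-plane space-filling $\SLE_\kappa$ recorded in Section~\ref{sec-space-filling-properties} (Lemma~\ref{lem:whole-plane} and $\eta(-\cdot)\eqD\eta$) to reduce the statement to a quantitative assertion that the origin lies far inside the ``past'' $\eta((-\infty,1])$ of the curve at time $1$, and then to prove that assertion by a multiscale argument. \emph{Reduction.} If $z\in\BB D$ is not covered by $\eta([-M,M])$ then, since $\eta$ is space-filling, $z=\eta(s)$ for some $|s|>M$, so
\[
\BB P\big[\BB D\not\subset\eta([-M,M])\big]\;\le\;\BB P\big[\exists\,|t|>M:\eta(t)\in\BB D\big]\;\le\;2\,\BB P\big[\exists\,t>M:\eta(t)\in\BB D\big],
\]
the last step by reversibility. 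Applying the spatial scaling $\eta\mapsto M^{-1/2}\eta(M\,\cdot)$, which preserves the law of the Lebesgue-parametrized curve by Lemma~\ref{lem:whole-plane}, the right side equals $2\,\BB P[\,\exists\,t>1:\eta(t)\in B_\delta(0)\,]$ with $\delta=M^{-1/2}$; equivalently, $\eta((1,\infty))$ meets $B_\delta(0)$. Any point in this intersection lies in the closed set $\eta([1,\infty))$ at distance $<\delta$ from $0$, so the event is contained in $\{\op{dist}(0,\eta([1,\infty)))<\delta\}$. Since $0=\eta(0)$ is a.s.\ visited by $\eta$ only at time $0$, the boundary of $\eta([1,\infty))$ is a.s.\ the pair of left/right boundary curves $\eta^{\op{L}}_{\eta(1)}\cup\eta^{\op{R}}_{\eta(1)}=\eta([1,\infty))\cap\eta((-\infty,1])$ of $\eta$ at $\eta(1)$ (Section~\ref{sec-space-filling-properties}), each an $\SLE_{16/\kappa}(\rho)$-type curve running between $\eta(1)$ and $\infty$, and $0$ lies off these curves. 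Thus it suffices to prove
\[
\BB P\big[\op{dist}(0,\ \eta^{\op{L}}_{\eta(1)}\cup\eta^{\op{R}}_{\eta(1)})<\delta\big]\;=\;O(\delta^{\,q})\qquad\text{for some }q>0,
\]
since then $\xi=q/2$ works.

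\emph{Multiscale estimate.} Fix $K$ with $2^{-K}\asymp\delta$ and consider the dyadic scales $2^{-k}$, $0\le k\le K$. If $\op{dist}(0,\eta^{\op{L}}_{\eta(1)}\cup\eta^{\op{R}}_{\eta(1)})<\delta$ then one of these two curves enters $\overline{B_{2^{-k}}(0)}$ for every $k\le K$; in particular, for each $k<K$, after this curve first enters $\overline{B_{2^{-k}}(0)}$ it must later also enter $\overline{B_{2^{-k-1}}(0)}$. I would bound the probability of this nested event by a product. Growing the relevant boundary curve inward from $\infty$, one conditions on the portion grown up to the first entrance into $\overline{B_{2^{-k}}(0)}$ (a stopping time) and uses the domain Markov property, together with the decomposition of $\eta^{\op{L}}_{\eta(1)},\eta^{\op{R}}_{\eta(1)}$ into (conditionally) chordal $\SLE_{16/\kappa}(\rho)$ curves recalled in Section~\ref{sec-space-filling-properties}, and a standard chordal $\SLE_{16/\kappa}$ ball-avoidance estimate: a chordal $\SLE$-type curve started on $\partial B_r(0)$ in a domain comparable to $B_r(0)$ has uniformly positive probability (by conformal invariance and Lemma~\ref{lem:whole-plane}) of never entering $B_{r/2}(0)$. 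This produces a constant $c>0$, independent of $k$, such that with (conditional) probability at least $c$ the curve does \emph{not} reach scale $2^{-k-1}$ after reaching scale $2^{-k}$; chaining over the $K$ scales yields $\BB P[\op{dist}<\delta]\le(1-c)^{K}\asymp\delta^{\,q}$ with $q=-\log_2(1-c)>0$. Proposition~\ref{prop-sle-ball} (``roughly spherical'') is available to handle the geometric bookkeeping when translating between statements about $\eta$ itself and statements about its boundary curves.

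\emph{Main obstacle.} The crux is making the cross-scale near-independence in the chaining rigorous, and two features make this delicate. First, the boundary curves run toward the \emph{a priori unknown} endpoint $\eta(1)$, which is itself unusually close to $0$ on part of the bad event; the conditioning must be arranged (for instance, condition on all of $\eta^{\op{R}}_{\eta(1)}$ and then grow $\eta^{\op{L}}_{\eta(1)}$, or work with the outer boundary of $\eta((-\infty,t])$ at suitable stopping times) so that the single-scale estimate survives uniformly in $k$, including on that part. Second, tracking how the reference time ``$1$'' transforms under the zoom-ins requires care, since the Lebesgue-area cut-off does not respect spatial scalings; the cleanest implementation therefore conditions on boundary arcs or boundary lengths rather than on the time parameter directly. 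Finally, in the non-simple phase $\kappa\in(4,8)$ the set $\eta((-\infty,1])$ is a chain of Jordan domains with self-touching boundary, so the notions of ``interior'', of $\partial\eta((-\infty,1])$, and of the single-scale events must be interpreted accordingly. The remaining ingredients — the reduction above, the chordal $\SLE$ localization estimate, and the final product bound — are routine.
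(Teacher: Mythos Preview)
The paper does not prove this proposition; it is simply quoted as \cite[Proposition~6.2]{hs-euclidean}. So there is no in-paper argument to compare your proposal against.

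Your reduction is correct and is the natural first move: reversibility plus the scaling $\eta\mapsto M^{-1/2}\eta(M\,\cdot)$ reduces the question to a polynomial bound on $\BB P[\op{dist}(0,\partial\eta([1,\infty)))<\delta]$, and the identification of $\partial\eta([1,\infty))$ with the pair of $\SLE_{16/\kappa}$-type boundary curves at $\eta(1)$ is exactly what is described in Section~\ref{sec-space-filling-properties}.

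Where the proposal remains a sketch is precisely where you flag it. The multiscale chaining ``grow the boundary curve inward from $\infty$ and argue that at each dyadic scale there is a uniform chance it does not proceed to the next scale'' breaks down as stated because the curve must terminate at the random point $\eta(1)$: on the part of the bad event where $|\eta(1)|$ is itself small, the single-scale avoidance probability is \emph{zero}, not uniformly positive. You acknowledge this, but you do not give a mechanism to repair it. One standard fix is to split off the event $\{|\eta(1)|<\delta^\theta\}$ for a suitable $\theta\in(0,1)$ and bound it separately (e.g.\ via an area/diameter argument for $\eta([0,1])$ of the type in Proposition~\ref{prop-sle-ball}), so that on the complement you have of order $\log(1/\delta)$ ``good'' scales at which the endpoint lies outside the current annulus and the chordal single-scale estimate applies. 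Another is to use translation invariance (Lemma~\ref{lem-mating-translate}) to rewrite the event as $\{\op{dist}(\eta(1),\eta^{\op L}_0\cup\eta^{\op R}_0)<\delta\}$ and then condition on the \emph{fixed} boundary curves $\eta^{\op L}_0,\eta^{\op R}_0$; this decouples the target from the curve being grown but replaces the problem with one about the location of $\eta(1)$ inside the future region. Either route is workable, but neither is ``routine'' in the sense of your last sentence; the cross-scale independence you invoke does not come for free from the domain Markov property because of the force points and the moving target, and making it uniform in $k$ is the actual content of the proof.
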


\section{Continuum mating of trees}
\label{sec-mating}
In this section we will discuss several deep theorems which connect SLE and LQG. The first class of theorems, discussed in Section~\ref{subsec:zipper}, tell us that cutting a quantum wedge by an appropriate type of SLE$_{\ul\kappa}$ curve for $\ul\kappa=\gamma^2$ gives a pair of \emph{independent} quantum wedges parametrized by the regions on either side of the curve, and a similar statement holds if we instead cut a quantum cone. 
Sections~\ref{sec-mating-main} and~\ref{sec-mating-geometric} discuss two variants of the mating-of-trees theorem (the continuum analog of mating-of-trees bijections), in the infinite-volume and finite-volume settings, respectively.
Section~\ref{sec-mating-chordal} discusses a different sort of mating-of-trees theorem which represents a quantum wedge decorated by a chordal SLE$_\kappa$ curve for $\kappa = 16/\gamma^2 \in (4,8)$ as a mating of two trees of disks.

\subsection{Conformal welding for quantum wedges }\label{subsec:zipper}

The first rigorous connection between SLE and LQG, which constitutes the starting point of mating-of-trees theory, is the following theorem of Sheffield \cite{shef-zipper}.

\begin{figure}[t]
	\begin{center}
		\includegraphics[scale=0.92]{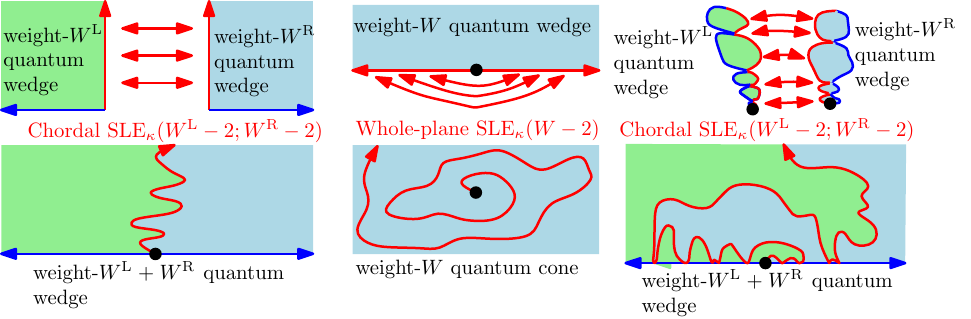}
	\end{center}
	\caption{\label{fig-welding}
		\textbf{Left:} Conformally welding two quantum wedges of weights $W^{\op L}$ and $W^{\op R}$ according to LQG length along half of their boundaries produces a quantum wedge of weight $W^{\op L} + W^{\op R}$ decorated by a chordal SLE$_\kappa(W^{\op L} -2 ; W^{\op R} - 2)$. 
		\textbf{Middle:} Conformally welding the two sides of the boundary of a weight-$W$ quantum wedge according to LQG length produces a weight-$W$ quantum cone decorated by a whole-plane SLE$_\kappa(W)$. 
		\textbf{Right:} Same picture as on the left but in the case when the wedges being welded together are thin and the wedge of weight $W^{\op L}+W^{\op L}$ is thick.  
	}
\end{figure} 

\begin{thm}[\cite{shef-zipper}]\label{thm:zipper0}
	Let $(\BB H,\bh ,0,\infty)$ be the circle average  embedding of a $(\gamma-2\gamma^{-1})$-quantum wedge (Definition~\ref{def:wedge}).
	Let $\eta$ be a chordal $\SLE_{\ul\kappa}$ on $(\BB H,0,\infty)$ where $\ul\kappa=\gamma^2$, sampled independently from $\bh$. 
	Let $\BB H^{\op L}$ and $\BB H^{\op R}$ be the connected components of $\BB H \setminus\eta$ lying to the left and right of $\eta$, respectively.
	Let $\cW^{\op L}$ and $\cW^{\op R}$ be the quantum surfaces $(\BB H^{\op L},\bh|_{\BB H^{\op L}}, 0,\infty)$ and $(\BB H^{\op R},\bh|_{\BB H^{\op R}}, 0,\infty)$, respectively. 
	\begin{itemize}
		\item $\cW^{\op L}$ and $\cW^{\op R}$ are independent $\gamma$-quantum wedges. 
		\item The $\gamma$-LQG length measures on $\eta$ as viewed from the left and right sides of $\eta$ coincide, i.e., if for $\bullet \in \{\op L , \op R\}$,   $(\BB H,\bh^{\bullet},0,\infty)$ is the circle average embedding of $\cW^{\bullet}$ and 
		$\phi_\bullet : \BB H^\bullet\to \BB H$ is such that $(\BB H,\bh^\bullet ,0,\infty)\overset{\phi_\bullet}{\sim}_{\gamma} (\BB H^{\bullet},\bh|_{\BB H^{\bullet}}, 0,\infty)$,
		then a.s.\ $\nu_{\bh^{\op L}} ([0,\phi _{\op L}(x)]) =\nu_{\bh^{\op R}}([\phi_{\op R}(x),0])$ for each $x\in \eta$.
		\item $(\bh,\eta)$ is a.s.\ determined  by $\cW^{\op L}$ and $\cW^{\op R}$ (see the beginning of Section~\ref{sec-sle-lqg} for notation).
	\end{itemize}
\end{thm}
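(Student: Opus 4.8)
The plan is to go through Sheffield's \emph{quantum zipper}, whose stationarity is the single analytic input from which all three assertions follow. Parametrize $\eta$ by half-plane capacity, so that it drives the Loewner evolution \eqref{eq:Loewner} with $W_t = \sqrt{\ul\kappa}\,B_t$, $\ul\kappa = \gamma^2$, and let $g_t \colon H_t \rta \BB H$ be the Loewner maps. Since $\ul\kappa = \gamma^2 \in (0,4)$, the curve $\eta$ is in the simple phase, so $\BB H\setminus\eta$ has exactly the two components $\BB H^{\op L}, \BB H^{\op R}$, each a Jordan domain on the Riemann sphere with $\eta$ on its boundary. For $t\ge0$ let $\psi_t(z) := g_t(z) - W_t$ be the recentered Loewner map (sending the tip $\eta(t)$ to $0$) and set
\[
\bh_t := \bh\circ\psi_t^{-1} + Q\log|(\psi_t^{-1})'|, \qquad \wt\eta_t := \psi_t(\eta|_{[t,\infty)}),
\]
so that $(\BB H,\bh_t,0,\infty,\wt\eta_t)$ is the image of the curve-decorated surface $(H_t,\bh|_{H_t},\eta(t),\infty,\eta|_{[t,\infty)})$ under a coordinate change in the sense of \eqref{eq:coord}. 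The central claim is that $t\mapsto(\BB H,\bh_t,0,\infty,\wt\eta_t)$ is \emph{stationary}: for every $t$, $\bh_t$ is the circle-average embedding of a $(\gamma-2\gamma^{-1})$-quantum wedge, $\wt\eta_t$ is an independent chordal $\SLE_{\ul\kappa}$, and $(\bh_t,\wt\eta_t)$ is independent of the ``portion already unzipped,'' i.e.\ of the quantum surfaces cut out by $\eta([0,t])$ on either side together with their identification along $\eta([0,t])$. One proves this by It\^o's formula applied to the (semicircle) average of $\bh_t$ near a fixed test point: the drift produced by the evolving term $Q\log|\psi_t'|$ combines with the Brownian term from $dW_t = \sqrt{\ul\kappa}\,dB_t$, and for the specific choices $\ul\kappa = \gamma^2$ and $Q = \gamma/2+2/\gamma$ of \eqref{eqn-Q-def} the drifts reorganize into exactly the law prescribed in Definition~\ref{def:wedge}.

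Granting stationarity, bullets 1 and 2 follow. Each unit of capacity time in the zipper reveals, on the two sides of $\eta$, a pair of boundary arcs that are glued to one another; stationarity forces the quantum length accumulated along these arcs, measured from the left and from the right, to agree. This simultaneously makes sense of the $\gamma$-LQG length measure on the fractal $\eta$ and yields the identity $\nu_{\bh^{\op L}}(\phi_{\op L}(x)) = \nu_{\bh^{\op R}}(\phi_{\op R}(x))$ of bullet 2. For bullet 1, the surfaces $\cW^{\op L}$ and $\cW^{\op R}$ are measurable with respect to disjoint ``halves'' of the zipper data, which gives their mutual independence; that each is a $\gamma$-quantum wedge is identified from its law near a capacity-typical point of $\eta$, which after unzipping becomes a boundary point next to which the field locally agrees, in law up to bounded error, with the $\gamma$-log-singularity field of Lemma~\ref{lem:circle}, and the wedge law is then pinned down by the characterization through Lemmas~\ref{lem:circle} and~\ref{lem:inv} (equivalently, by the quantum-typical-point description analogous to Lemma~\ref{lem:gamma}).

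For bullet 3, note that since $\ul\kappa = \gamma^2 < 4$ the curve $\eta$ is a Jordan arc which, by the Rohde--Schramm regularity theory, is the boundary of a H\"older domain and is therefore conformally removable. The conformal welding of $\cW^{\op L}$ and $\cW^{\op R}$ along their boundaries according to quantum length (matching lengths from the marked point $0$) thus has $(\BB H,\bh,\eta)$ as a solution by the first two steps, and no other modulo conformal coordinate change: if $(\BB H,\bh',\eta')$ were another, the homeomorphism of $\BB H$ intertwining the two weldings would be conformal off $\eta$, hence globally conformal by removability, hence M\"obius. Therefore $(\bh,\eta)$ is a.s.\ determined by $\cW^{\op L}$ and $\cW^{\op R}$.

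The main obstacle is Step~1: carrying out the It\^o-calculus proof of stationarity, and in particular reproducing exactly the ``conditioned to stay positive'' feature of the wedge field near the marked points -- the feature that distinguishes a quantum wedge from a plain free-boundary GFF -- which is handled by running the argument for the reversed zipper, equivalently by a Girsanov/time-reversal argument together with the scale invariance of Lemma~\ref{lem:inv}. Making rigorous sense of the quantum length of $\eta$ and of its agreement from the two sides is the other technical point, but it comes essentially for free once stationarity is available. The conformal-welding background facts (existence of the welding, H\"older regularity, removability of H\"older-domain boundaries) are taken as black boxes.
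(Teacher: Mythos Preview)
Your argument for the third bullet (measurability via conformal removability) is essentially identical to the paper's: the paper proves only this bullet in detail, giving exactly the removability argument you describe, citing \cite{schramm-sle,jones-smirnov-removability} for the removability of $\SLE_{\ul\kappa}$ with $\ul\kappa<4$, and then arguing that any two candidate pairs $(\bh,\eta)$ and $(\wt\bh,\wt\eta)$ compatible with the same $(\cW^{\op L},\cW^{\op R})$ differ by a homeomorphism of $\BB H$ conformal off $\eta$, hence globally conformal, hence the identity once the embedding is fixed. For the first two bullets the paper explicitly declines to review the proof, referring instead to \cite{shef-zipper} and the expository notes \cite{berestycki-lqg-notes}.

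Your sketch for bullets 1 and 2 does follow Sheffield's quantum-zipper strategy, and you are right that the stationarity claim is the heart of the matter and that it comes down to an It\^o computation in which the particular values $\ul\kappa=\gamma^2$ and $Q=\gamma/2+2/\gamma$ are what make the drifts line up. Two technical points are worth flagging. First, the zipper is not stationary in the strong sense that $\bh_t$ is literally the circle-average embedding of a wedge for each $t$; rather, the \emph{quantum surface} $(\BB H,\bh_t,0,\infty)$ is stationary in law, i.e.\ $\bh_t$ has the wedge law only after re-normalizing the additive constant, and keeping this straight matters for the computation. Second, the passage from stationarity to the independence of $\cW^{\op L}$ and $\cW^{\op R}$ and to their identification as $\gamma$-quantum wedges is considerably more delicate than the ``disjoint halves of the zipper data'' heuristic: in \cite{shef-zipper} it goes through a reweighting/limiting argument comparing the capacity-time zipper to a quantum-length-time zipper, not a direct measurability decomposition. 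But as a roadmap your proposal is accurate, and you correctly isolate Step~1 as the place where the genuine analytic work lives.
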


As a consequence of the second assertion of Theorem~\ref{thm:zipper0}, the boundary length measure $\nu_\bh$ is well-defined on $\eta$ in the sense that we get the same length measure whether we measure lengths from the left or right side of $\eta$. 
From this we infer that if $\wt h$ (resp.\ $\wt\eta$) is any distribution whose law is locally absolutely continuous with respect to the law of the GFF (resp.\ an SLE$_{\ul\kappa}$ curve) and $\wt h$ and $\wt\eta$ are independent, then the $\gamma$-LQG length measure $\nu_{\wt h}$ on $\wt\eta$ is well-defined. 
The $\gamma$-LQG length measure on an SLE$_{\ul\kappa}$ curve defined in this way can also be described as a Gaussian multiplicative chaos measure with the Minkowski content of the SLE$_{\ul\kappa}$ curve (which was shown to exist in~\cite{lawler-rezai-nat}) as the base measure; see~\cite{benoist-lqg-chaos}. 
 
The proof of the first two assertions of Theorem~\ref{thm:zipper0} are quite difficult and we will not attempt to review the proof of these (but see \cite{berestycki-lqg-notes} for some expository notes).
The measurability statement in Theorem~\ref{thm:zipper0} is a neat observation based on  the conformal removability of $\SLE_{\ul\kappa}$  with $\ul{\kappa}\in(0,4)$ 
(see \cite[Proof of Theorem~1.4]{shef-zipper}). We now explain how this argument goes. 
A set $A\subset\BB C$ is \emph{conformally removable} if every homeomorphism $\BB C\rta \BB C$ which is conformal on $\BB C \setminus A$ is in fact conformal on all of $\BB C$ (so is a complex affine map). 
The a.s.\ conformal removability of SLE$_{\ul\kappa}$ curves follows by combining results in~\cite{schramm-sle,jones-smirnov-removability}. 

Suppose there exists $(\wt h,\wt \eta)$ such that $(\wt h,\wt \eta,\cW^{\op L},\cW^{\op R})\eqD(h,\eta,\cW^{\op L},\cW^{\op R})$. Define $(\wt \phi_{\op L}, \wt\phi_{\op R})$.
in the same way as $(\phi_{\op L},\phi_{\op R})$. Then by the second assertion in Theorem~\ref{thm:zipper0}, 
$\wt \phi_{\op L}^{-1}\circ\phi_{\op L}$ and $\wt \phi_{\op R}^{-1}\circ\phi_{\op R}$  together extend to a homeomorphism $\phi:\BB H\to \BB H$, which is conformal everywhere on $\BB H\setminus\eta$.  The  conformal removability of $\eta$ ensures that such a map $\phi$ has to be conformal everywhere. 
Since we are assuming both $h$ and $\wt h$ are the circle average embeddings, we must have that $\phi$ is the identity map and hence $(\wt h,\wt\eta)=(h,\eta)$. 

See~\cite{hp-welding,mmq-welding} for the variant of Theorem~\ref{thm:zipper0} when $(\gamma,\ul\kappa) = (2,4)$. We remark that the paper~\cite{astala-welding} studies a variant of the setup of Theorem~\ref{thm:zipper0} where one conformally welds an LQG surface to a Euclidean surface, but in this case the curve which is the gluing interface is not an SLE (see also Problem~\ref{prob-mismatched-welding}). 

Theorem~\ref{thm:zipper0} is a continuum analog of cutting/gluing statements for random planar maps. See~\cite[Section 2.2]{shef-kpz} for a conjectural example in the case of spanning-tree weighted random planar maps and Section~\ref{sec-kappa6} for an overview of a rigorous example in the case of uniform random planar maps. 
  
Theorem~\ref{thm:zipper0} is vastly generalized in \cite{wedges}.  To state it we first introduce a new parametrization of quantum wedges and cones. For $\alpha<Q+\gamma/2$, we call 
\begin{equation}\label{eq:weight}
W\defeq \gamma(Q+\frac\gamma2-\alpha)
\end{equation}
the \notion{weight} of an $\alpha$-quantum wedge. In the terminology introduced at the end of Section~\ref{subsec:cone-wedge}, $W\geq \gamma^2/2$ corresponds to a thick wedge while $W\in(0,\gamma^2/2)$ corresponds to a thin wedge. 
The weight $W$ is sometimes a more convenient parameter to work with than $\alpha$ since it is additive under cutting and gluing operations (see theorems below).
We also recall the definition of SLE$_{\ul\kappa}(\rho^{\op L} ; \rho^{\op R})$ from Section~\ref{sec-sle-rho}. 
With this notation, the following theorem (\cite[Theorem~1.2]{wedges}) explains how to cut a thick wedge using $\SLE_{\ul\kappa}(\rho^{\op L};\rho^{\op R})$. 

\begin{thm}[Cutting a thick wedge]\label{thm:zipper1}
	Fix   $\rho^{\op L}>-2$ and $\rho^{\op R}>-2$. Set $W^{\op L}=\rho^{\op L}+2$, $W^{\op R}=\rho^{\op R}+2$, and $W=W^{\op L}+W^{\op R}$. Suppose $W\ge \gamma^2/2$. 
	Let $(\BB H,\bh,0,\infty)$ be the circle average embedding of a  quantum wedge of weight $W$. 	
	Let $\eta$ be a chordal $\SLE_{\ul\kappa}(\rho^{\op L};\rho^{\op R})$ from 0 to $\infty$ in $\BB H$, where $\ul\kappa = \gamma^2$, sampled independently from $\bh$. 
	Let $\BB H^{\op L}$ (resp.\ $\BB H^{\op R}$) be the union of the connected components of $\BB H \setminus\eta$ which lie to the left (resp.\ right) of $\eta$.
	Let $\cW^{\op L}$, $\cW^{\op R}$ be the quantum surface $(\BB H^{\op L},\bh|_{\BB H^{\op L}}, 0,\infty)$ and $(\BB H^{\op R},\bh|_{\BB H^{\op R}}, 0,\infty)$  respectively. 
	\begin{itemize}
		\item $\cW^{\op L}$ and $\cW^{\op R}$ are independent  quantum wedges of weights  $W^{\op L}$ and $W^{\op R}$, respectively. 
		\item The parametrizations of $\eta$ induced by the  quantum length measure on the right boundary of $\cW^{\op L}$ and the left boundary of $\cW^{\op R}$ agree a.s.\ (as in Theorem~\ref{thm:zipper0}).
		\item  $(\bh,\eta)$ is almost surely determined by $(\cW^{\op L},\cW^{\op R})$. 
	\end{itemize}
\end{thm}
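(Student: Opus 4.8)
The three assertions have quite different flavors, so the plan is to handle them separately: the third is soft and I would prove it in full, while the first two are the analytic heart (the content of \cite[Theorem~1.2]{wedges}) and I would only sketch the strategy.

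\textbf{Measurability (third bullet).} I would run the removability argument exactly as for Theorem~\ref{thm:zipper0}. The key input is that a chordal $\SLE_{\ul\kappa}(\rho^{\op L};\rho^{\op R})$ with $\ul\kappa=\gamma^2\in(0,4)$ is a.s.\ conformally removable: away from $\eta\cap\partial\BB H$ it is locally absolutely continuous with respect to an ordinary $\SLE_{\ul\kappa}$, which is removable by \cite{schramm-sle,jones-smirnov-removability}, and near the boundary one invokes the known regularity (John-type) estimates for $\SLE_{\ul\kappa}(\rho)$ curves so that the Jones--Smirnov criterion still applies. Granting this, suppose $(\wt\bh,\wt\eta)$ has the same joint law as $(\bh,\eta)$ given $(\cW^{\op L},\cW^{\op R})$, and define $\wt\phi_{\op L},\wt\phi_{\op R}$ in the same way as $\phi_{\op L},\phi_{\op R}$. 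The second bullet guarantees that the left and right $\gamma$-LQG length parametrizations of $\eta$ match, so $\wt\phi_{\op L}^{-1}\circ\phi_{\op L}$ and $\wt\phi_{\op R}^{-1}\circ\phi_{\op R}$ glue along $\eta$ into a homeomorphism $\phi:\BB H\to\BB H$ that is conformal on $\BB H\setminus\eta$; removability upgrades $\phi$ to a global conformal automorphism of $\BB H$ fixing $0$ and $\infty$, hence $\phi(z)=cz$, and since both fields use the circle-average embedding the normalization in Definition~\ref{def:wedge} forces $c=1$. Thus $(\wt\bh,\wt\eta)=(\bh,\eta)$, which gives the measurability.

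\textbf{Independence and identification of the weights (first bullet) --- the main obstacle.} I do not expect a short argument here; this is essentially all of \cite[Theorem~1.2]{wedges}. The strategy, building on Theorem~\ref{thm:zipper0}, is to use Sheffield's \emph{quantum zipper}: parametrize $\eta$ by capacity and consider the operation of ``unzipping'' an initial segment of $\eta$ and re-embedding the resulting configuration in $\BB H$. One shows that the pair (weight-$W$ wedge, $\SLE_{\ul\kappa}(\rho^{\op L};\rho^{\op R})$) is \emph{stationary} under this operation, and that its increments are recorded by the $\gamma$-LQG lengths of the two arcs of $\eta$ from the tip to $\infty$. Analyzing these two length processes --- showing they evolve independently and that each has precisely the law that characterizes a weight-$W^{\op L}$ (resp.\ weight-$W^{\op R}$) wedge via the uniqueness characterizations behind Lemmas~\ref{lem:circle} and~\ref{lem:inv} --- identifies $\cW^{\op L}$ and $\cW^{\op R}$. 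The special case $\rho^{\op L}=\rho^{\op R}=0$, so $W^{\op L}=W^{\op R}=2$, is exactly Theorem~\ref{thm:zipper0} and serves as the base case; for general $\rho$ one must additionally track the force points through the zipper, and when $\rho^{\op L}<\ul\kappa/2-2$, i.e.\ $W^{\op L}<\gamma^2/2$, the curve hits the left boundary by Lemma~\ref{lem-sle-rho-hit}, so $\BB H^{\op L}$ is a chain of bubbles and $\cW^{\op L}$ emerges as a \emph{thin} wedge in the sense of Definition~\ref{def:thin-wedge} (and symmetrically on the right). The genuinely hard inputs --- stationarity of the zipper, the explicit laws of the two length processes, and the characterization of quantum wedges --- I would cite from \cite{wedges} rather than reprove.

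\textbf{Agreement of the boundary length measures (second bullet).} This falls out of the same analysis: the zipper propagates a single well-defined $\gamma$-LQG length along $\eta$, so the parametrizations induced from the two sides necessarily coincide. Away from the (Lebesgue-null) set $\eta\cap\partial\BB H$ one also sees it directly, since there $\bh$ is locally absolutely continuous with respect to the field of Theorem~\ref{thm:zipper0} and $\eta$ with respect to an ordinary $\SLE_{\ul\kappa}$, so the equality of the left and right length measures is inherited from that theorem; the boundary-touching points contribute nothing.
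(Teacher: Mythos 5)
Your proposal is correct and follows essentially the same route as the paper: the survey treats the first two bullets as the content of \cite[Theorem~1.2]{wedges} (to be cited, not reproved) and obtains the measurability statement from the conformal removability of $\SLE_{\ul\kappa}(\rho^{\op L};\rho^{\op R})$ exactly as in the argument given below Theorem~\ref{thm:zipper0}. Your additional detail on why the boundary-touching curve is still removable and on the zipper strategy is consistent with, and slightly more explicit than, what the paper records.
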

Since  a $(\gamma-2\gamma^{-1})$-quantum wedge has weight $2+2=4$,
Theorem~\ref{thm:zipper0} is the special case of Theorem~\ref{thm:zipper1} where $\rho^{\op L}=\rho^{\op R}=0$ and $W^{\op L}=W^{\op R}=2$.

Theorem~\ref{thm:zipper1} is consistent with the  topological properties of $\SLE_{\ul{\kappa}}(\rho^{\op L};\rho^{\op R})$ in Section~\ref{sec-sle-rho}.
Note that $\cW^{\op L}$ is a thick wedge if and only if $W^{\op L}\ge \gamma^2/2=\ul{\kappa}/2$.
Since $\rho^{\op L} = W^{\op{L}}-2$, this is consistent with Lemma~\ref{lem-sle-rho-hit}.
When $\rho^{\op L}\in (-2,\ul{\kappa}/2 -2)$,  the two curves intersect infinitely many times, 
thus enclosing a region with the topology of the thin wedge $\cW^{\op L}$.  The same statement holds if ${\op L}$ is replaced by $\op R$.

The last assertion in Theorem~\ref{thm:zipper1} is equivalent to the fact that $(\BB H , \bh,\eta,a,b)$  is a.s.\ determined by $(\cW^{\op L},\cW^{\op R})$ as a curve-decorated quantum surface (Definition~\ref{def-curve-decorated}). 
Therefore, in Theorem~\ref{thm:zipper1}  it is not essential to assume that $\cW$ is under the circle average embedding or  any other particular embedding.
\begin{remark}[Cutting a thin wedge]\label{thm:zipper2}
	There is also a variant of Theorem~\ref{thm:zipper1} where $W\in (0,\gamma^2/2)$ so that $\cW$ is a thin wedge.  In this case, we take $\eta$ to be  the concatenation of independent $\SLE_{\ul\kappa}(\rho^{\op L};\rho^{\op R})$ curves in each bead of $\cW$. We also require that $\eta$ is independent of the fields $\{h^e\}$ under the embedding of $\cW$ given by Definition~\ref{def:thin-wedge}.
	This uniquely specifies the law of $(\cW,\eta)$ as a curve-decorated quantum surface.
	This way, all the three assertions in Theorem~\ref{thm:zipper2} remain true.
\end{remark}

There is also  a variant of Theorem~\ref{thm:zipper0} about cutting quantum cones by whole-plane $\SLE_{\ul\kappa} (\rho)$ \cite[Theorem~1.5]{wedges}.
Given $\alpha<Q$, we define  the \notion{weight} of  an $\alpha$-quantum cone by
\begin{equation}\label{eq:weight-cone}
W\defeq2\gamma(Q-\alpha).
\end{equation}

\begin{thm}\label{thm:zipper3}
	Fix   $\rho>-2$ and set $W=\rho+2$. 
	Let $(\CC,\bh,0,\infty)$ be the circle average  embedding of a quantum cone of weight $W$. Let $\eta$ be a whole-plane $\SLE_{\ul\kappa}(\rho)$ on $(\CC,0,\infty)$ where $\ul\kappa=\gamma^2$, sampled independently from $\bh$. 
	\begin{itemize}
		\item The quantum surface $\mcl W  := (\CC\setminus\eta, \bh  |_{\CC\setminus\eta} ,0,\infty)$ is a quantum wedge of weight $W$. 
		\item The parametrizations of $\eta$ induced by the  quantum length measure on the left and right boundaries of $\mcl W$ agree a.s.
		\item $\mcl W$ a.s.\ determines $(\bh,\eta)$ modulo rotation about the origin.
	\end{itemize}
\end{thm}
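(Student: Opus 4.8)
The plan is to treat the three assertions in turn, first in the \emph{thick} case $W\geq\gamma^2/2$ --- equivalently $\rho\geq\ul\kappa/2-2$, so that by Lemma~\ref{lem-sle-rho-whole-plane} the curve $\eta$ is simple and $\BB C\setminus\eta$ is a simply connected domain with marked boundary points $0$ and $\infty$ --- and then adapting to the \emph{thin} case $W\in(0,\gamma^2/2)$, where $\eta$ self-intersects, $\BB C\setminus\eta$ is a chain of Jordan domains, and the claim is that this chain carries the bead structure of a thin wedge. The second and third assertions will follow from local considerations essentially as in Theorem~\ref{thm:zipper0}; the real content is the first assertion, which I would obtain from the characterization of quantum wedges by scale invariance together with a boundary-behavior computation at the marked points, or else by transferring Theorem~\ref{thm:zipper1} through a local absolute continuity argument.

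For the \textbf{length-matching} assertion, fix a point on $\eta$ which is $\nu_\bh$-typical and lies away from $0$, $\infty$ and the force point. In a small neighborhood of this point the pair $(\bh,\eta)$ has law absolutely continuous with respect to that of a GFF-like field together with an independent ordinary $\SLE_{\ul\kappa}$ segment: by Lemma~\ref{lem:gamma} and its whole-plane analog the cone field locally looks like a $\gamma$-quantum cone, hence like $h-\gamma\log|\cdot-u|$ for a whole-plane GFF $h$, which is GFF-like; and whole-plane $\SLE_{\ul\kappa}(\rho)$ is locally absolutely continuous with respect to $\SLE_{\ul\kappa}$ away from its endpoints and force point. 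Since the $\gamma$-LQG length of an $\SLE_{\ul\kappa}$ curve decoupled from the field is intrinsically well defined --- it is a Gaussian multiplicative chaos measure with the Minkowski content of the curve as base measure, as recalled after Theorem~\ref{thm:zipper0} --- the lengths of $\eta$ measured from its two sides agree a.s.\ near this point, hence a.s.\ everywhere by a covering argument. Given this, the \textbf{measurability} assertion is proved exactly as for Theorem~\ref{thm:zipper0}: if $(\wt\bh,\wt\eta)$ is another pair inducing the same wedge $\mcl W$ (both circle-average embedded), the conformal maps from $\mcl W$ onto $\BB C\setminus\eta$ and onto $\BB C\setminus\wt\eta$ glue along $\eta$ into a homeomorphism $\phi:\BB C\to\BB C$ that is conformal off $\eta$; a.s.\ conformal removability of the $\SLE_{\ul\kappa}$ curve, $\ul\kappa=\gamma^2\in(0,4)$ (combining~\cite{schramm-sle,jones-smirnov-removability}), forces $\phi$ to be a global conformal map, hence a M\"obius transformation fixing $0$ and $\infty$, i.e.\ $z\mapsto cz$; the circle-average normalization then gives $|c|=1$, so $(\bh,\eta)$ is determined by $\mcl W$ modulo rotation.

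For the \textbf{first assertion} I would check that $\mcl W$ has the two properties that characterize the weight-$W$ quantum wedge in~\cite[Propositions 4.8 and 4.14]{wedges}, namely the scale invariance of Lemma~\ref{lem:inv} and the local field description of Lemma~\ref{lem:circle}. Scale invariance transfers directly: the weight-$W$ cone satisfies the cone version of Lemma~\ref{lem:inv}, $\eta$ has scale-invariant law and is independent of $\bh$, and cutting commutes with scaling, so adding a constant $c$ to the field of $\mcl W$ (equivalently scaling its area by $e^{\gamma c}$) does not change the law of $\mcl W$. For the local description, Lemma~\ref{lem:circle} gives that near $0$ the cone field equals $h-\alpha_{\mathrm c}\log|\cdot|$ in law with $\alpha_{\mathrm c}=Q-W/(2\gamma)$ by~\eqref{eq:weight-cone}, while the map uniformizing $\BB C\setminus\eta$ near the slit tip $0$ behaves, by the Loewner description~\eqref{eq:Loewner} of whole-plane $\SLE_{\ul\kappa}(\rho)$ near its starting point, like $z\mapsto z^{\beta}$ for an explicit exponent $\beta=\beta(\ul\kappa,\rho)$; pushing the field through this map via~\eqref{eq:coord} turns the $\log$-singularity at $0$ into one of strength $\alpha_{\mathrm w}$, and one verifies that $\alpha_{\mathrm w}=Q+\gamma/2-W/\gamma$ --- so that the weight equals $W$ by~\eqref{eq:weight} --- exactly when $\rho=W-2$. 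A parallel computation at $\infty$, or an appeal to reversibility of whole-plane $\SLE_{\ul\kappa}(\rho)$, handles the other marked point, after which the uniqueness half of the wedge characterization identifies $\mcl W$.

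The hard part is making the tip computation rigorous: one must control the uniformizing map near the slit tip, justify the interchange with the circle-average process, and --- in the thin case --- show that the chain of Jordan components of $\BB C\setminus\eta$ carries the joint law of the Poisson collection of beads in Definition~\ref{def:thin-wedge}, which amounts to matching the corresponding excursion/bead measures and requires a delicate analysis of the self-intersection structure of whole-plane $\SLE_{\ul\kappa}(\rho)$ interacting with the cone field. This is the technical heart of~\cite{wedges}. An alternative that sidesteps the explicit computation is to decorate the cone with an extra interior quantum-typical marked point, observe that the re-rooted curve-decorated surface is locally absolutely continuous with respect to the chordal configuration of Theorem~\ref{thm:zipper1}, and transfer the wedge structure --- which, once scale invariance is in hand, is a local-to-global property.
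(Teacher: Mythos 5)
You should first note what the survey actually proves here: the first two assertions are simply quoted from \cite[Theorem~1.5]{wedges} (just as the first two assertions of Theorem~\ref{thm:zipper0} are explicitly not re-proved), and the only argument the paper supplies is the measurability statement, deduced from conformal removability of $\SLE_{\ul\kappa}$ exactly as in the paragraph below Theorem~\ref{thm:zipper0}. Your treatment of that third bullet --- match the two boundary-length parametrizations via local absolute continuity, glue the two uniformizing maps into a homeomorphism of $\BB C$ conformal off $\eta$, invoke removability, and pin down the resulting M\"obius map using the marked points and the circle-average normalization --- is the same argument as the paper's, and it is correct (modulo the extra care needed in the thin phase, where $\eta$ is non-simple and one must know removability of the full self-intersecting trace, which the paper asserts for both curve types).

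For the first assertion, however, your main-line strategy has a genuine gap. The characterization of the weight-$W$ wedge in \cite[Propositions 4.8 and 4.14]{wedges} requires both the scale invariance of Lemma~\ref{lem:inv} (which you transfer correctly) \emph{and} the full local field description of Lemma~\ref{lem:circle}, i.e.\ that under the circle average embedding the field on $\BB D\cap\BB H$ is a free-boundary GFF plus $-\alpha_{\mathrm w}\log|\cdot|$. Your proposed verification only tracks the radial log-singularity through a putative power-law behavior $z\mapsto z^\beta$ of the uniformizing map at the tip; but the uniformizing map of the complement of a fractal SLE trace has no such pointwise power-law behavior that can be pushed through the coordinate change~\eqref{eq:coord}, and, more importantly, the claim that the \emph{lateral} part of the uniformized field is that of a free-boundary GFF is precisely the content of the conformal welding theorem --- it is not a consequence of any computation at the marked points. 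So this step is circular: it assumes the hard part of the result it is meant to prove. Your parenthetical alternative --- add a quantum-typical interior point, re-root, and use local absolute continuity to import the wedge structure from the chordal statement of Theorem~\ref{thm:zipper1} --- is much closer to how \cite{wedges} actually deduces the whole-plane/cone statement from the chordal one, and is the route you should develop if you want a proof rather than a heuristic.
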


When $W\ge \gamma^2/2$, $\cW$ is a thick wedge so $\CC\setminus\eta$ is homeomorphic to the half-plane.  
When  $W\in(0,\gamma^2/2)$ so that $\rho\in(-2, \frac{\ul{\kappa}}{2}-2)$, $\cW$ is a thin wedge so $\CC\setminus\eta$ is an  ordered collection of Jordan domains  with two marked points (see the discussion below Lemma~\ref{lem-sle-rho-whole-plane}). 
This is consistent with the  topological properties of  whole-plane $\SLE_{\ul{\kappa}}(\rho)$ discussed in Section~\ref{sec-sle-rho}.

Both $\SLE_{\ul{\kappa}}(\rho^{\op L},\rho^{\op R})$ and whole-plane $\SLE_{\ul\kappa}(\rho)$ curves in the welding theorems above are conformally removable. Therefore, the measurability statements in these theorems follow from the argument below Theorem~\ref{thm:zipper0}.

There is a dynamical formulation of Theorem~\ref{thm:zipper0} which is also instrumental.
\begin{prop}\label{prop:zipper}
	In the setting of Theorem~\ref{thm:zipper0}, if $\eta$ is under the quantum natural parametrization, 
	then the laws of the curve-decorated quantum surfaces $(\BB H\setminus \eta[0,t],\bh, \eta(t),\infty, \eta|_{[t,\infty)})_{t\ge 0}$ are stationary in $t$.
\end{prop}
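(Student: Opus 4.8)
The plan is to reduce this statement to Theorem~\ref{thm:zipper0}, trading the dynamical picture for a static one about the welded pair of wedges. First I would record the two-way measurability furnished by Theorem~\ref{thm:zipper0}: cutting $(\BB H,\bh,0,\infty)$ along $\eta$ produces an ordered pair of \emph{independent} $\gamma$-quantum wedges $(\cW^{\op L},\cW^{\op R})$ whose boundary-length parametrizations of $\eta$ agree, the curve-decorated quantum surface $(\BB H,\bh,0,\infty,\eta)$ is a.s.\ determined by this pair, and conversely the pair is determined by that curve-decorated surface. So it is enough to understand the image of $(\cW^{\op L},\cW^{\op R})$ under ``zip up to time $t$''.

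Next I would fix $t\ge0$ and observe that, since $\eta[0,t]$ is a boundary arc of both $\BB H^{\op L}$ and $\BB H^{\op R}$, cutting the slit surface $(\BB H\setminus\eta[0,t],\bh,\eta(t),\infty,\eta|_{[t,\infty)})$ along $\eta|_{[t,\infty)}$ recovers exactly $\cW^{\op L}$ and $\cW^{\op R}$, but with the root $0$ replaced by $\eta(t)$; call these re-rooted surfaces $\cW^{\op L}_t$ and $\cW^{\op R}_t$. If $(\cW^{\op L}_t,\cW^{\op R}_t)$ is again an independent pair of $\gamma$-quantum wedges whose boundary-length parametrizations of $\eta|_{[t,\infty)}$ agree, then Theorem~\ref{thm:zipper0} applies to this pair verbatim; and since not regluing the arc $\eta[0,t]$ simply leaves it as a slit, the conformal welding along quantum length of $\cW^{\op L}_t$ and $\cW^{\op R}_t$ is exactly $(\BB H\setminus\eta[0,t],\bh,\eta(t),\infty,\eta|_{[t,\infty)})$. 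Hence this curve-decorated surface has the law of $F(\cW^{\op L}_t,\cW^{\op R}_t)$, where $F$ denotes the $t$-independent ``weld along quantum length'' operation. Taking $t=0$ identifies $F(\cW^{\op L},\cW^{\op R})$ with the law of $(\BB H,\bh,0,\infty,\eta)$, so the proposition reduces to the single distributional identity $(\cW^{\op L}_t,\cW^{\op R}_t)\eqD(\cW^{\op L},\cW^{\op R})$.

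To prove this identity I would note that $\eta(t)$, regarded inside $\cW^{\op L}$ (resp.\ $\cW^{\op R}$), is the boundary point lying at quantum natural time $t$ from the root along the $\eta$-arc, which by construction of the quantum natural parametrization is an intrinsic functional of the marked quantum surface; by the second bullet of Theorem~\ref{thm:zipper0}, which matches the two parametrizations, the two points so obtained coincide. Thus $(\cW^{\op L},\cW^{\op R})\mapsto(\cW^{\op L}_t,\cW^{\op R}_t)$ is a single re-rooting operation applied to each coordinate separately, and since $\cW^{\op L}\perp\cW^{\op R}$ it suffices to show that a $\gamma$-quantum wedge, re-rooted at the boundary point at quantum natural time $t$ along a boundary arc (forgetting the old root), has the law of a $\gamma$-quantum wedge. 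I would establish this in the strip parametrization of Remark~\ref{remark-strip}: re-rooting amounts to a field-dependent horizontal translation of $\cS$ together with the additive recentering that restores the circle-average embedding, and invariance then follows from the unconditioned Markov property of the Brownian motion governing the horizontal averages on the positive side, the explicit conditioned-to-stay-positive law on the negative side, and the scale invariance of the wedge (Lemma~\ref{lem:inv}); alternatively one can invoke the boundary analogue of Lemma~\ref{lem:gamma}, which exhibits the $\gamma$-quantum wedge as the local limit of a quantum surface at a quantum-length-typical boundary point and is manifestly unchanged under re-rooting at another such point.

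The step I expect to be the main obstacle is this last one --- making precise the quantum natural parametrization of $\eta$ and proving the re-rooting invariance of the $\gamma$-quantum wedge --- because that is exactly where a random time change (quantum length measured along the welding interface) has to be reconciled with a random spatial recentering of the wedge field; everything else is bookkeeping layered directly on Theorem~\ref{thm:zipper0}.
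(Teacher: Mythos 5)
The paper states Proposition~\ref{prop:zipper} without proof (it is imported from \cite{shef-zipper}, where the stationarity is obtained by a rather different route: one first builds a \emph{capacity} zipper from the reverse SLE/GFF coupling and then deduces quantum-length stationarity by reparametrizing at a quantum-typical time). Your argument instead takes Theorem~\ref{thm:zipper0} as a black box and reduces the dynamical statement to a static re-rooting invariance of the $\gamma$-quantum wedge: cut, re-root each wedge at the boundary point lying at quantum length $t$ along the welding interface, and re-weld. Given all three bullets of Theorem~\ref{thm:zipper0}, this reduction is sound: the length-matching bullet guarantees the two re-rooted points are identified under the welding, independence is preserved because each re-rooting is an intrinsic functional of its own wedge, and the removability argument sketched below Theorem~\ref{thm:zipper0} applies verbatim to the partial welding along $\eta|_{[t,\infty)}$ (leaving $\eta([0,t])$ as a slit), so the welding map is the same $t$-independent functional in both cases. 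Within the survey's presentation this is a legitimate and economical derivation; it quarantines all of the SLE/GFF coupling machinery inside Theorem~\ref{thm:zipper0}, at the cost of needing the re-rooting invariance of the $\gamma$-quantum wedge as a separate input.

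One detail in that last input is wrong as written. In the strip parametrization $(\cS,\wt{\bh},+\infty,-\infty)$, re-rooting at the boundary point at quantum length $t$ from the root is \emph{not} a field-dependent horizontal translation of $\cS$: a horizontal translation fixes both ends of the strip, whereas re-rooting moves the first marked point from $+\infty$ to a finite boundary point, so the normalizing map is a genuinely different conformal automorphism of $\cS$ and the ``Markov property of the horizontal averages'' argument does not apply as stated. Your fallback is the correct one and should be the main argument: the $\gamma$-quantum wedge is the local limit of a GFF-type surface at a $\nu_h$-typical boundary point (the boundary analogue of Lemma~\ref{lem:gamma}), and shifting the root by a fixed amount of quantum length preserves $\nu_h$-typicality in the pre-limit, so the limit law is invariant under this re-rooting; this invariance is recorded in \cite{shef-zipper,wedges}. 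With that substitution the proof goes through.
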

See Section~\ref{sec-mating-chordal} for a generalization of Theorem~\ref{thm:zipper0} and Proposition~\ref{prop:zipper} to chordal $\SLE_\kappa$   on a quantum wedge where $\kappa=16/\gamma^2\in (4,8)$.

\subsection{Infinite-volume mating of trees} 
\label{sec-mating-main}

\subsubsection{The whole-plane mating-of-trees theorem}

Let $(\BB C , \bh , 0, \infty)$ be a $\gamma$-quantum cone with the circle average embedding and let $\mu_\bh$ and $\nu_\bh$ be its $\gamma$-LQG area measure and boundary length measure, respectively. Recall from the discussion just after Theorem~\ref{thm:zipper0} that $\nu_\bh$ makes sense as a measure on any SLE$_{\ul\kappa}$-type curve sampled independently from $\bh$ for $\ul\kappa = \gamma^2$. Let $\eta$ be a whole-plane space-filling SLE$_\kappa$ from $\infty$ to $\infty$ sampled independently from $\bh$ and then parametrized in such a way that $\eta(0) = 0$ and $\mu_\bh(\eta([t_1,t_2])) = t_2-t_1$ whenever $t_1<t_2$. Note that this parametrization is natural in light of Remark~\ref{rmk:conj}. 

Define a process $L : \BB R \rta \BB R$ in such way that $L_0 = 0$ and for $t_1,t_2 \in \BB R$ with $t_1<t_2$, 
\allb \label{eqn-peanosphere-bm}
L_{t_2}- L_{t_1} &= \nu_\bh\left( \text{left boundary of $\eta([t_1,t_2]) \cap \eta([t_2,\infty))$} \right) \notag \\
&\qquad - \nu_\bh\left( \text{left boundary of $\eta([t_1,t_2]) \cap \eta((-\infty , t_1])$} \right) ,
\alle 
so that $L$ gives the net change of the LQG length of the left outer boundary of $\eta$ relative to time 0. 
Define $R : \BB R\rta \BB R$ similarly but with ``right" in place of ``left" and set $Z := (L,R)$. See Figure~\ref{fig-peanosphere-bm-def} for an illustration. 
The process $Z$ is the continuum analog of the encoding walk in the mating-of-trees bijections in Section~\ref{sec-bijections}.

\begin{figure}[ht!]
	\begin{center}
		\includegraphics[scale=0.92]{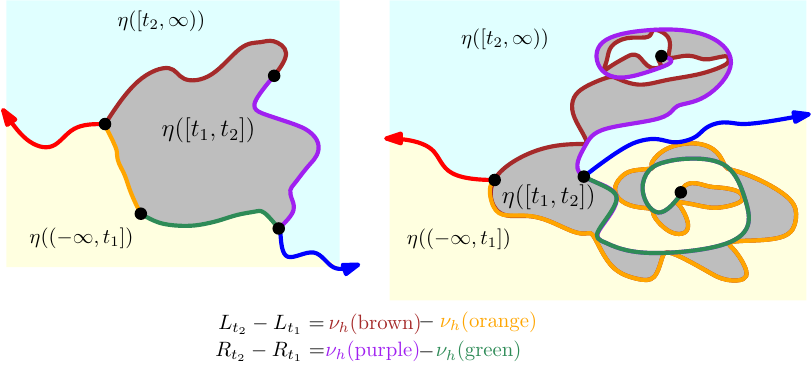}
	\end{center}
	\caption[Definition of the peanosphere Brownian motion]{\label{fig-peanosphere-bm-def} 
		Illustration of the definition of the peanosphere Brownian motion $Z = (L,R)$ in the case when $\kappa \geq 8$ (left) and the case when $\kappa \in (4,8)$ (right). 
		The left (resp.\ right) outer boundary of $\eta((-\infty,t_1])$ is the union of the red and orange (resp.\ blue and green) curves and the left (resp.\ right) outer boundary of $\eta((-\infty,t_2])$ is the union of the red and brown (resp.\ blue and purple) curves.
	}
\end{figure} 

The following fundamental theorem is proven in~\cite[Theorems 1.9 and 1.11]{wedges}. See also~\cite{kappa8-cov} for the computation of the correlation in the case when $\kappa  > 8$. 

\begin{thm}[Continuum mating of trees~\cite{wedges}] \label{thm-mating}
	There is a deterministic constant $\BB a = \BB a(\gamma) > 0$ such that the process $Z = (L,R)$ evolves as a correlated two-dimensional Brownian with variances and covariances
	\eqb \label{eqn-bm-cov}
	\op{Var}(L_t) = \op{Var}(R_t) = \BB a |t| , \quad \text{and} \quad \op{Cov}(L_t,  R_t) = -\BB a |t| \cos\left(\frac{\pi \gamma^2}{4} \right) , \quad\forall t\in\BB R .
	\eqe
	Furthermore, $Z$ a.s.\ determines $(\bh,\eta)$ modulo rotation about the origin. 
\end{thm}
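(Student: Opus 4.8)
The plan is to prove the mating-of-trees theorem in two independent halves: first that $Z=(L,R)$ is a correlated Brownian motion with the stated covariance structure, and second (the measurability statement) that $Z$ almost surely recovers $(\bh,\eta)$ up to rotation. For the first half, I would begin by establishing that $Z$ has stationary and independent increments. Stationarity in the forward direction follows from the scale invariance of the $\gamma$-quantum cone (Lemma~\ref{lem:inv}) together with the scale invariance of whole-plane space-filling $\SLE_\kappa$ (Lemma~\ref{lem:whole-plane}): rescaling space by a factor that multiplies the LQG area by $e^{\gamma c}$ leaves the joint law of $(\bh,\eta)$ invariant modulo rotation, and under the $\mu_\bh$-parametrization this corresponds to a linear time change of $Z$; combined with the reversibility $\eta(-\cdot)\eqD\eta$ this yields a two-sided process with stationary increments. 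For independence of increments, I would invoke the conformal welding theory of Section~\ref{subsec:zipper} — specifically the analog of Lemma~\ref{lem-past-future-ind} alluded to in the text — which says that the left and right outer boundaries of $\eta((-\infty,0])$ cut the quantum cone into two independent pieces (a past and a future, each a quantum wedge decorated by an independent space-filling SLE). Applying this at a sequence of times gives the Markov property of $Z$, and with stationary increments this forces $Z$ to be a Lévy process.

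Next I would rule out jumps and the drift. Continuity of $Z$ follows from the a.s.\ continuity of the boundary-length measure $\nu_\bh$ along $\eta^{\op L}$ and $\eta^{\op R}$, which in turn follows from the continuity of the circle-average process of the GFF and the Hölder-type regularity of Gaussian multiplicative chaos along SLE curves, together with the fact (Proposition~\ref{prop-sle-ball}) that space-filling SLE segments are roughly spherical so that small time increments correspond to small Euclidean and hence small boundary-length increments. A continuous Lévy process is a Brownian motion with drift; the drift vanishes by a symmetry argument — swapping the left and right boundaries corresponds to a reflection of the plane under which $(\bh,\eta)$ is invariant in law, so $L\eqD R$, and time-reversal together with the up-down symmetry shows $L\eqD -L$, killing the drift. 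It remains to compute the variance $\BB a$ and the correlation $-\cos(\pi\gamma^2/4)$. The existence of a finite positive $\BB a$ follows from a moment bound; the precise value of the correlation is the genuinely computational input, and I would obtain it as in~\cite{wedges} via the explicit description of $\eta_0^{\op L}$ as a whole-plane $\SLE_{\ul\kappa}(2-\ul\kappa)$ and $\eta_0^{\op R}$ as a chordal $\SLE_{\ul\kappa}(-\ul\kappa/2,-\ul\kappa/2)$ in its complement (SLE duality), combined with the known quantum boundary length of wedges welded along these curves and a two-point correlation computation for GMC; alternatively one can cite the Brownian-motion scaling limit of the discrete mating-of-trees walk, whose correlation is computed combinatorially.

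For the measurability statement — that $Z$ determines $(\bh,\eta)$ modulo rotation — I would argue as follows. Given $Z$, one can in principle reconstruct the ``tree structure'': the process $L$ encodes a continuum random tree (the left boundary tree) via its running-infimum ancestral relation, and similarly $R$ encodes the right boundary tree, exactly as a discrete contour function encodes a discrete tree. Gluing these two trees along the identification prescribed by the time parameter produces a topological space which, by the results of~\cite{wedges}, is homeomorphic to the sphere, and the image of the time axis is a space-filling curve $\eta'$ on it; one then needs to put a conformal structure on this sphere. The key point is that the peanosphere construction produces a curve-decorated topological surface, and one invokes the conformal removability of the outer boundaries of $\eta$ — the left and right boundaries are $\SLE_{\ul\kappa}$-type curves with $\ul\kappa=\gamma^2\in(0,4)$, hence conformally removable by the combination of~\cite{schramm-sle,jones-smirnov-removability} — to conclude that the conformal structure is unique. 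Concretely, if $(\bh,\eta)$ and $(\wt\bh,\wt\eta)$ both give rise to the same $Z$, the identity map between the two mated-tree constructions is a homeomorphism of $\BB C$ that is conformal off the (removable) left and right boundary trees, hence globally conformal, hence a rotation since both fields are under the circle-average embedding; this forces $(\wt\bh,\wt\eta)=(\bh,\eta)$ up to rotation. The main obstacle, and the part requiring the most care, is this last step: showing that the topological sphere obtained from gluing the two CRTs actually carries a well-defined quantum surface structure and that $\eta$'s outer boundaries are genuinely removable in the mated surface — this is precisely where the heavy machinery of~\cite{wedges} (the peanosphere homeomorphism, regularity of the gluing, and removability) enters, and I would cite it rather than reprove it.
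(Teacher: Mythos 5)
Your first half (identifying $Z$ as a Brownian motion) follows the paper's route in outline, but two steps are off. First, you derive stationarity of increments from the scale invariance of the quantum cone (Lemma~\ref{lem:inv}) and of space-filling SLE. Scale invariance only gives the Brownian \emph{scaling} relation $Z_{\lambda\cdot}\eqD\lambda^{1/2}Z$; it does not tell you that the surface re-rooted at $\eta(t)$ has the same law as the surface rooted at $\eta(0)=0$, which is what stationarity of increments actually requires. That re-rooting invariance is the content of Lemma~\ref{lem-mating-translate}, and its proof is not a pure scaling argument: one realizes the quantum cone as the local limit of a finite surface at a $\mu_h$-typical point $u$, observes that the hitting time of $u$ by the $\mu_h$-parametrized curve is conditionally uniform, and deduces that shifting time by $\ep t$ is negligible in total variation before passing to the limit. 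Without this lemma your Lévy-process conclusion does not follow. Second, the correlation $-\cos(\pi\gamma^2/4)$ is not obtained in~\cite{wedges} from a GMC two-point computation or from a discrete scaling limit (the latter is only available for special $\gamma$ and would be circular here); it is obtained by matching the exponent for cut points of the past/future wedges (equivalently, the thin-wedge weight $2-\gamma^2/2$) against the known exponent for $\pi/2$-cone times of a correlated planar Brownian motion. The independence-of-increments step via Lemma~\ref{lem-past-future-ind} and the use of Brownian scaling to pin down the form of the process are correct and match the paper.

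The measurability argument is where there is a genuine gap. You propose to reconstruct the peanosphere from $Z$ and then invoke conformal removability of the left and right boundary curves to conclude uniqueness of the conformal structure, by analogy with the proof of the last assertion of Theorem~\ref{thm:zipper0}. That analogy breaks down: in Theorem~\ref{thm:zipper0} the welding interface is a \emph{single} $\SLE_{\ul\kappa}$ curve, whose removability is known. Here, $Z$ only determines each cell $(\eta([a,b]),\bh|_{\eta([a,b])},\eta|_{[a,b]})$ as an abstract curve-decorated quantum surface (Lemma~\ref{lem-local-msrble}), not its embedding, so a removability argument would have to apply to the union of the welding interfaces over all cells at all scales --- i.e., to the entire pair of branching boundary trees $\{\eta_z^{\op L}\},\{\eta_z^{\op R}\}$, a dense fractal set in $\BB C$. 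Removability of a single $\SLE_{\ul\kappa}$ curve does not imply removability of this set, and no such removability statement is known. This is precisely why the actual proof in~\cite[Section~9]{wedges} does \emph{not} go through removability: it shows instead that the conditional variance of $(\bh,\eta)$ given $Z$ vanishes, via a resampling argument built on the adjacency graph of cells $\eta([x-\ep,x])$, $x\in\ep\BB Z$, which is a.s.\ determined by $Z$. If you want to keep a welding-style argument you would need, at minimum, a uniqueness theorem for the simultaneous conformal welding of the full tree of cells, which is an open problem closely related to Problem~\ref{prob-analytic-welding}.
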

The last statement of the theorem means that there is a measurable function $\Phi$ from $\{$continuous paths $\BB R\rta \BB R^2$ $\}$ to $\{\text{distribution/curve pairs}\}$ such that a.s.\ $\Phi(Z)$ and $(h,\eta)$ differ by a rotation about the origin.

We will now explain why the above theorem gives us a construction of SLE and LQG known as the mating-of-trees construction. 
Each Brownian motion $L$ and $R$ encodes a continuum random tree (CRT) \cite{aldous-crt1}. 
The Brownian motion $L$ or $R$ is the continuum analogs of the contour function of a discrete planar tree in Section~\ref{subsec:UST}.
As explained in Figure~\ref{fig-peanosphere} these CRT's can be glued together to form a topological space (homeomorphic to $\BB C$) decorated by a space-filling curve. 
This object is called the \emph{infinite-volume peanosphere} constructed from $Z$. 
As a consequence of Theorem~\ref{thm-mating}, the curve-decorated topological space $(\BB C,\eta)$ is homeomorphic (via a curve-preserving homeomorphism) to the infinite-volume peanosphere constructed from $Z$.

The variance constant $\BB a$ is not computed explicitly in~\cite{wedges}, but so far most of the applications of Theorem~\ref{thm-mating} do not depend on the value of this constant.

\begin{figure}[t]
	\begin{center}
		\includegraphics[scale=0.65]{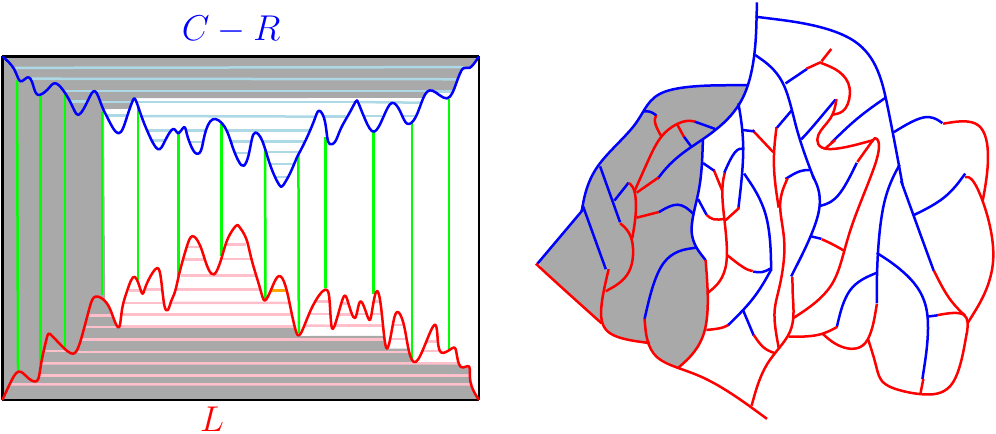}
	\end{center}
	\caption[Peanosphere definition]{\label{fig-peanosphere} 
		The peanosphere construction of \cite{wedges} shows how to obtain a topological sphere decorated by a space-filling curve by gluing together two correlated Brownian excursions $L,R \colon [0,1] \to [0,\infty)$. As explained in~\cite[Section 8.2]{wedges},  a similar construction works for  a pair of correlated Brownian motions, which corresponds to the ``local limit" of the Brownian excursion construction at a point in the interior of $[0,1]$. Let $C>0$ be a random constant chosen so that the graphs of $C-R$ and $L$ do not intersect. We define an equivalence relation on the square $[0,1] \times [0,C]$ by identifying any two points which lie on the same horizontal line segment under the graph of $L$ or above the graph of $C-R$; or on the same vertical line segment between the two graphs. Several such segments are shown in the figure. As explained in \cite{wedges}, it follows from Moore's theorem~\cite{moore} that the topological quotient space under this equivalence relation is a topological sphere decorated by a space-filling curve $\eta$, where $\eta(t)$ is the image of $(t,L_t)$ (equivalently, $(t,C-R_t)$) under the quotient map. 
		This topological space decorated by a space-filling curve is called the \emph{peanosphere}. Heuristically speaking, the peanosphere is obtained by gluing together the continuum random trees (CRT's) associated with $L$ and $R$, and the curve $\eta$ snakes between these two trees (strictly speaking, however, the image of each of the CRT's under the quotient map is the whole space). 
		\cite[Theorem 1.1]{sphere-constructions} shows that there is a canonical embedding of the peanosphere into $\BB C$ whereby the curve $\eta$ maps to a space-filling SLE$_\kappa$ from $\infty$ to $\infty$ parametrized by $\gamma$-LQG mass with respect to an independent quantum sphere. By Theorem~\ref{thm-mating}, the same holds for the infinite-volume peanosphere, but with the quantum sphere replaced by a $\gamma$-quantum cone. }
\end{figure}

\subsubsection{Outline of the proof of Theorem~\ref{thm-mating}}

The first step of the proof of Theorem~\ref{thm-mating} is the following lemma, which is a consequence of the conformal welding results from~\cite{shef-zipper,wedges}, as explained in Section~\ref{subsec:zipper}. 
For the statement, we recall that the weight of an $\alpha$-quantum wedge is $\gamma\left(Q +\frac{\gamma}{2} - \alpha \right)$.

\begin{lem} \label{lem-past-future-ind}
	The curve-decorated quantum surfaces
	\eqb
	\left( \eta((-\infty,0]) ,  \bh|_{ \eta((-\infty,0])} , \eta|_{(-\infty,0]}   \right) \quad \text{and} \quad \left( \eta([0,\infty) ) ,  \bh|_{ \eta([0,\infty))} , \eta|_{[0,\infty)}   \right)
	\eqe
	are independent.
	Each has the law of a quantum wedge of weight $2-\gamma^2/2$ (equivalently, $\alpha =3\gamma/2$) decorated by a chordal space-filling SLE$_\kappa$ between its marked points (or a concatenation of chordal space-filling SLE$_\kappa$'s in each of its beads, in the case when the wedge is thin). 
\end{lem}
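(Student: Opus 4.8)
The plan is to deduce this lemma from the cutting/welding theorems of Section~\ref{subsec:zipper}, specifically Theorem~\ref{thm:zipper3} (cutting a quantum cone by a whole-plane $\SLE_{\ul\kappa}(\rho)$) together with the flow-line description of the left and right outer boundaries $\eta_0^{\op L}, \eta_0^{\op R}$ of a whole-plane space-filling $\SLE_\kappa$ at $0$ given in Section~\ref{sec-space-filling-properties}. First I would recall that $\eta((-\infty,0])$ and $\eta([0,\infty))$ are exactly the two pieces into which $\BB C$ is divided by the set $\eta_0^{\op L} \cup \eta_0^{\op R}$, and that $\eta|_{(-\infty,0]}$ traces out $\eta((-\infty,0])$ while $\eta|_{[0,\infty)}$ traces out $\eta([0,\infty))$. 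So the statement is really a statement about cutting the $\gamma$-quantum cone $(\BB C,\bh,0,\infty)$ along the two-sided curve from $0$ to $\infty$ formed by $\eta_0^{\op L}$ and $\eta_0^{\op R}$, and then recording, on each side, the induced space-filling curve.

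The key steps, in order, are as follows. (1) Identify the conditional law of the concatenation $\eta_0^{\op L} \cup \eta_0^{\op R}$: by the flow-line description in Section~\ref{sec-space-filling-properties}, $\eta_0^{\op L}$ is a whole-plane $\SLE_{\ul\kappa}(2-\ul\kappa)$ from $0$ to $\infty$ and, given $\eta_0^{\op L}$, the curve $\eta_0^{\op R}$ is a chordal $\SLE_{\ul\kappa}(-\ul\kappa/2;-\ul\kappa/2)$ from $0$ to $\infty$ in $\BB C \setminus \eta_0^{\op L}$; moreover these curves are sampled independently from $\bh$ (since $\eta$ is). (2) Apply Theorem~\ref{thm:zipper3} with $\rho = 2-\ul\kappa$, i.e. weight $W = \rho+2 = 4-\gamma^2$: cutting the $\gamma$-quantum cone by the whole-plane $\SLE_{\ul\kappa}(2-\ul\kappa)$ curve $\eta_0^{\op L}$ produces a single quantum wedge $\mcl W'$ of weight $4-\gamma^2$, with the quantum length measures on the two sides of $\eta_0^{\op L}$ agreeing. (3) Now cut $\mcl W'$ further along $\eta_0^{\op R}$: since, conditionally, $\eta_0^{\op R}$ is a chordal $\SLE_{\ul\kappa}(-\ul\kappa/2;-\ul\kappa/2)$ in the complement of $\eta_0^{\op L}$, apply Theorem~\ref{thm:zipper1} (and Remark~\ref{thm:zipper2} if the resulting wedges are thin) with $\rho^{\op L} = \rho^{\op R} = -\ul\kappa/2$, hence $W^{\op L} = W^{\op R} = 2-\gamma^2/2$ and $W^{\op L}+W^{\op R} = 4-\gamma^2$, matching the weight of $\mcl W'$. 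This yields that $\left(\eta((-\infty,0]),\bh|_{\eta((-\infty,0])}\right)$ and $\left(\eta([0,\infty)),\bh|_{\eta([0,\infty))}\right)$ are \emph{independent} quantum wedges, each of weight $2-\gamma^2/2$ (equivalently $\alpha = 3\gamma/2$ by \eqref{eq:weight}), with matching quantum lengths along $\eta_0^{\op R}$. (4) Finally, add the decoration: conditionally on the pair of wedges, the space-filling curve $\eta|_{[0,\infty)}$ restricted to $\eta([0,\infty))$ is, by the conditional description of space-filling SLE in Section~\ref{sec-space-filling-properties}, a chordal space-filling $\SLE_\kappa$ between the marked points of that wedge (a concatenation of such curves over the beads, in the thin case), and it is conditionally independent of the analogous curve on the other side; this is exactly the claimed joint law. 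One must also check that the marked points $0$ and $\infty$ of the two wedges are as asserted, which is immediate since $\eta(0)=0$ and $\eta(\pm\infty)=\infty$.

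The main obstacle I expect is the careful iterated-welding argument in steps (2)–(3): one is cutting by a two-sided curve which is itself a concatenation of a whole-plane piece and a chordal piece, and one must make sure the weight bookkeeping and the independence structure compose correctly — in particular that after cutting by $\eta_0^{\op L}$ and obtaining a single weight-$(4-\gamma^2)$ wedge, the conditional law of $\eta_0^{\op R}$ inside that wedge is exactly the chordal $\SLE_{\ul\kappa}(-\ul\kappa/2;-\ul\kappa/2)$ required to invoke Theorem~\ref{thm:zipper1}. This requires knowing that the conditional law of $\eta_0^{\op R}$ given $(\bh,\eta_0^{\op L})$ does not depend on $\bh$ (true because $\eta$ is independent of $\bh$) together with the measurability assertions in Theorems~\ref{thm:zipper3} and~\ref{thm:zipper1} to transfer the independence through the welding. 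A secondary technical point is the thick/thin dichotomy: for $\gamma^2 < 2$ the weight $2-\gamma^2/2 > \gamma^2/2$ so the wedges are thick, but one should still phrase things so that the argument is uniform in $\gamma \in (0,2)$, invoking Remark~\ref{thm:zipper2} and the bead decomposition where needed.
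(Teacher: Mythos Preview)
Your proposal is correct and follows essentially the same approach as the paper's own proof: identify the joint law of $(\eta_0^{\op L},\eta_0^{\op R})$ via the flow-line description, apply Theorem~\ref{thm:zipper3} to cut the weight-$(4-\gamma^2)$ $\gamma$-quantum cone along $\eta_0^{\op L}$, then apply Theorem~\ref{thm:zipper1}/Remark~\ref{thm:zipper2} to cut the resulting weight-$(4-\gamma^2)$ wedge along $\eta_0^{\op R}$. Your step~(4) on adding the space-filling decoration and your discussion of the independence bookkeeping are more explicit than the paper's brief treatment, but the argument is the same.
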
 

Note that the quantum wedges in Lemma~\ref{lem-past-future-ind} are thick (resp.\ thin) if $\gamma \in (0,\sqrt 2]$ (resp.\ $\gamma \in (\sqrt 2 ,2)$). 
This is consistent with the two topological phases of space-filling SLE$_\kappa$ discussed in Section~\ref{sec-space-filling-properties}.

\begin{proof}[Proof of Lemma~\ref{lem-past-future-ind}] 
	We first recall from Section~\ref{sec-space-filling-properties} that the left and right outer boundaries of $\eta((-\infty,0])$ (i.e., the interface between $\eta((-\infty,0])$ and $\eta([0,\infty))$) can be described as follows. The law of the left outer boundary $\eta_0^{\op L}$ is that of a whole-plane SLE$_{\ul\kappa}(2-\ul\kappa)$ from 0 to $\infty$ with $\ul\kappa = 16/\kappa \in (0,4)$ and conditional on $\eta_0^{\op L}$, the law of the right outer boundary $\eta_0^{\op R}$ is that of a chordal SLE$_{\ul\kappa}(-\ul\kappa/2,-\ul\kappa/2)$ from 0 to $\infty$ in $\BB C\setminus \eta_0^{\op L}$.  
	Recall from~\eqref{eq:weight-cone} that a $\gamma$-quantum cone has weight $4-\gamma^2$.  
	By the above description of $\eta_0^{\op L}$, one can apply the welding theorem for quantum cones (Theorem~\ref{thm:zipper3}) to find that law of the quantum surface $(\BB C\setminus \eta_0^{\op L} ,  \bh|_{\BB C\setminus \eta_0^{\op L}} , 0, \infty)$ is that of a quantum wedge of weight $4-\gamma^2$, in the notation~\eqref{eq:weight} (note that this wedge is thick if and only if $\gamma\leq \sqrt{8/3}$).
	By the above description of the conditional law of $ \eta_0^{\op R}$ given $\eta_0^{\op L}$, one can then apply the welding theorem for quantum wedges (Theorem~\ref{thm:zipper1} and Remark~\ref{thm:zipper2}) to the curve $\eta_0^{\op R}$ on the quantum wedge $(\BB C\setminus \eta_0^{\op L} , \bh|_{\BB C\setminus \eta_0^{\op L}})$ to obtain the lemma. 
\end{proof}

The next step is to show (using a relatively straightforward scaling argument) that the law of the pair $(\bh,\eta)$ is translation invariant in the following sense.

\begin{lem} \label{lem-mating-translate}
	For each $t\in\BB R$, the curve-decorated quantum surface $\left(\BB C , \bh(\cdot + \eta(t)) , 0,\infty , \eta(\cdot + t) - \eta(t) \right) 
	$ has the same law as $\left( \BB C ,\bh , 0,\infty,\eta \right) $, i.e., these two five-tuples agree in law modulo rotation and scaling. 
\end{lem}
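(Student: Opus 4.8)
The plan is to reduce the statement to the scale invariance properties of the $\gamma$-quantum cone (Lemma~\ref{lem:inv}) and of the whole-plane space-filling $\SLE_\kappa$ curve (Lemma~\ref{lem:whole-plane}), together with the fact that the parametrization of $\eta$ by $\mu_{\bh}$-mass transforms correctly under adding a constant to the field and rescaling space. First I would recall that by Lemma~\ref{lem:whole-plane}, the law of $\eta$ viewed modulo time parametrization is invariant under translations, rotations, and scalings of space, and that by Lemma~\ref{lem:inv}, the $\gamma$-quantum cone has the property that scaling its area measure by a constant factor does not change its law as a quantum surface. Since $\bh$ and $\eta$ are independent, the pair $(\bh,\eta)$ therefore has a joint scale invariance: for any fixed $r>0$, the curve-decorated quantum surface $(\BB C, \bh(r\cdot) + Q\log r + c_r, 0,\infty, r^{-1}\eta(\cdot))$, with $c_r$ chosen so that the field is again the circle average embedding, agrees in law with $(\BB C,\bh,0,\infty,\eta)$ modulo rotation, where on the right $\eta$ is reparametrized by $\mu$-mass. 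The point is that adding the constant $\gamma c$ to the field multiplies $\mu$ by $e^{\gamma c}$, so after renormalizing the time parametrization we recover exactly the $\mu_{\bh}$-parametrized space-filling SLE.

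The main step is then to fix $t\in\BB R$ and consider the re-centered curve-decorated surface $(\BB C,\bh(\cdot+\eta(t)),0,\infty,\eta(\cdot+t)-\eta(t))$. Conditionally on $(\bh,\eta)$ the translation by $\eta(t)$ is a deterministic Euclidean translation, which preserves $\mu_{\bh}$ up to pushforward; hence the re-centered pair is again a field which is GFF-like plus the pushforward of the cone field, decorated by a $\mu$-parametrized space-filling SLE started at $0$. I would argue that the re-centered surface is still a $\gamma$-quantum cone: this follows because a $\gamma$-quantum cone is precisely the local behavior of the field near a $\mu_{\bh}$-typical point (Lemma~\ref{lem:gamma} and its whole-plane analog), and $\eta(t)$ is, by construction, a $\mu_{\bh}$-typical point when we forget $\eta$ — more precisely, conditionally on $\bh$, the point $\eta(t)$ has the law of a sample from $\mu_{\bh}$ restricted to a large ball and normalized, because $\eta$ is independent of $\bh$ and space-filling, so $\eta(t)$ is distributed as Lebesgue-then-reweighted, which under the cone measure is $\mu_{\bh}$-typical. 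Given that the re-centered field is a $\gamma$-quantum cone, independence of $\eta$ from $\bh$ is preserved under the re-centering (translation acts only on the spatial coordinate and does not mix the two sources of randomness), and the re-centered curve is, by Lemma~\ref{lem:whole-plane} applied to whole-plane space-filling SLE, again a whole-plane space-filling $\SLE_\kappa$ from $\infty$ to $\infty$ independent of the cone; reparametrizing by $\mu$-mass of the new field gives the claimed law.

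The hard part will be making the ``$\eta(t)$ is a $\mu_{\bh}$-typical point'' assertion rigorous: one must justify that the operation of translating so that a fixed $\mu$-parametrized time $t$ sits at the origin really does produce a $\gamma$-quantum cone, not merely a field that is locally GFF-like. I expect this to go through by a weak-limit / absolute-continuity argument: approximate the cone by a GFF plus $-\gamma\log|\cdot|$ on a large ball (Lemma~\ref{lem:circle}), use that re-rooting at a $\mu_h$-sample of a GFF yields (in the $\ep\to 0$ limit, after the coordinate change $\phi_\ep$ of Lemma~\ref{lem:gamma}) the $\gamma$-quantum cone, and then transfer independence of the curve through the limit using that $\eta$ is sampled independently at each stage. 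A scaling argument (of exactly the type flagged as ``relatively straightforward'' before the statement) handles the normalization of the time parametrization, since $L$ and $R$, and hence $\eta$, scale in a controlled way. One should also note that the statement is only claimed ``modulo rotation and scaling,'' which gives the necessary slack: we need not pin down the circle-average embedding after re-centering, only identify the quantum surface and the curve up to the conformal symmetries of the cone.
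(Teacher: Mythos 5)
Your proposal follows essentially the same route as the paper: realize $(\bh,\eta)$ as the local limit of a GFF on $\BB D$ decorated by an independent space-filling SLE loop, re-rooted at a point $u$ sampled from the normalized LQG measure, and deduce the re-rooting/translation invariance by passing the finite-volume statement through the limit of Lemmas~\ref{lem:gamma} and~\ref{lem:whole-plane}. The one mechanism you leave implicit --- and which the paper uses to make ``the shift by a fixed time $t$ is invisible in the limit'' precise --- is that the hitting time $\tau$ of $u$ is conditionally uniform on $[0,\mu_{\wt h}(\BB D)]$ given the field and the curve, so $\tau$ and $\tau+\ep t$ are close in total variation as $\ep\to 0$; your literal assertion that, given $\bh$, the fixed-time point $\eta(t)$ is itself a normalized $\mu_{\bh}$-sample is not correct as stated and should be replaced by this uniform-hitting-time argument.
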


We will give an outline of the proof; see~\cite[Lemma 8.3]{wedges} for a full argument.

\begin{proof}[Proof outline Lemma~\ref{lem-mating-translate}]
Let $\wt h$ be a zero-boundary GFF on $\BB D$ and let $\wt\eta$ be a space-filling SLE$_\kappa$ loop based at $1\in\bdy\BB D$, sampled independently from $\wt h$ and then parametrized by $\mu_h$-mass. 
Using Lemmas~\ref{lem:gamma} and~\ref{lem:whole-plane}, one can check that if $u \in\BB D$ is sampled from the probability measure $\mu_{\wt h} / \mu_{\wt h}(\BB D)$, then $(\wt h , \wt\eta)$ converges to $(\bh,\eta)$ when we ``zoom in near $u$", in a sense similar to the convergence in Lemmas~\ref{lem:gamma} and~\ref{lem:whole-plane}. 
If $\tau$ is the first time when $\wt\eta$ hits $u$, then (since $\wt\eta$ is parametrized by $\mu_{\wt h}$-mass) the conditional law of $ \tau$ given $(\wt h , \wt\eta)$ is uniform on $[0,\mu_{\wt h}(\BB D)]$.
Therefore, when $\ep$ is small conditional laws of $\wt\tau+\ep t $ and $\wt\tau$ given $(\wt h ,\wt\eta)$ are close in the total variation sense.
Passing this through to the local limit gives Lemma~\ref{lem-mating-translate}. 	
\end{proof}

By Lemmas~\ref{lem-past-future-ind} and~\ref{lem-mating-translate}, the conclusion of Lemma~\ref{lem-past-future-ind} remains true with $(-\infty,0]$ and $[0,\infty)$ replaced by $(-\infty,t]$ and $[t,\infty)$ for any $t\in\BB R$. 
The processes $(Z-Z_t)|_{(-\infty,t]}$ and $(Z-Z_t)|_{[t,\infty)}$ are determined by the past and future curve-decorated quantum surfaces $\left( \eta((-\infty,t]) ,  \bh|_{ \eta((-\infty,t])} , \eta|_{(-\infty,t]}   \right)$ and $\left( \eta([t,\infty) ) ,  \bh|_{ \eta([t,\infty))} , \eta|_{[t,\infty)}   \right)$, respectively. 
It follows that $Z$ has independent, stationary increments. 
Furthermore, $Z$ obeys Brownian scaling since adding a constant $C$ to the field $\bh$ scales areas by $e^{\gamma C}$ and boundary lengths by $e^{\gamma C/2}$. 

Hence $Z$ must be a Brownian motion with some covariance matrix. 
The correlation of the two coordinates of $Z$ can be computed by comparing exponents for cut times of the past/future wedges to corresponding exponents for $\pi/2$-cone times of correlated Brownian motion; see~\cite[Lemma 8.5]{wedges} (for the case when $\gamma \in [\sqrt 2 , 2)$) or~\cite{kappa8-cov} (for the case when $\gamma \in (0,\sqrt 2)$). 

The proof that $Z$ a.s.\ determines $(\bh,\eta)$ modulo rotation (given in~\cite[Section 9]{wedges}) proceeds by showing that, roughly speaking, the conditional variance of the law $(\bh,\eta)$ given $Z$ is zero. 
The proof is based on an analysis of the adjacency graph of space-filling SLE ``cells" $\eta([x-\ep,x])$ for $x\in\ep\BB Z$, which is a.s.\ determined by $Z$; see Section~\ref{sec-strong-coupling}.  The proof in~\cite{wedges} does not explicitly describe the functional which takes in $Z$ and outputs the equivalence class of $(\bh,\eta)$ modulo rotation and scaling.
However, this functional \emph{can} be made explicit using the results of~\cite{gms-tutte}; see Section~\ref{sec-tutte-conv}. 

It is implicit in the proof of Theorem~\ref{thm-mating} (and can also be deduced from Theorem~\ref{thm-mating} and Lemma~\ref{lem-past-future-ind}) that $Z$ determines $(\bh , \eta)$ in a local manner, in the following sense. 

\begin{lem} \label{lem-local-msrble}
	For each $a < b$, the curve-decorated quantum surface $(\eta([a,b]), \bh|_{\eta([a,b])} , \eta|_{[a,b]})$ is a.s.\ determined by $(Z-Z_a)|_{[a,b]}$, i.e., this triple is determined by $(Z-Z_a)|_{[a,b]}$ modulo conformal maps. 
\end{lem}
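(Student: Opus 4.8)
\textbf{Proof proposal for Lemma~\ref{lem-local-msrble}.}

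The plan is to bootstrap the global measurability statement of Theorem~\ref{thm-mating} (that $Z$ determines $(\bh,\eta)$ modulo rotation about the origin) into the local statement using the conformal welding structure provided by Lemma~\ref{lem-past-future-ind} together with the translation invariance of Lemma~\ref{lem-mating-translate}. The key observation is that the curve-decorated quantum surface $(\eta([a,b]), \bh|_{\eta([a,b])}, \eta|_{[a,b]})$ can be obtained by cutting a larger quantum surface along SLE-type interfaces, and that such cutting operations are measurable with respect to the appropriate increments of $Z$.

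First I would reduce, by the scaling and translation invariance of $(\bh,\eta)$ in Lemma~\ref{lem-mating-translate}, to the case $a = 0$ (apply the map $z \mapsto z - \eta(a)$ and rescale so that $\mu_\bh$ is unchanged; this transforms $Z$ by $(Z - Z_a)|_{[a,\infty)}$ into a copy of the original process). So it suffices to show that $(\eta([0,b]), \bh|_{\eta([0,b])}, \eta|_{[0,b]})$ is a.s.\ determined by $Z|_{[0,b]}$. Next, recall from Lemma~\ref{lem-past-future-ind} (applied at times $0$ and $b$, using that it holds at every time by the remark following Lemma~\ref{lem-mating-translate}) that the past and future curve-decorated surfaces at time $0$, and also at time $b$, are each quantum wedges decorated by space-filling SLE$_\kappa$, and the past at time $0$ is independent of the future at time $0$; likewise at $b$. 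The surface $\eta([0,b])$ is exactly the ``piece in between'': it is cut out of the future-at-time-$0$ wedge $(\eta([0,\infty)), \bh|_{\eta([0,\infty))}, \eta|_{[0,\infty)})$ by the curve $\eta|_{[b,\infty)}$, whose image is the outer boundary of $\eta([b,\infty))$ relative to $\eta([0,\infty))$. By Theorem~\ref{thm:zipper1} (cutting a thick wedge, or Remark~\ref{thm:zipper2} in the thin case), this cutting map is a conformal welding along quantum length, hence $(\eta([0,b]), \bh|_{\eta([0,b])}, \eta|_{[0,b]})$ as a curve-decorated quantum surface is determined by the future-at-$0$ surface together with the boundary-length coordinates of the cut, which are encoded in $(L_t, R_t)_{t \in [0,b]}$ — these are by definition the net boundary length changes along the left and right sides of $\eta([0,t])$.

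To make this rigorous I would argue as follows. Let $\mcl F_b := \sigma(Z|_{[0,b]})$. Theorem~\ref{thm-mating} gives a measurable $\Phi$ with $\Phi(Z) = (\bh,\eta)$ modulo rotation a.s.; composing with the obvious restriction/cutting operations on curve-decorated quantum surfaces (which are measurable for the $\sigma$-algebra defined after Definition~\ref{def-curve-decorated}), we get that $(\eta([0,b]), \bh|_{\eta([0,b])}, \eta|_{[0,b]})$ is $\sigma(Z)$-measurable. It remains to show this triple is conditionally independent of $Z|_{\BB R \setminus [0,b]}$ given $Z|_{[0,b]}$. For this, use that $Z$ has independent increments (established in the proof of Theorem~\ref{thm-mating}): $Z|_{(-\infty,0]}$, $Z|_{[0,b]}$, and $Z|_{[b,\infty)}$ are mutually independent once we subtract the relevant starting values. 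By Lemma~\ref{lem-past-future-ind} and the welding theorems, the surface $(\eta([0,b]), \bh|_{\eta([0,b])}, \eta|_{[0,b]})$ is a deterministic function of the future-at-$0$ decorated wedge and the cut curve, and the future-at-$0$ wedge together with this cut is independent of the past-at-$0$ wedge; similarly on the other side. The only randomness feeding into $\eta([0,b])$ beyond $Z|_{[0,b]}$ is the ``conditional law of the decorated wedge given its boundary length data,'' which by the independence of increments is conditionally independent of everything outside $[0,b]$ given $Z|_{[0,b]}$. A standard argument (a.s.\ measurability with respect to $\sigma(Z)$ plus conditional independence of the complementary increments given $Z|_{[0,b]}$ forces a.s.\ measurability with respect to $\sigma(Z|_{[0,b]})$) then yields the claim.

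The main obstacle I anticipate is making the ``cutting of a wedge along an interior sub-curve'' step fully precise: Lemma~\ref{lem-past-future-ind} tells us the future-at-$0$ surface is a wedge decorated by space-filling SLE$_\kappa$ run between its two marked points, and one must carefully identify the image of $\eta|_{[b,\infty)}$ within this wedge as the appropriate SLE$_{\ul\kappa}(\rho)$-type interface so that Theorem~\ref{thm:zipper1}/Theorem~\ref{thm:zipper3} applies, and then check that the boundary-length coordinates produced by that conformal welding really are the coordinates of $Z|_{[0,b]}$ (as opposed to some measurable function of them that loses information). In the thin-wedge regime $\gamma \in (\sqrt 2, 2)$ there is the extra bookkeeping of matching up beads with excursions of $Z$ away from the running minimum/maximum, but the structure of the argument is unchanged. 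I expect this can be handled by citing the stationarity statement (Proposition~\ref{prop:zipper}) and the explicit correspondence between $Z$-increments and boundary lengths that is built into the definition \eqref{eqn-peanosphere-bm} of $Z$.
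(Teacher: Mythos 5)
Your proposal is correct and follows essentially the route the paper itself indicates: the paper gives no detailed proof, noting only that the lemma is implicit in the proof of Theorem~\ref{thm-mating} and ``can also be deduced from Theorem~\ref{thm-mating} and Lemma~\ref{lem-past-future-ind},'' which is exactly your third paragraph ($\sigma(Z)$-measurability from the global mating-of-trees theorem, plus conditional independence of the complementary increments via past/future independence applied at both endpoints). The conformal-welding detour in your second paragraph is not needed for this deduction, but the core argument stands without it.
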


We note, however, that $(Z-Z_a)|_{[a,b]}$ does \emph{not} determine the manner in which this curve-decorated surface is embedded into $\BB C$. In particular, it does not determine $\eta([a,b])$. 

\subsubsection{Geometric encoding of $\eta$ by $Z$}
\label{sec-mating-geometric}

The Brownian motion $Z$ encodes many geometric features of $\eta$ in an explicit way. Here are some examples. Careful justifications of these examples and further examples can be found in~\cite[Section 2]{ghm-kpz} and~\cite[Section~6]{bhs-site-perc}. 
\begin{itemize}
	\item For $t \in \BB R$, the times $s\geq t$ at which $\eta$ hits the left (resp.\ right) outer boundary of $\eta((-\infty , t])$ are precisely the times when $L$ (resp.\ $R$) attains a running minimum relative to time $t$. 
	\item The times after $t$ at which $\eta$ hits a point when the left and right outer boundaries of $\eta([t,\infty))$ meet are the same as the times when $L$ and $R$ attain a simultaneous running minimum relative to time $t$. 
	\item The times when $\eta$ finishes filling in a bubble which it has cut off from its target point are precisely the \emph{$\pi/2$-cone times} of $Z$, i.e., the times $t\in\BB R$ for which there exists $t' < t$ such that $L_s \geq L_t$ and $R_s \geq R_t$ for each $s\in [t',t]$. The smallest $t'$ for which this condition holds is the time at which $\eta'$ starts filling in the corresponding bubble. Such an interval $[t',t]$ is called a \emph{$\pi/2$-cone interval} for $Z$. 
	\item When $\kappa\in(4,8)$, for a fixed $t\in \R$, consider  the set of times $s \in (t,\infty)$ which are not contained in the interior of any $\pi/2$-cone interval for $Z$ which is itself contained in $(t,\infty)$. 
	Then $\eta$ restricted to this set is the trace of a whole-plane $\SLE_\kappa(\kappa-6)$ on $(\CC,\eta(t), \infty)$.
\end{itemize} 
It follows from~\cite[Theorem 1]{shimura-cone} (see also~\cite{evans-cone}) that times for $Z$ as in the second and third examples exist if and only if $\op{corr}(L,R) > 0$, equivalently $\gamma > \sqrt 2$. This is consistent with the fact that segments of space-filling SLE$_\kappa$ are homeomorphic to closed disks for $\kappa\geq 8$ but not for $\kappa\in (4,8)$.

\subsection{Finite-volume mating of trees} 
\label{sec-mating-finite}

There are also variants of the peanosphere construction which encode space-filling SLE on a quantum sphere, disk, or wedge in terms of a pair of correlated Brownian motions with certain conditioning. 
We state the disk version; see Figure~\ref{fig-disk-bm} for an illustration. 
The sphere version is similar, and appears as~\cite[Theorem 1.1]{sphere-constructions}.
The wedge version is~\cite[Theorem 1.3]{ag-disk}. 

Let $(\BB D  ,h^{\op{D}} ,1)$ be a quantum disk with boundary length $\ell$ (see Definition~\ref{def-disk})
with a single marked boundary point. Also let $\kappa = 16/\gamma^2$ and let $\eta^{\op{D}}$ be a clockwise space-filling SLE$_{\kappa }$ loop  on $(\D, 1)$ (see Definition~\ref{def:space-filling}), 
sampled independently of $h^{\op{D}}$ and then parametrized by $\gamma$-LQG mass with respect to $h^{\op{D}}$, 
so that $\mu_{h^{\op{D}}}(\eta^{\op{D}}([0,t])) =  t$ for each $t\in [0,\mu_{h^{\op D}} (\D)]$. 
For $t\in [0,\mu_{h^{\op D}} (\D)]$, let $L_t$ be the $\gamma$-LQG length of the left outer boundary of $\eta^{\op{D}}([0,t])$ plus the $\gamma$-LQG length of $\bdy\BB D\setminus\eta^{\op{D}}([0,t])$. 
Also let $R_t$ be the $\gamma$-LQG length of the right outer boundary of $\eta^{\op D}([0,t])$. 
See Figure~\ref{fig-disk-bm} for an illustration.

\begin{figure}[ht!]
	\begin{center}
		\includegraphics[scale=.8]{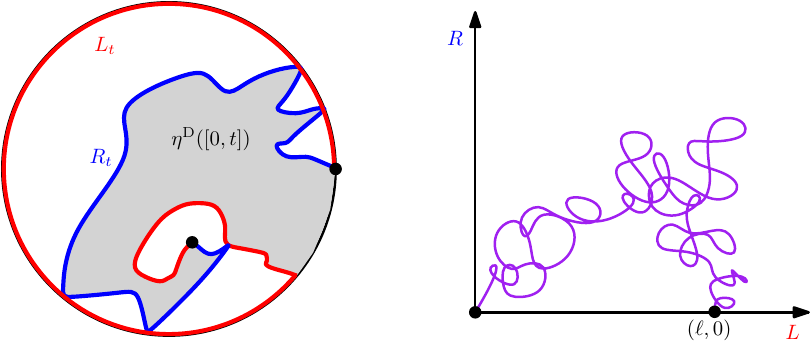} 
		\caption{\label{fig-disk-bm} \textbf{Left:} A space-filling SLE$_\kappa$ loop in $\BB D$ from 1 to 1 run up to some time $t>0$. The quantity $L_t$ (resp.\ $R_t$) is the $\gamma$-LQG length of the red (resp.\ blue) arc. In the figure, $\gamma \in (\sqrt 2 , 2)$ and $\kappa \in (4,8)$ --- the topology of the curve is simpler in the case when $\kappa \geq 8$. In this latter case, $\BB D\setminus \eta^{\op{D}}([0,t])$ is connected for each $t$. \textbf{Right:} The process $(L,R)$ is a correlated two-dimensional Brownian motion started from $(1,0)$ and conditioned to stay in $[0,\infty)^2$ until it reaches the origin. 
		}
	\end{center}
\end{figure}

The following theorem is proven in~\cite[Theorem 2.1]{sphere-constructions} for $\gamma \in (\sqrt 2 ,2)$ and in~\cite[Theorem 1.1]{ag-disk} for general $\gamma \in (0,2)$. For $\kappa=6$ it is the continuum analogue of Proposition \ref{prop:site-mating}.

\begin{thm}[Continuum mating of trees for the disk] \label{thm-mating-disk}
	The process $Z$ defined just above has the law of a Brownian motion with variances and covariances as in~\eqref{eqn-bm-cov} started from 
	$(\ell,0)$ and conditioned to stay in $[0,\infty)^2$ until hitting $(0,0)$  (this singular conditioning is made sense of in~\cite[Section~4]{ag-disk}, building on~\cite{shimura-cone}).
\end{thm}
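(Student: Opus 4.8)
The plan is to deduce Theorem~\ref{thm-mating-disk} from the whole-plane mating-of-trees theorem (Theorem~\ref{thm-mating}) by exhibiting a marked quantum disk decorated by a space-filling $\SLE_\kappa$ loop as a piece cut out of the whole-plane picture and then reading off the associated process by restriction. Work first in the regime $\gamma\in(\sqrt 2,2)$, so that $\kappa=16/\gamma^2\in(4,8)$ and $\op{corr}(L,R)>0$. Let $(\C,\bh,0,\infty)$ be a $\gamma$-quantum cone, $\eta$ an independent whole-plane space-filling $\SLE_\kappa$ parametrized by $\mu_\bh$-mass, and $Z=(L,R)$ the correlated Brownian motion of Theorem~\ref{thm-mating}. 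By the geometric dictionary recalled in Section~\ref{sec-mating-geometric}, the bubbles which $\eta$ cuts off from its target $\infty$ are in bijection with the $\pi/2$-cone intervals $[t',t]$ of $Z$: on such an interval $\eta([t',t])$ is the closure of the corresponding bubble, and $\eta|_{[t',t]}$, with time shifted to start at $t'$, fills it in as a space-filling $\SLE_\kappa$ loop rooted at the point where $\eta$ enters and exits the bubble --- exactly as the discrete Peano curve of Section~\ref{subsec:site} enters and fills a bubble disconnected from the target. Using the conformal-welding theorems for quantum cones and wedges (Theorems~\ref{thm:zipper3}, \ref{thm:zipper1}, Remark~\ref{thm:zipper2}), which underlie the ``mating of trees of disks'' picture of Section~\ref{sec-mating-chordal}, together with Remark~\ref{remark-beads} and~\cite[Proposition~A.8]{wedges}, the quantum surface carried by such a bubble, conditioned on its $\gamma$-LQG boundary length being $\ell$, is a quantum disk of boundary length $\ell$ whose root is a $\nu$-typical boundary point; so it is precisely a quantum disk with one marked boundary point in the sense of Definition~\ref{def-disk}.

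Given this identification I would carry out a disintegration over the boundary length $\ell>0$ (and the time length $t-t'$). On the LQG side, the regular conditional law of the curve-decorated quantum surface $(\eta([t',t]),\bh|_{\eta([t',t])},\eta|_{[t',t]})$ carried by a $\pi/2$-cone interval, given its boundary length $\ell$, is exactly the object of the theorem: a quantum disk with boundary length $\ell$ and one marked point, decorated by an independent space-filling $\SLE_\kappa$ loop parametrized by quantum mass. On the Brownian side, Lemma~\ref{lem-local-msrble} (locality of the encoding) shows that the pair $(L,R)$ one builds from this curve-decorated surface as in the statement is a deterministic, explicitly computable function of $(Z-Z_t)|_{[t',t]}$ --- the only change being that the not-yet-traced part of the bubble boundary is bundled into the left coordinate --- so that it starts at $(\ell,0)$, stays in $[0,\infty)^2$, and ends at $(0,0)$. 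In particular the covariance structure~\eqref{eqn-bm-cov} is inherited for free from Theorem~\ref{thm-mating}: no exponent or correlation need be recomputed. After this reduction, Theorem~\ref{thm-mating-disk} becomes the purely probabilistic assertion that a $\pi/2$-cone excursion of a correlated Brownian motion with covariances~\eqref{eqn-bm-cov}, bookkept in the same way and conditioned on its ``width'' $\ell$, is the Brownian motion with those covariances started from $(\ell,0)$ and conditioned to stay in $[0,\infty)^2$ until hitting $(0,0)$.

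The main obstacle is making that last assertion rigorous, because the conditioning is doubly singular: one conditions a two-dimensional diffusion started on the boundary of the quadrant $[0,\infty)^2$ to remain in the quadrant until it hits the single point $(0,0)$, an event of probability zero, and then further conditions on the exit data. This requires (i) constructing the conditioned law as a weak limit of honest conditionings --- e.g.\ start from $(\ell,\ep)$, condition to stay in $[0,\infty)^2$ until reaching distance $\ep$ from the origin, and let $\ep\to0$ --- and showing the limit exists and does not depend on the approximation scheme; and (ii) identifying this limit with the cone-excursion measure of the correlated Brownian motion, disintegrated over the excursion endpoints, so that it can be matched with the LQG side. Both steps build on Shimura's analysis of cone times and cone excursions for two-dimensional Brownian motion~\cite{shimura-cone} and are the technical heart of the argument; they are carried out in~\cite[Section~4]{ag-disk}.

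Finally, for $\gamma\in(0,\sqrt 2]$, so $\kappa\ge 8$, the loop $\eta^{\op D}$ cuts off no bubbles and the cone-interval route is unavailable; here the marked quantum disk decorated by $\eta^{\op D}$ is instead accessed through conformal-welding identities for finite-volume quantum surfaces --- roughly, the space-filling $\SLE_\kappa$ loop cuts the disk into two ``trees'' of quantum surfaces glued along boundary arcs of matching $\gamma$-LQG length, mirroring the two trees of the discrete bijections of Section~\ref{sec-bijections} --- combined with the wedge version of mating of trees (\cite[Theorem~1.3]{ag-disk}, itself a restriction of Theorem~\ref{thm-mating} to a half-line of times as in Lemma~\ref{lem-past-future-ind}), or through an approximation by surfaces for which the result is already known; this is carried out in~\cite{ag-disk}. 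In either regime the Brownian covariances are those of~\eqref{eqn-bm-cov}, inherited from Theorem~\ref{thm-mating}, and the one genuinely new analytic input beyond it is the description of the conditioned quadrant Brownian motion above.
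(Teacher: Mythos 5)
Your proposal matches the paper's (sketched) argument: for $\gamma\in(\sqrt 2,2)$ one deduces the disk statement from Theorem~\ref{thm-mating} by restricting $\bh$ and $\eta$ to a bubble that $\eta$ cuts off and fills in, identified on the Brownian side with a $\pi/2$-cone interval of $Z$, with the covariances inherited and the only new input being the singular conditioning handled in~\cite[Section~4]{ag-disk} via~\cite{shimura-cone}. The one place you drift from the paper is the case $\gamma\in(0,\sqrt 2]$: the paper's mechanism there is to condition the future quantum wedge $(\eta([0,\infty)),\bh|_{\eta([0,\infty))},0,\infty)$ on having an ``approximate pinch point'' along its boundary and to show the pinched-off curve-decorated surface converges to $(h^{\op D},\eta^{\op D})$, which is closer to the second of the two alternatives you hedge between than to the welding-of-two-trees route you mention first; this is a difference only at the level of the sketch, not a gap.
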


In the case when $\gamma \in (\sqrt 2 ,2)$, Theorem~\ref{thm-mating-disk} can be deduced from Theorem~\ref{thm-mating} using the fact that the quantum surface obtained by restricting the field $\bh$ of Theorem~\ref{thm-mating} to one of the ``bubbles" filled in by $\eta$ is a quantum disk, and the restriction of $\eta$ to the time interval during which it fills in such a bubble is a space-filling SLE$_{\kappa }$ loop in the bubble.

For $\gamma \in (0,\sqrt 2]$, the curve $\eta$ does not fill in bubbles so the proof is more difficult. 
The idea is to condition on the event that the future quantum wedge $(\eta([0,\infty)) , \bh|_{\eta([0,\infty))} , 0,\infty)$ has an ``approximate pinch point" along its boundary, and show that the curve-decorated quantum surface obtained by restricting $\bh$ and $\eta$ to the ``pinched off" surface approximates the pair $(h^{\op{D}} ,\eta^{\op{D}} )$ when one takes an appropriate limit.

\subsection{Ordinary SLE$_\kappa$ for $\kappa \in (4,8)$ as a mating of trees of disks} 
\label{sec-mating-chordal}

For $\gamma\in(\sqrt 2 , 2)$ and corresponding $\kappa  =16/\gamma^2 \in (4,8)$, there is a variant of Theorem~\ref{thm-mating} which applies to chordal SLE$_\kappa$ instead of space-filling SLE$_\kappa$. To explain this, let $(\BB H , \bh , 0, \infty)$ be a $\frac{4}{\gamma} - \frac{\gamma}{2}$-quantum wedge (which is thick) with the circle average embedding, as in Definition~\ref{def:wedge}. 
Let $\eta$ be a chordal SLE$_\kappa$ ($\kappa = 16/\gamma^2$) from 0 to $\infty$ in $\BB H$, sampled independently from $\bh$.
The natural quantum time parametrization of $\eta$ is the so-called \emph{quantum natural time}, which can be thought of as the ``quantum Minkowski content" of $\eta$. See~\cite[Definition 6.23]{wedges} for a precise definition. 

Henceforth assume that $\eta$ is parametrized by quantum natural time and for $t\geq 0$, let $L_t$ (resp.\ $R_t$) be the net change in the $\gamma$-LQG length of the left (resp.\ right) outer boundary of $\eta([0,t])$ relative to time 0; see Figure~\ref{fig-bdy-process-sle}. 
The processes $L$ and $R$  are c\'adl\'ag, with downward jumps corresponding to the times when $\eta$ disconnects a bubble from $\infty$ (the magnitude of the downward jump is the $\gamma$-LQG length of the boundary of the bubble).

\begin{figure}[ht!]
	\begin{center}
		\includegraphics[scale=.8]{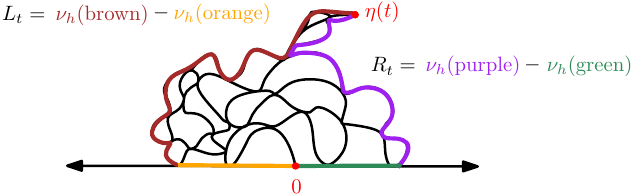} 
		\caption{\label{fig-bdy-process-sle} The definition of the left/right boundary length process for SLE$_\kappa$ on a quantum wedge. The pair $(L,R)$ evolves as a pair of independent $\kappa/4$-stable processes with only downward jumps. 
		}
	\end{center}
\end{figure}

The following theorem is a combination of~\cite[Theorem 1.18 and Corollary 1.19]{wedges}. 

\begin{thm}[Continuum mating of trees of disks] \label{thm-mating-chordal}
	$(L,R)$ is a pair of $\kappa/4$-stable processes with only downward jumps, i.e., the L\'evy measure for each of $L$ and $R$ is $c |x|^{-\kappa/4-1}\BB 1_{(x  < 0)} \, dx$ for a non-explicit normalizing constant $c$. 
	Let $t\geq 0$ and condition on $(L,R)|_{[0,t]}$. The conditional law of the quantum surface obtained by restricting $\bh$ to the unbounded connected components of $\BB H\setminus\eta([0,t])$, with marked points $\eta(t)$ and $\infty$, is that of a $(\frac{4}{\gamma} -\frac{\gamma}{2})$-quantum wedge. 
	The conditional laws of the quantum surfaces obtained by restricting $\bh$ to the bounded connected component of $\BB H\setminus\eta([0,t])$, each marked by the point where $\eta$ finishes tracing its boundary, are independent singly marked quantum disks with boundary lengths equal to the magnitudes of the corresponding downward jump of $\eta$. 
	All of these surfaces are conditionally independent given $(L,R)|_{[0,t]}$. 
	Finally, $(L,R)|_{[0,t]}$ together with the above collection of quantum surfaces a.s.\ determines $(\bh,\eta)$. 
\end{thm}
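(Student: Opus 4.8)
The plan is to adapt the proof of Theorem~\ref{thm-mating} to the chordal setting, replacing whole-plane space-filling SLE$_\kappa$ by chordal SLE$_\kappa$ and the $\gamma$-quantum cone by the $(\tfrac{4}{\gamma}-\tfrac{\gamma}{2})$-quantum wedge. Three things must be shown: (i) that $(L,R)$ is a pair of $\kappa/4$-stable processes with only downward jumps; (ii) the conditional description of the quantum surfaces cut out by $\eta([0,t])$; and (iii) the measurability assertion. The main inputs are the conformal welding theorems for quantum wedges (Theorem~\ref{thm:zipper1}, Remark~\ref{thm:zipper2}), SLE duality as supplied by imaginary geometry~\cite{ig1,ig4}, Brownian-type scaling of the LQG measures (Lemma~\ref{lem:inv}), and, for (iii), conformal removability.

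First I would establish a chordal analog of Lemma~\ref{lem-past-future-ind}, in the dynamical form of Proposition~\ref{prop:zipper}. For $\kappa\in(4,8)$ the complement $\BB H\setminus\eta([0,t])$ is an unbounded component $U_t$ together with the bubbles disconnected by $\eta$ up to time $t$, which are organized in a tree: each bubble is cut off by an arc of $\eta$, and sits either to the left or to the right of the part of $\eta$ that encloses it. By SLE duality the outer boundary of the hull $\eta([0,t])$ --- i.e.\ the two frontiers of $U_t$ --- and the boundary of each bubble are (conditionally) SLE$_{\gamma^2}(\rho)$-type curves for appropriate weights $\rho\in(-2,\gamma^2/2-2)$ in the self-touching regime, with explicit conditional laws. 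One then applies the welding Theorem~\ref{thm:zipper1} and its thin-wedge variant Remark~\ref{thm:zipper2} iteratively --- welding along the frontiers and, recursively, along the enclosing arcs of the deeper bubbles --- to decompose the $(\tfrac{4}{\gamma}-\tfrac{\gamma}{2})$-quantum wedge into $U_t$ together with the tree of bubbles. The upshot is that $U_t$ (with marked points $\eta(t),\infty$) is again a $(\tfrac{4}{\gamma}-\tfrac{\gamma}{2})$-quantum wedge, each bubble is a quantum disk whose boundary length is the quantum length of the enclosing arc of $\eta$, all of these surfaces are conditionally independent given the quantum lengths entering the welding, and the whole curve-decorated surface is a.s.\ determined by them. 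A scaling argument in the spirit of Lemma~\ref{lem-mating-translate} then promotes this to stationarity of $t\mapsto(U_t,\bh|_{U_t},\eta(t),\infty,\text{bubbles cut off})$ under quantum natural time.

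From the stationarity and past/future independence just obtained, $(L,R)$ has stationary independent increments, hence is a two-dimensional L\'evy process; it has no upward jumps since the frontier lengths move with no positive jumps and drop by exactly the quantum length of a bubble boundary at the instant that bubble is disconnected; and it is self-similar, since adding a constant $C$ to $\bh$ preserves the law of the wedge (Lemma~\ref{lem:inv}) while scaling quantum length by $e^{\gamma C/2}$ and quantum natural time by the matching power. A self-similar L\'evy process is stable; matching exponents gives stability index $\kappa/4$ and L\'evy measure $c|x|^{-\kappa/4-1}\BB 1_{(x<0)}\,dx$ for each coordinate, and since such a process ($\kappa/4\in(1,2)$) has no Gaussian component while the jumps of $L$ and $R$ are never simultaneous (bubbles are disconnected one at a time, on one side of $\eta$ at a time), $L$ and $R$ are independent. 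To match the conditioning in the theorem, observe that $(L,R)|_{[0,t]}$ records the jump magnitudes --- hence the boundary lengths of the disks, which determine their laws by Definition~\ref{def-disk} --- while the residual information $(L_t,R_t)$ does not affect the law of the thick wedge $U_t$; combined with the conditional independence from the welding, this is exactly the stated conclusion.

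The hard parts are (a) the structural bookkeeping in the previous two paragraphs --- verifying that iterated welding really reproduces the tree-of-disks picture with the marked points and weights matching, so that the wedge returns to type $(\tfrac{4}{\gamma}-\tfrac{\gamma}{2})$ and the residual pieces are quantum disks --- and (b) the measurability statement (iii). For (iii), the argument below Theorem~\ref{thm:zipper0} does not apply directly, because conformal removability of SLE$_\kappa$ for $\kappa\in(4,8)$ is open; instead one uses that every curve actually welded is removable, the frontiers and bubble boundaries being SLE$_{\gamma^2}(\rho)$-type with $\gamma^2\in(2,4)$ (removable by~\cite{schramm-sle,jones-smirnov-removability}). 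Given two pairs $(\bh,\eta)$ and $(\wt\bh,\wt\eta)$ with the same $(L,R)|_{[0,t]}$ and the same collection of quantum surfaces, length-matching yields a homeomorphism of $\BB H$ conformal on $U_t$ and inside every bubble, which one must promote to a global conformal map using removability of the countably many bubble boundaries and the two frontiers --- the delicate point being the accumulation set of bubbles along $\eta([0,t])$. Since $\gamma\in(\sqrt2,2)$ here, an alternative is to reduce (iii) to Theorem~\ref{thm-mating}: $\eta$ is the skeleton left after deleting the $\pi/2$-cone excursions of a peanosphere Brownian motion, each deleted bubble carries (by Theorem~\ref{thm-mating-disk}) its own mating-of-trees excursion determined by the bubble as a quantum disk, re-inserting these excursions into $(L,R)$ recovers the peanosphere Brownian motion, and a cutting argument (via Theorem~\ref{thm:zipper3}) relates the $\gamma$-quantum cone to the $(\tfrac{4}{\gamma}-\tfrac{\gamma}{2})$-quantum wedge; I expect this route to be cleanest, with the cone-to-wedge reduction the step needing most care.
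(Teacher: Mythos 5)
The survey does not prove this theorem: it is imported verbatim from \cite[Theorem 1.18 and Corollary 1.19]{wedges}, so there is no in-paper argument to compare against. Your sketch follows essentially the strategy of the original proof in \cite{wedges}: describe the frontier of the explored region by SLE$_{\gamma^2}(\rho)$-type curves via duality, apply the cutting theorems (Theorem~\ref{thm:zipper1} and Remark~\ref{thm:zipper2}, with the bubbles arising as beads of a thin wedge and hence quantum disks in the sense of Remark~\ref{remark-beads}) to obtain the Markov property and conditional independence, identify $(L,R)$ as a stable L\'evy process from stationarity, independent increments, and LQG scaling, and get measurability from removability of the $\gamma^2$-SLE welding interfaces rather than of $\eta$ itself (whose removability is open). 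Two caveats on the middle step. First, quantum natural time is \emph{defined} in \cite[Definition 6.23]{wedges} precisely so that the disks cut off form a Poisson point process indexed by it; your appeal to ``stationarity in quantum natural time'' is therefore not an independent input but essentially the content of that definition, and the substantive work is in showing such a parametrization exists and in extracting the index $\kappa/4$, which comes from the explicit scaling of the quantum disk measure (equivalently the dimension $3-4/\gamma^2$ of the underlying Bessel excursion measure), not from a generic ``matching of exponents.'' Second, deducing independence of $L$ and $R$ from ``no simultaneous jumps'' requires an almost sure non-coincidence statement you do not justify; the cleaner argument, already implicit in your welding step, is that the families of disks cut off on the left and on the right are conditionally independent as part of the welding decomposition. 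Neither point is a wrong turn, but both are where the actual content of the proof lives.
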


Theorem~\ref{thm-mating-chordal} is a generalization of Theorem~\ref{thm:zipper0} and Proposition~\ref{prop:zipper} to the case when $\kappa \in (4,8)$. 
The theorem implies that the curve-decorated topological space $(\BB H ,\eta)$ is the homeomorphic to the space obtained by gluing together a certain pair of trees of disks constructed from $L$ and $R$. See~\cite[Figure 1.19]{wedges}.  These trees of disks are closely related to the stable looptrees introduced in \cite{curien-kortchemski-looptree-def}.
 
As in Theorem~\ref{thm-mating}, the proof of Theorem~\ref{thm-mating-chordal} does not give an explicit way of recovering $(\bh,\eta)$ from $(L,R)|_{[0,t]}$ and the above quantum surfaces (equivalently, from the trees of disks). A strengthened form of the measurability statement in Theorem~\ref{thm-mating-chordal} is proven in~\cite{mmq-welding}, which implies that there is only one way to weld together two trees of disks \emph{which produces an SLE curve}. However, this strengthened version is still weaker than the one in Theorem~\ref{thm:zipper0} based on conformal removability. Indeed, it is still an open question to determine whether $\SLE_\kappa$ is conformally removable for $\kappa\in (4,8)$. If we knew that this were the case then we could say that there is only one possible way of gluing together the trees of disks, regardless of the type of curve produced.

In the case when $\gamma =\sqrt{8/3}$ and $\kappa = 6$, Theorem~\ref{thm-mating-chordal} is the continuum analog of the so-called \emph{peeling process} for a uniform infinite planar map with the topology of the half-plane decorated by a percolation configuration.
In this analogy, the left/right boundary length processes $L$ and $R$ correspond to so-called \emph{horodistance} processes.
See~\cite{angel-peeling,angel-curien-uihpq-perc,richier-perc} for more on this peeling process and~\cite{curien-glimpse} for a similar peeling process using SLE$_6$ instead of a percolation interface. 
The relationship between Theorem~\ref{thm-mating-chordal} and peeling processes is used in~\cite{gwynne-miller-char,gwynne-miller-perc} to prove the convergence of percolation on half-planar maps to SLE$_6$ on $\sqrt{8/3}$-LQG.

\section{Applications}
\label{sec-applications}

\subsection{Convergence of random planar maps: peanosphere convergence and beyond}\label{subsec:rmp}
In this section we explain how mating-of-trees theory allows one to show that certain random planar maps decorated by statistical mechanics models converge to SLE-decorated LQG surfaces in a certain topology. 
For expository convenience, we first explain this in the case of an infinite-volume spanning-tree weighted planar map $(M^\infty, e^\infty , T^\infty)$, as discussed at the end of Section~\ref{subsec:UST}. 
Let $\mcl Z^\infty$ be the two-sided simple random walk on $\BB Z^2$ associated with $(M^\infty, e^\infty , T^\infty)$ via the infinite-volume version of the Mullin bijection.
Then $\mcl Z^\infty$ converges in law in the scaling limit to a standard planar Brownian motion $Z$.
By Theorem~\ref{thm-mating}, $Z$ encodes a $\sqrt 2$-quantum cone $(\BB C,h,0,\infty)$ decorated by an SLE$_8$ curve $\eta$ from $\infty$ to $\infty$. 
The process $\mcl Z^\infty$ (resp.\ $Z$) a.s.\ determines $(M^\infty, e^\infty , T^\infty)$ (resp.\ $(h,\eta)$).
Hence, we can interpret the convergence of random walk to Brownian motion as saying that the infinite spanning-tree weighted planar map converges in law to an SLE$_8$-decorated $\sqrt2$-quantum cone with respect to a certain topology, namely the one where two decorated surfaces are close if their corresponding encoding functions are close. 
We call this type of convergence \emph{convergence in the peanosphere sense}.  

Besides the two bijections discussed in Section~\ref{sec-bijections},  other decorated random planar maps models known to have a mating-of-trees encoding (and to converge to SLE-decorated LQG in the peanosphere sense) include the following. In each case, the corresponding value of $\gamma$ is listed in the parentheses. 
\begin{enumerate}
	\item Planar maps decorated by an instance of the critical Fortuin-Kasteleyn (FK) cluster model~\cite{shef-burger} ($\gamma$ ranges from $\sqrt 2$ to 2, depending on the FK parameter $q \in (0,4)$);
	\item Planar maps decorated by active spanning trees~\cite{gkmw-burger} ($\gamma$ ranges from $0$ to $\sqrt 2$, depending on the activity parameter);
	\item Bipolar-oriented planar maps~\cite{kmsw-bipolar} ($\gamma=\sqrt{4/3}$ for uniform bipolar oriented maps, $\gamma$ ranges from $0$ to $\sqrt 2$ for biased bipolar-oriented maps);
	\item Schnyder wood-decorated simple triangulations \cite{lsw-schnyder-wood} ($\gamma=1$).
\end{enumerate}
We omit the precise definitions of these models and their encodings, which  can be found in the corresponding references. In all of the encodings, it is possible to start from the given model to construct a (non-uniform) spanning tree on the planar maps. The planar map is then encoded by the walk on $\BB Z^2$ whose coordinates are (minor variants of) the contour functions of this tree and its corresponding dual tree. 

Once one has a mating-of-trees encoding for a certain random planar map model, proving convergence in the peanosphere sense for these models is a matter of checking that the  2D random walk converges to the 2D Brownian motion whose two coordinates have the correct correlation. For planar maps decorated by the critical FK model or by an active spanning tree, the encoding (the so-called \emph{hamburger-cheeseburger bijection}) is closely related to the Mullin bijection, but the random walks involved are \emph{non-Markovian}.
Hence it requires some non-trivial work to prove the Brownian motion convergence (this convergence is proven in~\cite{shef-burger} and~\cite{gkmw-burger}, respectively). 
For percolated triangulations and for planar maps decorated by bipolar orientations or Schnyder woods, the random walks involved have independent increments, therefore the peanosphere convergence is trivial once the mating-of-trees bijection is known. The fact that the walk has independent increments will also play an important role in Section~\ref{sec-strong-coupling}.

These peanosphere convergence results constitute the \emph{first} link between the decorated planar map models and SLE/LQG.  
In the case of the critical FK cluster model, such a link is expected because on a regular lattice the critical FK cluster boundaries are supposed to converge in law to a conformal loop ensemble (CLE$_\kappa$) with $\kappa \in (4,8)$~\cite{shef-cle}.
In other cases, realizing such a link is conceptually important and sheds lots of light on these models. 
For example, in the bipolar orientation and Schnyder wood cases, the corresponding models on a regular lattice are the 6-vertex on $\BB Z^2$ and the 20-vertex model on the triangular lattice, respectively. 
The peanosphere convergence results mentioned above motivate the conjecture that the Peano curve associated with the 6-vertex (resp.\ 20-vertex) model converges in the scaling limit to SLE$_{12}$ (resp.\ SLE$_{16}$). Here we note that $12 = 16/(\sqrt{4/3})^2$ and $16 = 16/(1)^2$. See \cite{kmsw-6vertex} for some simulations supporting this conjecture in the case of the bipolar orientation.
Peanosphere convergence results are also the first results in which $\SLE_\kappa$ curves with $\kappa>8$ arise as the scaling limit of natural combinatorial models.

\subsubsection{Improvements on peanosphere convergence}

In many cases, once peanosphere convergence is established, it is possible to use model-specific arguments to obtain the convergence of additional natural observables.
In the rest of this subsection we review some results of this type. See Sections~\ref{sec-strong-coupling} and~\ref{sec-kappa6} for additional results about planar maps which are proven building on peanosphere convergence.

The first works in this direction are \cite{gms-burger-cone,gms-burger-local,gms-burger-finite,blr-exponents,chen-fk} concerning random planar maps decorated by the critical FK cluster model. In this model, each planar map is decorated with a subset of edges  which are declared to be \emph{open}. Edges which are not open are called \emph{closed}.  The natural observable for this model is the interfaces between open and closed clusters.
Based on the mating-of-trees encoding in~\cite{shef-burger} (a.k.a.\ the hamburger-cheeseburger bijection), 
it is proved in~\cite{gms-burger-cone,gms-burger-local,gms-burger-finite} that many quantities associated with the FK interfaces converge to their counterparts for $\gamma$-LQG coupled with $\CLE_\kappa$.  
These quantities include the areas and  boundary lengths of the complementary connected components\footnote{In the discrete setting, all the loops are simple but the clusters have ``fjords'' which lead to separate complementary components in the scaling limit. 
} of the macroscopic loops and the adjacency structure of these loops. The key observation here is that these quantities can be expressed in terms of a special set of times associated to a positively correlated two-dimensional Brownian motion called \emph{cone times}~\cite{evans-cone,shimura-cone}.
The work~\cite{blr-exponents} focuses on the tail exponent of quantities similar  to those considered in \cite{gms-burger-cone} and verifies the KPZ relation (see~Section~\ref{subsec:app-sle}). The work~\cite{chen-fk} studies the infinite-volume limit of the model.

For the site-percolated loopless triangulation considered in Section~\ref{subsec:site}, the loop ensemble is also the natural observable to look at. 
Given the simplicity of the bijection, all of the known scaling limit results for planar maps decorated by the critical FK cluster model can be proved in this case with simpler arguments. This is carried out in \cite{bhs-site-perc}. Moreover,
\cite{bhs-site-perc} also proves the scaling limit of the so-called \emph{pivotal measure}, which is the counting measure on  macroscopic percolation pivotal  points. 
This is an important step in proving the convergence of the uniform triangulation under the so-called \emph{Cardy embedding}. See Section~\ref{sec-kappa6} for more details. 

Both bipolar orientations and Schnyder woods are random orientation models, where each edge comes with a direction. Ignoring boundary conditions, in a  bipolar orientation, every vertex has exactly two outgoing edges, while in a Schnyder wood every vertex has exactly three outgoing edges. In the latter case, Euler's formula yields that the underlying map has to be a simple  triangulation.
Both of the two models exhibit richer structure than just the spanning trees involved in their corresponding mating-of-trees bijections. For example, the Schnyder wood-decorated triangulation can be decomposed into three spanning trees.
These spanning trees give rise to three different encoding walks for the same random planar map, say $\mcl Z_1,\mcl Z_2,\mcl Z_3$. 
In \cite{lsw-schnyder-wood}, it is shown that $\mcl Z_1,\mcl Z_2,\mcl Z_3$ converge jointly in law in the scaling limit to three coupled planar Brownian motions which encode three coupled $\SLE_{16}$ curves on the \emph{same} $\gamma=1$-LQG surface in the setting of Theorem~\ref{thm-mating}.

Moreover, the coupling of the three $\SLE_{16}$ curves can be naturally described  in  terms of imaginary geometry. 
Indeed, fix $z\in\BB C$ and consider the left and right outer boundaries of each $\SLE_{16}$ stopped upon hitting $z$.
This gives six simple SLE$_1$-type curves emanating from $z$.
The joint law of these six curves can be described as six imaginary geometry flow lines (in the sense of~\cite[Theorem 1.1]{ig4}) of the same whole-plane GFF $h$ with angles spaced apart by $\pi/3$. 
A similar result was proven in \cite{ghs-bipolar} for uniform bipolar oriented triangulations, in which case the limiting object is a pair of coupled $\SLE_{12}$ curves on a $\sqrt{4/3}$-LQG surface.

Schnyder woods were first introduced by Schnyder~\cite{schnyder-poset,schnyder-embedding} to give an  efficient algorithm for embedding triangulations in the grid in such a way that edges are non-crossing straight line segments.  
Based on the joint convergence of the three trees above, it is proved in \cite{lsw-schnyder-wood} that the Schnyder wood-decorated triangulation has a scaling limit under the Schnyder embedding which can be described in terms of SLE and LQG.
We emphasize that the Schnyder embedding is not close to being ``discrete conformal". In particular, the embedded triangulation has macroscopic faces and the scaling limit of the counting measure on vertices of the embedded triangulation is \emph{not} given by the LQG area measure.

\subsection{Mated-CRT maps and strong coupling} 
\label{sec-strong-coupling}
Mated-CRT maps are discretizations of the definition of the infinite-volume peanosphere described in Figure~\ref{fig-peanosphere}, which produce a random planar map instead of a topological space. 
Let $\gamma \in (0,2)$ and let $Z = (L,R)$ be a pair of correlated, two-sided Brownian motions with correlation $-\cos(\pi\gamma^4/4)$, as in Theorem~\ref{thm-mating}. For $\ep > 0$, the \emph{mated-CRT map} with cell size $\ep$ associated with $Z$ is the graph whose vertex set is $\ep\BB Z$, with two vertices $x_1,x_2 \in \ep\BB Z$ connected by an edge if and only if either
\allb \label{eqn-inf-adjacency}
&\max\left\{ \inf_{t\in [x_1-\ep , x_1]} L_t ,\, \inf_{t\in [x_2-\ep , x_2]} L_t \right\} \leq \inf_{t\in [x_1, x_2-\ep]} L_t \quad \text{or}  \notag \\
&\qquad\qquad \max\left\{ \inf_{t\in [x_1-\ep , x_1]} R_t ,\, \inf_{t\in [x_2-\ep , x_2]} R_t \right\} \leq \inf_{t\in [x_1, x_2-\ep]} R_t .
\alle
There are two edges connecting $x_1$ and $x_2$ if $|x_1-x_2| > \ep$ and the conditions for both $L$ and $R$ in~\eqref{eqn-inf-adjacency} holds.
We note that by Brownian scaling, the law of the mated-CRT map does not depend on $\ep$, but it is convenient to view the maps with different values of $\ep$ as being coupled with the same Brownian motion. 
See Figure~\ref{fig-mated-crt-map}, left, for a geometric interpretation of~\eqref{eqn-inf-adjacency}. 

As explained in Figure~\ref{fig-mated-crt-map}, the mated-CRT map possesses a canonical planar map structure. In fact, it is a.s.\ a triangulation: see~\cite[Figure 1]{gms-harmonic}. This can also be seen from the SLE/LQG description of the mated-CRT map, as described below. Mated-CRT maps are used implicitly (but not referred to as such) in~\cite[Section 9]{wedges} and play a fundamental role in the papers~\cite{ghs-dist-exponent,ghs-map-dist,gms-tutte,gms-harmonic,gm-spec-dim,gh-displacement,dg-lqg-dim,gp-dla}. 

\begin{figure}[t!]
	\begin{center}
		\includegraphics[scale=.65]{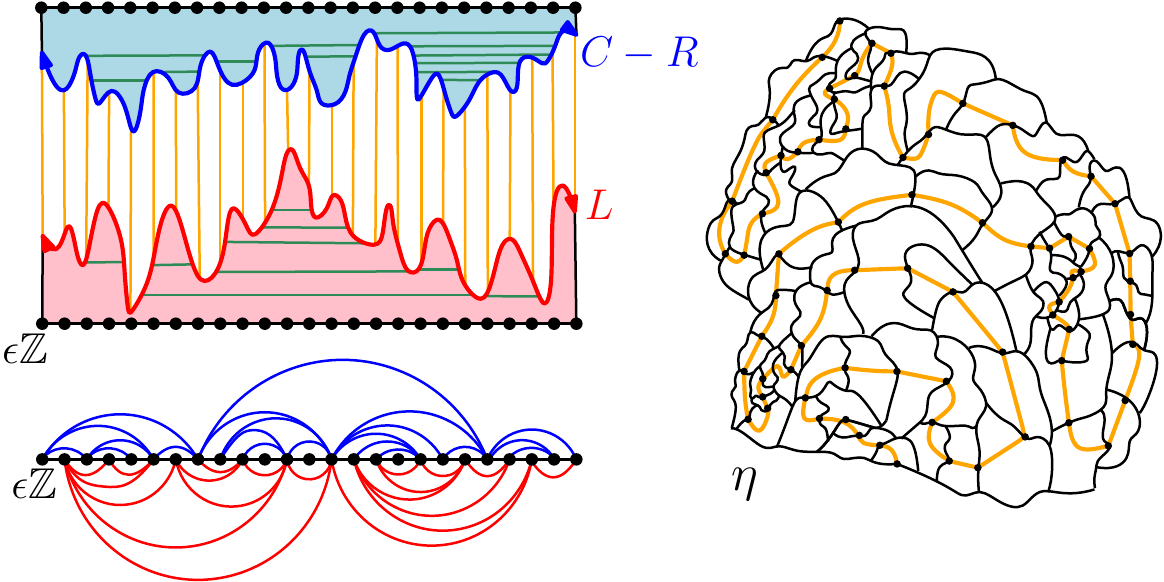}
		\vspace{-0.01\textheight}
		\caption{\textbf{Top Left:} To construct the mated-CRT map $\mcl G^\ep$ geometrically, one can draw the graph of $L$ (red) and the graph of $C-R$ (blue) for some large constant $C > 0$ chosen so that the parts of the graphs over some time interval of interest do not intersect. One then divides the region between the graphs into vertical strips (boundaries shown in orange) and identifies each strip with the horizontal coordinate $x\in \ep \BB Z$ of its rightmost point. Vertices $x_1,x_2\in \ep \BB Z$ are connected by an edge if and only if the corresponding strips are connected by a horizontal line segment which lies under the graph of $L$ or above the graph of $C-R$. One such segment is shown in green in the figure for each pair of vertices for which this latter condition holds.
		\textbf{Bottom left:} One can define a canonical planar map structure on the mated-CRT map by (a) connecting $x-\ep$ and $x$ by a line segment for each $x\in\ep\BB Z$ and then (b) drawing an arc above (resp.\ below) the real line joining $x_1$ and $x_2$ whenever $|x_1-x_2| >\ep$ and the corresponding strips are joined by a horizontal line segment above the graph of $C-R$ (resp.\ below the graph of $L$).  
			\textbf{Right:} The mated-CRT map can be realized as the adjacency graph of \emph{cells} $\eta([x-\ep,x])$ for $x\in \ep\BB Z$, where $\eta$ is a space-filling SLE$_\kappa$ for $\kappa=16/\gamma^2$ parametrized by $\gamma$-LQG mass with respect to an independent $\gamma$-LQG surface. Here, the cells are outlined in black and the order in which they are hit by the curve is shown in orange. 
			Note that the different figures do not correspond to the same mated-CRT map realization. 
		}\label{fig-mated-crt-map}
	\end{center}
	\vspace{-1em}
\end{figure}

Mated-CRT maps are a particularly natural class of random planar maps to study since they provide a bridge between the main discrete and continuum models involved in the theory of random planar maps. Consequently, such maps are an extremely useful tool for analyzing a much larger class of random planar maps, as we now explain. 
\medskip

\noindent\textbf{Discrete connection.}
The definition of the mated-CRT map is a semi-continuous analog of the mating-of-trees bijections discussed in Section~\ref{sec-bijections}. 
Due to the convergence of random walk to Brownian motion (i.e., peanosphere convergence) one can view the mated-CRT map as a coarse-grained approximation of these other planar map models. 
In fact, for each of the mating-of-trees bijections discussed there, the condition for two vertices of the map to be connected by an edge is a discrete analog of the mated-CRT map adjacency condition~\eqref{eqn-inf-adjacency}. This is explained carefully for each of the models in~\cite[Section 3]{ghs-map-dist}.  

Suppose now that $(\mcl M , \mcl T)$ is a decorated random planar map which can be encoded by a mating-of-trees bijection wherein the encoding walk $\mcl Z : \BB Z\rta\BB Z^2$ has i.i.d.\ increments with an exponential tail. Let $Z$ be the Brownian motion used to define the mated-CRT map, with $\gamma$ chosen so that the correlation of the coordinates of $\mcl Z$ is $-\cos(\pi\gamma^2/4)$ (i.e., the same as the correlation of the two coordinates of $Z$). 
Since scaling $L$ and $R$ does not affect the definition of the mated-CRT map, we can assume without loss of generality that the variances of the corresponding coordinates of $Z$ and $\mcl Z$ agree. 
The strong coupling result of Zaitsev~\cite{zaitsev-kmt} (which is the multivariate analog of the KMT coupling theorem~\cite{kmt}) says that one can couple $\mcl Z$ and $Z$ in such a way that
\eqb \label{eqn-strong-coupling}
\sup_{t\in [-n,n] \cap \BB Z} |  Z_t - \mcl Z_t| = O(\log n) ,\quad 
\text{with extremely high probability.}
\eqe  
This leads to a coupling of $(\mcl M , \mcl T)$ with the mated-CRT maps associated with $Z$. 
Combining this with an analysis of the geometry of the mating-of-trees bijection, one can prove the following informal theorem. See~\cite[Theorems 1.5 and 1.9]{ghs-map-dist} and~\cite[Lemma 4.3]{gm-spec-dim} for precise statements.

\begin{thm}[Strong coupling for mated-CRT maps, informal statement] \label{thm-strong-coupling}
	Let $\mcl G$ be the mated-CRT map with cell size $\ep =1$. For $n\in\BB N$, let $\mcl G_n$ be the subgraph of $\mcl G$ induced by $[-n,n] \cap \BB Z$ and let $\mcl M_n$ be the submap of $\mcl M$ corresponding to $\mcl Z|_{[-n,n] \cap \BB Z}$ (the precise definition depends on the particular bijection). 
	There are universal constants $p,q > 0$ such that if we couple $\mcl M$ and $\mcl G$ so that~\eqref{eqn-strong-coupling} holds, then there is a correspondence between vertices of $\mcl G$ and vertices of $\mcl M$ such that with extremely high probability, the following is true. 
	\begin{itemize}
		\item If $x,y$ are vertices of $\mcl G_n$ and $x',y'$ are the corresponding vertices of $\mcl M_n$, then the $\mcl G_n$-graph distance between $x$ and $y$ and the $\mcl M_n$-graph distance between $x'$ and $y'$ differ by a factor of at most $(\log n)^p$.
		\item If $f$ is a function from the vertex set of $\mcl G_n$ to $\BB R$ and $f'$ is the corresponding function on $\mcl M_n$, then the discrete Dirichlet energies of $f$ and $f'$ differ by a factor of at most $(\log n)^q$. 
	\end{itemize}
\end{thm}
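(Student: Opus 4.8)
The plan is to realize both $\mcl G_n$ and $\mcl M_n$ on the common index set $[-n,n]\cap\BB Z$, so that the required vertex correspondence is the identity, to couple the encoding walk $\mcl Z$ to the Brownian motion $Z$ by strong approximation, and then to show that the two adjacency relations differ only by ``local surgeries'' that cost polylogarithmically many extra edges. First I would match the variances of the coordinates of $\mcl Z$ and $Z$ (harmless, since scaling the coordinates does not affect the mated-CRT map) and apply Zaitsev's multivariate version \cite{zaitsev-kmt} of the KMT theorem \cite{kmt} to couple $\mcl Z$ and $Z$ on a common probability space so that \eqref{eqn-strong-coupling} holds, the failure probability decaying faster than any power of $n$; this is where the exponential tail of the increments enters. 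The only consequence of the coupling I will use is that for all integers $a<b$ in $[-n,n]$ the infima over $[a,b]$ of corresponding coordinates of $Z$ and $\mcl Z$ agree up to an additive $C\log n$. Finally, using the explicit form of the mating-of-trees bijection, worked out model by model in \cite[Section~3]{ghs-map-dist}, I would replace $\mcl M_n$ by the combinatorially identical planar map on $[-n,n]\cap\BB Z$ whose edges are given by the discrete analogue of~\eqref{eqn-inf-adjacency} with $\mcl Z$ in place of $Z$. The task thus reduces to comparing two planar maps on the same vertex set, one built from $Z$ and one from $\mcl Z$, whose defining processes are everywhere within $C\log n$.

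The heart of the proof is the claim that this closeness forces the identity map to be a rough isometry between $\mcl G_n$ and $\mcl M_n$ with multiplicative distortion $(\log n)^{O(1)}$; equivalently, every edge of either map is bridged by a path of length $(\log n)^{O(1)}$ in the other. Fix $\mcl G_n$-adjacent $x_1<x_2$, say via the $L$-clause of~\eqref{eqn-inf-adjacency}, and set $h:=\inf_{[x_1,x_2-1]}L$, so that $L\ge h$ on $[x_1,x_2-1]$. By the coupling the $L$-clause of~\eqref{eqn-inf-adjacency} for $\mcl Z$ then holds up to an additive error $2C\log n$. One produces a short $\mcl M_n$-path by descending from $x_1$ to a cell whose first coordinate dips a little below the relevant level, crossing under the obstruction, and ascending to $x_2$; the number of cells needed on each of these three legs is controlled by a quantitative estimate on how the near-minimum records of a linear Brownian motion cluster — concretely, a bound on the number of integer times in an interval at which the path comes within $r$ of its running minimum, which is polylogarithmic when $r=O(\log n)$ — combined with the L\'evy modulus of continuity. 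The symmetric argument, with $Z$ and $\mcl Z$ interchanged and using that the two maps arise from the same local rule, gives the reverse bridging. A union bound over the $O(n^2)$ pairs of vertices, absorbed into the negligible event from the first step, makes everything simultaneous, and the graph-distance assertion then follows by concatenating the bridges edge by edge, with $p$ the exponent produced by the record-clustering estimate.

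For the Dirichlet energy comparison, fix $f$ on $[-n,n]\cap\BB Z$, simultaneously a function on the vertex sets of both maps. For each edge $\{x,y\}$ of $\mcl M_n$, the previous step gives a $\mcl G_n$-path $x=z_0,\dots,z_k=y$ with $k\le(\log n)^{O(1)}$, whence $(f(x)-f(y))^2\le k\sum_i (f(z_i)-f(z_{i+1}))^2$ by Cauchy--Schwarz. Summing over $\mcl M_n$-edges gives $\mcl E_{\mcl M_n}(f)\le(\log n)^{O(1)}\sum_{e\in\mcl G_n} m(e)\,(\nabla_e f)^2$, where $m(e)$ is the number of chosen paths through $e$. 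One then bounds $\max_e m(e)$: an edge $e$ of $\mcl G_n$ can be forced into the bridge of an $\mcl M_n$-edge only if $e$ lies in the ``near-record window'' of that edge, and the same record-clustering estimates, together with tail bounds on the vertex degrees of the mated-CRT map (the map is not of bounded degree, but it is a triangulation \cite[Figure~1]{gms-harmonic} with all degree and edge-length moments finite), show that only $(\log n)^{O(1)}$ such edges exist. This gives $\mcl E_{\mcl M_n}(f)\le(\log n)^q\,\mcl E_{\mcl G_n}(f)$, and the reverse inequality follows symmetrically. The exponents $p,q$ are universal because every quantity above depends on the model only through the increment tail, which we standardized.

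The main obstacle is the polylogarithmic bridging bound of the second step and, crucially, its uniformity over all pairs of vertices: one must show that a vertical perturbation of size $O(\log n)$ of the graphs of $L$ and $R$ moves the combinatorial adjacency structure of the mated-CRT map by only polylogarithmically many cells. This needs genuinely quantitative control of the infimum (equivalently, local-time / occupation) structure of the correlated pair $(L,R)$ near its records — far beyond the qualitative fact that both the perturbed and the unperturbed maps are triangulations — and one has to track the correlation, since under the perturbation an edge can switch between the ``$L$'' and ``$R$'' clauses of~\eqref{eqn-inf-adjacency}. A secondary technical point is that controlling the multiplicities $m(e)$ in the energy comparison requires a priori volume-growth and degree estimates for $\mcl G_n$, which themselves come from the SLE/LQG description of the mated-CRT map (cf.\ Theorem~\ref{thm-mating} and Figure~\ref{fig-mated-crt-map}).
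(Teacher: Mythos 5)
Your plan coincides with the approach the survey sketches and that is carried out in \cite{ghs-map-dist}: a Zaitsev/KMT coupling of $\mcl Z$ with $Z$, followed by a combinatorial lemma showing that each edge of one map is bridged by a polylogarithmically short path in the other, with the Dirichlet energy comparison obtained by Cauchy--Schwarz along the bridging paths together with a bound on how many bridges use a given edge. The one point to sharpen is your record-counting lemma: the quantity that is actually polylogarithmic is the number of integer times in $[x_1,x_2]$ lying within $O(\log n)$ of being a \emph{two-sided} running minimum of the relevant coordinate (of order $r^2$ for window size $r$), whereas the one-sided near-minimum count over a macroscopic interval is polynomially large in its length; the two-sided form is the one used in \cite{ghs-map-dist}.
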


Theorem~\ref{thm-strong-coupling} allows us to reduce many statements about random planar maps which can be encoded by a mating-of-trees bijection wherein the encoding walk has i.i.d.\ increments to  statements about the mated-CRT map, which is in some cases much easier to analyze due to the continuum theory discussed below. The planar maps $\mcl M$ for which Theorem~\ref{thm-strong-coupling} is currently known to apply include the UIPT (Section~\ref{subsec:site}) and infinite-volume limits of planar maps weighted by the number of spanning trees (Section~\ref{subsec:UST}), bipolar orientations~\cite{kmsw-bipolar}, or Schnyder woods~\cite{lsw-schnyder-wood} they admit. 
For any walk $\mcl Z : \BB Z^2 \rta \BB Z$ with i.i.d.\ increments having an exponential tail, one can construct a planar map to which Theorem~\ref{thm-strong-coupling} applies via direct discretization of the mated-CRT map definition~\eqref{eqn-inf-adjacency}, but such a map is not always combinatorially natural. 
Theorem~\ref{thm-strong-coupling} is \emph{not} currently known to apply in the setting of the hamburger-cheeseburger bijection of~\cite{shef-burger,gkmw-burger} since in this case the encoding walk does not have i.i.d.\ increments. 
\medskip

\noindent\textbf{Continuum connection.}
The connection between the mated-CRT map and continuum theory is clear from Theorem~\ref{thm-mating}. 
Indeed, if we let $\eta$ be the space-filling SLE$_\kappa$ parametrized by $\gamma$-LQG mass corresponding to $Z$ in the setting of Theorem~\ref{thm-mating}, then the adjacency condition~\eqref{eqn-inf-adjacency} is equivalent to the condition that the $\ep$-LQG mass \emph{cells} $\eta([x_1-\ep,x_1])$ and $\eta([x_2-\ep,x_2])$ intersect along a non-trivial boundary arc. This is proven rigorously in~\cite[Section 8.2]{wedges}.
Hence the mated-CRT map is isomorphic to the adjacency graph of cells $\eta([x-\ep,x])$ for $x\in \ep\BB Z$ via the isomorphism $x\mapsto \eta([x-\ep,x])$. 

\begin{remark}
	For $\kappa \geq 8$, the cells $\eta([x_1-\ep,x_1])$ and $\eta([x_2-\ep,x_2])$ intersect if and only if they share a non-trivial boundary arc, and each such cell is homeomorphic to a closed disk. 
	For $\kappa \in (4,8)$, it is possible for two space-filling SLE cells to intersect each other in a fractal set, but not share a non-trivial boundary arc. In this case, the cells are not simply connected and they have ``cut points", so their interiors are not connected. See Figure~\ref{fig-peanosphere-bm-def}, right. 
\end{remark}

The above isomorphism gives a natural embedding of the mated-CRT map into the complex plane by sending $x\in\ep\BB Z$ to $\eta(x)$. 
This embedding is \emph{not} an explicit functional of the mated-CRT map $\mcl G^\ep$ (see, however, Section~\ref{sec-tutte-conv}) but it has many nice properties. 
For example, due to Proposition~\ref{prop-sle-ball}, the cells of the mated-CRT map are ``roughly spherical" in the sense that they are contained between Euclidean balls whose radii are comparable up to an $o(1)$ error in the exponent. 
Furthermore, one has bounds for the maximum and minimum Euclidean sizes of the cells which come from Proposition~\ref{prop-sle-ball} and basic estimates for the LQG measure: 

\begin{prop}[Cell diameter estimates] \label{prop-cell-diam}
	Fix a bounded open set $U $ with $\ol U\subset \BB D$ and a small parameter $\zeta\in (0,1)$. There exists $\alpha = \alpha(\zeta) > 0$ such that with probability $1-O_\ep(\ep^\alpha)$, 
	\eqb
	\ep^{\frac{2}{(2-\gamma)^2} + \zeta } \leq \op{diam} \eta([x-\ep,x]) \leq \ep^{\frac{2}{(2+\gamma)^2} - \zeta} ,\quad\forall x\in \ep\BB Z\: \text{with} \: \eta([x-\ep,x]) \cap U\not=\emptyset. 
	\eqe
\end{prop}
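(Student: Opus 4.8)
The plan is to combine the "roughly spherical" property of space-filling SLE segments (Proposition \ref{prop-sle-ball}) with standard moment bounds for the $\gamma$-LQG area measure. Recall that each cell $\eta([x-\ep,x])$ has $\mu_{\fh}$-mass exactly $\ep$ by the choice of parametrization, where here $\fh$ is the field of the $\gamma$-quantum cone (equivalently, by Lemma \ref{lem:circle}, locally a whole-plane GFF plus $-\gamma\log|\cdot|$, which near any fixed compact subset of $\BB D\setminus\{0\}$ is mutually absolutely continuous with a GFF). So the diameter of a cell is controlled, in both directions, by the relationship between Euclidean radius and LQG mass of a Euclidean ball.

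The key steps, in order, are as follows. First, I would reduce to a statement about the LQG measure of Euclidean balls: by Proposition \ref{prop-sle-ball}, off an event of probability decaying faster than any power of $\delta$, every cell $\eta([x-\ep,x])$ meeting $U$ with $\op{diam}\eta([x-\ep,x]) = \delta \le \ep_0$ contains a Euclidean ball of radius $\delta^{1+\zeta'}$ and is contained in a Euclidean ball of radius $\delta$ (trivially, a ball of that diameter). Hence, on that event, for each such cell,
\[
\mu_{\fh}\left(B_{\delta^{1+\zeta'}}(z)\right) \le \ep \le \mu_{\fh}\left(B_{2\delta}(w)\right)
\]
for suitable centers $z,w$ near the cell, both lying in a slightly enlarged bounded set $U' $ with $\ol{U'}\subset\BB D$. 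Second, I would invoke the standard LQG ball-mass estimates: for the $\gamma$-LQG measure of a GFF-type field on a compact subset of $\BB D$, for any $\xi>0$ there is $c>0$ with
\[
\BB P\left[\exists z\in U' \cap (\tfrac{r}{2}\BB Z^2): \mu_{\fh}(B_r(z)) \notin [r^{(Q+\gamma)^2/?}, \dots]\right]
\]
— more precisely the statements $\mu_{\fh}(B_r(z)) \le r^{2 + \gamma^2/2 - \gamma Q + o(1)}$ (upper) uniformly and $\mu_{\fh}(B_r(z)) \ge r^{2+\gamma^2/2+\gamma Q + o(1)}$ (lower) uniformly, which are well known consequences of the moment bounds for Gaussian multiplicative chaos together with a union bound over a grid of centers and the circle-average Markov property; see the GMC surveys cited in Section \ref{subsec:GFF}. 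Using $Q = \gamma/2 + 2/\gamma$ one checks $2 + \gamma^2/2 - \gamma Q = 2 - \gamma^2/2 - 2 = -\gamma^2/2$, so one gets the cleaner exponents: $\mu_{\fh}(B_r(z)) \le r^{(2-\gamma)^2/2 + o(1)}$ is the wrong normalization — I would recompute carefully, but the point is that the exponents $\frac{2}{(2\pm\gamma)^2}$ in the statement are exactly the reciprocals of the "volume exponent" of an LQG ball, i.e. the exponent $d$ such that $\mu_{\fh}(B_r) \approx r^{d}$ has $d \in [(2-\gamma)^2/2, \dots]$ range after accounting for the $-\gamma\log|\cdot|$ singularity being harmless away from $0$. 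Third, I would feed these two-sided ball estimates into the two inequalities above: the lower bound $\mu_{\fh}(B_{2\delta}(w)) \ge \ep$ combined with the uniform upper estimate $\mu_{\fh}(B_{2\delta}(w)) \le (2\delta)^{(2+\gamma)^2/2 \cdot \text{(stuff)}}$ forces $\delta \ge \ep^{2/(2+\gamma)^2 - \zeta}$, and symmetrically the upper bound $\mu_{\fh}(B_{\delta^{1+\zeta'}}(z)) \le \ep$ with the uniform lower estimate forces $\delta^{1+\zeta'} \le \ep^{2/(2-\gamma)^2 \cdot \text{(stuff)}}$, hence $\delta \le \ep^{2/(2-\gamma)^2 + \zeta}$ after adjusting $\zeta'$. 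Choosing all the auxiliary small parameters in terms of $\zeta$ and taking a union bound over the $O(\ep^{-2})$ relevant cells (or rather over the grid of ball centers, whose cardinality is polynomial in $\ep^{-1}$) gives the claimed probability $1 - O_\ep(\ep^\alpha)$, since the LQG ball estimates have polynomially small error probabilities and the Proposition \ref{prop-sle-ball} event is superpolynomially good.

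The main obstacle is bookkeeping rather than conceptual: one must (i) handle the $-\gamma\log|\cdot|$ singularity of the quantum-cone field, which is why we restrict to $U$ with $\ol U\subset\BB D$ and, implicitly, bounded away from $0$ — near compact subsets of $\BB D\setminus\{0\}$ the field is absolutely continuous to a GFF with a bounded Radon–Nikodym derivative on the relevant events, so the GMC estimates transfer; (ii) make the uniformity over all $x\in\ep\BB Z$ precise, which requires replacing the continuum of cell locations by a dyadic grid of ball centers at scale comparable to $\delta$ and absorbing the discretization error into $\zeta$; and (iii) match the exponents exactly, i.e. verify that the reciprocal of the LQG volume exponent at scale $r$ is $\frac{2}{(2\pm\gamma)^2}$ with the signs as stated — this is where the precise form of the Gaussian multiplicative chaos moment bounds (equivalently, the multifractal spectrum of the GFF) enters, and where one must be careful that the "with $o(1)$ in the exponent" upper/lower estimates hold simultaneously over the whole grid with only polynomially small failure probability, which is exactly the content of the standard estimates recalled in Section \ref{subsec:GFF}. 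Everything else is a short deterministic deduction from the two inclusions provided by Proposition \ref{prop-sle-ball}.
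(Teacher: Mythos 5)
Your strategy is exactly the paper's: the lower bound on $\op{diam}\eta([x-\ep,x])$ follows immediately from the uniform \emph{upper} bound on LQG masses of Euclidean balls together with $\mu_{\bh}(\eta([x-\ep,x]))=\ep$ (no need for Proposition~\ref{prop-sle-ball} here, since a set is trivially contained in a ball of its own diameter), while the upper bound combines the ``contains a ball of radius $\delta^{1+\zeta'}$'' statement of Proposition~\ref{prop-sle-ball} with the uniform \emph{lower} bound on ball masses; this is precisely the content of the cited Lemma~2.7 of the mated-CRT harmonic-functions paper.

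One concrete correction to the deduction you flagged as needing a recheck: you have paired the two ball-mass estimates with the wrong conclusions. The uniform upper estimate is $\sup_z\mu_{\bh}(B_r(z))\le r^{(2-\gamma)^2/2-o(1)}$ (driven by the $2$-thick points, since $\mu_{\bh}(B_r(z))\approx r^{2+\gamma^2/2}e^{\gamma h_r(z)}$ and $\max_z h_r(z)\approx 2\log r^{-1}$), and combining it with $\ep\le\mu_{\bh}(B_{2\delta}(w))$ yields the \emph{lower} diameter bound $\delta\ge\ep^{2/(2-\gamma)^2+\zeta}$; the uniform lower estimate is $\inf_z\mu_{\bh}(B_r(z))\ge r^{(2+\gamma)^2/2+o(1)}$, and combining it with $\mu_{\bh}(B_{\delta^{1+\zeta'}}(z))\le\ep$ yields the \emph{upper} diameter bound $\delta\le\ep^{2/(2+\gamma)^2-\zeta}$. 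As written, your two conclusions are swapped relative to this (and would jointly be vacuous for small $\ep$). With that fix, and with the routine union bound over dyadic scales and grid centers you describe, the argument is complete; the $-\gamma\log|\cdot|$ singularity at the origin is harmless even if $0\in U$, since $\gamma<Q$ means the ball-mass exponent at the origin lies strictly between $(2-\gamma)^2/2$ and $(2+\gamma)^2/2$.
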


The upper bound in Proposition~\ref{prop-cell-diam} is proven~\cite[Lemma 2.7]{gms-harmonic}. The lower bound is immediate from an elementary upper bound for the LQG areas of Euclidean balls and the fact that the LQG area of $\eta([x-\ep,x])$ is $\ep$.  

The SLE/LQG embedding of the mated-CRT map allows one to prove many properties of this map. 
The reason for this is as follows. A common way to study random planar maps is to embed the map into $\BB C$ in some way, then consider how the embedded map interacts with paths and functions in $\BB C$. A number of papers have used this strategy with the circle packing embedding (see~\cite{stephenson-circle-packing} for an introduction); see, e.g.,~\cite{benjamini-schramm-topology,gn-recurrence,abgn-bdy,gill-rohde-type,angel-hyperbolic,cg-liouville,lee-conformal-growth,lee-uniformizing}. 
Since mated-CRT map cells are ``roughly spherical", one can apply similar techniques to these papers with the SLE/LQG embedding of the mated-CRT map in place of the circle packing. 
However, thanks to SLE/LQG theory, the embedding of the mated-CRT map is in some ways much better understood than the circle packing.
For example, for most natural combinatorial random planar map models (e.g., uniform maps and maps weighted by statistical mechanics models) it is a major open problem to prove an analog of Proposition~\ref{prop-cell-diam} with the circles of the circle packing (appropriately normalized) in place of SLE cells. 

One can use the above continuum techniques to prove statements about the mated-CRT map, then transfer to any other random planar maps for which Theorem~\ref{thm-strong-coupling} applies. 
Some of the results which have been proven using this approach are as follows.
\begin{itemize}
	\item (Graph distances) There is an exponent $d_\gamma > 2$, depending only on $\gamma$, which describes a number of natural quantities related to graph distances in mated-CRT maps and planar maps to which Theorem~\ref{thm-strong-coupling} applies. For example, the graph distance ball of radius $r$ in any such map typically has of order $r^{d_\gamma + o_r(1)}$ vertices~\cite[Theorem 1.6]{dg-lqg-dim}. One also has reasonably sharp upper and lower bounds for $d_\gamma$ which match only for $\gamma=\sqrt{8/3}$, in which case $d_{\sqrt{8/3}}=4$~\cite{ghs-map-dist,dzz-heat-kernel,dg-lqg-dim,gp-lfpp-bounds,ang-discrete-lfpp}.\footnote{To be more precise, \cite{dzz-heat-kernel} proves the existence of an exponent $d_\gamma$ for a continuum discretization of LQG distances (called \emph{Liouville graph distance}), \cite{dg-lqg-dim} proves that this same exponent describes distances in the mated-CRT map and proves bounds for it, and~\cite{ghs-map-dist} compares distances in mated-CRT maps and other maps. See also~\cite{gp-lfpp-bounds,ang-discrete-lfpp} for improved bounds for $d_\gamma$ (and other related quantities) and~\cite{ghs-dist-exponent} for additional bounds for distances in mated-CRT maps. } This provides the first non-trivial bounds for distances in planar maps weighted by spanning trees, bipolar orientations, or Schnyder woods. It is shown in~\cite[Corollary 1.7]{gp-kpz} that $d_\gamma$ coincides with the Hausdorff dimension of the continuum LQG metric as constructed in~\cite{dddf-lfpp,gm-uniqueness}.
	\item (Random walk) For mated-CRT maps and maps for which Theorem~\ref{thm-strong-coupling} applies, the probability that simple random walk returns to its starting point after $n$ steps is of order $n^{-1 + o_n(1)}$ (i.e., the \emph{spectral dimension} is 2) and the typical distance traveled by the simple random walk after $n$ steps is $n^{1/d_\gamma + o_n(1)}$. This is proven in~\cite{gm-spec-dim,gh-displacement}, building on estimates for the mated-CRT map from~\cite{gms-harmonic} (the lower bound for the return probability was first proven by Lee~\cite{lee-conformal-growth}). 
	In the case of the UIPT, where $d_\gamma=d_{\sqrt{8/3}}=4$, this confirms a conjecture of Benjamini and Curien \cite{benjamini-curien-uipq-walk}.
	\item (External DLA) On the infinite spanning-tree weighted random planar map, the graph-distance diameter of an external diffusion limited aggregation (DLA) cluster run for $n$ steps is typically of order $n^{\frac2{d_{\sqrt 2}} + o_n(1)}$~\cite{gp-dla}. 
\end{itemize}

\subsection{The Tutte embedding of the mated-CRT map converges to LQG} \label{sec-tutte-conv}
There are various canonical ways of embedding a random planar map into $\BB C$ (i.e., drawing its vertices and edges in the plane so that no edges cross).  
For several embeddings one can argue heuristically that they approximate a conformal map.
Examples include circle packing~\cite{stephenson-circle-packing}, Riemann uniformization, and the Tutte embedding (which we define just below). 
It is expected that if we embed a random planar map in the $\gamma$-LQG universality class in one of these ``discrete conformal" ways, then the counting measure on the vertices of the embedded map, appropriately rescaled, should converge in law to the $\gamma$-LQG measure. Moreover, various curves associated with statistical mechanics models on the map should converge to SLE.  (Recall Section~\ref{subsec:scaling}.)
Here we describe the first rigorous proof of this type of convergence, in the case of the Tutte embedding of the mated-CRT map.

We defined the mated-CRT map with the topology of the whole plane in Section~\ref{sec-strong-coupling}, but the Tutte embedding is somewhat easier to define if we work with the disk topology. 
To define the mated-CRT map with the disk topology, we let $Z = (L,R)$ be a pair of correlated Brownian motions with correlation $-\cos(\pi\gamma^2/4)$ started from $(1,0)$ and conditioned to stay in the first quadrant until time 1 and satisfy $Z_1 = (0,0)$. In the setting of Theorem~\ref{thm-mating-disk}, this conditioned Brownian motion encodes a unit area, unit boundary length quantum disk $(\BB D , h^{\op{D}} , 1)$ decorated by a space-filling SLE$_\kappa$ loop $\eta^{\op{D}}$, parameterized by $\mu_{h^{\op{D}}}$-mass. 
For $n\in\BB N$, we define the mated-CRT map $\mcl G^{1/n}$ associated with $Z$ to be the random planar map with vertex set $ (\frac{1}{n}\BB Z^2) \cap (0,1]$, with two vertices $x_1,x_2$ connected by an edge if and only if~\eqref{eqn-inf-adjacency} holds (with $\ep = 1/n$). 
Exactly as in the whole-plane case, Theorem~\ref{thm-mating-disk} implies that $\mcl G^{1/n}$ can equivalently be defined as the adjacency graph of $1/n$-LQG mass cells $\eta^{\op{D}}([x-1/n,x])$. 
We define the \emph{boundary} of $\mcl G^{1/n}$ by
\eqb \label{eqn-mated-crt-map-bdy}
\bdy\mcl G^{1/n} := \left\{ y \in  (\frac{1}{n} \BB Z) \cap (0,1] : \inf_{t \in [y-1/n,y]} L_t \leq \inf_{t\in [y,1]} L_t \right\} ,
\eqe
which is the same as the set of $y\in (\frac{1}{n} \BB Z) \cap (0,1]$ for which the cell $\eta^{\op{D}}([y-1/n,y])$ shares a non-trivial arc with $\bdy\BB D$. 

To construct the Tutte embedding of the mated-CRT map $\mcl G^{1/n}$, first sample $\BB t$ uniformly at random from $[0,1]$ (independently from $Z$) and for $n\in\BB N$, let $\BB x^n \in   (\frac{1}{n} \BB Z) \cap (0,1]$ be the vertex of $\mcl G^{1/n}$ with $\BB t \in [\BB x^n - 1/n , \BB x^n]$.  
We will define our embedding so that $\BB x^n$ is mapped (approximately) to $0\in\BB D$. 
Number the vertices of $\bdy\mcl G^{1/n}$ (in increasing order) as $\{y_1,\dots,y_k\}$. For $j = 1,\dots,k$ let $\frk p(y_j)$ be the conditional probability given $\mcl G^{1/n}$ that a simple random walk started from $\BB x^n$ first hits the boundary at a vertex in the boundary arc $\{y_1,\dots,y_j\}$.
The boundary vertices $y_1, y_2, \ldots, y_k$ are then mapped in counterclockwise order around the complex unit circle via $y_j \mapsto e^{2\pi i \frk p(y_j)}$. This makes it so that the hitting probability of the random walk started from $\BB x^n$ approximates the uniform measure on the unit circle. One then maps the interior vertices of $\mcl G^{1/n}$ into the unit disk via the discrete harmonic extension of the boundary values.
Equivalently, we require that the position of each interior vertex is equal to the average of the positions of its neighbors.
 This embedding of $\mcl G^{1/n}$ is called the \notion{Tutte embedding} of the mated-CRT map centered at $\BB x^n$.
The following is~\cite[Theorem 1.1]{gms-tutte}.

\begin{thm}[\cite{gms-tutte}] \label{thm-tutte-conv}
	Fix $\gamma \in (0,2)$ and define the conditioned Brownian motion $Z$, the associated quantum disk $(\BB D  ,h^{\op{D}} , 1)$ and space-filling SLE$_\kappa$ curve $\eta^{\op{D}}$, and the mated-CRT maps with the disk topology $\mcl G^{1/n}$ for $n\in\BB N$ as above. 
	For $n\in\BB N$, let $\mu^n$ be the random measure on $\ol{\BB D}$ which assigns mass $1/n$ to each of the $n$ points in the Tutte embedding of the mated-CRT map centered at $\BB x^n$. We have the following convergence in probability as $n \to \infty$. 
	\begin{enumerate}
		\item The measures $\mu^n$ converge weakly to the $\gamma$-LQG measure $\mu_{h^{\op{D}}}$ induced by $h^{\op{D}}$. 
		\item The space-filling path on the embedded mated-CRT map which comes from the left-right ordering of the vertices converges uniformly to the space-filling $\op{SLE}_{\kappa}$ curve $\eta^{\op{D}}$. 
		\item The conditional law given $\mcl G^{1/n}$ of the simple random walk on the embedded map started from $\BB x^n$ and stopped upon hitting $\bdy\mcl G^{1/n}$ converges to the law of Brownian motion started from 0 and stopped upon hitting $\bdy\BB D$, modulo time parameterization. 
	\end{enumerate} 
\end{thm}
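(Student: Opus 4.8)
The plan is to prove Theorem~\ref{thm-tutte-conv} by combining the strong-coupling theory of Section~\ref{sec-strong-coupling} with the SLE/LQG description of mated-CRT map cells and a careful analysis of the discrete harmonic functions that define the Tutte embedding. The conceptual point is that the ``true'' embedding $x \mapsto \eta^{\op{D}}(x)$ of the mated-CRT map $\mcl G^{1/n}$ into $\ol{\BB D}$ (coming from Theorem~\ref{thm-mating-disk}) is already known to have good geometric properties --- cells are ``roughly spherical'' (Proposition~\ref{prop-sle-ball}) with controlled diameters (Proposition~\ref{prop-cell-diam}) --- and under this embedding the counting measure $\mu^n$ automatically converges to $\mu_{h^{\op{D}}}$ and the left-right path converges to $\eta^{\op{D}}$. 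So the entire content of the theorem is to show that the \emph{Tutte} embedding is close to the true embedding, i.e., that the discrete harmonic coordinate functions on $\mcl G^{1/n}$ approximate the (continuum) harmonic coordinate functions $z \mapsto z$ on $\BB D$.

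First I would set up the Dirichlet-energy comparison. The Tutte embedding coordinates $\mathfrak{X}^n, \mathfrak{Y}^n$ are, by definition, the discrete-harmonic extensions of boundary data to $\mcl G^{1/n}$ with respect to the simple random walk; equivalently they minimize the discrete Dirichlet energy subject to the prescribed boundary values. Using the cell description, one shows that the discrete Dirichlet energy on $\mcl G^{1/n}$ approximates the continuum Dirichlet energy on $\BB D$ up to lower-order errors --- this is where Proposition~\ref{prop-cell-diam} and the roughly-spherical property enter, together with estimates (as in~\cite{gms-harmonic}) controlling the electrical resistance across an annulus and the ``crossing estimates'' that make the discrete and continuum energies comparable. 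Then I would argue that (a) the boundary hitting probabilities $\mathfrak{p}(y_j)$ for the walk started from $\BB x^n$ converge to the harmonic measure of $\bdy\BB D$ from $0$, which is uniform, so the boundary conditions of the Tutte embedding converge to those of the identity map $z \mapsto z$; and (b) the discrete harmonic functions with these converging boundary values, having comparable Dirichlet energy, must converge (in an appropriate $L^2$ / weak sense, via compactness and a uniqueness argument for the limiting harmonic function) to the corresponding continuum harmonic function. Combining (a) and (b) identifies the limit of the Tutte coordinates with the identity on $\BB D$ precomposed with the true embedding. Once the embeddings are shown to be close, parts (1), (2), (3) follow: (1) because pushing $\mu^n$ through a map close to the true embedding gives something close to $\mu_{h^{\op{D}}}$; (2) because the ordering of cells is intrinsic to $\mcl G^{1/n}$ and the true-embedding image of that ordering is $\eta^{\op{D}}$; (3) because the random walk on the embedded map is, after the coordinate change, a random walk whose Dirichlet form converges to that of Brownian motion, so one invokes a Dirichlet-form / tightness-plus-identification argument (the simple random walk is automatically a discrete-harmonic-coordinate martingale, which is the key point making the walk converge to Brownian motion modulo time change).

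The main obstacle is step (b) --- upgrading the Dirichlet-energy comparison into genuine convergence of the embedding, uniformly enough to control the pushforward measure and the path. The difficulty is that a bound on Dirichlet energy alone does not pin down a harmonic function pointwise; one needs quantitative control (e.g. an a~priori modulus-of-continuity or equicontinuity estimate for the discrete harmonic coordinates, ruling out that two vertices which are $\mcl G^{1/n}$-close get mapped far apart, and also ruling out ``folding'' where the embedding degenerates). This requires combining the energy estimates with the cell-size bounds of Proposition~\ref{prop-cell-diam} and with estimates on the effective resistance / exit distributions of the random walk on $\mcl G^{1/n}$ --- precisely the mated-CRT map estimates developed in~\cite{gms-harmonic} --- and then an ergodic/stationarity argument (using Lemma~\ref{lem-mating-translate}, translation invariance of the whole-plane mated-CRT map, plus a local absolute continuity transfer to the disk version) to boost a ``convergence in probability of a typical quantity'' statement into convergence of the whole embedding. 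A secondary technical point is justifying the singular conditioning in the definition of $Z$ and transferring the whole-plane strong-coupling estimate~\eqref{eqn-strong-coupling} to the disk setting, which is done by absolute continuity away from the endpoints of time and a separate argument near the endpoints.
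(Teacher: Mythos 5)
Your overall reduction is the same as the one used in \cite{gms-tutte} and sketched in the paper after the theorem statement: under the a priori SLE/LQG embedding $x\mapsto\eta^{\op D}(x)$ the three convergence statements are essentially automatic, so the entire content is to show that the Tutte embedding is close to the a priori embedding, which in turn reduces to a quenched invariance principle for simple random walk on the adjacency graph of cells (Theorem~\ref{thm-quenched-clt0}), transferred from the whole plane to the disk by local absolute continuity. However, your opening appeal to the strong-coupling theory of Section~\ref{sec-strong-coupling} is a red herring: the estimate~\eqref{eqn-strong-coupling} couples a \emph{discrete} encoding walk $\mcl Z$ with $Z$, but here the mated-CRT map is defined directly from $Z$, so there is no discrete walk to couple --- and, as the paper notes, the polylogarithmic errors in Theorem~\ref{thm-strong-coupling} are in any case too coarse to be of use for a scaling limit statement.

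The genuine gap is in your step (b). The claim that ``the discrete Dirichlet energy on $\mcl G^{1/n}$ approximates the continuum Dirichlet energy on $\BB D$ up to lower-order errors'' is not available: because the cells have unbounded degrees and highly irregular shapes, the discrete energy of the restriction of a smooth function is comparable to its continuum energy only up to random constant-order factors that do \emph{not} tend to $1$. Such a two-sided comparison yields resistance and crossing estimates, hence tightness and Harnack-type control, but it cannot identify the limiting diffusion as \emph{isotropic} Brownian motion rather than some other (possibly anisotropic or spatially varying) diffusion, and therefore cannot show that the discrete harmonic coordinates converge to the continuum harmonic coordinates. Identifying the limit is the hard part, and in \cite{gms-tutte} it is accomplished by verifying the hypotheses of the general invariance principle of \cite[Theorem 3.10]{gms-random-walk} for environments that are \emph{translation invariant modulo scaling}: ergodicity (via Lemma~\ref{lem-mating-translate}) forces the limiting covariance to be a deterministic matrix, and the rotational invariance in law of the whole-plane SLE/LQG environment forces that matrix to be a multiple of the identity. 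Your ``ergodic/stationarity argument'' points in the right direction but is assigned the wrong role --- it is not a device for upgrading a typical-quantity statement, it is the mechanism by which the limit of the walk is identified at all. Once Theorem~\ref{thm-quenched-clt0} is in hand, your steps (a) and the deductions of parts (1)--(3) go through essentially as you describe.
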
 

Theorem~\ref{thm-tutte-conv} also applies to the so-called \emph{Smith embedding} of the mated-CRT map, where one tiles a square by rectangles corresponding to the edges of the map. See~\cite[Theorem 1.3]{gms-tutte}. 

Recall that the results of~\cite{wedges} show that, in the setting of Theorems~\ref{thm-mating} and~\ref{thm-mating-disk}, the Brownian motion $Z$ determines $(h^{\op{D}},\eta^{\op{D}})$ (modulo rotation and scaling), but not in an explicit way. Theorem~\ref{thm-tutte-conv} makes this determination explicit since the mated-CRT maps $\mcl G^{1/n}$ are explicit functionals of $Z$ and the Tutte embedding is an explicit functional of the mated-CRT map.

It is easy to simulate mated-CRT maps and compute their Tutte embeddings, so Theorem~\ref{thm-tutte-conv} gives an efficient way to simulate SLE/LQG (see~\cite[Footnote 9 and Figure 3]{gms-tutte}). Several such simulations made by J.\ Miller can be found at \url{http://statslab.cam.ac.uk/~jpm205/tutte-embeddings.html}. 

Unlike the results in Section~\ref{sec-strong-coupling}, it is not known how to transfer Theorem~\ref{thm-tutte-conv} from the mated-CRT map to other random planar map models since the polylogarithmic multiplicative errors in Theorem~\ref{thm-strong-coupling} are not suitable for proving scaling limits.

The basic idea of the proof of Theorem~\ref{thm-tutte-conv} is as follows. Recall the a priori SLE/LQG embedding $x\mapsto \eta^{\op{D}}(x)$ of the mated-CRT map. 
It is obvious that under this embedding, the counting measure on vertices converges to $\mu_{h^{\op{D}}}$ (since $\eta^{\op{D}}$ is parametrized by $\mu_{h^{\op{D}}}$-mass) and the space-filling curve on the map converges to $\eta^{\op{D}}$. We want to show that the a priori embedding is close to the Tutte embedding when $n$ is large, so that the same convergence statements also hold for the Tutte embedding. 
Since the Tutte embedding is defined by matching hitting probabilities for random walk on $\mcl G^{1/n}$ to hitting probabilities for Brownian motion on $\BB D$, one only needs to prove the following statement. 

\begin{thm}[\cite{gms-tutte}] \label{thm-quenched-clt0}
	As $n\rta\infty$, the conditional law given $\mcl G^{1/n}$ of simple random walk on the mated-CRT map under the SLE/LQG embedding $x\mapsto \eta^{\op{D}}(x)$ (i.e., the simple random walk on the adjacency graph of cells $\eta^{\op{D}}([x-1/n,x])$) stopped when it hits $\bdy \mcl G^{1/n}$ converges in probability to Brownian motion stopped upon hitting $\bdy\BB D$ modulo time parameterization, uniformly in the choice of starting point. 
\end{thm}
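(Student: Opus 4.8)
The plan is to deduce Theorem~\ref{thm-quenched-clt0} from a general quenched invariance principle for simple random walk on a \emph{stationary, ergodic} sequence of planar maps embedded in $\BB C$, and then to verify the hypotheses of that general principle using the SLE/LQG description of the mated-CRT map together with the geometric estimates recorded in Section~\ref{sec-strong-coupling}. As a first step I would reduce to the whole-plane setting. The statement only concerns the walk before it exits $\BB D$, and away from $\bdy\BB D$ the field $h^{\op D}$ of a quantum disk is mutually absolutely continuous with respect to a whole-plane GFF while $\eta^{\op D}$ is locally absolutely continuous with respect to a whole-plane space-filling $\SLE_\kappa$; moreover the mated-CRT map and its embedding $x\mapsto\eta^{\op D}(x)$ are local functionals of $Z$. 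Hence it suffices to establish a quenched invariance principle for simple random walk on the whole-plane mated-CRT map realized as the adjacency graph of cells $\eta([x-\ep,x])$, $x\in\ep\BB Z$, embedded via $x\mapsto\eta(x)$, where $\eta$ is a whole-plane space-filling $\SLE_\kappa$ parametrized by $\mu_h$-mass for a $\gamma$-quantum cone $(\BB C,h,0,\infty)$.

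The next step is to set up the \emph{environment seen from the particle}. By Lemma~\ref{lem-mating-translate} the law of $(\BB C,h,0,\infty,\eta)$ is invariant (modulo rotation and scaling) under translating $\eta$ along itself, and by the exact scale invariance of the $\gamma$-quantum cone (Lemma~\ref{lem:inv}) it is invariant under scaling; combining these, the mated-CRT map with its embedding, rooted at a cell, is stationary under the shift which moves the root cell one step along the curve. Ergodicity of the relevant shifts follows from the fact that everything is a measurable functional of the two-sided Brownian motion $Z=(L,R)$, whose increment process is mixing. Since simple random walk on the mated-CRT map is reversible with respect to the degree measure, and cell degrees have an exponential tail (a property of the mated-CRT map that can be read off from its SLE/LQG description), the environment viewed from the walk is a stationary ergodic Markov chain on the space of rooted embedded environments. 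This puts us in the framework of the \emph{corrector method} for random walk in random environment.

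The analytic heart of the argument is to verify the hypotheses of the corrector method: construct a corrector $\chi$ so that $X_n-\chi(X_n)$ is a martingale (where $X_n$ denotes the embedded walk) and show that $\chi$ is a.s.\ sublinear, $\chi(X_n)=o(n^{1/2})$. This in turn requires (i) tightness of the diffusively rescaled walk and the absence of trapping, which follow from the facts that mated-CRT map cells are ``roughly spherical'' and squeezed between Euclidean balls of comparable radii (Propositions~\ref{prop-sle-ball} and~\ref{prop-cell-diam}), that the space-filling SLE fills space at a controlled rate (Proposition~\ref{prop-sle-fill}), and basic LQG area estimates; and (ii) matching upper and lower bounds on effective resistances across Euclidean annuli in the embedded mated-CRT map, which are exactly the estimates established in~\cite{gms-harmonic}. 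The resistance bounds are what force diffusive behavior on all scales and rule out degeneracy of the limit. I expect this step --- the a priori resistance/energy estimates and the resulting sublinearity of the corrector --- to be the main obstacle; the rest of the argument is comparatively soft.

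Finally, the corrector method yields that the rescaled walk converges, quenched, to a Brownian motion with some deterministic covariance matrix $\Sigma$, uniformly over starting cells in a compact set (since all the estimates above are uniform in the starting cell). The law of the whole-plane $\gamma$-quantum cone decorated by an independent whole-plane space-filling $\SLE_\kappa$ is invariant under rotations of $\BB C$, which forces $\Sigma=\mathfrak c\,\mathrm{Id}$ for a scalar $\mathfrak c>0$, positivity coming from the resistance lower bound. As the theorem asks only for convergence modulo time parametrization, rescaling time by $\mathfrak c$ gives convergence to standard planar Brownian motion. Transferring back to the disk via the absolute continuity from the first step, and handling the walk's behavior near $\bdy\BB D$ with the exit-time estimates implicit in (i), then completes the proof.
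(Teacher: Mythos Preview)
Your outline has the right high-level shape --- reduce to the whole-plane setting, invoke a random-walk-in-random-environment framework, verify its hypotheses via SLE/LQG estimates, and fix the diffusion matrix by rotational symmetry --- and this matches the paper. But there is a genuine gap at the step where you write ``this puts us in the framework of the corrector method''. The classical corrector/Kipnis--Varadhan machinery needs the environment to be stationary under \emph{spatial} translations of $\BB C$, so that re-centering at the walker's embedded position gives a stationary environment process. The embedded mated-CRT map does not have this property: the $\gamma$-quantum cone has a distinguished point at the origin, and Lemmas~\ref{lem:inv} and~\ref{lem-mating-translate} only give invariance \emph{modulo a rotation and a random scaling}. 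The shift you propose (moving the root one step along the space-filling curve) is a shift in the index set $\ep\BB Z$, not a spatial translation; under the embedding $x\mapsto\eta(x)$ it corresponds to recentering \emph{and rescaling}, so the would-be corrector has to be compared across scales, which is precisely what the standard method does not handle.

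The paper's proof addresses exactly this point: rather than the classical corrector method, it appeals to a general quenched invariance principle (from~\cite{gms-random-walk}) tailored to planar environments that are only \emph{translation invariant modulo scaling}. The SLE/LQG estimates you list (Propositions~\ref{prop-sle-ball}, \ref{prop-sle-fill}, \ref{prop-cell-diam}, and the resistance bounds of~\cite{gms-harmonic}) are indeed the inputs used to verify the hypotheses of that theorem, so your identification of the analytic core is correct. What you are missing is that the general RWRE result being invoked is itself non-standard and is where the real work happens; you cannot substitute the off-the-shelf corrector method for it.
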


The proof of Theorem~\ref{thm-quenched-clt0} is based on a general scaling limit theorem for random walks in highly inhomogeneous planar random environments which are required to be \emph{translation invariant modulo scaling} instead of exactly stationary with respect to spatial translations~\cite[Theorem 3.10]{gms-random-walk}. 
To prove Theorem~\ref{thm-quenched-clt0}, one uses estimates for SLE and LQG to check that the adjacency graph of cells satisfies the hypotheses of this general random walk in random environment theorem.
The proof also works for the mated-CRT map with the whole-plane or sphere topology. We remark that the theorem of~\cite{gms-random-walk} can be used to show analogs of Theorem~\ref{thm-quenched-clt0} for other discretizations of LQG by ``cells" of approximately equal quantum mass. See, e.g.,~\cite{gms-poisson-voronoi}.   

The paper~\cite{bg-lbm} improves on Theorems~\ref{thm-tutte-conv} and~\ref{thm-quenched-clt0} by showing that the law of the embedded walk converges w.r.t.\ the uniform topology to \emph{Liouville Brownian motion}, the natural ``quantum time" parametrization of Brownian motion on an LQG surface~\cite{berestycki-lbm,grv-lbm} (instead of just converging modulo time parametrization). 

\subsection{Dimension and natural measures for SLE}\label{subsec:app-sle}
As explained in Section~\ref{subsec:sle}, a space-filling SLE$_\kappa$ encodes a number of other processes, e.g.\ CLE$_\kappa$, SLE$_{16/\kappa}$, and SLE$_\kappa$.
As explained in Section~\ref{sec-mating-geometric}, the peanosphere Brownian motion $Z$ encodes these processes as well as certain special subsets associated with them (such as double points, cut points, etc.) in a reasonably simple manner. In this section we will see that mating-of-trees theory can be used to find the dimension of these sets, in addition to defining natural measures supported on these sets.

The KPZ formula is an explicit quadratic formula which relates the Euclidean dimension and LQG dimension for random fractals which are independent from the GFF.
The Euclidean dimension can for example be a Hausdorff dimension or a Minkowski dimension, while the LQG dimension can be defined similarly using the LQG metric (rather than the Euclidean metric) or by using various approximations to this metric. The KPZ formula was first derived heuristically in the physics literature \cite{kpz-scaling}. 
It is a useful tool for predicting exponents associated with random fractals since in many cases the heuristic computation of the ``quantum dimension"
boils down to a counting problem for a process on a random planar map, which is much easier than the corresponding problem on a deterministic lattice.
For example, the KPZ formula was used by Duplantier\footnote{The Brownian intersection exponents were also derived earlier using a different method by Duplantier and Kwon in \cite{duplantier-kwon-brownian}.} in~\cite{duplantier-bm-exponents} to predict the values of the Brownian intersection exponents, which were later obtained rigorously by Lawler, Schramm, and Werner using SLE techniques~\cite{lsw-bm-exponents1,lsw-bm-exponents2,lsw-bm-exponents3}.

Several rigorous versions of the KPZ formula been proved in the math literature, e.g.\ in \cite{aru-kpz,grv-kpz,bjrv-gmt-duality,benjamini-schramm-cascades,wedges,shef-renormalization,shef-kpz,gp-kpz,rhodes-vargas-log-kpz}. The peanosphere allows to prove the following rigorous version of the KPZ formula, where $\dim_{\mcl H}(\cdot)$ denotes Hausdorff dimension.

\begin{thm}[\cite{ghm-kpz}]\label{thm-kpz}
	Consider the setting of Theorem \ref{thm-mating} and assume the field $\bh$ has the circle average embedding. Let $X\subset\BB C$ be a random Borel set which is independent from $\bh$ (e.g., a set which is determined by $\eta$, viewed modulo time parametrization). Then, almost surely for each Borel set $\wh X\subset\BB R$ such that $\eta(\wh X)=X$,  
	\eqb
	\dim_{\mcl H}(X) 
	= \Big(2+\frac{\gamma^2}{2}\Big)\dim_{\mcl H}(\wh X)
	- \frac{\gamma^2}{2}\dim_{\mcl H}(\wh X)^2.
	\label{eq:kpz}
	\eqe 
\end{thm}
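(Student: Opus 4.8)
The plan is to read~\eqref{eq:kpz} as the combination of the classical multifractal (KPZ) formula for Gaussian multiplicative chaos, which contributes the quadratic term, with a change of variables through the space-filling curve $\eta$, which converts the $\gamma$-LQG measure $\mu_\bh$ into Lebesgue measure on the time line and hence identifies the ``$\mu_\bh$-covering dimension'' of $X$ with the Euclidean Hausdorff dimension of $\wh X$. Write $f(u):=\bigl(2+\tfrac{\gamma^2}2\bigr)u-\tfrac{\gamma^2}2u^2$, which is increasing on $[0,1]$ with $f(0)=0$ and $f(1)=2$, so that~\eqref{eq:kpz} is the identity $\dim_{\mcl H}(X)=f(\dim_{\mcl H}(\wh X))$. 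The point is that $f(q)$ is exactly the GMC multifractal exponent: for a whole-plane GFF $h$ one has $\BB E[\mu_h(B_r(z))^q]\asymp r^{f(q)}$ for every $q\in[0,4/\gamma^2)$, and the quadratic term is precisely the correction coming from the lognormal fluctuations of the circle averages $h_r(z)$.

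First I would make the standard reductions. Since $X$ is independent of $\bh$, condition on $\eta$; then $X$ and any Borel $\wh X\subset\BB R$ with $\eta(\wh X)=X$ are fixed, while $\bh$ is still an independent $\gamma$-quantum cone. By countable stability of Hausdorff dimension we may assume $\wh X\subset[0,1]$, and by the exact scale invariance of the quantum cone (Lemma~\ref{lem:inv}) together with Lemma~\ref{lem:circle} we may assume $X$ is contained in a fixed compact $K\subset\BB D\setminus\{0\}$, on which $\bh$ is mutually absolutely continuous with a whole-plane GFF plus a continuous function; thus the GMC moment bounds above apply to $\bh$ uniformly over $z\in K$ and $r\in(0,1)$, and --- via a Borel--Cantelli argument using the Markov property of the GFF to decorrelate dyadic scales --- they hold simultaneously at all dyadic scales on an event of overwhelming probability. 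The link with $\eta$ is supplied by Theorem~\ref{thm-mating}: parametrized by $\mu_\bh$-mass, the ``cells'' $\eta([t,t'])$ tile the plane, are roughly spherical (Proposition~\ref{prop-sle-ball}), have Euclidean diameters obeying the a priori bounds of Proposition~\ref{prop-cell-diam}, and --- for $\kappa\ge8$ --- $\eta$ is injective off a time-set of Lebesgue measure zero (for $\kappa\in(4,8)$ the fractal double-point set of $\eta$ must be treated separately). Covering $X$ by cells is therefore the same as covering $\wh X$ by the corresponding time-intervals, and the roughly-spherical property lets one trade cells for genuine Euclidean balls.

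For the upper bound $\dim_{\mcl H}(X)\le f(\dim_{\mcl H}\wh X)$ I would run the usual GMC--KPZ covering argument (cf.~\cite{shef-kpz,rhodes-vargas-review}) in this setting: given $s>f(\dim_{\mcl H}\wh X)$, choose $q\in(\dim_{\mcl H}\wh X,f^{-1}(s))$; start from a cover of $\wh X$ by time-intervals with $\sum_j|\wh I_j|^q$ arbitrarily small, and refine each $\wh I_j$ by a local, field-adapted choice of scale into cells of Euclidean diameter $\le\delta$, producing a $\delta$-cover of $X$; the expected value of the resulting $s$-Hausdorff sum is controlled, scale by scale, by the $q$-th GMC moment bound (the a priori bounds of Proposition~\ref{prop-cell-diam} confine the relevant mass scales to an $O(\log\tfrac1\delta)$ window), and the optimization over the local scale --- equivalently, over $q$ near $\dim_{\mcl H}\wh X$ --- is what converts the linear term into $f$, yielding $\mcl H^s_\infty(X)\to0$. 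For the lower bound, fix $s<f(\dim_{\mcl H}\wh X)$, pick $q\in(f^{-1}(s),\dim_{\mcl H}\wh X)$ and a Frostman probability measure $\wh\rho$ on $\wh X$ of finite $q$-energy, and push it forward to $\rho:=\eta_*\wh\rho$ on $X$. One then estimates the two-point function $\BB E\bigl[|\eta(t)-\eta(t')|^{-s}\mid\eta\bigr]$ by decomposing over the dyadic scale of $\op{diam}\eta([t\wedge t',t\vee t'])$ and invoking the GMC moments (two prescribed boundary points of a mass-$r$ cell lying within Euclidean distance $\rho_0$ forces a nearby ball of radius $\sim\rho_0$ to carry $\mu_\bh$-mass $\le r$, an event whose probability is bounded by the same moments together with Proposition~\ref{prop-sle-ball}); the exponent one obtains is $<q$ precisely because $s<f(q)$, so integrating against $d\wh\rho\,d\wh\rho$ shows the $s$-energy of $\rho$ is a.s.\ finite and hence $\dim_{\mcl H}(X)\ge s$ by Frostman's lemma. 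Since $f:[0,1]\to[0,2]$ is a continuous increasing bijection, the two bounds give~\eqref{eq:kpz}, and applying them to \emph{every} admissible $\wh X$ forces all such $\wh X$ to have the common dimension $f^{-1}(\dim_{\mcl H}X)$.

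The routine ingredient is the GMC moment estimate and its uniformity over base points and scales; the genuinely delicate part, as usual in this circle of ideas, is the change of variables through $\eta$. The obstructions are that preimages of Euclidean balls under $\eta$ are not time-intervals (circumvented via the cell structure and the roughly-spherical property), that $\eta$ is non-injective when $\kappa\in(4,8)$, and --- most seriously --- that the box-counting number of $\eta^{-1}(X)$ in time need not be governed by $\dim_{\mcl H}(\wh X)$. This last point is the crux: I expect one handles it by first proving~\eqref{eq:kpz} for ``dimension-homogeneous'' sets, where the relevant box and Hausdorff dimensions coincide, and bootstrapping to the general case, using crucially that the conclusion must hold for every Borel $\wh X$ with $\eta(\wh X)=X$.
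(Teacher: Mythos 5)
Your overall framework --- reading $f(q)=(2+\tfrac{\gamma^2}{2})q-\tfrac{\gamma^2}{2}q^2$ as the GMC multifractal exponent $\BB E[\mu_{\bh}(B_r(z))^q]\asymp r^{f(q)}$, and converting between covers of $X$ by roughly spherical cells and covers of $\wh X$ by time-intervals --- is exactly the mechanism of the proof in~\cite{ghm-kpz}, and your upper bound $\dim_{\mcl H}(X)\le f(\dim_{\mcl H}\wh X)$ follows that proof closely. But there are two genuine problems. First, your opening reduction is off: you cannot ``condition on $\eta$'' and keep $\bh$ as an independent quantum cone, because $\eta$ is parametrized by $\mu_{\bh}$-mass, so the parametrized curve (and hence $\wh X=\eta^{-1}(X)$) is a function of $\bh$. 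Only the curve viewed modulo time parametrization is independent of the field; after conditioning on that and on $X$, the set $\wh X$ is still random and correlated with $\bh$. Second, and consequently, the Frostman-energy lower bound cannot be run as you describe: the Frostman measure $\wh\rho$ lives on $\wh X$ and is therefore correlated with the field, so $\BB E\bigl[\iint|\eta(t)-\eta(t')|^{-s}\,d\wh\rho(t)\,d\wh\rho(t')\bigr]$ cannot be estimated by inserting a two-point moment bound for $|\eta(t)-\eta(t')|$ --- the measure and the integrand are not independent. Your final paragraph's worry about box-counting dimensions and a bootstrap from ``dimension-homogeneous'' sets is a symptom of not having the right second half.

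The way~\cite{ghm-kpz} closes the loop is structurally different and avoids Frostman entirely: because the relation is between two Hausdorff dimensions of sets linked by $\eta$, \emph{each} inequality can be obtained as a first-moment covering bound on one of the two dimensions. The inequality $\dim_{\mcl H}(X)\ge f(\dim_{\mcl H}\wh X)$ is proved in the contrapositive form $\dim_{\mcl H}(\wh X)\le f^{-1}(\dim_{\mcl H}X)$: take a cover of $X$ by Euclidean balls $B_{r_j}(z_j)$ with $\sum_j r_j^{s}$ small for $s>\dim_{\mcl H}(X)$, pull each ball back to the union of time-intervals of the cells meeting it (using Proposition~\ref{prop-sle-ball} and Proposition~\ref{prop-cell-diam} to control how many cells of a given quantum mass a ball can meet), and bound $\sum_j\mu_{\bh}(B_{r_j}(z_j))^q$ in expectation by $\sum_j r_j^{f(q)}$ via the moment estimate, made uniform over all dyadic scales by a union bound. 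This covers $\wh X$ (indeed any admissible $\wh X$, since any such set is contained in $\eta^{-1}(X)$) and gives $\dim_{\mcl H}(\wh X)\le q$ whenever $f(q)>s$. Replacing your Frostman step by this pull-back covering argument both repairs the gap and removes the need for any homogeneity assumption on $\wh X$.
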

The dimension $\dim_{\mcl H}(\wh X)$ has a natural interpretation as the ``$\gamma$-LQG dimension'' of $X$ for the following reason: $\dim_{\mcl H}(\wh X)$ is defined in terms of coverings of $\wh X$ by intervals $I$, each of which corresponds to a covering of $X$ by space-filling SLE segments $\eta(I)$. The $\gamma$-LQG area of each of these space-filling SLE segments is equal to the length of $I$. Therefore $\dim_{\mcl H}(\wh X)$ can be expressed in terms of an optimization problem over covers of $X$, where the function we want to minimize depends on the LQG area of the sets in the cover.

The KPZ formula in Theorem \ref{thm-kpz} differs from other rigorously proven versions of the KPZ formulas since it is directly applicable for dimension computations.
This is because the $\gamma$-LQG dimension $\dim_{\mcl H}(\wh X)$ is often easy to calculate.
Indeed, for many natural sets $X$ defined in terms of $\eta$, the time set $\wh X$ has a nice description in terms of $Z$ (see Section \ref{sec-mating-geometric}), and \eqref{eq:kpz} has been applied in several of these cases to calculate the (Euclidean) Hausdorff dimension $\dim_{\mcl H}(X)$ of $X$. Most of these dimensions had been calculated rigorously previously using SLE theory, but the new proofs are very short and simple, and are closer to the physics derivations of the dimensions, which were often based on the KPZ formula. 

For example, the left frontier $X$ of $\eta$ upon hitting the origin has the law of a whole-plane SLE$_{\ul\kappa}(\ul\kappa/2-2)$ curve for $\ul\kappa = 16/\kappa$ (see Section~\ref{subsec:sle}), and the time set $\wh X$ associated with $X$ is the set of running infima for the Brownian motion $L$ relative to time 0 (see the end of Section \ref{sec-mating-geometric}).
The Hausdorff dimension of this time set $\wh X$ is $1/2$. 
Plugging this into Theorem~\ref{thm-kpz} shows that the Hausdorff dimension of an SLE$_{\ul\kappa}(\ul\kappa/2-2)$ curve is $1+\ul\kappa/8$.
By local absolute continuity, the same is true for an ordinary SLE$_{\ul\kappa}$ curve. This provides an alternative proof of a theorem of Beffara~\cite{beffara-dim}. See~\cite[Section 2]{ghm-kpz} for many additional examples. 

Theorem~\ref{thm-kpz} has also been used to prove new dimension formulas for SLE which were not previously in the literature. For example, in~\cite{ghm-kpz} the theorem is used to compute the Hausdorff dimension of the $k$-tuple points of space-filling SLE$_\kappa$ for any $k\in\BB N$. As another example, in~\cite{ghm-conformal-dim}, Theorem~\ref{thm-kpz} together with the KPZ formula for the LQG boundary length measure from~\cite{rhodes-vargas-log-kpz} is used to prove a formula relating the Hausdorff dimension of a subset of $\BB R$ and the dimension of its image under a conformal map from $\BB H$ to a complementary connected component of an SLE curve.

Besides encoding the dimension of sets associated with SLE, the peanosphere Brownian motion $Z$ can also be used to define \emph{measures} supported on such sets. Recall from Theorem \ref{thm-kpz} that any Borel set $X\subset\BB C$ can be associated with a time set $\wh X\subset\BB R$ for the Brownian motion $Z$. We saw above that the dimension of $\wh X$ defines a notion of ``$\gamma$-LQG dimension'' for $X$. Similarly, if one has a method for measuring the ``size'' of $\wh X$, e.g.\ by considering its Minkowski content or Hausdorff measure, one obtains the ``LQG size'' of $X$. 

For example, in the example just above where $X$ is an SLE$_{\ul\kappa}(\ul\kappa/2-2)$ curve we obtain a measure supported on $\wh X$ by considering the local time of the running infima of $L$, and we obtain a measure $\sigma$ on the SLE$_{\ul\kappa}(\ul\kappa/2-2)$ curve $X$ by pushing forward this local time via $\eta$. It can be proved that the local time of $\wh X$ is equal to its $1/2$-dimensional Minkowski content, which is the same as its Hausdorff measure with the appropriate gauge function.
The measure $\sigma$ can be defined equivalently without using the peanosphere Brownian motion $Z$. Indeed, it can be proved that $\sigma$ is the $\gamma$-LQG measure on $X$ as defined in~\cite{shef-zipper} and discussed in Section~\ref{subsec:zipper}.
Equivalently, $\sigma=e^{d\gamma \bh}d\lambda$, where $d=\dim_{\mcl H}(\wh X)=1/2$ is the $\gamma$-LQG dimension of $X$ as determined by \eqref{eq:kpz} and $\lambda$ is the measure defined by the (Euclidean) Minkowski content of $X$~\cite{lawler-rezai-nat}. See \cite{benoist-lqg-chaos} for a proof. The description of the measure in terms of $Z$ is useful for linking the measure to the appropriate counting measure on random planar maps, while the alternative description provides a link to Euclidean occupation measures. 

Here we consider the example where $X$ is the left frontier of $\eta$ upon hitting 0, but similar relationships hold for many other sets determined by $\eta$. See \cite{hs-cardy-embedding} for precise statements and proofs in the case $\kappa=6$, where (non-space-filling) SLE$_{6}$ and double points (a.k.a.\ pivotal points) for SLE$_6$ are considered.

\subsection{Special symmetries for $\gamma=\sqrt{8/3}$ }
\label{sec-kappa6}

In the physics literature, $\sqrt{8/3}$-LQG is sometimes referred to as ``pure'' gravity, and it is particularly natural since it corresponds to random planar maps which are \emph{uniformly} sampled.
Special properties for $\gamma=\sqrt{8/3}$ (equivalently, $\kappa=6$) have allowed to use mating-of-trees theory as a tool for completing two major programs: Miller and Sheffield \cite{lqg-tbm1,lqg-tbm2,lqg-tbm3} proved the equivalence of the Brownian map and $\sqrt{8/3}$-LQG, and Holden and Sun \cite{hs-cardy-embedding} (based on  \cite{bhs-site-perc,ghss-ldp,hlls-cut-pts,hls-sle6,ghs-metric-peano,aasw-type2}) proved that uniform triangulations converge to $\sqrt{8/3}$-LQG under the so-called Cardy embedding. 
Additionally, the quantum zipper results described in Section~\ref{subsec:zipper} for $\gamma=\sqrt{8/3}$ have been used to show that uniform random planar maps decorated by self-avoiding walks converge to $\sqrt{8/3}$-LQG surfaces decorated by SLE$_{8/3}$ in the metric space sense~\cite{gwynne-miller-saw}.

\subsubsection{The $\sqrt{8/3}$-LQG metric}
\label{sec-metric-pure-lqg}

Le Gall \cite{legall-uniqueness} and Miermont \cite{miermont-brownian-map} proved that uniformly sampled quadrangulations (and more generally $p$-angulations for $p=3$ or $p \geq 4$ even) converge in the scaling limit for the Gromov-Hausdorff topology. The limit is a random metric measure space called the \emph{Brownian map}, which has the topology of the sphere~\cite{legall-paulin-tbm} but has Hausdorff dimension 4~\cite{legall-topological}.
The results of Le Gall and Miermont were later adapted to a number of other random planar map models \cite{abraham-bipartite,ab-simple,bjm-uniform,marzouk-brownian-map}, including planar maps with other topologies \cite{bet-mier-disk,curien-legall-plane,gwynne-miller-simple-quad,gwynne-miller-uihpq,bmr-uihpq}. 

Due to its link with random planar maps, the Brownian map is a canonical model for a continuum random surface equipped with a metric and an area measure. Liouville quantum gravity provides another natural method for constructing a random surface. By construction, an LQG surface has a conformal structure, which gives it a canonical embedding (modulo conformal transformations) into the complex plane. Miller and Sheffield in~\cite{lqg-tbm1,lqg-tbm2,lqg-tbm3} proved that the Brownian map and $\sqrt{8/3}$-LQG are \emph{equivalent} in the sense that they can be coupled together so each surface determines the other surface in a natural way. In particular, the Brownian map has a canonical conformal structure which gives it a canonical embedding into the complex plane, while a $\sqrt{8/3}$-LQG surface has a natural metric. The quantum sphere, equipped with its $\sqrt{8/3}$-LQG metric and area measure, has the same law as the Brownian map. 

To construct a metric on a $\sqrt{8/3}$-LQG surface $(D,h)$, Miller and Sheffield used a growth process they call the \emph{Quantum Loewner evolution} (QLE). QLE started at a point $z\in D$ is defined by growing a whole-plane SLE$_6$ started from $z$, but resampling the tip of the SLE$_6$ every $\delta>0$ units of quantum natural time, and then sending $\delta\rta 0$.
QLE behaves similarly to SLE$_6$ in some ways (e.g., the laws of the quantum surfaces parametrized by the complementary connected components have the same law as for SLE$_6$) but it grows simultaneously in all directions instead of just from the tip. Miller and Sheffield proved that there is a metric on $D$ whose metric ball growth started from each $z\in\BB C$ is given by QLE. 
Using an axiomatic characterization of the Brownian map~\cite{tbm-characterization}, they then proved that in the case of the quantum sphere, the resulting metric space is equal in law to the Brownian map.

\begin{remark}
	The $\sqrt{8/3}$-LQG metric defined using QLE has been proven to be equivalent to the $\sqrt{8/3}$-LQG metric defined via direct regularization of the GFF~\cite{gm-uniqueness,dddf-lfpp}; see \cite[Corollary 1.4]{gm-uniqueness}. The latter metric was not yet constructed at the time of Miller and Sheffield's work.
	Furthermore, it is not clear how to relate the metric of~\cite{gm-uniqueness,dddf-lfpp} to the Brownian map without going through~\cite{lqg-tbm1,lqg-tbm2,lqg-tbm3}.
\end{remark}

The construction of the QLE metric and the proof of its equivalence with the Brownian map rely crucially on the following facts from mating-of-trees theory, which are  established in~\cite{sphere-constructions} and are only true for $\gamma=\sqrt{8/3}$. 
\begin{enumerate}[(i)]
	\item If we run an SLE$_6$ on a quantum sphere up to a fixed quantum natural time, then the quantum surfaces parametrized by the complementary connected components of the SLE$_6$ are independent quantum disks conditioned on their boundary length~\cite[Theorem 1.2]{sphere-constructions} (this is a quantum sphere analog of Theorem~\ref{thm-mating-chordal}). \label{item-sle-sphere-markov}
	\item If we condition on the quantum surface parametrized by the complementary connected component whose boundary contains the tip of the SLE$_6$ curve, then the conditional law of the location of the tip is that of a uniform sample from the $\sqrt{8/3}$-LQG length measure on the boundary of the surface~\cite[Proposition 6.4]{sphere-constructions}.  \label{item-sle-sphere-tip}
\end{enumerate} 
Fact~\eqref{item-sle-sphere-markov} is used to establish a Markov property for QLE processes and connect them to the Brownian map. Fact~\eqref{item-sle-sphere-tip} is what allows one to define QLE, since it implies that re-sampling the location of the tip of the SLE$_6$ curve will not change the law of the unexplored region.

\subsubsection{Percolation and Cardy embedding for uniform triangulations}
Holden and Sun \cite{hs-cardy-embedding} provide a first proof for the convergence of uniform random planar maps to LQG under conformal embedding. They introduce a discrete conformal embedding for a random triangulation with boundary into an equilateral triangle $\BB T$ called the \emph{Cardy embedding}, which is defined in terms of percolation crossing probabilities on the triangulation. They show that the measure and the metric on $\BB T$ induced by the counting measure on the vertices and the graph metric, respectively, on the triangulation converge jointly in the scaling limit to the $\sqrt{8/3}$-LQG area measure and metric associated with a quantum disk.

The peanosphere encoding of LQG and SLE plays an important role in two steps of the proof. First, an important input to the program is that a percolation interface on a random planar map converges to a (non-space-filling) SLE$_6$ on a quantum disk, where we view the surfaces as curve-decorated metric measure spaces~\cite{gwynne-miller-perc}. This result relies on Theorem \ref{thm-mating-chordal} and the stronger result \cite[Theorem 1.4]{gwynne-miller-char} that the joint law of $(L,R)$ and the quantum surfaces parametrized by the complementary connected components of the curve in fact uniquely characterize SLE$_6$ on the quantum disk. In \cite{ghs-metric-peano} the result of \cite{gwynne-miller-perc} is upgraded to show convergence of the whole collection of percolation interfaces on the uniform triangulation toward CLE$_6$.

Second, the peanosphere encoding allows to prove convergence of a number of interesting functionals of a percolated triangulation \cite{bhs-site-perc}, since these are encoded nicely by the peanosphere Brownian motion $Z$. For example, one can show convergence of percolation crossing events and the so-called pivotal measure. A thorough understanding of the latter measure is important for showing in \cite{hs-cardy-embedding} that the convergence to CLE$_6$ in the previous paragraph is \emph{quenched}, i.e., the conditional law of the collection of loops given the planar map converges. This is because the pivotal points govern a natural dynamics on percolated triangulations which is called Liouville dynamical percolation \cite{ghss-ldp}.

\subsubsection{Self-avoiding walk on random planar maps}
Recall that Theorem~\ref{thm:zipper0} tells us that conformally welding two $\gamma$-quantum wedges along their boundaries produces a $\gamma-2\gamma^{-1}$-quantum wedge decorated by an SLE$_{\gamma^2}$ curve.
We now briefly discuss an application of this theorem to random planar maps in the special case when $\gamma=\sqrt{8/3}$. 

The \emph{uniform infinite half-plane quadrangulation (UIHPQ)}, which was introduced in~\cite{curien-miermont-uihpq}, describes the local limit of uniform quadrangulations with boundary at a typical boundary vertex when we send the total number of interior vertices, and then the total number of boundary vertices, to infinity. 
It is shown in~\cite{gwynne-miller-uihpq,bmr-uihpq} that the UIHPQ equipped with its graph distance converges in the Gromov-Hausdorff sense to a metric measure space called the \emph{Brownian half-plane}. Using the results of~\cite{lqg-tbm1,lqg-tbm2,lqg-tbm3}, one can show that the Brownian half-plane is equivalent, as a metric measure space, to a $\sqrt{8/3}$-quantum wedge equipped with its $\sqrt{8/3}$-LQG metric and area measure~\cite[Proposition 1.10]{gwynne-miller-uihpq}.

It is shown in~\cite{caraceni-curien-saw} that gluing together two independent UIHPQ's with simple boundaries along their positive boundary rays (i.e., performing the discrete analog of the gluing operation shown in Figure~\ref{fig-welding}, right) yields a uniform infinite quadrangulation of the half-plane decorated by a self-avoiding walk from the root vertex to $\infty$. 
It is shown in~\cite{gwynne-miller-saw} that this SAW-decorated quadrangulation converges to the curve-decorated metric measure space obtained as the metric quotient of two independent Brownian half-planes identified along their boundaries, with respect to a variant of the Gromov-Hausdorff topology for curve-decorated metric measure spaces. 

Due to Theorem~\ref{thm:zipper0} and the equivalence of the $\sqrt{8/3}$-quantum wedge and the Brownian half-plane, the above limiting object can equivalently be described as a $\sqrt{8/3} - 2\sqrt{3/8}$-quantum wedge decorated by an independent chordal SLE$_{8/3}$ curve (see~\cite{gwynne-miller-gluing} for a proof that the metric gluing and conformal welding operations are compatible). 
This gives the first rigorous link between SLE$_{8/3}$ and self-avoiding walk. The convergence of self-avoiding walk on a deterministic lattice to SLE$_{8/3}$ was conjectured in~\cite{lsw-saw} and is still open.

\subsection{Other applications} 
\label{sec-applications-other}
In this section we will present three further applications of the mating-of-trees theory.

Holden and Sun \cite{hs-euclidean} proved a mating-of-trees theorem for SLE$_\kappa$ ($\kappa>4$) in \emph{Euclidean} geometry. Namely, they proved that for a space-filling SLE$_\kappa$ curve $\eta$ one can define its boundary length process $(L,R)$ as in \eqref{eqn-peanosphere-bm}, but with $1+2/\kappa$-dimensional Minkowski content instead of $4/\sqrt{\kappa}$-LQG length. Furthermore, they prove that $\eta$ is measurable with respect to the $\sigma$-algebra generated by $(L,R)$. Unlike in the LQG case, the law of the boundary length process $(L,R)$ has not been identified explicitly; only continuity and a scaling property are established. Building on \cite{lawler-viklund-lerw-nat}, Holden and Sun also established the Euclidean counterpart of peanosphere convergence for the uniform spanning tree on $\BB Z^2$ to SLE$_8$. Although the statements of the results are strongly inspired by \cite{wedges}, the proofs proceed by completely different methods. We remark, however, that Theorem \ref{thm-mating} is used in one step of the proof, namely to establish a moment estimate for the Euclidean boundary length process by using that the Minkowski content of an SLE$_{16/\kappa}$ curve (modulo a continuous reweighting) is equal to the expected LQG length conditioned on the curve. 

Gwynne and Pfeffer~\cite{gp-sle-bubbles} used mating-of-trees theory (in particular, Theorem~\ref{thm-mating-chordal}) to study the geometry of a (non-space-filling) SLE$_\kappa$ curve $\eta$ for $\kappa\in(4,8)$. They considered the adjacency graph of complementary connected components of $\eta$, where each component corresponds to a vertex and two components are connected by an edge if their boundaries intersect. They showed that for $\kappa\in(4,\kappa_0]$, where $\kappa_0\approx 5.6158$, this adjacency graph is a.s.\ connected, in the sense that any two components can be joined by a finite path in the adjacency graph. By Theorem \ref{thm-mating-chordal}, the adjacency graph has an explicit description in terms of two independent $\kappa/4$-L\'evy processes with only negative jumps. The proof is based on an analysis of these L\'evy processes.

Mating-of-trees theory can also be used to prove exact formulas for quantities related to LQG surfaces.
For example, Ang and Gwynne~\cite[Theorem 1.2]{ag-disk} use the mating-of-trees theorem for the disk (Theorem~\ref{thm-mating-disk}) to express the conditional law of the area of a quantum disk given its boundary length in terms of a certain stopping time for conditioned Brownian motion, and thereby obtain an exact formula (modulo knowledge of the constant $\BB a=\BB a(\gamma)$ in Theorem \ref{thm-mating}) for the density of this conditional law with respect to Lebesgue measure. This provides a completely different approach to proving exact formulas for LQG from the conformal field theory  approach used in~\cite{krv-dozz,remy-fb-formula,rz-gmc-interval}.

Miller, Sheffield, and Werner~\cite{msw-simple-cle-lqg,msw-non-simple-cle} used a variant of the mating-of-trees theorem for chordal SLE$_\kappa$ for $\kappa \in (4,8)$ (Theorem~\ref{thm-mating-chordal}) to study conformal loop ensembles (CLE$_\kappa$) for $\kappa \in (8/3,8)$. We recall that CLE$_\kappa$ is a collection of SLE$_\kappa$-type loops whose law is conformally invariant~\cite{shef-cle} (see also Remark~\ref{rmk:CLE}). In particular, they showed that when one samples an independent CLE$_\kappa$ for $\kappa \in (8/3,4) \cup (4,8)$ on an appropriate type of LQG surface, then the quantum surfaces parametrized by the complementary connected components of the closure of the union of the CLE loops are quantum disks. Their work also has applications to the study of CLE$_\kappa$ (without reference to LQG):
\begin{itemize}
\item It is shown in~\cite{cle-percolations} that for $\kappa\in  (4,6)$, there is a $p = p(\kappa) \in (0,1)$ such that if one colors the loops of a CLE$_\kappa$ independently red with probability $p$ or blue with probability $1-p$, then the collection of interfaces between the red and blue clusters is a CLE$_{16/\kappa}$. 
This relationship is a continuum analog of the Edwards-Sokal coupling. In~\cite{msw-non-simple-cle}, the authors use a description of the law of the LQG boundary length process for the curve which explores the CLE loops (analogous to Theorem~\ref{thm-mating-chordal}) to show that $p = 1/ ( 4\cos^2(\pi\kappa/4))$. 
\item The paper~\cite{msw-non-simple-cle} also uses LQG techniques to derive formulas for ``arm exponents" for the above divide-and-color variant of CLE in terms of $p$ and $\kappa$. This leads to predictions for the analogous arm exponents for divide-and-color models starting from percolation or the Fortuin-Kasteleyn model. 
\end{itemize}

\section{Open problems}
\label{sec-open-problems}

In order to apply the theory developed in this paper to a random planar map model, one first needs an encoding of this model by a mating-of-trees bijection.

\begin{prob}  \label{prob-other-peano}
	Find additional mating-of-trees bijections for random planar maps, besides the ones in~\cite{mullin-maps,bernardi-maps,shef-burger,bernardi-dfs-bijection,bhs-site-perc,gkmw-burger,kmsw-bipolar,lsw-schnyder-wood}. 
\end{prob}

It is particularly interesting to find mating-of-trees bijections for which the encoding walk has i.i.d.\ increments, since currently this is needed to apply Theorem~\ref{thm-strong-coupling}. Alternatively, one could also try to expand the maps to which Theorem~\ref{thm-strong-coupling} applies.

\begin{prob}  \label{prob-kmt}
	Prove analogs of the results for graph distance and random walk on random planar maps described in Section~\ref{sec-strong-coupling} in the setting of random planar maps decorated by the critical FK model or the active spanning tree model, which are encoded by the hamburger-cheeseburger bijection~\cite{shef-burger,gkmw-burger}. 
\end{prob}

Note that the encoding walk in the hamburger-cheeseburger bijection is non-Markovian (except in the degenerate case when it reduces to the Mullin bijection), so one cannot directly apply the strong coupling theorem for random walk and Brownian motion from~\cite{zaitsev-kmt,kmt}. 
Possible approaches to Problem~\ref{prob-kmt} include proving an analog of this strong coupling theorem for the hamburger-cheeseburger walk or finding a more robust way of comparing random planar maps to the mated-CRT map which works with a weaker coupling theorem.  

As we have seen in the bipolar orientation and Schnyder wood cases, mating-of-trees theory provides a useful perspective to some classical combinatorial objects. 
To give an another example, we consider all possible configurations of a simple closed curve  intersecting a straight line, modulo topological equivalence. 
Each such configuration is called a \emph{meander}.  Let $M_n$ be the number of meanders where the number of intersections between the curve and the line is $2n$.
The asymptotic  enumeration of  $M_n$ is a well-known difficult problem in combinatorics. 
Di Francesco, Golinelli, and Guitter \cite{dgg-meander}  put the meander problem into the framework of matter coupled with quantum gravity, i.e.\ decorated random planar maps, and 
argued that the central charge of the matter is $\mathbf c=-4$. 
Let $\gamma=\sqrt{\frac{17}{3}-\frac{\sqrt{145}}{3}}$, which is the unique solution in $(0,2)$ for \eqref{eq:parameter} with $\mathbf c=-4$.  
Based on the LQG connection, it is conjectured in \cite{dgg-meander}  that $M_n\approx n^{-\alpha} R^{2n}$, where $\alpha=1+\frac{4}{\gamma^2}=\frac{29+\sqrt{145}}{12}$ and $R$ is a positive constant.
Restricting the closed curve on each side of the horizontal line, we obtain a pair of rainbow patterns similar to the one on the top right of Figure~\ref{fig-mated-crt-map}.
These rainbow patterns give rise to a two dimensional lattice walk $\cZ$. This gives a mating-of-trees approach to the meander problem.
\begin{prob}  \label{prob-meander}
	Prove that the lattice  walk $\cZ$ in the meander problem converges to a 2D Brownian motion\footnote{More precisely, the scaling limit should be the correlated 2D Brownian excursion appearing in the sphere version of the mating-of-tree theorem~\cite[Theorem 1.1]{sphere-constructions}.} 
	such that the correlation between the two coordinates is  $-\cos (\pi\gamma^2/4)$ with $\gamma$ as above.
\end{prob}
In \cite{gms-burger-local} the asymptotic behavior of the partition function of critical FK-decorated  random planar maps is estimated based on the peanosphere convergence. 
It is possible that  by a similar argument, the solution to Problem~\ref{prob-meander} would give the value of  $\alpha$ in the meander problem.
There are also other meander-type combinatorial models which have a quantum gravity connection, see~\cite{DiFancesco-BAMS,ckst-noodle}.

\emph{Update}: As explained in \cite{bgs-meander}, we expect that, contrary to the above conjecture, the scaling limit of a uniform meander is actually LQG with ${\mathbf c} = -4$, decorated by two independent SLE$_8$ curves. Note that here $\kappa$ is not equal to $16/\gamma^2$, so the results of \cite{wedges} do not apply directly. See \cite{bgs-meander} for more explanation.

Convergence results in the peanosphere sense give some notion of convergence for the \emph{pair} consisting of a random planar map and a statistical physics model on top of it. Does it also provide some notion of convergence for these two objects (namely, the random planar map and the statistical physics model) separately? This question can be formulated more precisely as follows.
\begin{prob} \label{prob-upper-lower}
	Let $M$ be a random planar map reweighted by the partition function of some statistical physics model, and let $P_1$ and $P_2$ be two independent samples of this statistical physics model on $M$. Assume that $(M,P_1)$ (equivalently, $(M,P_2)$) converges in the peanosphere sense to an SLE-decorated LQG surface. The two pairs $(M,P_1)$ and $(M,P_2)$ give us two random walks $\cZ^1$ and $\cZ^2$. Does $(\cZ^1,\cZ^2$) converge jointly in the scaling limit to two Brownian motions $(Z^1,Z^2)$ encoding the \emph{same} LQG surface decorated by two \emph{independent} SLE curves?
\end{prob}
Although a positive answer to this question is very natural, it is far from immediate from Theorem \ref{thm-mating} how one would prove it: The random walks $\cZ^1$ and $\cZ^2$ do not distinguish in simple way between the randomness of the map and the randomness of statistical physics model, and the same holds in the continuum. 

Establishing a positive answer to Problem \ref{prob-upper-lower} for the case of percolated triangulations was a key step in the proof of convergence of uniform triangulations under the Cardy embedding \cite{hs-cardy-embedding}. Variants of the same result for other decorated random planar map models would likely lead to similar convergence results under conformal embedding for those, with the Cardy embedding replaced by an appropriate embedding defined using observables of the statistical physics model. See~\cite[Section 1.3]{ghs-metric-peano} for some further discussion.

Recall (Section~\ref{subsec:scaling}) that for a random planar map $\mcl M$ in the $\gamma$-LQG universality class, it is expected that if we embed $\mcl M$ into $\BB C$ in a reasonable way, then the counting measure on the vertices of $\mcl M$, appropriately rescaled, converges to the $\gamma$-LQG measure. So far, this has been proven only for the Tutte and Smith embeddings of the mated-CRT map and the Cardy embedding of a uniform triangulation (see Sections~\ref{sec-tutte-conv} and~\ref{sec-applications-other}).
In light of the strong coupling results between other random planar maps and the mated-CRT map described in Section~\ref{sec-strong-coupling}, it is natural to try to deduce embedding convergence results for other random planar maps from the embedding convergence result for the mated-CRT map. 

\begin{prob} \label{prob-embedding}
	Can the ideas described in Sections~\ref{sec-strong-coupling} and~\ref{sec-tutte-conv} be extended to show that additional random planar map models (such as spanning-tree weighted random planar maps or bipolar oriented random planar maps) converge to LQG under appropriate embeddings?
\end{prob}

Problem~\ref{prob-embedding} is likely to be difficult since scaling limit results generally require up-to-constants estimates whereas the strong coupling techniques described in Section~\ref{sec-strong-coupling} only yield estimates with polylogarithmic multiplicative errors. 
However, there is an important intermediate step toward the proof of embedding convergence results for random planar maps which does \emph{not} require up-to-constants estimates.
Indeed, one of the major obstacles to proving such results is showing that there are no macroscopic faces, i.e., the maximal size of the faces of the embedded map tends to zero as the number of faces tends to $\infty$. 

\begin{prob} \label{prob-max-face}
	Can the mated-CRT map coupling techniques described in Section~\ref{sec-strong-coupling} be used to show that embeddings of various interesting random planar maps have no macroscopic faces?
\end{prob}

It is shown in~\cite[Corollary 1.6]{gms-harmonic} that the maximal face size for the mated-CRT map under the Tutte embedding decays polynomially in the number of vertices. 
A similar result is proven in~\cite[Theorem 1.6]{gjn-macroscopic-circles} with the circle packing in place of the Tutte embedding. 
It is natural to think that these results together with Theorem~\ref{thm-strong-coupling} should lead to a solution to Problem~\ref{prob-max-face} for the Tutte embeddings and circle packings of random planar maps to which Theorem~\ref{thm-strong-coupling} applies. However, the maximal face size does not seem to be a simple functional of graph distances and/or Dirichlet energies so it is not yet clear how to apply Theorem~\ref{thm-strong-coupling} in this setting.

Our next few questions ask about generalizations of the theory presented in this article. 
Currently, most known results in mating-of-trees theory concern simply connected LQG surfaces. 
It is of interest to extend this theory to the non-simply connected case. 
See Section~\ref{subsec:rv} for some references on known results about LQG on non-simply connected surfaces.

\begin{prob} \label{prob-higher-genus}
	Develop a mating-of-trees type  theory for non-simply connected LQG surfaces.
	Prove scaling limit results for  natural model-decorated random planar maps on non-simply connected surfaces, starting from peanosphere convergence and upgrading to more intrinsic topologies.
	Determine the connection between the LQG surfaces arising in these scaling limits with the ones defined via the path integral approach (see \cite{drv-torus,remy-annulus,grv-higher-genus}).
\end{prob}
The key new difficulty here is that for the non-simply connected case the conformal structure of the LQG surface itself will have to be random, namely, there will be a probability measure on the moduli space.
In the path integral approach, the law of the random modulus is taken to be the one derived from the DDK ansatz (recall Section~\ref{sec-overview} for the DDK ansatz and see \cite{dp-string}  for more details).
However,  from the  planar maps scaling limit perspective, it is challenging to derive  any explicit law for the random modulus for the limiting LQG surfaces.

In all of the results presented in this paper, the SLE and LQG parameters are related by $\kappa  \in\{\gamma^2 ,16/\gamma^2 \}$
Our next problem concerns the case where $\gamma$ and $\kappa$ are ``mismatched".

\begin{prob}  \label{prob-mismatch}
	Is there a variant of the conformal welding and cutting results in \cite{shef-zipper,wedges} for $\kappa \not\in\{\gamma^2 ,16/\gamma^2 \}$? Is there a variant of Theorem \ref{thm-mating} for the case where $\kappa\neq16/\gamma^2$? 
\end{prob}
One key difficulty is that when $\gamma^2\neq\kappa$, the two surfaces one obtains when cutting a $\gamma$-LQG surface by an SLE$_\kappa$ are not independent. Therefore we do not expect the boundary length process $Z$ in Theorem \ref{thm-mating} to be Markovian. Perhaps the most interesting possible result along the lines of Problem~\ref{prob-mismatch} would be to give an explicit description of the law of this process.

All of the results in this article concern LQG with $\gamma\in (0,2)$, which can equivalently be described by the central charge of the corresponding matter field, which is $\mathbf c = 25 - 6(2/\gamma+\gamma/2)^2 \in (-\infty,1)$. 
See~\cite{hp-welding,mmq-welding} for an analog of the conformal welding results of~\cite{shef-zipper,wedges} for $\gamma=2$, equivalently $\mathbf c=  1$. 
There is a work in progress by Aru, Holden, Powell, and Sun which will 
prove a variant of the mating-of-trees theorem of~\cite{wedges} in the case when $\gamma=2$. 
However, the case when $\mathbf c \in (1,25)$, which corresponds to a complex value of $\gamma$, is also of substantial mathematical and physical interest. 
In this case, one has $Q = 2/\gamma + \gamma/2 \in (0,2)$, so the definition of an LQG surfaces still make sense. 
Moreover, it is expected that such surfaces can be endowed with a canonical metric.
However, for $\mathbf c \in (1,25)$ one does not have a construction of a canonical area measure and this phase is much less well understood than the case when $\mathbf c \in (-\infty,1]$. See~\cite{ghpr-central-charge,dg-supercritical-lfpp} for some recent progress and discussion on the issues involved. 

\begin{prob} \label{prob-central-charge}
	Can any of the results discussed in this paper be extended to the case when $\mathbf c \in (1,25)$?
\end{prob}

Theorems~\ref{thm:zipper0} and~\ref{thm-mating} provide, via probabilistic techniques, a method for conformally welding together two quantum wedges or two continuum random trees (CRTs). The proof of these results both rely on the fact that we know what is the object obtained after welding, namely an SLE-decorated LQG surface.

\begin{prob} \label{prob-analytic-welding}
	Can one prove existence and uniqueness for the conformal welding problem for a pair of quantum wedges or a pair of CRTs using analytic techniques?
\end{prob}

To answer this question positively, one would need to identify certain almost sure properties of the quantum wedges or the CRTs which allow to define the conformal welding. 
In the setting of Theorem~\ref{thm:zipper0}, uniqueness of the welding problem is established using the conformal removability of SLE$_\kappa$, but this does \emph{not} provide a solution to Problem~\ref{prob-analytic-welding} since one cannot see the removability of the welding interface just from the two surfaces being welded. 
In~\cite{mmq-welding}, uniqueness of the conformal welding was established under certain regularity conditions on the interface curves which in particular apply to SLE$_4$ (which arises in the conformal welding of $\gamma=2$ quantum wedges \cite{hp-welding}) and SLE$_\kappa$ curves for $\kappa\in (4,8)$ (which arise in mating of trees of quantum disks, see Theorem~\ref{thm-mating-chordal}).
Again, however, this paper does not solve Problem~\ref{prob-analytic-welding} since the uniqueness criterion is in terms of the interface curve. 
Lin and Rohde~\cite{lin-rohde-welding} solve a variant of Problem~\ref{prob-analytic-welding} for the conformal welding of a CRT to a Euclidean disk. 
We also note that the conformal welding problem for a pair of trees has been solved for several deterministic Julia sets. See~\cite{milnor-mating} for an introduction to this theory as well as~\cite[Section 2.3]{wedges} and the references therein. 

The results surveyed in this article focus on conformal weldings of two LQG surfaces with the same value of $\gamma$. 
The paper \cite{astala-welding} studies the conformal welding of an LQG surface and a Euclidean disk.

\begin{prob} \label{prob-mismatched-welding}
	What can be said about the conformal welding of a $\gamma_1$-LQG surface and a $\gamma_2$-LQG surface for distinct $\gamma_1,\gamma_2\in (0,2)$?
\end{prob}

A solution to Problem~\ref{prob-analytic-welding} in the case of quantum wedges might also yield the existence and uniqueness of the welding in the setting of Problem~\ref{prob-mismatched-welding} if the welding criterion is a.s.\ true for pairs LQG surfaces with different $\gamma$-values.

\bibliography{cibiblong,cibib,extrabib}
\bibliographystyle{hmralphaabbrv}

\end{document}